\newtheorem{theo}{Th\'eor\'eme}[section]
\newcommand{\stirlingii}{\genfrac{\{}{\}}{0pt}{}}
\def\ker{\mathop{\mathrm{ker}}\nolimits}
\def\Hilb{\mathop{\mathrm{Hilb}}\nolimits}
\def\rank{\mathop{\mathrm{rank}}\nolimits}
\def\int{\mathop{\mathrm{int}}\nolimits}
\def\tor{\mathop{\mathrm{tor}}\nolimits}
\def\im{\mathop{\mathrm{Im}}\nolimits}
\def\Id{\mathop{\mathrm{Id}}\nolimits}
\def\Sym{\mathop{\mathrm{Sym}}\nolimits}
\def\GL{\mathop{\mathrm{GL}}\nolimits}
\def\mod{\mathop{\mathrm{mod}}\nolimits}
\def\remk{\noindent\textit{Remark:~}}
\newtheorem{prop}[theo]{Proposition}
\newtheorem{def-prop}[theo]{Definition-Proposition}
\newtheorem{cor}[theo]{Corollary}
\newtheorem*{teo*}{Theorem}
\newtheorem{lemma}[theo]{Lemma}
\newtheorem{teo}[theo]{Theorem}
\newtheorem{definition}[theo]{Definition}
\newtheorem{example}[theo]{Example}
 \newcommand{\cM}{{\mathcal M}}
  \newcommand{\bC}{{\mathbb C}}
  \newcommand{\bF}{{\mathbb F}}
 \newcommand{\bN}{{\mathbb N}}
 \newcommand{\bQ}{{\mathbb Q}}
  \newcommand{\bH}{{\mathbb H}}
 \newcommand{\bT}{{\mathbb T}}
 \newcommand{\bZ}{{\mathbb Z}}
  \def\SL {\mathop{\mathrm{SL}}\nolimits}
  \def\val {\mathop{\mathrm{val}}\nolimits}
      \def\prim {\mathop{\mathrm{prim}}\nolimits}
   \def\Ann {\mathop{\mathrm{Ann}}\nolimits}
\newcommand{\sE}{{\mathscr E}}
\title[Torsions in Cohomology]{Torsions in Cohomology of $\SL_2(\bZ)$ and Congruence of Modular Forms}
\date{}
\keywords{Invariants, co-invairants, 
cohomology of $\text{SL}_2(\mathbb{Z})$, torsions, congruence of modular forms, Stirling number of the second kind
}
\begin{document}

\author{Taiwang Deng}

\begin{abstract}
We describe torsion classes in the first cohomology group of 
$\text{SL}_2(\mathbb{Z})$. In particular, we obtain generalized Dickson's 
invariants for p-power polynomial rings. Secondly, 
we describe torsion classes in the
zero-th homology group of $\text{SL}_2(\mathbb{Z})$ as a module over
the torsion invariants. As application, we obtain various
congruences between cuspidal forms of level one and Eisenstein series.
\end{abstract}

\address{Max Planc Institut f{\"{u}}r Mathematik, 
 Vivatsgasse 7, 53111 Bonn.}
\email{dengtw@mpim-bonn.mpg.de}

\maketitle

\tableofcontents

\section{Introduction}
Let $\Gamma=\SL_2(\bZ)/\{\pm \Id\}$ and $\mathbb{H}=\{z\in \bC: \Im(z)>0\}$. 
Let 
\[
X:=\Gamma\backslash \bH
\]
In this article, we investigate the 
torsion classes in $H^1(X, \tilde{\cM}_n)$
and $H^2_c(X, \tilde{\cM}_n)$, where 
the sheaf $\tilde{\cM}_n$ is induced by
the action of $\SL_2(\bZ)$ on the space
of homogeneous polynomials of degree $n$, denoted by $\cM_n$.

Fix a prime $p>3$.\footnote{We remark that most part of the 
results in this article remain valid for 
$p=2$ and $p=3$, we exclude them for two reasons: one is due to the fact that the Lemma \ref{lemma-Harder-exact-functor} fails for these primes, the other is for being less technical.} We consider the following generalized 
Dickson's invariants
\[
f_{1, \delta}=(X^pY-XY^p)^{p^{\delta-1}}, \quad f_{2, \delta}=(\frac{X^{p^2-1}-Y^{p^2-1}}{X^{p-1}-Y^{p-1}})^{p^{\delta-1}}.
\]
Then in section \ref{sec-tfc}, we prove that 
\begin{teo}
The polynomial ring $\bZ/p^\delta[f_{1,\delta}, f_{2, \delta}]$ is a $\SL_2(\bZ/p^\delta)$-invariant sub-ring of $\bZ/p^\delta[X, Y]$. Moreover, any invariant
element of order $p^\delta$ (i.e., primitive)
is congruent to some element in $\bZ/p^\delta[f_{1,\delta}, f_{2, \delta}]$ modulo p.
\end{teo}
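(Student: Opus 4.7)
The plan is to split the statement into two parts and to anchor both on the classical Dickson $\SL_2(\bF_p)$-invariants $f_1 := X^pY - XY^p$ and $f_2 := (X^{p^2-1}-Y^{p^2-1})/(X^{p-1}-Y^{p-1})$, which generate $\bF_p[X,Y]^{\SL_2(\bF_p)}$ as a polynomial ring in two variables (Dickson's theorem).

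For the invariance of $\bZ/p^\delta[f_{1,\delta}, f_{2,\delta}]$, my approach is as follows. Any $g \in \SL_2(\bZ/p^\delta)$ reduces modulo $p$ to an element fixing each $f_i$, so one may write $g \cdot f_i = f_i + p h$ in $\bZ/p^\delta[X,Y]$. Raising to the $p^{\delta-1}$-th power and expanding binomially, Kummer's theorem gives $v_p\bigl(\binom{p^{\delta-1}}{k}\bigr) = \delta - 1 - v_p(k)$, so every term with $k \geq 1$ in $(f_i + ph)^{p^{\delta-1}}$ has $p$-adic valuation at least $(\delta - 1) - v_p(k) + k \geq \delta$ (using $k - v_p(k) \geq 1$), hence vanishes modulo $p^\delta$. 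This yields $g \cdot f_{i,\delta} \equiv f_{i,\delta} \pmod{p^\delta}$.

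The congruence statement reduces to the following derivative estimate: if $f \in \bZ/p^\delta[X,Y]$ is $\SL_2(\bZ/p^\delta)$-invariant, then $\partial_X f, \partial_Y f \in p^{\delta-1} \bZ/p^\delta[X,Y]$. I would prove this by induction on $\delta$. In the inductive step, let $f$ be $\SL_2(\bZ/p^{\delta+1})$-invariant; the inductive hypothesis applied to $f \bmod p^\delta$ lets me write $\partial_X f = p^{\delta-1} h$ in $\bZ/p^{\delta+1}[X,Y]$. Invariance under the unipotent $g = I + p E_{12}$ then gives
\[
0 \equiv f(X + pY, Y) - f(X, Y) \equiv \sum_{k \geq 1} p^k Y^k \cdot \frac{\partial_X^k f}{k!} \pmod{p^{\delta+1}}.
\]
The $k=1$ term is $p^\delta Y h$; for $k \geq 2$, the coefficient of $X^{i-k}Y^{j+k}$ equals $p^k\binom{i}{k} c_{i,j} = \frac{p^k}{k}\binom{i-1}{k-1}(i c_{i,j})$, whose $p$-adic valuation is at least $(k - v_p(k)) + (\delta - 1) \geq \delta + 1$ provided $k - v_p(k) \geq 2$. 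This inequality holds for all $k \geq 2$ when $p > 2$ (tight at $p = k = 3$, failing at $p = 2$). Invariance therefore forces $h \equiv 0 \pmod p$, so $\partial_X f \in p^\delta \bZ/p^{\delta+1}[X,Y]$; the symmetric argument with $E_{21}$ handles $\partial_Y f$.

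Once the derivative estimate is established, the theorem follows quickly. The reduction $\bar f := f \bmod p$ has only monomials $X^i Y^j$ with $p^{\delta-1} \mid i$ and $p^{\delta-1} \mid j$, so $\bar f \in \bF_p[X^{p^{\delta-1}}, Y^{p^{\delta-1}}]$. Because $a^{p^{\delta-1}} = a$ in $\bF_p$, the $\SL_2(\bF_p)$-action on $(X^{p^{\delta-1}}, Y^{p^{\delta-1}})$ coincides with the action on $(X,Y)$, so Dickson's theorem identifies the invariants of this subring as $\bF_p[f_1^{p^{\delta-1}}, f_2^{p^{\delta-1}}] = \bF_p[\bar f_{1,\delta}, \bar f_{2,\delta}]$, which is precisely the image of $\bZ/p^\delta[f_{1,\delta}, f_{2,\delta}]$ under reduction. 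The main obstacle will be the inductive derivative estimate itself: the $p$-adic valuation bookkeeping in the Taylor expansion is delicate, and is exactly what forces the hypothesis $p > 2$.
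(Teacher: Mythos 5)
Your proposal is correct, and both halves are sound: the invariance of $f_{i,\delta}$ via $g\cdot f_i=f_i+ph$ and Kummer's estimate $\val_p\bigl(\binom{p^{\delta-1}}{k}\bigr)=\delta-1-\val_p(k)$ is essentially the same computation the paper performs (its Lemma \ref{lem-binom-valuation} applied to $R(f_{1,n})$ and $R(f_{2,n})$), but your treatment of the congruence statement is a genuinely different argument. The paper proceeds cohomologically: it forms the connecting map $\delta_n:H^0(G_{n-1},M^{n-1})\to H^1(G_n,M^1)$ from the sequence $0\to M^1\to M^n\to M^{n-1}\to 0$, observes it is a derivation (hence kills $p$-th powers), and then the real work is Lemma \ref{lem2}, a lengthy explicit computation of $r_n(f_{1,n-1})$, $r_n(f_{2,n-1})$ and $r_n(f_{1,n-1}f_{2,n-1})$ in the $\epsilon_k$-basis of $H^1(L_\infty,M^1)$ using Stirling-number congruences, to show $r_n$ is nonzero precisely off the $p$-th powers. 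Your derivative estimate $\partial_Xf,\partial_Yf\in p^{\delta-1}\bZ/p^\delta[X,Y]$, proved by Taylor-expanding the invariance under the congruence unipotents $I+pE_{12}$, $I+pE_{21}$ and bookkeeping $k-\val_p(k)\geq 2$ for $k\geq 2$, $p>2$, replaces all of that; combined with the Frobenius-equivariance observation it lands $\bar f$ in $\bF_p[f_1^{p^{\delta-1}},f_2^{p^{\delta-1}}]$ directly. Your route is shorter, avoids group cohomology entirely, and in fact only uses invariance under two congruence elements plus mod-$p$ invariance, so it proves something slightly stronger; what it does not buy you is the explicit cocycle data $r_n(f_{1,n-1}^af_{2,n-1}^b)$, which the paper reuses later (e.g.\ Corollary \ref{corstir} and the coinvariant computations of Section \ref{sec-tsc}). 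Two small points to tidy up: you should still record the algebraic independence of $f_{1,\delta},f_{2,\delta}$ over $\bZ/p^\delta$ (the paper reduces this to Dickson mod $p$ in one line) to justify calling $\bZ/p^\delta[f_{1,\delta},f_{2,\delta}]$ a polynomial ring, and with the paper's convention $g.P(X,Y)=P(aX+cY,bX+dY)$ the matrix producing $f(X+pY,Y)$ is $\bigl(\begin{smallmatrix}1&0\\p&1\end{smallmatrix}\bigr)$ rather than $I+pE_{12}$ --- a harmless relabeling.
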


The polynomial ring $\bZ/p^\delta[f_{1,\delta}, f_{2, \delta}]$ is defined to be a primitive sub-ring
of the invariant sub-ring $\bZ/p^\delta[X, Y]^{\SL_2(\bZ/p^\delta)}$.

From this one can determine the $p$-power
torsion classes in $H^1(X, \tilde{\cM}_n)$.

As for $H^2_c(X, \tilde{\cM}_n)$, a well known result says that $H^2_c(X, \tilde{\cM}_n)\simeq \cM_n/I_{\Gamma}\cM_n$, where $I_{\Gamma}$ is the kernel
of the augmentation map
\[
\bZ[\Gamma]\rightarrow \bZ.
\]

In section \ref{sec-tsc}, we analyze more generally the 
module 
\[
\cM/I_{\SL_2(\bZ)}\cM, \quad \cM=\oplus_{n=0}^{\infty} \cM_n.
\]
In particular, we determine the module structure of $\cM/I_{\SL_2(\bZ)}\cM\otimes \bF_p$ over $(\cM\otimes \bF_p)^{\SL_2(\bF_p)}$,
which is the following theorem( see Proposition \ref{prop-divided-power-struture-modp})

\begin{teo}
We have
\begin{align*}
\cM/I_{\SL_2(\bZ)}\cM\otimes \bF_p&\simeq (\cM\otimes \bF_p)^{\SL_2(\bF_p)} X^{p^2-p}Y^{p-1}\\
&\oplus\bigoplus_{k=2}^{p-1} (\cM\otimes \bF_p)^{\SL_2(\bF_p)}X^{(k-1)(p-1)}Y^{p-1}
\oplus (\cM\otimes \bF_p)^{\SL_2(\bF_p)}1
\end{align*}
where  
\begin{itemize}
\item[(1)] the module $(\cM\otimes \bF_p)^{\SL_2(\bF_p)} X^{p^2-p}Y^{p-1}$ is free of rank one over $(\cM\otimes \bF_p)^{\SL_2(\bF_p)}$;
\item[(2)]the module $(\cM\otimes \bF_p)^{\SL_2(\bF_p)}X^{(k-1)(p-1)}Y^{p-1}$ and $(\cM\otimes \bF_p)^{\SL_2(\bF_p)}1$ are free of rank one over $(\cM\otimes \bF_p)^{\SL_2(\bF_p)}/(f_{1, 1})$.
\end{itemize}
\end{teo}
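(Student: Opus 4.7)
The plan is to reduce to a problem over $\bF_p$ and then analyze the modular coinvariants via their module structure over $R := \bF_p[f_{1,1}, f_{2,1}] = (\cM\otimes\bF_p)^{\SL_2(\bF_p)}$ (Dickson's theorem). Since $\SL_2(\bZ) \twoheadrightarrow \SL_2(\bF_p)$ is surjective and the $\SL_2(\bZ)$-action on $\cM\otimes\bF_p$ factors through this quotient, one has $(\cM/I_{\SL_2(\bZ)}\cM)\otimes\bF_p \cong (\cM\otimes\bF_p)/I_{\SL_2(\bF_p)}(\cM\otimes\bF_p)$, so the task becomes a description of $(\cM\otimes\bF_p)_{\SL_2(\bF_p)}$ as an $R$-module.

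The first main step is to produce the generators. I would show every class $[X^aY^b]$ reduces modulo $I_{\SL_2(\bF_p)}$ to an $R$-combination of $\{1,\; X^{(k-1)(p-1)}Y^{p-1}\ (2\le k\le p-1),\; X^{p^2-p}Y^{p-1}\}$. The tools are the unipotent generators $u^{\pm}\in\SL_2(\bF_p)$: expanding $(u^\pm-1)(X^aY^b)$ yields congruences among monomials with binomial-coefficient factors (analyzed via Lucas's theorem), and combining these with the Frobenius identity $(X+Y)^p\equiv X^p+Y^p$ gives a reduction by induction on bidegree. The specific shape of the generators (an always-present factor $Y^{p-1}$, with $X$-exponents that are controlled residues modulo $p-1$) reflects that, modulo $I_{\SL_2(\bF_p)}$, one can normalize the $Y$-exponent to $\{0, p-1\}$ and the $X$-exponent within each torus weight component.

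Next I would verify the relations: $f_{1,1}$ annihilates each generator except $X^{p^2-p}Y^{p-1}$, while acting freely on that distinguished one. For each lower-degree generator $g$, this amounts to exhibiting an explicit presentation $f_{1,1}\cdot g = \sum_i (h_i-1)m_i$ with $h_i\in\SL_2(\bF_p)$ and $m_i\in\cM\otimes\bF_p$, exploiting the factorization $f_{1,1}=XY(X^{p-1}-Y^{p-1})$. For the top generator, the nonvanishing of every $f_{1,1}^a f_{2,1}^b\cdot X^{p^2-p}Y^{p-1}$ in the coinvariants would be detected by a distinguished linear functional vanishing on $I_{\SL_2(\bF_p)}(\cM\otimes\bF_p)$. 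Freeness over $R/(f_{1,1})=\bF_p[f_{2,1}]$ for the remaining summands then follows from the regularity of $f_{2,1}$. Directness and completeness of the generating set would be certified by matching Hilbert series:
\[
\frac{t^{p^2-1}}{(1-t^{p+1})(1-t^{p^2-p})} \;+\; \frac{1+\sum_{k=2}^{p-1}t^{k(p-1)}}{1-t^{p^2-p}}
\]
must equal the Poincaré series of $(\cM\otimes\bF_p)_{\SL_2(\bF_p)}$, computed via a filtration of $\Sym^n\bF_p^2$ into indecomposable $\SL_2(\bF_p)$-summands or directly by dimension count using the generating set.

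The main obstacle is the reduction in Step 1: without averaging arguments available in the modular regime, one must carefully orchestrate Frobenius identities and unipotent actions to send a generic monomial to one of the $p$ canonical forms. The privileged position of $X^{p^2-p}Y^{p-1}$ (giving a free summand rather than an $f_{1,1}$-torsion one) is subtle; its degree $p^2-1$ coincides with $\deg f_{2,1}$, hinting at a Steinberg-type phenomenon in which one component of $\Sym^{p^2-1}\bF_p^2$ resists $f_{1,1}$-annihilation. Pinning down this asymmetry will be the crux of the argument.
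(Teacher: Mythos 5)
Your overall architecture (generators via unipotent reduction, explicit $f_{1,1}$-annihilation relations, a detecting functional for the free summand) parallels the paper's proof of Proposition \ref{prop-divided-power-struture-modp}, but two of your steps have genuine gaps. First, your claim that freeness of the torsion summands over $\bF_p[f_{2,1}]$ ``follows from the regularity of $f_{2,1}$'' does not follow: $f_{2,1}$ being a nonzerodivisor on $\cM\otimes\bF_p$ says nothing about its action on the quotient module of coinvariants. One must actually show that no power of $f_{2,1}$ kills the classes $X^{(k-1)(p-1)}Y^{p-1}$ and $1$ in the coinvariants; the paper does this in Lemma \ref{nonvb} by pairing $f_2^{j}X^{i(p-1)}Y^{p-1}$ against the explicit $G_1$-invariant elements $\sum_{r}\xi_1^{(r(p-1))}\xi_2^{(\cdot)}$ of the dual divided power algebra (Proposition \ref{divel}) and computing the pairing to be a unit mod $p$. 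You invoke such a functional only for the top generator; you need it (or something like it) for every summand.

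Second, your plan to certify directness and completeness by matching Hilbert series presupposes an independent computation of the Poincar\'e series of $(\cM\otimes\bF_p)_{\SL_2(\bF_p)}$, and the methods you propose do not deliver it: Molien-type averaging is unavailable since $p$ divides $|\SL_2(\bF_p)|$, and a filtration of $\Sym^n\bF_p^2$ by indecomposables only bounds the coinvariants from above, because $(-)_{G}$ is right exact but not exact. In the paper the Hilbert series is a \emph{consequence} of the structure theorem, not an input. The directness and freeness are instead obtained by module-theoretic arguments: the sum is direct because $N_1\cap N_2\subseteq \Ann_{M^{1,G_1}}(f_1)\cap N_1=0$, and the top summand is free because any annihilator of $X^{p^2-p}Y^{p-1}$ would, after multiplication by $f_1$, annihilate all of $M^1_{G_1}$, contradicting the general fact (quoted from Lewis--Reiner--Stanton) that $\rank_{S^G}(S_G)=1$ for a finite group acting on an integral domain. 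Replacing your Hilbert-series step with arguments of this kind is necessary to close the proof.
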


We furthermore determines the free (primitive, as defined in the paper) part of the module $\cM/I_{\SL_2(\bZ)}\cM\otimes \bZ/p^{\delta}$( which is denoted by $M^{\delta}$ in the paper), which is
the following (see Proposition \ref{prop-second-cohomology-module-structures})

\begin{teo}
The element $$X^{p^{\delta+1}-p^{\delta}+p^{\delta-1}-1}Y^{p^{\delta}-1}$$
under the action of the polynomial ring
$\bZ/p^\delta[f_{1,\delta}, f_{2, \delta}]$, 
generates a sub-module of $\cM/I_{\SL_2(\bZ)}\cM\otimes \bZ/p^{\delta}$, which is free over $\bZ/p^\delta[f_{1,\delta}, f_{2, \delta}]$. Moreover, any element of $\cM/I_{\SL_2(\bZ)}\cM\otimes \bZ/p^{\delta}$ which is of
order $p^{\delta}$ can be written of the form 
\[
c f+h, \quad c\in (\bZ/p^{\delta})^{\times}, \quad p^{\delta-1}h=0
\]
and 
$$f\in \bZ/p^\delta[f_{1,\delta}, f_{2, \delta}]X^{p^{\delta+1}-p^{\delta}+p^{\delta-1}-1}Y^{p^{\delta}-1}.$$

\end{teo}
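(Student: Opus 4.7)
The strategy is to lift the preceding mod-$p$ structure theorem to a $\bZ/p^\delta$ statement by recognizing $v := X^{p^{\delta+1}-p^\delta+p^{\delta-1}-1} Y^{p^\delta-1}$ as a carefully chosen ``primitive lift'' of the generator $X^{p^2-p}Y^{p-1}$ of the free summand of $M^1 := \cM/I_{\SL_2(\bZ)}\cM \otimes \bF_p$. I abbreviate $A_\delta := \bZ/p^\delta[f_{1,\delta}, f_{2,\delta}]$ and $M^\delta := \cM/I_{\SL_2(\bZ)}\cM \otimes \bZ/p^\delta$ throughout.

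The preliminary step is to identify the reduction $\bar v \in M^1$. Factoring $v = X^{p^2-p}Y^{p-1} \cdot X^{p^{\delta+1}-p^\delta+p^{\delta-1}-p^2+p-1} Y^{p^\delta-p}$, I would verify that the second factor is congruent modulo $I_{\SL_2(\bZ)}\cM \otimes \bF_p$ to a nonzero element of $(\cM\otimes\bF_p)^{\SL_2(\bF_p)}$ built from Frobenius twists of the Dickson invariants $f_{1,1}, f_{2,1}$; the exponents of $v$ are tuned precisely so that this identification is valid, placing $\bar v$ in the free summand of $M^1$ as a nonzero invariant multiple of $X^{p^2-p}Y^{p-1}$.

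For freeness of $A_\delta \cdot v$: given $g = \sum c_{ij} f_{1,\delta}^i f_{2,\delta}^j \in A_\delta$ with $g v = 0$ in $M^\delta$, reducing modulo $p$ and using $f_{k,\delta} \equiv f_{k,1}^{p^{\delta-1}} \pmod p$ gives $\bar g \in \bF_p[f_{1,1}^{p^{\delta-1}}, f_{2,1}^{p^{\delta-1}}] \subset \bF_p[f_{1,1}, f_{2,1}]$ with $\bar g \cdot \bar v = 0$ in $M^1$. By the preliminary identification, $\bar v$ generates a free rank-one module over the full Dickson ring $\bF_p[f_{1,1}, f_{2,1}]$, which is itself free over the sub-polynomial ring generated by $p^{\delta-1}$-th powers; hence $\bar g = 0$, so $p \mid c_{ij}$ for all $i,j$. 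To iterate and conclude $g = 0$, I would work in each graded degree $n$ separately: the $\bZ/p^\delta$-free module $(A_\delta)_n$ of finite rank $r_n$ maps to $(M^\delta)_{n + \deg v}$, and the mod-$p$ injectivity combined with a dimension count of the integral free part of $(\cM/I_{\SL_2(\bZ)}\cM)_{n+\deg v}$ (extracted from the structural torsion decomposition in each degree) yields injectivity of the degree-$n$ component via a Nakayama-type lifting.

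For the primitive decomposition, take $w \in M^\delta$ with $p^{\delta-1}w \neq 0$, so $p^{\delta-1}w$ is a nonzero element of the socle $M^\delta[p]$. The key structural claim is $p^{\delta-1}M^\delta = p^{\delta-1}(A_\delta \cdot v)$: the non-free summands of $M^1$ arise from $p$-torsion in $\cM/I_{\SL_2(\bZ)}\cM$ of order at most $p^{\delta-1}$ and hence vanish on multiplication by $p^{\delta-1}$, while the $A_\delta$-orbit of $v$ exhausts all classes lifting to full order $p^\delta$. Granted this, there exist $c \in (\bZ/p^\delta)^\times$ and $f \in A_\delta v$ with $p^{\delta-1}w = c \cdot p^{\delta-1}f$, so $h := w - cf$ satisfies $p^{\delta-1}h = 0$, yielding the desired decomposition $w = cf + h$. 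The principal obstacle throughout is the explicit mod-$p$ congruence $\bar v \equiv h \cdot X^{p^2-p}Y^{p-1} \pmod{I_{\SL_2(\bZ)}\cM \otimes \bF_p}$ of the preliminary step: because the ideal involves the augmentation of $\bZ[\SL_2(\bZ)]$ rather than $\bZ[\SL_2(\bF_p)]$, one must pass through the congruence filtration and exploit the Stirling-number relations alluded to in the paper's keywords.
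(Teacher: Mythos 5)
Your overall shape (reduce mod $p$, invoke the mod-$p$ structure theorem, lift) matches the paper's starting point, and your preliminary step is essentially the paper's Lemma \ref{lemma-primitive-element-special-expression}, which expresses $X^{p^{\delta+1}-p^{\delta}+p^{\delta-1}-1}Y^{p^{\delta}-1}$ in $M^1_{G_1}$ as $X^{p^2-p}Y^{p-1}h(f_1,f_2)+\cdots$ with $h$ having unit leading coefficient. But there is a genuine gap at both of the places where you wave your hands, and it is the same gap twice: mod-$p$ information only controls $M^{\delta}/pM^{\delta}$, and in a non-free $\bZ/p^{\delta}$-module this cannot determine the \emph{order} of an element. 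Concretely, your freeness argument shows $\Ann(v)\subseteq pA_{\delta}$, and then for $g=p^{r}g'$ with $g'\not\equiv 0 \bmod p$ you need $g'v$ to have order exactly $p^{\delta}$ to conclude $p^{r}g'v\neq 0$ for $r<\delta$; knowing only that $\bar g'\bar v\neq 0$ in $M^1$ tells you $g'v\notin pM^{\delta}$, which is compatible with $g'v$ being killed by $p$. The ``dimension count of the integral free part'' you invoke presupposes the very structure being proved. Likewise, your ``key structural claim'' $p^{\delta-1}M^{\delta}=p^{\delta-1}(A_{\delta}\cdot v)$ — that all non-free contributions have order at most $p^{\delta-1}$ — is precisely the hard content of the paper's Lemmas \ref{lemma-primitive-element-non-existence-two} and \ref{lemma-primitive-element-non-existence-higer-powers}, and does not follow from the mod-$p$ decomposition.

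The missing device is the paper's dual divided-power algebra $D(V_{\delta})$ and the lifting of the mod-$p$ invariant functionals $u_{\delta}$ to primitive $G_{\delta}$-invariants $w_{\delta}\in D(V_{\delta})^{G_{\delta}}$ (Proposition \ref{prop-lift-fixedelement-divpow}, proved by showing a connecting cocycle vanishes on the congruence subgroup $N_{\delta}$ and then killing it by a restriction--corestriction argument on the prime-to-$p$ index $[G_1:\langle T\rangle]$). Pairing against $w_{\delta+r}$ gives a $\bZ/p^{\delta}$-valued functional on $M^{\delta}_{G_{\delta}}$, and the unit value $\langle w_{\delta+r}, f_{1,\delta}^{p^{r}-1}f_{2,\delta}^{p^{r}-1}U_{\delta}\rangle$ (Lemma \ref{lemma-pairing-lifting-multiplication-nonvanishing}) is what certifies that $fU_{\delta}$ has full order $p^{\delta}$ for every $f\not\equiv 0\bmod p$ (Proposition \ref{prop-primitive-element-stable-multiplication}), whence $\Ann_{A_{\delta}}(U_{\delta})=0$. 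The decomposition statement then still requires the separate degree-by-degree Stirling-number analysis showing that the remaining generators $\epsilon_{\ell p^{\delta}-1}$ of the boundary module die to order $p^{\delta-1}$ in the coinvariants. Your proposal would need to supply a substitute for the invariant functional $w_{\delta}$ — or some other order-detecting mechanism over $\bZ/p^{\delta}$ — before either half of the argument closes.
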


These two results allow us to determine all the 
p-power torsion elements in $\cM/I_{\SL_2(\bZ)}\cM$.

Finally, in section \ref{sec-cmf}, applying the fundamental
exact sequence from section 1, i.e., the 
exact sequence (\ref{eq1}), we get various 
congruences between cuspidal forms of level 1 and 
Eisenstein series. This recovers the famous 
congruences for Ramanujan $\tau$-function modulo
small primes.

\par \vskip 1pc
{\bf Acknowledgement.}
First of all, I would like to thank Professor G{\"u}nter Harder, for suggesting the problem to me and also for his encouragement and guidance. I thank Robin Bartlett, Bingxiao Liu, Sheng Meng, Carlo Pagano and Danylo Radchenko for many helpful discussions on the article. The whole
article is written during my stay at the Max Planck Institute for Mathematics of Bonn as a postdoc. 
I would like to thank their hospitality. 

\section{Cohomology of Arithmetic Groups}
We follow the notation of \cite{Har}.
Let $\Gamma=\SL_2(\bZ)/\{\pm \Id\}$ and $\mathbb{H}=\{z\in \bC: \Im(z)>0\}$. 
Let 
\[
X=\Gamma\backslash \bH
\]
be the quotient space. We are interested in the Cohomology groups of $X$. 

\begin{definition}
Let 
\[
\cM_n=\{\sum a_v X^{v}Y^{n-v}: a_v\in \bZ, 
\quad 0\leq v\leq n\}
\]
be the space of homogeneous polynomials of degree $n$. We define 
also an action of $\Gamma$ on $\cM_n$, for $g=\begin{pmatrix} 
a & b \\
c & d
\end{pmatrix}$ and $P(X, Y)\in \cM_n $, 
\[
g.P(X, Y)=P(aX+cY, bX+dY). 
\]
This action defines a sheaf on 
$X$, which we denote by $\Tilde{\cM}_n$.
\end{definition}
\remk For more information about 
the sheaf $\tilde{\cM}_n$, we refer
to \cite{Har}.

To study the cohomology of the sheaf $\Tilde{\cM_n}$, we fix some 
generators of the group $\Gamma$
\[
R=\begin{pmatrix} 
1 & -1 \\
1 & 0
\end{pmatrix}, \quad S=\begin{pmatrix} 
0 & -1 \\
1 & 0
\end{pmatrix}, \quad T=RS=\begin{pmatrix} 
1 & 1 \\
0 & 1
\end{pmatrix}
\]

Note that in this special case we have the following.

\begin{prop}
We have 
\[
H^1(X, \Tilde{\cM}_n)\simeq \cM_n/(\cM_n^{<R>}+\cM_n^{<S>})
\]
and
\[
H^1(\partial X, \Tilde{\cM}_n)\simeq \cM_n/((\Id-T)\cM_n)
\]
where $\cM_n^{<R>}$(resp. $\cM_n^{<S>}$) is the sub-module fixed by
$R$(resp. $S$), and $\overline{X}=X\cup \partial X$ is the Borel-Serre 
compactification of $X$. Moreover, we have the following fundamental exact sequence
\begin{align}\label{eqfun}
0&\rightarrow H^0(X, \Tilde{\cM}_n)\rightarrow H^0(\partial X,\Tilde{\cM}_n)\rightarrow
 H^1_c(X, \Tilde{\cM}_n)\rightarrow H^1(X, \Tilde{\cM}_n)\nonumber\\ &\rightarrow H^1(\partial X, \Tilde{\cM}_n) \rightarrow H^2_c(X, \Tilde{\cM}_n)\rightarrow 0.
\end{align}

\end{prop}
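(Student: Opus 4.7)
The plan is to compute $H^\ast(X,\tilde{\cM}_n)$ as group cohomology of $\Gamma$, using the Bass–Serre tree attached to the free-product decomposition $\Gamma=\langle R\rangle\ast\langle S\rangle\cong \bZ/3\ast\bZ/2$ (a classical consequence of the fundamental-domain argument, since $R^3=S^2=\pm\Id$). This tree $\cT$ is a contractible $\Gamma$-space with one orbit of edges of trivial stabilizer and two orbits of vertices with stabilizers $\langle R\rangle$ and $\langle S\rangle$. Its $\Gamma$-equivariant cellular cochain complex with coefficients in $\cM_n$ is concentrated in two degrees,
\[
0\longrightarrow \cM_n^{\langle R\rangle}\oplus \cM_n^{\langle S\rangle}\xrightarrow{\,(a,b)\mapsto a-b\,}\cM_n\longrightarrow 0,
\]
so its cohomology is $\cM_n^{\Gamma}$ in degree $0$ and $\cM_n/(\cM_n^{\langle R\rangle}+\cM_n^{\langle S\rangle})$ in degree $1$. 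Since $\bH$ is also a contractible $\Gamma$-space, the equivariant cohomology coincides with that of $\cT$, and under the conventions of \cite{Har} this is what computes $H^\ast(X,\tilde{\cM}_n)$; this yields the first isomorphism.

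For the second formula, the Borel–Serre boundary $\partial X$ is a single circle associated to the cusp at $\infty$, with fundamental group generated by $T$. The restriction of $\tilde{\cM}_n$ to this circle is the local system attached to the $\langle T\rangle$-module $\cM_n$, so
\[
H^1(\partial X,\tilde{\cM}_n)\simeq H^1(\langle T\rangle,\cM_n)\simeq \cM_n/(\Id-T)\cM_n,
\]
since $\langle T\rangle\simeq \bZ$.

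Finally, the fundamental exact sequence is obtained from the long exact sequence of the pair $(\overline{X},\partial X)$ in compactly supported cohomology,
\[
\cdots\to H^i_c(X,\tilde{\cM}_n)\to H^i(\overline{X},\tilde{\cM}_n)\to H^i(\partial X,\tilde{\cM}_n)\to H^{i+1}_c(X,\tilde{\cM}_n)\to\cdots,
\]
combined with the identification $H^i(\overline{X},\tilde{\cM}_n)=H^i(X,\tilde{\cM}_n)$ (the inclusion is a deformation retract), the vanishing $H^0_c(X,\tilde{\cM}_n)=0$ (non-compactness of $X$), and the dimension bounds $H^i(X,\tilde{\cM}_n)=0$ for $i\ge 2$ and $H^i(\partial X,\tilde{\cM}_n)=0$ for $i\ge 2$.

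The main conceptual obstacle is the identification in the first step, where one must be sure that the orbifold points at $i$ and $\rho$ are handled so that equivariant cohomology of $\bH$ genuinely computes $H^\ast(X,\tilde{\cM}_n)$. Once this is accepted (as in \cite{Har}), the Bass–Serre cochain complex for the tree $\cT$ does the work integrally, and no prime needs to be inverted; the hypothesis $p>3$ enters only later, when one translates the resulting formula into concrete statements about $p$-torsion.
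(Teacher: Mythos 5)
The paper offers no proof of this proposition at all: it is quoted from Harder's notes, so your write-up supplies an argument the paper omits, and in substance it is the standard one (two-term complex from the quotient of the Bass--Serre tree, circle boundary with monodromy $T$, long exact sequence of the pair $(\overline{X},\partial X)$). The conclusion and the exact sequence are correct. One point deserves more care than you give it: the phrase ``the equivariant cohomology coincides with that of $\cT$'' is where the real content hides, and as literally stated it is misleading. The $\Gamma$-equivariant cohomology of $\bH$ (or of $\cT$) is the group cohomology $H^{\ast}(\Gamma,\cM_n)$, and the equivariant cochain complex of $\cT$ computes it only through a spectral sequence whose $E_1$-page has extra rows $H^{q}(\bZ/2,\cM_n)$ and $H^{q}(\bZ/3,\cM_n)$ for $q>0$; these contribute genuine $2$- and $3$-torsion, so group cohomology and the cohomology of your two-term complex differ. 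What the two-term complex actually computes is the sheaf cohomology $H^{\ast}(X,\tilde{\cM}_n)$ of the quotient space, because the stalk of $\tilde{\cM}_n$ at an orbifold point is the module of invariants under the finite stabilizer, so a \v{C}ech cover of $X$ by two charts around the images of $i$ and $\rho$ yields exactly $0\to\cM_n^{\langle S\rangle}\oplus\cM_n^{\langle R\rangle}\to\cM_n\to 0$. You do flag this issue and defer to \cite{Har}, which is acceptable (the paper does no better), but in a text whose entire subject is torsion, the distinction between $H^{\ast}(\Gamma,\cM_n)$ and $H^{\ast}(X,\tilde{\cM}_n)$ should be stated as the reason the formula holds integrally, not left as an accepted convention. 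The boundary computation and the derivation of the exact sequence (\ref{eqfun}) from $H^0_c(X,\tilde{\cM}_n)=0$ and the vanishing of $H^{\geq 2}$ of $X$ and $\partial X$ are fine as written.
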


\begin{definition}
We define 
\[
H^{1}(X, \Tilde{\cM}_n)_{\int}=\im(H^{1}(X, \Tilde{\cM_n})\rightarrow H^{1}(X, \Tilde{\cM_n}\otimes \bQ)),
\]
\[
H^{1}(X, \Tilde{\cM}_n)_{\tor}=\ker(H^{1}(X, \Tilde{\cM_n})\rightarrow H^{1}(X, \Tilde{\cM_n})_{\int}),
\]
and 
\[
H^{1}_{!}(X, \Tilde{\cM}_n)=\im(H^{1}_c(X, \Tilde{\cM_n})\rightarrow H^{1}(X, \Tilde{\cM_n})),
\]
\[
H^{1}_{!}(X, \Tilde{\cM}_n)_{\int}=\im(H^{1}_{!}(X, \Tilde{\cM}_n)\rightarrow H^{1}(X, \Tilde{\cM}_n)_{\int}).
\]
Similarly, we define $H^{1}(\partial X, \Tilde{\cM}_n)_{\int}, H^{1}(\partial X, \Tilde{\cM}_n)_{\tor}$.
\end{definition}

Now we have the following commutative diagram of exact sequences

\begin{displaymath}
\xymatrix{
 & & 0\ar[d]& 0\ar[d] &\\
 & & H^1(X, \Tilde{\cM}_n)_{\tor}\ar[r]\ar[d]& H^1(\partial X, \Tilde{\cM}_n)_{\tor}\ar[d]&\\
0\ar[r]&H^1_{!}(X, \Tilde{\cM}_n)\ar[d]\ar[r]&H^1(X, \Tilde{\cM}_n)\ar[d]\ar[r]& H^1(\partial X, \Tilde{\cM}_n)\ar[d]&\\
0\ar[r]&H^1(X, \Tilde{\cM}_n)_{\int, !}\ar[r]& H^1(X, \Tilde{\cM}_n)_{\int}\ar[d]\ar[r]&H^1(\partial X, \Tilde{\cM}_n)_{\int}\ar[d]\ar[r]&0\\
& & 0 & 0
}
\end{displaymath}

Applying the Snake Lemma to last two exact sequences in columns, we get 
\begin{align}
0&\rightarrow H^1(X, \Tilde{\cM}_n)_{\int, !}/H^1_{!}(X, \Tilde{\cM}_n)_{\int}\rightarrow
H^1(\partial X, \Tilde{\cM}_n)_{\tor}/H^1(X, \Tilde{\cM}_n)_{\tor}\rightarrow\nonumber \\
&\rightarrow H^2_c(X, \Tilde{\cM}_n)\rightarrow 0\label{eq1}
\end{align}
The object of study in this paper is the fundamental exact sequence (\ref{eq1}).

\remk We should remark that all the terms
are torsions and non-vanishing in general.

\section{Torsions in the Cohomology of boundary}

In this section we study the torsions in the first cohomology of the boundary $\partial X$. 
We show the semi-simplicity of the Hecke action on them and compute the Hecke eigenvalues.

\begin{definition}
We introduce a new set of elements in $\cM_n$
\[
\epsilon_0^{n}=X^n, \epsilon_k^n=Y(Y-X)\cdots (Y-kX+X)X^{n-k},\quad 1\leq k\leq n.
\]
When there is no confusion about degrees, we use $\epsilon_k$ instead. Also, we 
will never take the product of $\epsilon_i^n$ and $\epsilon_j^n$.
\end{definition}

\remk
In the literature, the element 
$(X)_k=X(X-1)\cdots (X-k+1)$ is called a Pochhammer symbol or falling factorial.

\begin{prop}\label{prop1}
Let $n>0$. The set $\{\epsilon_k:0\leq k\leq n\}$ form a basis for $\cM_n$, i.e, 
\[
\cM_n=\oplus_{k=0}^{n}\bZ\epsilon_k.
\]
Moreover, 
\[
T\epsilon_k=\epsilon_k+k\epsilon_{k-1}, 
\]
therefore, we have
\[
\cM_n/(\Id-T)\cM_n=\bZ Y^n\bigoplus \oplus_{k=1}^{n}(\bZ/k\bZ) \epsilon_{k-1}.
\]
\end{prop}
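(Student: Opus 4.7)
The plan is to verify the three assertions---that $\{\epsilon_k\}$ is a $\bZ$-basis of $\cM_n$, the formula $T\epsilon_k = \epsilon_k + k\epsilon_{k-1}$, and the coinvariant structure---one after another. The only non-formal step is the middle calculation, which reduces to a single algebraic trick.

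For the basis claim, I would order monomials by decreasing $X$-degree and observe that $\epsilon_k = Y(Y-X)\cdots(Y-(k-1)X) X^{n-k}$ expands as $X^{n-k} Y^k$ plus terms in $X^{n-k+1} Y^{k-1}, X^{n-k+2} Y^{k-2}, \ldots, X^n$ (the elementary-symmetric expansion of the Pochhammer product introduces only strictly higher powers of $X$). Hence the change-of-basis matrix from $\{X^{n-k}Y^k\}$ to $\{\epsilon_k\}$ is upper unitriangular over $\bZ$, so $\{\epsilon_k\}_{k=0}^n$ is indeed a $\bZ$-basis of $\cM_n$.

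For the action of $T$, since $T.P(X,Y) = P(X, X+Y)$, direct substitution gives
\[
T\epsilon_k = (X+Y)\cdot Y(Y-X)\cdots(Y-(k-2)X)\cdot X^{n-k}.
\]
The key trick is the decomposition $X+Y = (Y - (k-1)X) + kX$: substituting it and distributing produces two products, the first reassembling into $\epsilon_k$ and the second into $k\epsilon_{k-1}$.

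For the coinvariants, this formula gives $(\Id - T)\epsilon_k = -k\epsilon_{k-1}$ for $k \ge 1$ and $(\Id - T)\epsilon_0 = 0$, so $(\Id - T)\cM_n = \sum_{k=1}^n k\bZ \cdot \epsilon_{k-1}$, a submodule of $\bigoplus_{k=0}^{n-1} \bZ\epsilon_k$. Taking the quotient componentwise yields $\bZ\epsilon_n \oplus \bigoplus_{k=1}^n (\bZ/k\bZ)\epsilon_{k-1}$. To replace $\epsilon_n$ by $Y^n$ in the free summand, observe that $Y^n - \epsilon_n \in \bigoplus_{k<n} \bZ\epsilon_k$ projects into the torsion part, so $\bZ Y^n$ is still a free rank-one complement, giving the stated direct sum. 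There is no serious obstacle here; the entire proof really hinges on the one-line identity $X+Y = (Y - (k-1)X) + kX$.
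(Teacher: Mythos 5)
Your proof is correct and follows essentially the same route as the paper: the paper establishes the basis claim by writing out the explicit unitriangular change of basis via Stirling numbers of the first and second kind (coefficients it reuses later, e.g.\ in Corollary \ref{corstir}), which is exactly your triangularity observation, and it leaves the identity $T\epsilon_k=\epsilon_k+k\epsilon_{k-1}$ and the resulting computation of $\cM_n/(\Id-T)\cM_n$ to the reader. Your verification of those last two points --- the splitting $X+Y=(Y-(k-1)X)+kX$ and the replacement of $\epsilon_n$ by $Y^n$ modulo torsion --- correctly fills in what the paper omits.
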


\begin{proof}
We have 
\[
\epsilon_k=\sum_{j=0}^k s(k, j)X^{n-j}Y^{j}
\]
where $(-1)^{k-j}s(k, j)$ is Stirling number  of the first kind. Conversely, we have
\[
X^{n-k}Y^k=\sum_{j=0}^{k}\stirlingii{k}{j}\epsilon_j
\]
where $\stirlingii{k}{j}$ is Stirling number  of the second kind. Therefore 
the set $\{\epsilon_k: 0\leq k\leq n\}$ forms a basis for $\cM_n$. 
\end{proof}

We are ready to compute the Hecke action on the boundary cohomology. Following 
Harder(cf. \cite{Har} \S 3.3), we know that the Hecke operator $T_p$ acts on $H^1(\partial X, \Tilde{\cM}_n)$ as follows
\[
T_p(X^{n-k}Y^k)=p^k\sum_{j=0}^{p-1}X^{n-k}(Y+jX)^k+ p^{n-k}X^{n-k}Y^{k}
\]

Therefore

\begin{prop}\label{prop-Hecke-operator-boudary-eigenvalue}
Let $p>n$ be prime. The Hecke operator $T_p$ acts semi-simply on $H^1(\partial X, \Tilde{\cM}_n)$ with 
\[
T_p(\epsilon_k)=(p^{n-k}+p^{k+1})\epsilon_k, \quad 1\leq k\leq n.
\]
\end{prop}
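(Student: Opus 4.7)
The plan is to exploit the fact that $T$ acts as the identity on the quotient $H^1(\partial X, \tilde{\cM}_n) = \cM_n/(\Id-T)\cM_n$. Since $T \cdot P(X,Y) = P(X, Y+X)$, iterating gives $X^{n-k}(Y+jX)^k = T^j(X^{n-k}Y^k)$, so the given formula for $T_p$ on monomials rewrites as
\[
T_p(X^{n-k}Y^k) = p^k \sum_{j=0}^{p-1} T^j(X^{n-k}Y^k) + p^{n-k}X^{n-k}Y^k.
\]
Descending to the quotient, where $\sum_{j=0}^{p-1}T^j \equiv p \cdot \Id$, each monomial $X^{n-k}Y^k$ becomes a $T_p$-eigenvector with eigenvalue $\lambda_k := p^{k+1}+p^{n-k}$.

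Next, I would expand $\epsilon_k = \sum_j s(k,j)X^{n-j}Y^j$ and use linearity to obtain $T_p \epsilon_k \equiv \sum_j s(k,j)\lambda_j X^{n-j}Y^j$ in the quotient. The target identity then reduces to the vanishing of the difference $\sum_j s(k,j)(\lambda_j - \lambda_k) X^{n-j}Y^j$ in the quotient. Re-expressing the monomials in the $\epsilon$-basis via $X^{n-j}Y^j = \sum_l \stirlingii{j}{l} \epsilon_l$, and recalling that $\epsilon_l$ has order $l+1$ in the quotient for $l < n$, the task reduces to the congruences
\[
c_{k,l} := \sum_{j=l}^{k} s(k,j)(\lambda_j - \lambda_k)\stirlingii{j}{l} \equiv 0 \pmod{l+1}, \qquad 0 \leq l < k \leq n.
\]

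The main obstacle is proving these divisibilities; here the hypothesis $p > n$ ensures $(p, l+1) = 1$ so that Fermat's little theorem applies. The key tool is the factorization $\lambda_k - \lambda_j = (p^{k-j}-1)(p^{j+1}-p^{n-k})$. When $q = l+1$ is prime, Wilson's theorem yields $\stirlingii{j}{l} \equiv 1 \pmod q$ if $l \mid j$ and $\equiv 0$ otherwise, while Fermat gives $p^j \equiv 1 \pmod q$ whenever $l \mid j$, so that $\lambda_j \bmod q$ is in fact constant on multiples of $l$. A roots-of-unity filter in $\bF_q$ then reduces $c_{k,l} \equiv 0 \pmod q$ to the identity $\sum_{x \in \bF_q^\times}(x)_k \equiv 0 \pmod q$, which holds because $k > l = q-1$ forces the falling factorial $(x)_k$ to contain $\geq q$ consecutive factors and hence vanish for every $x \in \bF_q$. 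The composite case $l+1$ is handled prime-power by prime-power in the same spirit. Once the divisibility is established, the $\epsilon_k$'s descend to a system of $T_p$-eigenvectors spanning $H^1(\partial X, \tilde{\cM}_n)$, which yields both the eigenvalue formula $T_p \epsilon_k = \lambda_k \epsilon_k$ and the semi-simplicity of the $T_p$-action.
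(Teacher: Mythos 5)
Your reduction is sound, and it takes a genuinely different route from the paper: you work in the quotient $\cM_n/(\Id-T)\cM_n$ from the start, where each monomial $X^{n-j}Y^j$ is already a $T_p$-eigenvector with eigenvalue $\lambda_j=p^{j+1}+p^{n-j}$, and you reduce everything to the congruences $c_{k,l}=\sum_j s(k,j)(\lambda_j-\lambda_k)\stirlingii{j}{l}\equiv 0\pmod{l+1}$. The paper instead inducts on $k$ and reduces to the \emph{termwise} congruence $(p^{n-k}+p^{k+1}-p^{n-j}-p^{j+1})\stirlingii{k}{j}\equiv 0\pmod{j+1}$, which it proves uniformly for every modulus $j+1$ (prime or not) by a generating-function identity resting on the observation that multiplication by the unit $p$ permutes the nonzero residues mod $j+1$, so that $(1-Y)\cdots(1-jY)\equiv(1-pY)\cdots(1-pjY)\pmod{j+1}$.

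The gap in your argument is the composite modulus case. Your treatment of $q=l+1$ prime is correct, but every ingredient in it is specific to prime moduli, and "prime-power by prime-power in the same spirit" does not go through: already for $l+1=4$ one has $\stirlingii{4}{3}=6\equiv 2$ and $\stirlingii{5}{3}=25\equiv 1\pmod 4$, so $\stirlingii{j}{l}\bmod(l+1)$ is not the indicator of $l\mid j$; and $p^{3}\equiv p\not\equiv 1\pmod 4$ when $p\equiv 3\pmod 4$, so $\lambda_j$ is not constant on multiples of $l$; the roots-of-unity filter over $\bF_q^{\times}$ also has no direct analogue over $(\bZ/q^e)^{\times}$. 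The congruence you need is nevertheless true and can be proved uniformly: for $l<k$ one has $c_{k,l}=pA+p^nB$ with $A=\sum_j s(k,j)\stirlingii{j}{l}p^{j}$ and $B$ the same sum with $p$ replaced by an integer inverse of $p$ modulo $(l+1)!$ (legitimate since $p>n\geq l+1$). The exact identity $l!\stirlingii{j}{l}=\sum_{i=0}^{l}(-1)^{l-i}\binom{l}{i}i^{j}$ together with $\sum_j s(k,j)x^{j}=x(x-1)\cdots(x-k+1)$ gives $l!A=\sum_{i}(-1)^{l-i}\binom{l}{i}\,(ip)(ip-1)\cdots(ip-k+1)$; each such product of $k$ consecutive integers is divisible by $k!$, hence by $(l+1)!$ since $k\geq l+1$, so $(l+1)\mid A$, and likewise $(l+1)\mid B$. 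With this (or with the paper's generating-function lemma) substituted for your final step, the proof closes.
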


\remk Note that our proposition applies equally to the free 
part of the cohomology with generator $\epsilon_n$.

\begin{proof}
We prove the statement by induction on $k$. For $k=1$, our statement is trivial. For $k>1$, since
\[
X^{n-k}Y^k=\sum_{j=0}^{k}\stirlingii{k}{j}\epsilon_{j}
\]
and $\stirlingii{k}{k}=1$, by induction we have 
\begin{align*}
T_p(\epsilon_k)&=T_p(X^{n-k}Y^k)-\sum_{j=0}^{k-1} \stirlingii{k}{j}\epsilon_{j}
\\
               &=(p^{n-k}+p^{k+1})\sum_{j=0}^k \stirlingii{k}{j}\epsilon_{j}-\sum_{j=0}^{k-1}(p^{n-j}+p^{j+1})\stirlingii{k}{j}\epsilon_{j}\\
               &+\sum_{j=0}^{k-1}\sum_{i=1}^{k-j}(i+1)p^k(p-i)\frac{(j+i)!}{j!}\stirlingii{k}{j+i}\epsilon_j\\
               \\
               &=(p^{n-k}+p^{k+1})\epsilon_k\\
               &+\sum_{j=0}^{k-1}((p^{n-k}+p^{k+1}-p^{n-j}-p^{j+1})\stirlingii{k}{j}+\sum_{i=1}^{k-j}(i+1)p^k(p-i)\frac{(j+i)!}{j!}\stirlingii{k}{j+i})\epsilon_{j}.
\end{align*}
Note that here we use the fact that 
\[
X^k(Y+iX)^{n-k}=T^i(X^kY^{n-k})=\sum_{j=0}^{k}\stirlingii{k}{j}T^i(\epsilon_{j})
\]
and
\[
T^i(\epsilon_j)=\epsilon_j+2j\epsilon_{j-1}+3j(j-1)\epsilon_{j-2}+\cdots.
\]

We need to show that 
\[
(p^{n-k}+p^{k+1}-p^{n-j}-p^{j+1})\stirlingii{k}{j}+\sum_{i=1}^{k-j}(i+1)p^k(p-i)\frac{(j+i)!}{j!}\stirlingii{k}{j+i}\equiv 0 \text{ mod }j+1.
\]
We observe that for $i>0$, 
\[
\frac{(j+i)!}{j!}\equiv 0, \quad \mod j+1, 
\]
hence, we only need to show 
\[
(p^{n-k}+p^{k+1}-p^{n-j}-p^{j+1})\stirlingii{k}{j}\equiv 0, \quad \mod j+1
\]

Note that this also holds for $j=k$ for trivial reasons. To show this we need the following lemma

\begin{lemma}(cf.\cite{Sta97} Page 57)\label{lemstir}
We have the following identity
\[
\sum_{r=k}^{\infty}\stirlingii{r}{k} X^{r-k}=\frac{1}{(1-X)(1-2X)\cdots(1-kX)}.
\]
\end{lemma}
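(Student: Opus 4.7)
The plan is to establish this classical identity by induction on $k$, using the standard Stirling recurrence
$$\stirlingii{r}{k} = k\,\stirlingii{r-1}{k} + \stirlingii{r-1}{k-1}, \qquad r > k \geq 1,$$
together with the boundary values $\stirlingii{k}{k}=1$ and $\stirlingii{r}{0}=\delta_{r,0}$. Throughout, write $F_k(X) := \sum_{r \geq k} \stirlingii{r}{k}\,X^{r-k}$ for the left-hand side.

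For the base case $k=1$, every nonempty set admits a unique partition into a single block, so $\stirlingii{r}{1}=1$ for all $r \geq 1$; hence $F_1(X) = \sum_{r \geq 1} X^{r-1} = (1-X)^{-1}$, matching the right-hand side.

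For the inductive step, I would first isolate the $r=k$ term of $F_k$ (which contributes $1$), then apply the recurrence to the remaining terms and reindex $s = r-1$. The contribution from $k\,\stirlingii{r-1}{k}$ becomes $kX\,F_k(X)$, while the contribution from $\stirlingii{r-1}{k-1}$ becomes $F_{k-1}(X) - 1$, where the $-1$ accounts for the missing $s=k-1$ summand that $F_{k-1}$ contains but the shifted sum does not. Collecting everything yields
$$F_k(X) = 1 + kX\,F_k(X) + \bigl(F_{k-1}(X) - 1\bigr) = kX\,F_k(X) + F_{k-1}(X),$$
so $(1 - kX)\,F_k(X) = F_{k-1}(X)$, and the inductive hypothesis closes the loop.

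This is really a bookkeeping argument; the only care required is to correctly track the boundary terms when splitting off $r=k$ before reindexing, and to ensure the recurrence is applied only in the valid range $r > k$. No serious obstacle is anticipated, which is why the paper simply refers to Stanley \cite{Sta97} for the statement.
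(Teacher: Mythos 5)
Your induction is correct: the recurrence $\stirlingii{r}{k}=k\stirlingii{r-1}{k}+\stirlingii{r-1}{k-1}$ applied for $r>k$, together with the boundary terms you track, gives $(1-kX)F_k(X)=F_{k-1}(X)$ exactly as claimed. The paper offers no proof of its own --- it simply cites Stanley --- and your argument is the standard one found there, so there is nothing to compare; the bookkeeping of the $r=k$ and $s=k-1$ terms, which is the only delicate point, is handled correctly.
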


Consider the following polynomial
\[
P(X, Y)=\sum_{k=j}^{\infty}\sum_{n=k}^{\infty}\stirlingii{k}{j}(p^{n-k}+p^{k+1}-p^{n-j}-p^{j+1})X^{n-k}Y^{k-j}.
\]
We have 
\begin{align*}
P(X, Y)&=\sum_{k=j}^{\infty}(\stirlingii{k}{j}\frac{Y^{k-j}}{1-pX}+\stirlingii{k}{j}\frac{p^{k+1}Y^{k-j}}{1-X}-\stirlingii{k}{j}\frac{p^{k-j}Y^{k-j}}{1-pX}-\stirlingii{k}{j}\frac{p^{j+1}Y^{k-j}}{1-X})\\
&=\frac{1}{(1-pX)}\frac{1}{(1-Y)\cdots (1-j Y)}+\frac{p^{j+1}}{(1-X)}\frac{1}{(1-pY)\cdots (1-pj Y)}\\
&-\frac{1}{(1-pX)}\frac{1}{(1-pY)\cdots (1-pj Y)}-\frac{p^{j+1}}{(1-X)}\frac{1}{(1-Y)\cdots (1-j Y)}\\
&=(\frac{1}{1-pX}-\frac{p^{j+1}}{1-X})(\frac{1}{(1-Y)\cdots (1-j Y)}-\frac{1}{(1-pY)\cdots (1-pj Y)}).
\end{align*}
Note that the condtion that $p>n\geq k>j$ implies $(p, j+1)=1$, hence 
\[
\frac{1}{(1-Y)\cdots (1-j Y)}\equiv \frac{1}{(1-pY)\cdots (1-pj Y)} \text{ mod }j+1
\]
which in turn implies that 
\[
P(X, Y)\equiv 0 \text{ mod }j+1.
\]
This finishes the proof.
\end{proof}

\remk 
Our proposition might fail for $p<n$, consider for example $j=17, k=23, n=24, p=3$, then 
\[
(p^{n-k}+p^{k+1}-p^{n-j}-p^{j+1})\stirlingii{k}{j}\equiv 6 \quad\mod 18
\]
However, a weaker statement holds without the assumption on $p$, i.e, 
\[
T_p(\epsilon_k)\equiv (p^{n-k}+p^{k+1})\epsilon_k \quad \mod q
\]
for any prime $q|(k+1)$. In fact, we have
\[
(p^{n-k}+p^{k+1}-p^{n-j}-p^{j+1})\stirlingii{k}{j}\equiv 0 \quad\mod p.
\]
for any $p$ and $n>k$. Combining this and the argument in the proposition implies our weaker assertion.

Before we finish this section, we deduce from lemma \ref{lemstir} some congruence properties of Stirling number of the second kind which will be used in the next section.

\begin{cor}\label{corstir}
Let $1\leq t, k\leq p$. We have 
\[
\stirlingii{p^2-tp}{kp-1}\equiv \left\{\begin{array}{lcr}
1 \mod p,&  \text{ if } k=1 \text{ and }t=1, \\
0 \mod p,&  \text{ otherwise }, 
\end{array}\right. 
\]
and
\[
\stirlingii{t(p-1)}{kp-1}\equiv \left\{\begin{array}{lcr}
1 \mod p,&  \text{ if } k=1, \\
0 \mod p,&  \text{ otherwise }.
\end{array}\right. 
\]
Finally, we have
\[
\stirlingii{p^2-1}{kp-1}\equiv \left\{\begin{array}{lcr}
1 \mod p,&  \text{ if } k=1 \text{ or } k=p, \\
0 \mod p,&  \text{ otherwise }.
\end{array}\right. 
\]
\end{cor}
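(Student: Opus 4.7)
The plan is to leverage Lemma \ref{lemstir} together with the well-known factorization
\[
\prod_{j=1}^{p-1}(1-jX)\equiv 1-X^{p-1}\pmod p,
\]
which follows from Fermat's little theorem (since $\prod_{j=1}^{p-1}(Y-j)\equiv Y^{p-1}-1\pmod p$; substitute $Y=1/X$ and clear denominators). Applying the lemma with $m=kp-1$ gives the generating function
\[
\sum_{N\geq kp-1}\stirlingii{N}{kp-1}X^{N-(kp-1)}=\frac{1}{\prod_{j=1}^{kp-1}(1-jX)}.
\]

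The main step is to compute the denominator modulo $p$. For $j=1,\ldots,kp-1$, grouping by residue class mod $p$: each residue $r\in\{1,\ldots,p-1\}$ is hit exactly $k$ times, while residue $0$ is hit $k-1$ times (by $p,2p,\ldots,(k-1)p$), and these last factors become $\equiv 1\pmod p$. Therefore
\[
\prod_{j=1}^{kp-1}(1-jX)\equiv\Bigl(\prod_{r=1}^{p-1}(1-rX)\Bigr)^{k}\equiv(1-X^{p-1})^{k}\pmod p,
\]
and consequently
\[
\sum_{N\geq kp-1}\stirlingii{N}{kp-1}X^{N-(kp-1)}\equiv\sum_{\ell\geq 0}\binom{\ell+k-1}{k-1}X^{(p-1)\ell}\pmod p.
\]

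This reduces each claim to a routine coefficient extraction: $\stirlingii{N}{kp-1}\equiv 0\pmod p$ unless $N-(kp-1)$ is a nonnegative multiple of $p-1$, and otherwise equals $\binom{\ell+k-1}{k-1}\bmod p$ where $(p-1)\ell=N-(kp-1)$. One then checks each case:
\begin{itemize}
\item For $N=p^2-tp$, the exponent is $p^2-(t+k)p+1\equiv 2-t-k\pmod{p-1}$, which vanishes in the range $2\leq t+k\leq 2p$ only for $t+k\in\{2,p+1,2p\}$; only $t=k=1$ yields $N\geq m$, giving $\ell=p-1$ and binomial coefficient $1$.
\item For $N=t(p-1)$, the exponent is $\equiv 1-k\pmod{p-1}$, so $k\in\{1,p\}$; but $k=p$ forces $N<m$, while $k=1$ gives $\ell=t-1$ and coefficient $1$.
\item For $N=p^2-1$, again $k\in\{1,p\}$: $k=1$ gives $\ell=p$ with coefficient $1$, and $k=p$ gives $\ell=0$ with coefficient $\binom{p-1}{p-1}=1$.
\end{itemize}

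The main obstacle is the clean mod-$p$ factorization of $\prod_{j=1}^{kp-1}(1-jX)$; once this is established, the rest is bookkeeping. The hypothesis $k\leq p$ is essential here, as it ensures that the factors indexed by multiples of $p$ contribute only the trivial factor $1$ (rather than introducing further cancellations from $j\equiv 0\pmod{p^2}$).
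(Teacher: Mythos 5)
Your proposal is correct and follows essentially the same route as the paper: apply Lemma \ref{lemstir}, reduce the denominator to $(1-X^{p-1})^{-k}$ modulo $p$, and then determine which exponents are divisible by $p-1$ in each case. The only difference is that you make explicit the surviving coefficients $\binom{\ell+k-1}{k-1}$ (verifying they equal $1$ rather than merely being the unspecified $a_\ell$ of the paper), which is a welcome but minor sharpening of the same argument.
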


\begin{proof}
By lemma \ref{lemstir}, we have 
\[
\sum_{r=kp-1}^{\infty}\stirlingii{r}{kp-1} X^{r-kp+1}=\frac{1}{(1-X)(1-2X)\cdots(1-(kp-1)X)}.
\]
But 
\[
\frac{1}{(1-X)(1-2X)\cdots(1-(kp-1)X)}=\frac{1}{(1-X^{p-1})^k}\equiv \sum_{\ell=1}^{\infty} a_{\ell} X^{\ell(p-1)}\text{ mod } p.
\]
We can assume that $p^2-tp\geq kp-1$, which imply $t+k\leq p$.
For $1\leq k\leq p-1, 1\leq r\leq p-1$, then $(p-1)\mid p(p-t)-kp+1$ would imply $t+k=2$ or $p+1$.
Hence we must have $t+k=2$, i.e, $t=k=1$.
By comparing the coefficients, we get the result. As for 
\[
\stirlingii{t(p-1)}{kp-1}
\]
we observe that $(p-1)\mid t(p-1)-kp+1$ only if $k=1$. Hence it follows that
\[
\stirlingii{t(p-1)}{kp-1}\equiv \left\{\begin{array}{lcr}
1 \text{ mod }p,&  \text{ if } k=1, \\
0 \text{ mod }p,&  \text{ otherwise }.
\end{array}\right. 
\]
Same argument shows the case of $\stirlingii{p^2-1}{kp-1}$.
\end{proof}

\section{Torsions in the first cohomology}\label{sec-tfc}

We fix an odd prime $p>3$. We recall the following theorem of E.L.Dickson (for $\SL_2(\bZ)$),

\begin{teo}
The group $\SL_2(\bZ)$ acts on the polynomial ring $\bF_p[X, Y]$ with the ring of invariants 
a polynomial ring generated by 
\[
f_1=X^pY-XY^p, \quad f_2=\frac{X^{p^2-1}-Y^{p^2-1}}{X^{p-1}-Y^{p-1}}.
\]
\end{teo}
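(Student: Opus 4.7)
The action of $\SL_2(\bZ)$ on $\bF_p[X, Y]$ factors through $G := \SL_2(\bF_p)$, so the task is to identify $\bF_p[X, Y]^G$. My plan proceeds in three stages.

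\textbf{Invariance.} I first verify $f_1, f_2 \in \bF_p[X, Y]^G$ by thinking geometrically. Write
\[
f_1 = XY \prod_{\lambda \in \bF_p^{\times}}(X - \lambda Y)
\]
as the product, up to sign, over $\P^1(\bF_p)$ of defining linear forms, and passing to $\bF_{p^2}$,
\[
f_2 = \prod_{\alpha \in \bF_{p^2}^{\times} \setminus \bF_p^{\times}}(X - \alpha Y)
\]
as the analogous product over $\P^1(\bF_{p^2}) \setminus \P^1(\bF_p)$. Both point sets are $G$-stable, so $g \cdot f_i$ equals $f_i$ times the product of scalar leading coefficients induced on the linear factors; a short bookkeeping identifies this scalar with $\det(g)^{N_i} = 1$. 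Alternatively, invariance can be checked directly on the generators $S$ and $T$ using only $(X+Y)^p \equiv X^p + Y^p \pmod{p}$.

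\textbf{Numerology.} We have $\deg f_1 = p + 1$ and $\deg f_2 = p^2 - p$, whose product $(p+1)(p^2 - p) = p^3 - p$ equals $|G|$. A quick check of leading monomials (or of the Jacobian) shows $f_1, f_2$ are algebraically independent, so $\bF_p[f_1, f_2]$ is a polynomial subring of $\bF_p[X, Y]^G$ over which $\bF_p[X, Y]$ is a finite module.

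\textbf{Equality.} This is the real content. My plan is to pass through $\GL_2(\bF_p)$. Dickson's classical theorem on $\GL_n$-invariants gives $\bF_p[X, Y]^{\GL_2(\bF_p)}$ as a polynomial ring in two generators of degrees $p^2 - 1$ and $p^2 - p$; an explicit computation with the product $\prod_{(a, b) \in \bF_p^2}(T - aX - bY)$ identifies these up to scalars with $f_1^{p-1}$ and $f_2$, so $\bF_p[X, Y]^{\GL_2(\bF_p)} = \bF_p[f_1^{p-1}, f_2]$. Now $\bF_p^{\times} = \GL_2/\SL_2$ acts on $\bF_p[X, Y]^G$ with fixed ring $\bF_p[X, Y]^{\GL_2}$, and a check on diagonal matrices (using $\lambda^p = \lambda$) gives $g \cdot f_1 = \det(g) f_1$, so $f_1$ generates the character $\det$. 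Given any $\det^i$-isotypic $h \in \bF_p[X, Y]^G$ with $0 \leq i \leq p-2$, the product $h f_1^{p-1-i}$ is $\GL_2$-invariant, hence $= P(f_1^{p-1}, f_2)$ for some polynomial $P$; comparing $Y$-adic valuations in the UFD $\bF_p[X, Y]$ (noting that $Y \mid f_1$ but $Y \nmid f_2$) forces $f_1^{p-1} \mid P$, whence $h = f_1^i Q(f_1^{p-1}, f_2) \in \bF_p[f_1, f_2]$. Summing over the isotypic decomposition for $i = 0, \ldots, p-2$ gives $\bF_p[X, Y]^G = \bF_p[f_1, f_2]$.

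\textbf{Main obstacle.} The hard step is the last one: pinning down $\bF_p[X, Y]^{\GL_2}$ as $\bF_p[f_1^{p-1}, f_2]$ and promoting every $\det^i$-semi-invariant to $f_1^i$ times a $\GL_2$-invariant via the $Y$-divisibility argument. A softer route is the modular Chevalley--Shephard--Todd theorem: the transvections $T$ and $STS^{-1}$ are pseudo-reflections generating $G$, so $\bF_p[X, Y]^G$ is a polynomial ring whose fundamental degrees must multiply to $|G| = p^3 - p$; matching Hilbert series then pins them as $p+1$ and $p^2 - p$ and identifies the generators with $f_1, f_2$ up to scalar.
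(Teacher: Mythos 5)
The paper does not prove this statement at all: it is quoted as ``the theorem of E.L.\ Dickson'' and used only as the base case $n=1$ of the induction in Section \ref{sec-tfc}, so there is no in-paper argument to compare yours against. Your main line of proof is correct and is essentially the standard derivation of the $\SL_2$ Dickson invariants from the $\GL_2$ ones: the factorizations of $f_1$ and $f_2$ into linear forms indexed by $\P^1(\bF_p)$ and by $\P^1(\bF_{p^2})\setminus\P^1(\bF_p)$ give semi-invariance with multiplier $\det^{N_i}=1$ (for $p>3$ one can shortcut even this, since $\SL_2(\bF_p)$ is perfect and hence admits no nontrivial character into $\bF_p^{\times}$); coprimality of $f_1,f_2$ gives algebraic independence; and the passage through $\bF_p[X,Y]^{\GL_2(\bF_p)}=\bF_p[f_1^{p-1},f_2]$ combined with the $\det^i$-isotypic decomposition (legitimate because $|\bF_p^{\times}|=p-1$ is invertible in $\bF_p$ and $x^{p-1}-1$ splits) and reduction modulo $Y$ (where $f_2\equiv X^{p^2-p}$, so the powers $f_2^k$ stay linearly independent) correctly forces $h=f_1^{i}Q(f_1^{p-1},f_2)$. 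One warning: the ``softer route'' in your last paragraph is not valid as stated. In the modular case the Chevalley--Shephard--Todd equivalence holds in only one direction: a polynomial invariant ring forces generation by pseudo-reflections (Serre), but generation by pseudo-reflections does \emph{not} imply the invariant ring is polynomial. The correct soft argument is the homogeneous-system-of-parameters criterion: $f_1,f_2$ have no common zero besides the origin, so $\bF_p[X,Y]$ is finite over $\bF_p[f_1,f_2]$, and $\deg f_1\cdot\deg f_2=p^3-p=|\SL_2(\bF_p)|=[\bF_p(X,Y):\bF_p(X,Y)^{G}]$ forces $\bF_p(f_1,f_2)=\bF_p(X,Y)^{G}$ and then, by normality of the polynomial ring $\bF_p[f_1,f_2]$ and integrality of $\bF_p[X,Y]^{G}$ over it, equality of the rings. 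Since your primary argument does not rely on that closing paragraph, the proof stands.
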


Our first goal in this section is to generalize this theorem to allow $p$-power torsions.

\begin{definition}
Let $G$-be a group and $M$ be a $G$-module which is free over $\bZ/p^n$. Let $(i, M_{\prim}^{G})$ be 
the pair where the embedding $i:M^G_{\prim}\rightarrow M^G$ realizes $M^G_{\prim}$
as one of the maximal $\bZ/p^n$-sub-modules of $M^{G}$ which is free (over $\bZ/p^n$). We call it primitive invariant sub-module
of $G$ over $\bZ/p^n$. We also call an element primitive if
it is of order $p^n$ in $M^G$.
\end{definition}
\remk By elementary divisor decomposition theorem, we know the pair $(i, M^G_{\prim})$ 
always exists and is not unique. When no ambiguity arises, we also drop the morphism $i$
ans say simply that $M^G_{\prim}$ is a primitive invariant sub-module.

Then we have the following 

\begin{teo}
Let $p>3$.
The group $\SL_2(\bZ)$ acts on on the polynomial ring $\bZ/p^n[X, Y]$ with a polynomial ring of primitive invariants generated by 
\[
f_{1, n}=(X^pY-XY^p)^{p^{n-1}}, \quad f_{2, n}=(\frac{X^{p^2-1}-Y^{p^2-1}}{X^{p-1}-Y^{p-1}})^{p^{n-1}}.
\]
\end{teo}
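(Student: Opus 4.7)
The plan is to establish three things in turn: (i) $f_{1,n}$ and $f_{2,n}$ are $\SL_2(\bZ)$-invariant modulo $p^n$; (ii) the sub-ring $\bZ/p^n[f_{1,n}, f_{2,n}]$ is free over $\bZ/p^n$, hence a primitive invariant sub-module; and (iii) every invariant element of order $p^n$ reduces modulo $p$ to an element of $\bF_p[f_1^{p^{n-1}}, f_2^{p^{n-1}}]$.

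For (i), I would lift Dickson's theorem from $\bF_p$ to $\bZ/p^n$ via a Frobenius-type binomial argument. For any $g \in \SL_2(\bZ)$, Dickson over $\bF_p$ gives $g \cdot f_i = f_i + p\, h_{i,g}$ in $\bZ[X, Y]$. Raising to the $p^{n-1}$-th power,
\[
g \cdot f_{i,n} = (f_i + p\, h_{i,g})^{p^{n-1}} = \sum_{j=0}^{p^{n-1}} \binom{p^{n-1}}{j} p^j\, h_{i,g}^{j}\, f_i^{p^{n-1}-j}.
\]
By Kummer's theorem, $v_p\!\bigl(\binom{p^{n-1}}{j}\bigr) = n-1-v_p(j)$ for $1 \leq j \leq p^{n-1}$, so the $j$-th term has $p$-adic valuation $n - 1 - v_p(j) + j \geq n$ (since $j - v_p(j) \geq 1$), and every $j \geq 1$ term vanishes modulo $p^n$. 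For (ii), Dickson also gives algebraic independence of $f_1, f_2$ over $\bF_p$, hence of $f_{1,n} \bmod p = f_1^{p^{n-1}}$ and $f_{2,n} \bmod p = f_2^{p^{n-1}}$; the monomials $\{f_{1,n}^a f_{2,n}^b\}$ are then $\bF_p$-linearly independent modulo $p$, hence by Nakayama $\bZ/p^n$-linearly independent, making $\bZ/p^n[f_{1,n}, f_{2,n}]$ a free polynomial sub-ring over $\bZ/p^n$.

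For (iii) I would argue by induction on $n$, the base case $n=1$ being Dickson's theorem. For the inductive step, let $P$ be a primitive invariant modulo $p^n$; by Dickson applied to $\bar P$, we have $\bar P = Q(f_1, f_2)$ for some $Q \in \bF_p[u, v]$, and the claim is that $Q \in \bF_p[u^{p^{n-1}}, v^{p^{n-1}}]$. The first idea is to extract differential constraints from the action of $T^p \cdot P(X, Y) = P(X, Y + pX)$ modulo $p^n$. Taylor expanding,
\[
\sum_{k \geq 1} \frac{(pX)^k}{k!}\, \partial_Y^{k} P \equiv 0 \pmod{p^n};
\]
a careful extraction of the lowest-order term modulo successive powers of $p$ yields $\partial_Y P \equiv 0 \pmod{p^{n-1}}$, and by the analogous $S$-conjugated argument $\partial_X P \equiv 0 \pmod{p^{n-1}}$. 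Reducing modulo $p$ gives $\bar P \in \bF_p[X^p, Y^p]$, so $\bar P = \bar R^p$ where $\bar R$ is automatically $\SL_2(\bF_p)$-invariant (unique $p$-th roots in characteristic $p$), hence $\bar R \in \bF_p[f_1, f_2]$, and therefore $\bar P \in \bF_p[f_1^p, f_2^p]$.

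The main obstacle is to bridge the gap between $\bF_p[f_1^p, f_2^p]$ and the stronger conclusion $\bF_p[f_1^{p^{n-1}}, f_2^{p^{n-1}}]$, since the differential analysis above only recovers one Frobenius iterate. To close this gap I expect to iterate: upon writing $\bar P = \bar R^p$ with $\bar R = R'(f_1, f_2)$, find a suitable integral lift $\tilde R \in \bZ/p^{n-1}[X, Y]$ of $\bar R$ that is itself a primitive invariant of order $p^{n-1}$, and then invoke the inductive hypothesis to deduce $\bar R \in \bF_p[f_1^{p^{n-2}}, f_2^{p^{n-2}}]$, giving $\bar P = \bar R^p \in \bF_p[f_1^{p^{n-1}}, f_2^{p^{n-1}}]$. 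The construction of such a lift is delicate and likely uses the cohomological Lemma \ref{lemma-Harder-exact-functor} (whose failure at $p=2,3$ explains the prime restriction). A more combinatorial alternative is to work in the $\epsilon_k$-basis of Proposition \ref{prop1}: $T$-invariance modulo $p^n$ forces the coefficient of $\epsilon_k$ to vanish modulo $p^{n - v_p(k)}$, so a primitive $T$-invariant has nonzero $\epsilon_k$-coefficient modulo $p$ only for $k$ divisible by $p^n$, and combining this with $S$-invariance and the Stirling-number congruences of Corollary \ref{corstir} should pin down $\bar P$ as a polynomial in $\bar f_{1,n}, \bar f_{2,n}$.
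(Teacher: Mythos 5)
Your parts (i) and (ii) are essentially the paper's own argument: the invariance of $f_{1,n}$ and $f_{2,n}$ is proved there by exactly your valuation estimate on $\binom{p^{n-1}}{j}p^j$ (Lemma \ref{lem-binom-valuation}), and algebraic independence is likewise deduced from reduction modulo $p$. A small caveat on your differential step: the claim $\partial_Y P\equiv 0\pmod{p^{n-1}}$ does not follow from the Taylor expansion of $P(X,Y+pX)$, because for $j\geq 2$ the Hasse-derivative terms $p^jX^jD_Y^{(j)}P$ are not controlled by the divisibility of $\partial_Y P$ (e.g.\ $P=Y^p$ has $\partial_YP\equiv 0$ but $D_Y^{(p)}P=1$), so the iteration stalls after the first order. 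Fortunately you only need the mod-$p$ statement to conclude $\bar P\in\bF_p[X^p,Y^p]$, so this does not damage what you actually use.

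The genuine gap is the one you flag yourself: passing from $\bar P=\bar R^{\,p}$ to $\bar P\in\bF_p[f_1^{p^{n-1}},f_2^{p^{n-1}}]$. Your proposed fix --- lift $\bar R$ to a primitive invariant of $\bZ/p^{n-1}[X,Y]$ and apply induction --- assumes precisely the lifting statement that constitutes the entire difficulty: a priori an $\SL_2(\bF_p)$-invariant need not lift to an invariant modulo $p^{n-1}$, and the content of the theorem is that most do not. (Lemma \ref{lemma-Harder-exact-functor} is about exactness of the sheafification functor on $\Gamma\backslash\bH$ and has no bearing on this lifting problem.) The paper closes the gap by making the obstruction computable: the connecting map $\delta_n\colon H^0(G_{n-1},M^{n-1})\to H^1(G_n,M^1)$ of the sequence $0\to M^1\to M^n\to M^{n-1}\to 0$ measures exactly whether an invariant mod $p^{n-1}$ lifts, it is a derivation (Lemma \ref{lem1}), hence vanishes on $p$-th powers, and the long explicit computation of Lemma \ref{lem2} shows that its restriction $r_n$ to $\langle T\rangle$ is \emph{nonzero} on every monomial $f_{1,n-1}^af_{2,n-1}^b$ with $p\nmid\gcd(a,b)$ and on every nontrivial linear combination of such. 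Combined with the inductive description of the primitive invariants at level $n-1$, this forces a liftable primitive invariant to be congruent mod $p$ to a polynomial in $f_{1,n-1}^p=f_{1,n}$ and $f_{2,n-1}^p=f_{2,n}$. Without an argument of this kind (or a proof of your lifting claim), your induction does not close.
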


\begin{proof}
We prove this theorem by induction on $n$. The $n=1$ case is just the theorem of Dickson. Assume 
$n>1$ from now on. 
Note that since the action of $\SL_2(\bZ)$ factor through $\SL_2(\bZ/p^n)$ we are allowed to 
replace it by the latter. Now let $G_n=\SL_2(\bZ/p^n)$, and 
\[
L_n=\langle \begin{pmatrix}1&1\\ 0& 1\end{pmatrix}\rangle \subseteq G_n.
\]
 Let $M^n=\bZ/p^n[X, Y]$, then we have the following 
morphisms of cohomology groups
\[
Res: H^1(G_n, M^n)\rightarrow H^1(L_n, M^n),
\]
\[
Inf: H^1(L_n, M^n)\rightarrow H^1(L_{\infty}, M^n),
\]
which are the restriction and inflation, here $L_{\infty}=\langle T\rangle \subseteq \SL_2(\bZ)$.
Note that we have the following exact sequence
\begin{displaymath}
\xymatrix{
0\ar[r] &M^{1}\ar[r]& M^n\ar[r]& M^{n-1}\ar[r] &0
}
\end{displaymath}
which induces long exact sequence
\begin{displaymath}
\xymatrix{
0 \ar[r]& H^0(G_n, M^{1})\ar[r]& H^0(G_n, M^n)\ar[r]& H^0(G_n, M^{n-1})\ar[r]^{\delta_n}& H^1(G_n, M^{1})
}
\end{displaymath}

We get the  following morphisms 
\[
\delta_n: H^0(G_{n-1}, M^{n-1})\rightarrow H^1(G_{n}, M^{1}), 
\]
\[
r_{n}=Inf\circ Res\circ \delta_{n}: H^0(G_{n-1}, M^{n-1})\rightarrow H^1(L_{\infty}, M^{1}).
\]
The morphism $\delta_n$ admits the following description: let $h\in G_{n}$, then for any $f\in H^0(G_{n-1}, M^{n-1})$, and 
$\tilde{f}\in M^n$ be a lift, then 
\[
\delta_{n}(f)(h)=\frac{h(\tilde{f})-\tilde{f}}{p^{n-1}}, 
\]
here we identify the element $ \varphi\in H^1(G_{n}, M^{1})$ as 
\[
\varphi:G_{n}\rightarrow M^{1}, \quad \varphi(h_1h_2)=\varphi(h_1)+h_1\varphi(h_2).
\]
\begin{lemma}\label{lem1}
The morphisms $\delta_{n}$ and $r_n$ are additive and for $f, g\in H^0(G_{n-1}, M^{n-1})$, 
\[
\delta_{n}(fg)=g\delta_{n}(f)+f\delta_{n}(g),\quad r_{n}(fg)=g r_{n}(f)+fr_{n}(g).
\]
\end{lemma}
\begin{proof}[Proof of Lemma \ref{lem1}]
We only prove this lemma for $\delta_n$( it is similar for $r_n$). In fact, for $h\in G_{n}$, and $\tilde{f}, \tilde{g}\in M^n$ be lifting, 
\[
\delta_{n}(fg)(h)=\frac{h(\tilde{f}\tilde{g})-\tilde{f}\tilde{g}}{p^{n-1}}=h(\tilde{f})\delta_{n}(g)(h)+\tilde{g}\delta_{n}(f)(h), 
\]
by assumption, we know that $f$ mod $p^{n-1}$ lands in $H^0(G_{n-1}, M^{n-1})$, therefore we know that
\[
h(\tilde{f})\equiv f \text{ mod } p^{n-1}.
\]
The additivity is obvious. Hence we finish the proof of the lemma.
\end{proof}

As an outcome, we know that 
\[
\delta_{n}(f^p)=0,\quad \forall f\in H^0(G_{n-1}, M^{n-1}).
\]
By induction, we know that $H^0(G_n, M^{n-1})=H^0(G_{n-1}, M^{n-1})$ contains a primitive polynomial ring 
generated by 
\[
f_{1, n-1}=(X^pY-XY^p)^{p^{n-2}}, \quad f_{2, n-1}=(\frac{X^{p^2-1}-Y^{p^2-1}}{X^{p-1}-Y^{p-1}})^{p^{n-2}}.
\]
We pick a lift of $f_{1, n-1}$ and $f_{2,n-1}$ to $M_n$
\[
\tilde{f}_{1, n-1}=(X^pY-XY^p)^{p^{n-2}}, \quad \tilde{f}_{2, n-1}=(\frac{X^{p^2-1}-Y^{p^2-1}}{X^{p-1}-Y^{p-1}})^{p^{n-2}}.
\]
We have the following 

\begin{lemma}\label{lem2}
Let $p>3$. Let $f\in  H^0(G_{n-1}, M^{n-1})$ be a polynomial such that $f=f_{1, n-1}^af_{2, n-1}^b$ with $p\nmid (a, b)$, 
here $(a, b)$ denotes the gcd of $a$ and $b$. Then 
we have $r_n(f)$ is non-trivial in $H^1(L_{\infty}, M^{1})$. More generally, 
let $f=\sum_i c_i f_{1, n-1}^{a_i}f_{2, n-1}^{b_i}$ such that $p\nmid (a_i, b_i)$, then 
$r_n(f)=0$ implies $c_i=0$ in $\bF_p$.
\end{lemma}
We postpone the proof of this lemma to the end of the section.
Assuming this lemma, we still need to show that 
$f_{1, n}$ and $f_{2, n}$ lie in $H^0(G_n,M^{n})$(Note that both of them are of order $p^n$). For $p>2$, the invariance under $S$ is obvious. 
As for the action of $R$, for $p>3$, 
\begin{align*}
    R(f_{1, n})&=((X+Y)^p(-X)-(X+Y)(-X)^p)^{p^{n-1}}\\
               &=(YX^p-XY^p+p(\cdots))^{p^{n-1}}\\
               &=(X^pY-XY^p)^{p^{n-1}}+p^n(\cdots)
\end{align*}
the second term vanishes in $M^n$ by applying lemma \ref{lem-binom-valuation}.
And
\begin{align*}
    R(f_{2, n})&=(\frac{(X+Y)^{p^2-1}-(-X)^{p^2-1}}{(X+Y)^{p-1}-(-X)^{p-1}})^{p^{n-1}}\\
               &=((X+Y)^{p(p-1)}+(X+Y)^{(p-1)(p-1)}X^{(p-1)}+\cdots+X^{p(p-1)})^{p^{n-1}}.
\end{align*}
To apply lemma \ref{lem-binom-valuation}, it remains to see that 
\begin{align*}
&(X+Y)^{p(p-1)}+(X+Y)^{(p-1)(p-1)}X^{(p-1)}+\cdots+X^{p(p-1)}\\
&\equiv Y^{p(p-1)}+Y^{(p-1)(p-1)}X^{(p-1)}+\cdots+X^{p(p-1)}\text{ mod } p.
\end{align*}
But this follows from the fact that $f_{2, 1}$ is invariant under $\SL_{2}(\bF_p)$.
Finally, we need to show the algebraic independence of $f_{1, n}$ and $f_{2, n}$. 
This follows from the fact that their images under the canonical projection into
$M^1$ are algebraically independent, since they are $p^{n-1}$-powers of the 
algebraically independent elements $f_{1, 1}$
and $f_{1, 2}$. We are done.
\end{proof}

With the above theorem, one can proceed to compute the torsions in 
$H^1(X, \Tilde{\cM}_{n})$. 

\begin{lemma}(cf. \cite{Har} \S 2.1)\label{lemma-Harder-exact-functor}
Let $A$ be a ring such that $2, 3$ are inverted. Then functor from the category of 
$\SL_2(\bZ)$-modules with coefficients in $A$ to the category of abelian sheaves on $X$ with coefficients in $A$  is exact. 
\end{lemma}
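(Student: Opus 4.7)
The plan is to reduce exactness of this functor to a statement about stalks, and then to invoke the classical averaging trick using that $2$ and $3$ are invertible in $A$.

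First I would recall the construction of the sheaf $\tilde{M}$ associated with an $\SL_2(\bZ)$-module $M$. Let $\pi: \bH \to X$ denote the quotient map. One defines $\tilde{M}$ as the sheaf on $X$ whose sections over an open set $U \subseteq X$ are the $\Gamma$-equivariant continuous (locally constant) maps $\pi^{-1}(U) \to M$. This construction is functorial in $M$, and it is clearly left exact because taking equivariant sections is left exact; the entire content of the lemma is the right exactness (i.e.\ surjectivity on stalks).

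Next I would identify the stalks. Exactness of a sequence of sheaves is checked stalk by stalk. For a point $x = \pi(z) \in X$ with $z \in \bH$, the stalk $\tilde{M}_x$ is canonically isomorphic to the submodule $M^{\Gamma_z}$ of invariants under the stabilizer $\Gamma_z \subseteq \Gamma$ of the point $z$. So the lemma reduces to the assertion that, for every $z \in \bH$, the functor $M \mapsto M^{\Gamma_z}$ is exact on the category of $A[\Gamma_z]$-modules.

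The key input is that the stabilizers $\Gamma_z$ in $\Gamma = \SL_2(\bZ)/\{\pm \Id\}$ acting on $\bH$ are very restricted: they are trivial at a generic point, cyclic of order $2$ at the $\Gamma$-orbit of $i$, and cyclic of order $3$ at the $\Gamma$-orbit of $\rho = e^{2\pi i/3}$. In particular, $|\Gamma_z| \in \{1,2,3\}$ is a unit in $A$ by hypothesis. Therefore the usual averaging idempotent
\[
e_{\Gamma_z} = \frac{1}{|\Gamma_z|}\sum_{g \in \Gamma_z} g \;\in\; A[\Gamma_z]
\]
is well defined, and realizes $M^{\Gamma_z} = e_{\Gamma_z} M$ as a direct summand of $M$ functorially in $M$. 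The functor $M \mapsto M^{\Gamma_z}$ is then split exact on each category of $A[\Gamma_z]$-modules, which gives the required stalkwise exactness and hence exactness of the functor $M \mapsto \tilde{M}$.

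There is no real obstacle beyond being careful with the $\pm\Id$ bookkeeping: one works with $\Gamma = \SL_2(\bZ)/\{\pm\Id\}$, so that $-\Id$ acts trivially on $\bH$ and the stabilizers really are the small cyclic groups above. For $p = 2$ or $p = 3$ the averaging idempotent does not exist and the stalks at the elliptic points can contribute genuine higher cohomology, which is precisely why these primes are excluded in the footnote and in the present lemma.
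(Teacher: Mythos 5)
Your argument is correct and complete: the paper does not prove this lemma itself but cites it from Harder's notes, and your reduction to stalks together with the observation that the stabilizers in $\PSL_2(\bZ)$ have order $1$, $2$ or $3$ (so that the averaging idempotent $e_{\Gamma_z}$ exists once $2$ and $3$ are inverted) is exactly the standard argument given in the cited reference.
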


Therefore we have the following short exact sequence of sheaves
\begin{displaymath}
\xymatrix{
0\ar[r]&\Tilde{\cM}_n\ar[r]^{p^{\delta}} &\Tilde{\cM}_n\ar[r]&\Tilde{\cM}_n\otimes \bZ/p^{\delta}\ar[r]&0,
}
\end{displaymath}
which induces a long exact sequence
\begin{displaymath}
\xymatrix{
0\ar[r]&H^0(X, \Tilde{\cM}_n)\ar[r]&H^0(X, \Tilde{\cM}_n)\ar[r]& H^0(X, \Tilde{\cM}_n\otimes \bZ/p^{\delta})\ar[r]^{\hspace{1.5cm}\alpha}&\\
\ar[r]^{\hspace{-0.8cm}\alpha}& H^1(X, \Tilde{\cM}_n)\ar[r]^{p^{\delta}}&H^1(X, \Tilde{\cM}_n)\ar[r]&H^1(X, \Tilde{\cM}_n\otimes \bZ/p^{\delta})&
}
\end{displaymath}

\begin{cor}\label{torh1}
Let $p>3$. Assume that $n>0$. We have an isomorphism
\[
\alpha: H^0(X, \Tilde{\cM}_n\otimes \bZ/p^{\delta})_{\prim}\rightarrow H^1(X, \Tilde{\cM}_n)[p^{\delta}]_{\prim}, 
\]
where the latter denotes the primitive $p^{\delta}$-torsions in $ H^1(X, \Tilde{\cM}_n)$
which is induced through the morphism $\alpha$.
Moreover, we have
\[
H^0(X, \Tilde{\cM}_n\otimes \bZ/p^{\delta})=(M_{n}^{\delta})^{\Gamma}, 
\]
where $M^{\delta}=\bZ/p^{\delta}[X, Y]$.
\end{cor}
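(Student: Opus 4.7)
The plan is to read both assertions directly off the displayed long exact sequence, the crucial input being the vanishing of $H^0(X, \tilde{\cM}_n)$ for positive $n$.

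For part (2), I would use the general principle that, because $\bH$ is contractible, sections on $X = \Gamma\backslash \bH$ of a sheaf coming from a $\Gamma$-module $M$ coincide with the $\Gamma$-invariants $M^{\Gamma}$. Applied to $\cM_n \otimes \bZ/p^{\delta}$, which is precisely the degree-$n$ piece $M_n^{\delta}$ of the graded ring $M^{\delta} = \bZ/p^{\delta}[X, Y]$, this yields $H^0(X, \tilde{\cM}_n \otimes \bZ/p^{\delta}) = (M_n^{\delta})^{\Gamma}$.

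For the isomorphism $\alpha$, the same identification gives $H^0(X, \tilde{\cM}_n) = \cM_n^{\Gamma}$. For $n > 0$ this vanishes: any homogeneous polynomial in $\bZ[X, Y]$ fixed by $\SL_2(\bZ)$ is fixed by the Zariski closure $\SL_2(\bQ)$, and the $\SL_2(\bQ)$-invariants in $\bQ[X, Y]$ are the constants. Feeding $H^0(X, \tilde{\cM}_n) = 0$ into the displayed long exact sequence, one obtains
\[
0 \to H^0(X, \tilde{\cM}_n \otimes \bZ/p^{\delta}) \xrightarrow{\alpha} H^1(X, \tilde{\cM}_n) \xrightarrow{p^{\delta}} H^1(X, \tilde{\cM}_n),
\]
so $\alpha$ is injective and its image equals the $p^{\delta}$-torsion $H^1(X, \tilde{\cM}_n)[p^{\delta}]$. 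Consequently $\alpha$ is a $\bZ/p^{\delta}$-linear isomorphism
\[
\alpha: (M_n^{\delta})^{\Gamma} \xrightarrow{\;\sim\;} H^1(X, \tilde{\cM}_n)[p^{\delta}].
\]

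It remains to observe that any injective $\bZ/p^{\delta}$-linear map preserves the order of every element, since $p^{k}\alpha(x) = \alpha(p^{k}x)$. Hence $\alpha$ restricts to a bijection between elements of order exactly $p^{\delta}$ on the two sides and transports any maximal free $\bZ/p^{\delta}$-submodule of the source onto a maximal free $\bZ/p^{\delta}$-submodule of the target; this is precisely the asserted isomorphism on primitive parts in the sense of the definition-plus-remark preceding the corollary. The only point of substance is the vanishing $\cM_n^{\Gamma} = 0$ for $n > 0$; the remainder is a formal consequence of the long exact sequence together with order preservation under injections of $\bZ/p^{\delta}$-modules.
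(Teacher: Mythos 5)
Your proposal is correct and follows essentially the same route as the paper: feed the vanishing $H^0(X,\Tilde{\cM}_n)=0$ for $n>0$ into the long exact sequence to get injectivity of $\alpha$ with image the $p^{\delta}$-torsion, then pass to primitive parts. The only difference is that you supply justifications the paper leaves implicit (the Zariski-density argument for $\cM_n^{\Gamma}=0$ and the identification $H^0(X,\Tilde{\cM}_n\otimes\bZ/p^{\delta})=(M_n^{\delta})^{\Gamma}$), which is harmless.
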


\begin{proof}
We know that for $n>0$, 
\[
H^0(X, \Tilde{\cM}_n)=0, 
\]
this proves the injectivity of $\alpha$. On the other hand, any primitive $p^{\delta}$-torsion
is killed by multiplication by $p^{\delta}$, hence must come from $H^0(X, \Tilde{\cM}_n\otimes \bZ/p^{\delta})$ in the long exact sequence above.
\end{proof}
We finish this section by supplying a proof of Lemma \ref{lem2}.

\begin{proof}[Proof of Lemma \ref{lem2}]
Let $M^1=\oplus_{d=0}^{\infty} M^1_d$, where $M^1_d$ is the subspace of homogeneous polynomials of degree $d$. First of all, we know from Proposition \ref{prop1} that 
\[
H^1(L_{\infty}, M^{1}_{d})=\bZ/p\bZ\epsilon_{d}^d\oplus \bigoplus_{1\leq k\leq d, p\mid k}\bZ/p\bZ \epsilon_{k-1}^d.
\]
Note that here we use the superscript to distinguish the generators for different degrees.
We first compute $r_n(f_{1, n-1})$ and $r_n(f_{2, n-1})$. For $p>3$, 
\begin{align}
r_n(f_{1, n-1})&=\frac{T(\tilde{f}_{1, n-1})-\tilde{f}_{1, n-1}}{p^{n-1}}\nonumber\\    
               &=\frac{(X^p(X+Y)-X(X+Y)^p)^{p^{n-2}}-(X^pY-XY^p)^{p^{n-2}}}{p^{n-1}}\nonumber\\
               &=\frac{(X^p(X+Y)-X(X^p+Y^p+p(XY^{p-1}+X^2g_1)))^{p^{n-2}}-(X^pY-XY^p)^{p^{n-2}}}{p^{n-1}}\nonumber\\
                &=\frac{(X^pY-XY^p-p(X^2Y^{p-1}+X^3g_1))^{p^{n-2}}-(X^pY-XY^p)^{p^{n-2}}}{p^{n-1}}\nonumber\\
                 &=\frac{(X^pY-XY^p)^{p^{n-2}}-p^{n-1}(X^pY-XY^p)^{p^{n-2}-1}(X^2Y^{p-1}+X^3g_1)}{p^{n-1}}\nonumber\\
                 &-\frac{(X^pY-XY^p)^{p^{n-2}}}{p^{n-1}}\label{*}\\
                 &=-(X^pY-XY^p)^{p^{n-2}-1}(X^2Y^{p-1}+X^3g_1)\nonumber\\
                 &=(-1)^{p^{n-2}}X^{p^{n-2}+1}Y^{p^{n-1}-1}+X^{p^{n-2}+2}h_1\label{**}
\end{align}
We make some remarks concerning the computation. Here $h_1, g_1\in \bZ[X, Y]$, and in the expansion (\ref{*}), we 
ignore the terms divisible by $p^{n}$ since by Lemma \ref{lem-binom-valuation}, we have for $2\leq k\leq p^{n-2}$, 
\[
\val_p(p^k\binom{p^{n-2}}{k})\geq k+n-2-\val_p(k)
\]
This guarantees that for $p$ odd,  we have 
\[
\val_p(p^k\binom{p^{n-2}}{k})\geq n.
\]

Therefore, we have
\[
r_n(f_{1, n-1})=(-1)^{p^{n-2}}\epsilon_{p^{n-1}-1}^{p^{n-2}(p+1)}+\sum_{k<p^{n-1}-1}a_k \epsilon_{k}^{p^{n-2}(p+1)}, 
\]
which is nontrivial in $H^1(L_{\infty}, M^{1})$. Similarly, we have 
\begin{align}
r_n(f_{2, n-1})&=\frac{T(\tilde{f}_{2, n-1})-\tilde{f}_{2, n-1}}{p^{n-1}}\nonumber\\
              &=\frac{(X^{p(p-1)}+X^{(p-1)(p-1)}(X+Y)^{p-1}+\cdots +(X+Y)^{p(p-1)})^{p^{n-2}}}{p^{n-1}}\nonumber\\
              &-\frac{(X^{p(p-1)}+X^{(p-1)(p-1)}Y^{p-1}+\cdots +Y^{p(p-1)})^{p^{n-2}} }{p^{n-1}}\nonumber\\
              &=\frac{(X^{p(p-1)}+X^{(p-1)(p-1)}Y^{p-1}+\cdots+Y^{p(p-1)}+p(p-1)XY^{p(p-1)}+X^2g_2)^{p^{n-2}}}{p^{n-1}}\nonumber\\
              &-\frac{(X^{p(p-1)}+X^{(p-1)(p-1)}Y^{p-1}+\cdots +Y^{p(p-1)})^{p^{n-2}} }{p^{n-1}}\nonumber\\
               &=\frac{(X^{p(p-1)}+X^{(p-1)(p-1)}Y^{p-1}+\cdots+Y^{p(p-1)})^{p^{n-2}}}{p^{n-1}}\nonumber\\
               &+\frac{p^{n-1}(p-1)(X^{p(p-1)}+X^{(p-1)(p-1)}Y^{p-1}+\cdots+Y^{p(p-1)})^{p^{n-2}-1}XY^{p(p-1)}}{p^{n-1}}\nonumber\\
              &-\frac{(X^{p(p-1)}+X^{(p-1)(p-1)}Y^{p-1}+\cdots +Y^{p(p-1)})^{p^{n-2}} }{p^{n-1}}+X^2h_2\label{***}\\
              &=(p-1)XY^{p^{n-1}(p-1)-1}+X^2h_3\nonumber
\end{align}
Here again $h_2,h_3, g_2\in \bZ[X, Y]$, and in the expansion (\ref{***}), we 
ignore the terms divisible by $p^{n}$.
\[
r_n(f_{2, n-1})=(p-1)\epsilon_{p^{n-1}(p-1)-1}^{{p^{n-1}(p-1)}}+\sum_{k<p^{n-1}(p-1)-1}b_k \epsilon_{k}^{{p^{n-1}(p-1)}}, 
\]
which is nontrivial in $H^1(L_{\infty}, M^{1})$. 
We will also need to compute the image of 
$f_{1, n-1}f_{2, n-1}=(X^{p^2}Y-XY^{p^2})^{p^{n-2}}$ under $r_n$, which is
\begin{align}
r_n(f_{1, n-1}f_{2, n-1})&=\frac{T(\tilde{f}_{1, n-1}\tilde{f}_{2, n-1})-\tilde{f}_{1, n-1}\tilde{f}_{2, n-1}}{p^{n-1}}\nonumber\\
   &=\frac{(X^{p^2}(X+Y)-X(X+Y)^{p^2})^{p^{n-2}}-(X^{p^2}Y-XY^{p^2})^{p^{n-2}}}{p^{n-1}}\nonumber\\
               &=\frac{(X^{p^2}(X+Y)-X(X^{p^2}+Y^{p^2}+\sum_{i=1}^{p-1}\binom{p^2}{ip}X^{ip}Y^{p^2-ip}+p^2g_3))^{p^{n-2}}}{p^{n-1}}\nonumber\\
               &-\frac{(X^{p^2}Y-XY^{p^2})^{p^{n-2}}}{p^{n-1}}\nonumber\\
                &=\frac{(X^{p^2}Y-XY^{p^2}-(\sum_{i=1}^{p-1}\binom{p^2}{ip}X^{ip+1}Y^{p^2-ip}+p^2Xg_3))^{p^{n-2}}}{p^{n-1}}\nonumber\\
                &-\frac{(X^{p^2}Y-XY^{p^2})^{p^{n-2}}}{p^{n-1}}\nonumber\\
                 &=\frac{(X^{p^2}Y-XY^{p^2})^{p^{n-2}}-(X^{p^2}Y-XY^{p^2})^{p^{n-2}}}{p^{n-1}}\nonumber\\
                 &-\frac{p^{n-1}(X^{p^2}Y-XY^{p^2})^{p^{n-2}-1}(\sum_{i=1}^{p-1}\frac{1}{p}\binom{p^2}{ip}X^{ip+1}Y^{p^2-ip}+pXg_3)}{p^{n-1}}\nonumber\\
             &=-(X^{p^2}Y-XY^{p^2})^{p^{n-2}-1}(\sum_{i=1}^{p-1}\frac{1}{p-i}\binom{p-1}{i}X^{ip+1}Y^{p^2-ip})\label{****}
\end{align}
Here $g_3\in \bZ[X, Y]$, and in the expansion (\ref{****}), we 
ignore the terms divisible by $p^{n}$ and use the fact that $\frac{1}{p}\binom{p^2}{ip}\equiv \frac{1}{p-i}\binom{p-1}{i}$\text{ mod } $p$. By Corollary \ref{corstir}, we know that the term 
$\sum_{i=2}^{p-1}\frac{1}{p-i}\binom{p-1}{i}X^{ip+1}Y^{p^2-ip}$ vanishes in $H^1(L_{\infty}, M^1)$, which implies that it
lies in the image of $(T-1)$.
Now applying the fact that 
in $M^1$, 
\begin{align}\label{equation-invariant-under-T-p}
(T-1)((X^{p^2}Y-XY^{p^2})^{p^{n-2}-1})=0, 
\end{align}
we know that the term
\[
(X^{p^2}Y-XY^{p^2})^{p^{n-2}-1}(\sum_{i=2}^{p-1}\frac{1}{p-i}\binom{p-1}{i}X^{ip+1}Y^{p^2-ip})
\]
vanishes in $H^1(L_{\infty}, M^1)$. 
Therefore, 
\[
r_n(f_{1, n-1}f_{2, n-1})=(X^{p^2}Y-XY^{p^2})^{p^{n-2}-1}X^{p+1}Y^{p^2-p}+\cdots
\]
Again, Corollary \ref{corstir} tells us that
\[
X^{p+1}Y^{p^2-p}=\epsilon_{p-1}^{p^2+1}=X^{p^2-p+2}Y^{p-1}
\]
in $H^1(L_{\infty}, M^1)$. Hence applying again
equation (\ref{equation-invariant-under-T-p}) allows us to obtain 
\begin{align*}
r_n(f_{1, n-1}f_{2, n-1})&=(X^{p^2}Y-XY^{p^2})^{p^{n-2}-1}X^{p^2-p+2}Y^{p-1}\\
&=(-1)^{p^2-1}\epsilon_{p^n-p^2+p-1}^{p^{n-2}(p^2+1)}+\sum_{k<p^n-p^2+p-1}d_k\epsilon_{k}^{p^{n-2}(p^2+1)}.
\end{align*}

Note that by property of $r_n$, we have 
\[
r_n(f^k)=kf^{k-1}r_n(f).
\]
Therefore, 
\begin{align*}
r_n(f_{1, n-1}^af_{2, n-1}^b)&=af_{1, n-1}^{a-1}f_{2, n-1}^br_n(f_{1, n-1})+bf_{1, n-1}^{a}f_{2, n-1}^{b-1}r_n(f_{2, n-1})\\
                             &=(-1)^{p^{n-2}a}(a-b)\epsilon_{p^{n-1}(a+(p-1)b)-1}^{p^{n-2}(a(p+1)+p(p-1)b)}+\cdots 
\end{align*}
So if $a-b\neq 0$\text{ mod } $p$, we know that $r_n(f_{1, n-1}^af_{2, n-1}^b)$ is non-zero
in $H^1(L_{\infty}, M^1)$. Assume that $p\mid(a-b)$ but $p\nmid (a, b)$. We show that 
$r_n(f_{1, n-1}^af_{2, n-1}^b)$ does not vanish in $H^1(L_{\infty}, M^1)$. We argue under the assumption
\[
a=b+ps, s\geq 0, 
\]
which is similar for the case $b\geq a$. In fact, we have
\begin{align*}
r_n(f_{1, n-1}^{ps}(f_{1, n-1}f_{2, n-1})^b)&=bf_{1, n-1}^{ps}(f_{1, n-1}f_{2, n-1})^{b-1}r_n(f_{1, n-1}f_{2, n-1})\\
&=(-1)^{(a-1)p^{n-2}}b\epsilon_{p^{n-1}(a+(p-1)b)-p^2+p-1}^{p^{n-2}(a(p+1)+p(p-1)b)}+\cdots
\end{align*}
This shows the non-vanishing of $r_n(f_{1, n-1}^af_{2, n-1}^b)$.
Finally, if
\[
\sum_{i=1}^{\ell}r_n(c_i f_{1,n-1}^{a_i}f_{2, n-1}^{b_i})=0
\]
such that
\begin{align}\label{equation-degree-equality}
 d=p^{n-2}(a_i(p+1)+p(p-1)b_i), \ell=1, \cdots, \ell.
\end{align}
Assume first $n>2$, then 
\[
p^{n-1}(a+(p-1)b)-1\not\equiv  p^{n-1}(a+(p-1)b)-p^2+p-1 \mod p^{n-1}.
\]
And for $n=2$, the equality
\[
p^{n-1}(a+(p-1)b)-1\neq p^{n-1}(a+(p-1)b)-p^2+p-1
\]
imply
\[
a_i+(p-1)(b_i-1)=a_j+(p-1)b_j.
\]
But from (\ref{equation-degree-equality}), we get
\[
a_i+(p-1)b_i=a_j+(p-1)b_j.
\]
We deduce from it that 
\[
p-1=0, 
\]
which is absurd.
Therefore we are reduced to following two cases:

(1) We have $a_i-b_i\not\equiv  0\mod p$
for all $i$, but the equations
\begin{align*}
p^{n-1}(a_i+(p-1)b_i)-1&=p^{n-1}(a_j+(p-1)b_j)-1\\
p^{n-2}(a_i(p+1)+p(p-1)b_i)&=p^{n-2}(a_j(p+1)+p(p-1)b_j)
\end{align*}
imply $a_i=a_j, b_i=b_j$. Hence we must have
\[
r_n(c_if_{1,n-1}^{a_i}f_{2, n-1}^{b_i})=0,  i=1, \cdots, \ell.
\]
This shows $c_i\equiv 0\mod p$.

(2)We have $a_i-b_i\equiv 0\mod p$
and $p\nmid b_i$ for all $i$, then 
\begin{align*}
p^{n-1}(a_i+(p-1)b_i)-p^2+p-1&=p^{n-1}(a_j+(p-1)b_j)-p^2+p-1\\
p^{n-2}(a_i(p+1)+p(p-1)b_i)&=p^{n-2}(a_j(p+1)+p(p-1)b_j)
\end{align*}
imply also $a_i=a_j, b_i=b_j$, from which we deduce
that $c_i\equiv 0 \mod p$.

\end{proof}

\section{Torsions in Second Cohomology with Compact Support}\label{sec-tsc}

In this section, we determine the torsions appearingappear in $H^2_c(X, \Tilde{\cM}_n)$.
As in the previous section, we fix a prime
$p>3$.

\begin{definition}
Let $I_{\Gamma}$ be the augmentation ideal of the group algebra
\[
\nu: \bZ[\Gamma]\rightarrow \bZ, \quad \sum_{i} a_i g_i\mapsto \sum_{i} a_i.
\]
\end{definition}

\begin{prop}(cf. \cite{Har11Vol1}, \S 4.8.5)
We have 
\[
H^2_c(X, \Tilde{\cM}_n)=\cM_n/I_{\Gamma}\cM_n.
\]
\end{prop}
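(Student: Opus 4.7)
The plan is to invoke Poincaré--Lefschetz duality on the Borel--Serre compactification $\overline{X}=X\cup\partial X$ and then identify the resulting zeroth homology with the coinvariants $\cM_n/I_\Gamma\cM_n$ via the standard comparison between sheaf (co)homology on $X$ and the group (co)homology of $\Gamma$.

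Concretely, the chain of identifications I would establish is
$$H^{2}_c(X,\tilde{\cM}_n)\;=\;H^{2}(\overline{X},\partial X;\tilde{\cM}_n)\;\cong\;H_{0}(\overline{X};\tilde{\cM}_n)\;=\;H_{0}(\Gamma,\cM_n)\;=\;\cM_n/I_\Gamma\cM_n.$$
The first equality is the very definition of compactly supported cohomology through the Borel--Serre compactification. The second isomorphism is Poincaré--Lefschetz duality for the pair $(\overline{X},\partial X)$: this is legitimate because $\overline{X}$ is a $2$-dimensional oriented (orbifold) manifold-with-boundary, the orientation being inherited from $\mathbb{H}$ since $\Gamma\subset\SL_2(\bR)$ acts by orientation-preserving biholomorphisms. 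For the third equality I would use that $\overline{X}$ deformation retracts onto $X$ and that $\mathbb{H}$ is a contractible space on which $\Gamma$ acts properly discontinuously, so that the zeroth sheaf homology of $X$ with coefficients in $\tilde{\cM}_n$ coincides with the zeroth group homology of $\Gamma$ with coefficients in $\cM_n$. The fourth equality is the tautological presentation $\cM_n\otimes_{\bZ[\Gamma]}\bZ=\cM_n/I_\Gamma\cM_n$ coming from the augmentation sequence $0\to I_\Gamma\to\bZ[\Gamma]\to\bZ\to 0$.

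The main obstacle I expect is the torsion in $\Gamma$: the elliptic elements $S$ of order $2$ and $R$ of order $3$ produce two cone points in $X$, so $\overline{X}$ is only a genuine manifold away from these two orbifold points. Over $\bZ$, the Poincaré--Lefschetz step therefore has to be interpreted in the orbifold sense, or equivalently replaced by Bieri--Eckmann / Borel--Serre duality for the arithmetic group $\Gamma$. Once $2$ and $3$ are inverted --- which is exactly the standing assumption $p>3$ of the paper, and the regime in which Lemma \ref{lemma-Harder-exact-functor} holds --- all orbifold subtleties disappear and every isomorphism above becomes a literal equality. A purely algebraic alternative would be to extract $H^{2}_c$ as the cokernel in the fundamental exact sequence (\ref{eqfun}) and verify the identification $\cM_n/I_\Gamma\cM_n$ by hand using the Bass--Serre decomposition $\Gamma=\langle S\rangle * \langle R\rangle$; this would be more elementary but much less conceptual.
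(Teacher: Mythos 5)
The paper does not actually prove this proposition: it is quoted verbatim from Harder's book (\cite{Har11Vol1}, \S 4.8.5), so there is no internal argument to compare yours against. Your chain of identifications is the standard conceptual route to the statement, and every link in it is correct once $2$ and $3$ are inverted; you also correctly locate the only danger in the elliptic points of orders $2$ and $3$.

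The genuine gap is that what you prove is strictly weaker than what the proposition asserts. The statement is an equality of abelian groups over $\bZ$, and the paper uses it integrally: the examples in Section \ref{sec-cmf} read off $2$- and $3$-torsion in $H^2_c(X,\tilde{\cM}_{10})$, $H^2_c(X,\tilde{\cM}_{22})$, etc., precisely from $\cM_n/I_{\Gamma}\cM_n$. A proof that begins by inverting $2$ and $3$ cannot see these classes. Integrally, both of your middle steps break down, not just the duality: the underlying topological space of $\overline{X}$ is a disc, so the na\"ive $\pi_1(\overline{X})$ is trivial (the orbifold fundamental group $\Gamma\simeq\bZ/2\ast\bZ/3$ is killed by collapsing the two cone points), and $\tilde{\cM}_n$ is not a local system at those points, so neither Poincar\'e--Lefschetz duality nor the identification $H_0(\overline{X},\tilde{\cM}_n)=H_0(\Gamma,\cM_n)$ is available as stated. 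The proposition is nonetheless true over $\bZ$, and the route that actually delivers it is the one you relegate to a closing remark: compute $H^2_c$ as the cokernel of $H^1(X,\tilde{\cM}_n)\rightarrow H^1(\partial X,\tilde{\cM}_n)$ in the fundamental exact sequence (\ref{eqfun}), using the explicit presentations $H^1(X,\tilde{\cM}_n)\simeq \cM_n/(\cM_n^{\langle R\rangle}+\cM_n^{\langle S\rangle})$ and $H^1(\partial X,\tilde{\cM}_n)\simeq \cM_n/(\Id-T)\cM_n$ together with the fact that $S$ and $R$ generate $\Gamma$, so that $I_{\Gamma}\cM_n=(\Id-S)\cM_n+(\Id-R)\cM_n$; equivalently, run an equivariant cellular computation on a $\Gamma$-triangulation of the bordified $\overline{\bH}$ and note that in top degree with compact supports the finite stabilizers preserve orientation, so no orbifold correction terms appear. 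That is in substance Harder's argument, and it is what you would need to write out to prove the statement as it stands.
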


Therefore we need to compute the coinvariants of 
$\cM_n \otimes \bZ/p^{\delta}$ under the natural action of 
$\SL_2(\bZ)$. 

We follow the strategy of \cite{Lew17}, where the authors treat the 
case of $\GL_2(\bF_p^r)$ acting on $\bF_{p^r}[X, Y]$. We remark that though the strategy is the
same, their method does not yield the case $\SL_2(\bF_p)$ due to the lack of construction of
some auxiliary linear functions. Instead, our study
of invariants  in the divided power rings gives
naturally such linear functions. 

\begin{definition}
Let $G$ be a group acting on a module $M$. Then we let 
\[
M_G=M/I_GM
\]
be the space of coinvariants of $G$. In case $G_{\delta}=SL_2(\bZ/p^{\delta})$ and $M^{\delta}=\bZ/p^{\delta}[X, Y]$, let
\[
\Hilb(M_{G_{\delta}}^{\delta}, t)=\sum_{d\geq 0}\rank_{\bZ/p^{\delta}}(M^{\delta}_{G_{\delta}})_{d}t^d
\]
be the Hilbert series of $M^{\delta}_{G_{\delta}}$, where $(M^{\delta}_{G_{\delta}})_d$ be the degree $d$ part of $M^{\delta}_{G_{\delta}}$.
\end{definition}

\remk Although the module $M^{\delta}_{G_{\delta}}$ is not free over
the ring $\bZ/p^{\delta}$, by elementary 
divisor theorem it still makes sense to
speak about the rank of the free part(in the decomposition).

Before we state and prove the main result, we recall some preliminary results on divided power rings, for details, see \cite{Ak82}.
\begin{def-prop}
Set $V_{\delta}=(\bZ/p^{\delta})^2$. Then regarded as a Hopf algebra, the algebra $M^{\delta}=\bZ/p^{\delta}[X, Y]=\Sym(V_{\delta}^{*})$ admits a (restricted) dual Hopf algebra
\[
D(V_{\delta})=\bZ/p^{\delta}[\xi_1, \xi_2],
\]
where $D(V_{\delta})_d=(M^{\delta}_{d})^*$, with $\xi_1$ dual to $X$ and
$\xi_2$ dual to $Y$. Moreover,  $D(V_{\delta})$ carries a divided power structure satisfying
\[
\xi_i^{(m)}\xi_i^{(n)}=\binom{m+n}{n}\xi_i^{(m+n)}, \quad \text{ for } i=1, 2.
\]
\end{def-prop}

\begin{prop}
The divided power ring $D(V_{\delta})$ admits an action of $G_{\delta}$
by
\[
\begin{pmatrix} 
a & b \\
c & d
\end{pmatrix}.f(\xi_1, \xi_2)=f(a\xi_1+b\xi_2, c\xi_1+d\xi_2)
\]
satisfying
\[
\langle gf, h\rangle=\langle f, gh\rangle, \quad \forall f\in D(V_{\delta})_d, h\in M^{\delta}_d, g\in G^{\delta}, 
\]
where $\langle , \rangle: D(V)_d\times M^{\delta}_d\rightarrow \bZ/p^{\delta}$ being 
the natural pairing.
\end{prop}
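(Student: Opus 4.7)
The proposition has two components: that the formula $g \cdot f(\xi_1,\xi_2) = f(a\xi_1+b\xi_2, c\xi_1+d\xi_2)$ really defines a group action on $D(V_\delta)$, and that this action is adjoint to the $G_\delta$-action on $M^\delta$ under the natural pairing. My plan is to reduce both claims to degree one and then propagate them through the Hopf / divided power structure.

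For the first claim, I will appeal to the universal property of the divided power algebra $D(V_\delta) = \Gamma(V_\delta)$: any $\bZ/p^\delta$-linear endomorphism of the degree-one part $V_\delta = \bZ/p^\delta\,\xi_1 \oplus \bZ/p^\delta\,\xi_2$ extends uniquely to a divided power algebra endomorphism of $D(V_\delta)$, and the formula in the statement is precisely this extension for the left-multiplication action of $g$ on the column $(\xi_1,\xi_2)^T$. Group multiplicativity $(g_1g_2)f = g_1(g_2 f)$ then reduces to matrix multiplication on $V_\delta$ by uniqueness, and grading preservation is automatic since the generators are sent to degree-one elements.

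For the second claim, I will first compute directly that $\langle g\xi_i, X_j\rangle = \langle \xi_i, gX_j\rangle = g_{ij}$ (the $(i,j)$-entry of $g$), using $g\xi_1 = a\xi_1 + b\xi_2$, $gX = aX + cY$, and similarly for $\xi_2, Y$; this establishes adjunction on the degree-one generators. To extend to all degrees, I will use the fact that $M^\delta = \Sym(V_\delta^*)$ and $D(V_\delta) = \Gamma(V_\delta)$ are mutually dual Hopf algebras, in the sense that multiplication in one is adjoint to comultiplication in the other under the natural pairing. Since $g$ acts on $M^\delta$ by ring automorphisms and on $D(V_\delta)$ by divided power ring automorphisms, both actions commute with the Hopf-structure maps. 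Pairing $\xi_1^{(a)}\xi_2^{(b)}$ with $X^m Y^n$ via iterated multiplication on one side and iterated comultiplication on the other, a routine induction on the total degree then transports degree-one adjunction to all degrees simultaneously.

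The main obstacle I anticipate is a clean verification that the naive substitution $\xi_i \mapsto a_{i1}\xi_1 + a_{i2}\xi_2$ really defines a divided power algebra homomorphism; in particular, that the expansion $(a\xi_1+b\xi_2)^{(m)} = \sum_{i+j=m} a^i b^j \,\xi_1^{(i)}\xi_2^{(j)}$ is compatible with the structural relations $\xi_i^{(m)}\xi_i^{(n)} = \binom{m+n}{n}\xi_i^{(m+n)}$ and that composition of such substitutions matches matrix multiplication at the level of divided powers. This is a standard but slightly finicky point in divided power theory, and I would quote \cite{Ak82} for the universal extension property rather than verify it from scratch. Once this is granted, the Hopf duality argument is purely formal and the proposition follows.
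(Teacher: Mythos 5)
Your proposal is correct in substance but takes a genuinely different route from the paper. The paper proves only the adjunction identity, and does so by direct computation: it evaluates both sides of $\langle gf,h\rangle=\langle f,gh\rangle$ on the monomial bases $\xi_1^{(m)}\xi_2^{(n)}$ and $X^rY^s$ for the two group generators $g=S$ and $g=T$, finding $(-1)^m\delta_{n,r}\delta_{m,s}$ in the first case and $\binom{m+n-r}{m-r}$ in the second, and leaves both the multiplicativity of the substitution formula and the passage from generators to all of $G_\delta$ implicit. You instead reduce everything to degree one and propagate through the Hopf structure: the universal property of the divided power algebra produces the endomorphism attached to any linear self-map of $V_\delta$, and the duality between multiplication on $\Sym(V_\delta^*)$ and comultiplication on $D(V_\delta)$ transports the (trivially checked) degree-one adjunction to all degrees. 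Your argument treats every $g\in G_\delta$ at once and generalizes immediately to $\GL_n$ over any base, at the price of importing the structure theory from \cite{Ak82}; the paper's computation is self-contained and elementary.

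One step of your sketch needs care to avoid an actual error: the induction must decompose on the symmetric-algebra side, not the divided power side. $M^\delta=\Sym(V_\delta^*)$ is generated in degree one, so every $X^rY^s$ is a product of linear forms, whereas $\xi_i^{(d)}$ is \emph{not} a product of lower divided powers over $\bZ/p^\delta$ once $d\geq p$ --- that indecomposability is the whole point of the divided power structure, so "iterated multiplication" on the $\xi$ side does not reach the basis. The correct chain is $\langle gf,h_1h_2\rangle=\langle \Delta(gf),h_1\otimes h_2\rangle=\langle (g\otimes g)\Delta f,h_1\otimes h_2\rangle=\langle \Delta f,gh_1\otimes gh_2\rangle=\langle f,g(h_1h_2)\rangle$, using that $g$ commutes with the comultiplication of $D(V_\delta)$ and induction on the degrees of $h_1,h_2$. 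A final pedantic remark, which applies equally to the paper's statement: the substitution $\xi\mapsto g\xi$ composes contravariantly, so the displayed formula literally defines an anti-homomorphism (a right action); this is exactly what the transpose of the left action on $M^\delta$ must be and is harmless for the invariant-theoretic applications, but your appeal to uniqueness to conclude $(g_1g_2)f=g_1(g_2f)$ would in fact yield $(g_1g_2)f=g_2(g_1f)$.
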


\begin{proof}
We only need to check that we have
\[
\langle gf, h\rangle=\langle f, gh\rangle
\]
for $f$ (resp. $g$) running through the basis $\{\xi_1^{(m)}\xi_2^{(n)}: m+n=d\}$ (resp. $\{X^{m}Y^{n}: m+n=d\}$). We have
\begin{align*}
\langle S(\xi_1^{(m)}\xi_2^{(n)}), X^rY^s\rangle &=\langle (-1)^m\xi_2^{(m)}\xi_1^{(n)}, X^rY^s\rangle\\
&=(-1)^m\langle\xi_1^{(n)}, X^r\rangle \langle\xi_2^{(m)}, Y^s\rangle\\
&=(-1)^m\delta_{n, r}\delta_{m,s}, 
\end{align*}
and similarly, 
\begin{align*}
\langle \xi_1^{(m)}\xi_2^{(n)}, S(X^rY^s)\rangle&=\langle\xi_1^{(m)}\xi_2^{(n)}, Y^r(-X)^s\rangle\\
&=(-1)^s\langle\xi_1^{(m)}, X^s\rangle\langle\xi_2^{(n)}, Y^r\rangle\\
&=(-1)^m\delta_{n, r}\delta_{m,s}.
\end{align*}
Therefore
\[
\langle S(\xi_1^{(m)}\xi_2^{(n)}), X^rY^s\rangle=\langle\xi_1^{(m)}\xi_2^{(n)}, S(X^rY^s)\rangle.
\]
Also
\begin{align*}
\langle T(\xi_1^{(m)}\xi_2^{(n)}), X^rY^s\rangle&=\langle (\xi_1+\xi_2)^{(m)}\xi_2^{(n)}, X^rY^s\rangle\\
&=\langle\sum_{k=0}^m\xi_1^{(m-k)}\xi_2^{(k)}\xi_2^{(n)}, X^rY^s\rangle\\
&=\sum_{k=0}^{m}\binom{n+k}{k}\langle\xi_1^{(m-k)}\xi_2^{(n+k)}, X^rY^s\rangle\\
&=\sum_{k=0}^{m}\binom{n+k}{k}\delta_{m-k, r}\delta_{n+k, s}\\
&=\binom{n+m-r}{m-r}
\end{align*}
and 
\begin{align*}
\langle\xi_1^{(m)}\xi_2^{(n)}, T(X^rY^s)\rangle&=\langle\xi_1^{(m)}\xi_2^{(n)}, X^r(X+Y)^s\rangle\\
&=\langle\xi_1^{(m)}\xi_2^{(n)},\sum_{k=0}^s \binom{s}{k}X^{r+k}Y^{s-k}\rangle\\
&=\sum_{k=0}^{s}\binom{s}{k}\langle\xi_1^{(m)}\xi_2^{(n)}, X^{r+k}Y^{s-k}\rangle\\
&=\sum_{k=0}^{s}\binom{s}{k}\delta_{m, r+k}\delta_{n, s-k}\\
&=\binom{s}{m-r}\delta_{m-r, s-n}\\
&=\binom{m+n-r}{m-r}
\end{align*}
therefore
\[
\langle T(\xi_1^{(m)}\xi_2^{(n)}), X^rY^s\rangle=\langle \xi_1^{(m)}\xi_2^{(n)}, T(X^rY^s)\rangle.
\]
\end{proof}

\begin{cor}
The pairing $\langle , \rangle$ induces a morphism 
\[
\varphi_{\delta}:D(V_{\delta})^{G_{\delta}}\rightarrow (M^{\delta}_{G_{\delta}})^{*}
\]
which induces an isomorphism 
\[
\varphi_{1}:D(V_{1})^{G_{1}}\rightarrow (M^{1}_{G_{1}})^{*}
\]
\end{cor}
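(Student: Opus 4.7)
The plan is to check well-definedness of $\varphi_\delta$ directly from the adjoint relation of the previous proposition, and then bootstrap to the isomorphism claim in the $\delta=1$ case by exploiting perfectness of the graded pairing over $\bF_p$.

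First I would verify that $\varphi_\delta$ is well-defined. Given $f\in D(V_\delta)^{G_\delta}$, the linear form $\langle f,\cdot\rangle$ annihilates $I_{G_\delta}M^\delta$: for any $g\in G_\delta$ and $h\in M^\delta$ the adjoint identity yields
\[
\langle f,(g-1)h\rangle \;=\;\langle f,gh\rangle-\langle f,h\rangle \;=\;\langle gf,h\rangle-\langle f,h\rangle \;=\;0,
\]
since $gf=f$. Hence $\langle f,\cdot\rangle$ descends to $M^\delta_{G_\delta}$, and linearity in $f$ gives the desired $\bZ/p^\delta$-module map $\varphi_\delta$.

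For the case $\delta=1$ I would argue degree by degree. Both $D(V_1)$ and $M^1$ are graded, the $G_1$-action preserves the grading, and the graded pairing $D(V_1)_d\times M^1_d\to \bF_p$ is perfect by construction (finite-dimensional duality between $\bF_p$-vector spaces of the same dimension). Injectivity of $\varphi_1$ is immediate: if $\varphi_1(f)=0$ with $f=\sum_d f_d$, then $\langle f_d,\cdot\rangle\equiv 0$ on $M^1_d$, so each $f_d$ vanishes by non-degeneracy. For surjectivity, given $\psi\in((M^1_{G_1})_d)^*$, lift to $\tilde\psi\in (M^1_d)^*$ vanishing on $(I_{G_1}M^1)_d$, and use perfectness to write $\tilde\psi=\langle f,\cdot\rangle$ for a unique $f\in D(V_1)_d$. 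The adjoint relation then turns the vanishing $\tilde\psi((g-1)h)=0$ into $\langle(g-1)f,h\rangle=0$ for all $h\in M^1_d$, whence $(g-1)f=0$ by non-degeneracy; so $f\in D(V_1)_d^{G_1}$ and $\varphi_1(f)=\psi$.

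The main and only real point is the adjoint relation, which cleanly converts ``$G$-invariance on one side of the pairing'' into ``vanishing on $(g-1)$-differences on the other side''. Once that bridge is in place, finite-dimensional perfect duality over $\bF_p$ does the rest, and there is no genuine obstacle. I note that the restriction to $\delta=1$ is mostly a matter of convenience for what follows: the same argument applied in each graded piece, which is free of the same rank over $\bZ/p^\delta$ on both sides, would yield isomorphy for every $\delta$, but only the mod-$p$ version is needed for the module-structure results of the next section.
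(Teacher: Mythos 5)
Your proof is correct and takes essentially the same route as the paper's (much terser) argument: well-definedness is exactly the adjoint relation $\langle gf,h\rangle=\langle f,gh\rangle$ applied to $(g-1)h$, and the isomorphism for $\delta=1$ is precisely the paper's one-line justification ``because $\bZ/p$ is a field,'' unpacked as degree-by-degree perfect duality over $\bF_p$. Your closing remark about extending to general $\delta$ is tangential to the statement (the paper deliberately only asserts a morphism there and defers the primitive-part version to a remark), but it does not affect the validity of the proof of the corollary as stated.
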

\begin{proof}
The morphism $\varphi_1$ is an isomorphism due to the fact that $\bZ/p$ is
a field.
\end{proof}
\remk 
Although we do not give a proof, the reader should be aware that we have an isomorphism
\[
\varphi_{\delta}:D(V_{\delta})^{G_{\delta}}_{\prim}\rightarrow (M^{\delta}_{G_{\delta}, \prim})^{*}, 
\]
of course, the module $M^{\delta}_{G_{\delta}, \prim}$ should be appropriately defined.

\begin{prop}\label{divel}
We have a set of elements belonging to $D(V_{1})^{G_1}$
\[
\{\sum_{k=1}^{n-1}\xi_1^{(k(p-1))}\xi_2^{((n-k)(p-1))}:n\geq 2\}.
\]
\end{prop}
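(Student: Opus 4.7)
The plan is to verify $G_1$-invariance of $\eta_n := \sum_{k=1}^{n-1}\xi_1^{(k(p-1))}\xi_2^{((n-k)(p-1))}$ by direct computation on the two generators $S$ and $T$ of $G_1 = \SL_2(\bZ/p)$; this will suffice because $\SL_2(\bZ)=\langle S,T\rangle$ surjects onto $G_1$. For $S$-invariance I would use $S\xi_1=-\xi_2$ and $S\xi_2=\xi_1$, which in the divided-power formalism give $S\bigl(\xi_1^{(a)}\xi_2^{(b)}\bigr)=(-1)^{a}\xi_1^{(b)}\xi_2^{(a)}$. In the summands of $\eta_n$ the exponent $a=k(p-1)$ is even (because $p$ is odd), so the sign disappears and the reindexing $k\mapsto n-k$ returns $\eta_n$.

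For $T$-invariance I would start from $T\xi_1=\xi_1+\xi_2$ and $T\xi_2=\xi_2$, giving
\[
T\bigl(\xi_1^{(a)}\xi_2^{(b)}\bigr)=\sum_{i=0}^{a}\binom{a-i+b}{b}\,\xi_1^{(i)}\xi_2^{(a-i+b)}.
\]
Expanding $T\eta_n$ and collecting with $\ell = n-k$, the coefficient of $\xi_1^{(a')}\xi_2^{(b')}$ in $T\eta_n$ (for $a'+b'=n(p-1)$) becomes $\sum_{\ell=1}^{\min(n-1,\,\lfloor b'/(p-1)\rfloor)}\binom{b'}{\ell(p-1)}$, while the coefficient in $\eta_n$ itself is the indicator $\mathbf{1}\bigl[b'\in(p-1)\{1,\dots,n-1\}\bigr]$. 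Thus $T$-invariance reduces to matching these two expressions modulo $p$ for every admissible $(a',b')$.

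This binomial identity is what I expect to be the chief obstacle, and the plan is to dispatch it by a roots-of-unity filter. Using $\bF_p^{\times}=\mu_{p-1}$,
\[
D(N):=\sum_{\substack{0\leq j\leq N\\(p-1)\mid j}}\binom{N}{j}\equiv \frac{1}{p-1}\sum_{c\in\bF_p^{\times}}(1+c)^{N}=\frac{1}{p-1}\Bigl(\sum_{d\in\bF_p}d^{N}-1\Bigr)\pmod p,
\]
and character orthogonality gives $\sum_{d\in\bF_p^{\times}}d^{N}\equiv -1\pmod p$ if $(p-1)\mid N$ and $0$ otherwise. Hence $D(N)\equiv 2$ exactly when $N$ is a positive multiple of $p-1$, and $D(N)\equiv 1$ otherwise. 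Removing the boundary contributions $j=0$ and (when $(p-1)\mid b'$) $j=b'$ from $D(b')$, or from $D(n(p-1))$ in the edge case $a'=0$, should match the collected coefficient to the indicator above in every case, giving $T\eta_n=\eta_n$ and completing the verification.
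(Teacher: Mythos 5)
Your proposal is correct and follows essentially the same route as the paper: expand $T$ on divided powers, collect the coefficient of each $\xi_1^{(a')}\xi_2^{(b')}$, and reduce to the congruence $\sum_{(p-1)\mid j,\,1\le j\le N-1}\binom{N}{j}\equiv 0 \pmod p$, which both you and the paper prove with the $\sum_{\beta\in\bF_p^{\times}}\beta^{k}$ power-sum filter. The only (harmless) differences are that you also record the easy $S$-invariance check, which the paper leaves implicit, and you handle the boundary terms $j=0,N$ via $D(N)$ rather than via the paper's auxiliary polynomial $h(t)=(1+t)^j-1-t^j$.
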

\begin{proof}
Indeed, 
\begin{align*}
&T(\sum_{k=1}^{n-1}\xi_1^{(k(p-1))}\xi_2^{((n-k)(p-1))})\\
&= \sum_{k=1}^{n-1}(\xi_1+\xi_2)^{(k(p-1))}\xi_2^{((n-k)(p-1))}\\    
&=\sum_{k=1}^{n-1}(\sum_{r=0}^{k(p-1)}\xi_1^{(r)}\xi_2^{(k(p-1)-r)})\xi_2^{((n-k)(p-1))}\\
&=\sum_{k=1}^{n-1}\sum_{r=0}^{k(p-1)}\binom{n(p-1)-r}{(n-k)(p-1)}\xi_1^{(r)}\xi_2^{(n(p-1)-r)}\\
&=\sum_{r=0}^{(n-1)(p-1)-1}\sum_{\frac{r+1}{p-1}\leq k\leq n-1}\binom{n(p-1)-r}{(n-k)(p-1)}\xi_1^{(r)}\xi_2^{(n(p-1)-r)}\\
&+\sum_{k=1}^{n-1}\xi_1^{(k(p-1))}\xi_2^{((n-k)(p-1))}.
\end{align*}
Note that it is enough to show 
\[
\sum_{\frac{r+1}{p-1}\leq k\leq n-1}\binom{n(p-1)-r}{(n-k)(p-1)}\equiv 0 \quad \mod p
\]
or, equivalently,  
\[
\sum_{q-1\mid k,1\leq  k\leq j-1}\binom{j}{k}\equiv 0 \quad \mod p, \forall j>0.
\]
Now we want to use the following trick 
\[
\sum_{\beta\in \bF_p^{\times}}\beta^k=\left\{\begin{array}{lcr}
p-1=-1, &\text{ if } k=\ell(p-1) \text{ for some } \ell\in \bZ,  \\
0, &\text{ otherwise.}
\end{array}\right.
\]
Let 
\[
h(t)=(1+t)^j-1-t^j=\sum_{k=1}^{j-1}\binom{j}{k}t^k.
\]
Then we have 
\begin{align*}
\sum_{q-1\mid k,1\leq  k\leq j-1}\binom{j}{k}&=-\sum_{\beta\in \bF_p^{\times}} h(\beta)\\
&=-\sum_{\beta\in \bF_p} h(\beta)\\
&=-\sum_{\beta\in \bF_p}(\beta+1)^j+\sum_{\beta\in \bF_p}\beta^j+\sum_{\beta\in \bF_p}1\\
&=0.
\end{align*}
\end{proof}

We first study the module structure of $M^1_{G_1}$.
\begin{teo}
We have
\[
\Hilb(M^1_{G_1}, t)=1+\frac{t^{2(p-1)}}{1-t^{p-1}}+\frac{t^{p(p+1)}}{(1-t^{p+1})(1-t^{p(p-1)})}.
\]
\end{teo}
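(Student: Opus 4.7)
The plan is to pass to the graded linear dual. By the isomorphism $\varphi_1: D(V_1)^{G_1} \xrightarrow{\sim} (M^1_{G_1})^{*}$ from the preceding corollary, each graded piece of $D(V_1)^{G_1}$ has the same $\bF_p$-dimension as that of $M^1_{G_1}$, so it suffices to compute $\Hilb(D(V_1)^{G_1}, t)$.

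Next I would exhibit three explicit families of invariants in $D(V_1)^{G_1}$ whose contributions match the three summands of the claim. The constant $1$ gives the first term. The elements $T_n = \sum_{k=1}^{n-1}\xi_1^{(k(p-1))}\xi_2^{((n-k)(p-1))}$ from Proposition \ref{divel}, for $n \geq 2$, lie in $D(V_1)^{G_1}$: their $T$-invariance is proved there, and $S$-invariance follows from the involution $k \mapsto n-k$ together with $(-1)^{(n-k)(p-1)}=1$ for odd $p$. Each $T_n$ contributes one new dimension in degree $n(p-1)$, giving $\sum_{n \geq 2} t^{n(p-1)} = \frac{t^{2(p-1)}}{1-t^{p-1}}$. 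The third summand $\frac{t^{p(p+1)}}{(1-t^{p+1})(1-t^{p(p-1)})}$ should arise from a ``Dickson-dual'' family: an explicit invariant $\Phi$ in degree $p(p+1)$, together with auxiliary invariants $\phi_1, \phi_2$ of degrees $p+1$ and $p(p-1)$ (corresponding to Dickson's $f_1, f_2$ on the dual side), such that $\Phi \cdot \bF_p[\phi_1, \phi_2]$ is a free rank-one submodule of $D(V_1)^{G_1}$.

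The hard part is establishing that these three families exhaust $D(V_1)^{G_1}$ and are linearly independent. My plan is first to compute $D(V_1)^T$ using duality: the equivariance $\langle g f, h\rangle = \langle f, gh\rangle$ combined with Proposition \ref{prop1} yields $\dim_{\bF_p} D(V_1)^T_n = \dim_{\bF_p}(\cM_n \otimes \bF_p)^T = \lfloor n/p \rfloor + 1$, hence $\Hilb(D(V_1)^T, t) = \frac{1}{(1-t)(1-t^p)}$. Within $D(V_1)^T$, $S$-invariance must then be imposed. I would exploit the tensor decomposition $D(V_1) = \bigotimes_{i \geq 0} \bF_p[\xi_1^{(p^i)}, \xi_2^{(p^i)}]/((\xi_j^{(p^i)})^p)$ and the Frobenius-twist compatibility of the $G_1$-action across factors, reducing the enumeration of $G_1$-invariants in each graded piece to a combinatorial count. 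A degree-by-degree comparison with the sum $1 + \frac{t^{2(p-1)}}{1-t^{p-1}} + \frac{t^{p(p+1)}}{(1-t^{p+1})(1-t^{p(p-1)})}$ then closes the argument.
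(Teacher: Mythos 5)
Your reduction to the dual side via $\varphi_1$ is legitimate, and your identification of the middle summand with the invariants $\sum_{k=1}^{n-1}\xi_1^{(k(p-1))}\xi_2^{((n-k)(p-1))}$ of Proposition \ref{divel} is consistent with what happens in the paper. But the mechanism you propose for the third summand cannot work. In the divided power algebra $D(V_1)$ over $\bF_p$, \emph{every} homogeneous element of positive degree is nilpotent: for a monomial, $(\xi_i^{(m)})^p=\tfrac{(pm)!}{(m!)^p}\xi_i^{(pm)}=0$ because the multinomial coefficient $\binom{pm}{m,\dots,m}$ is divisible by $p$ (Kummer), and for a general homogeneous $u$ one has $u^p=\sum_j m_j^p=0$ by the Frobenius in characteristic $p$. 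Hence $\phi_1^{p}\Phi=0$ for any positive-degree invariants $\phi_1,\phi_2,\Phi$, and there is no free rank-one $\bF_p[\phi_1,\phi_2]$-submodule $\Phi\cdot\bF_p[\phi_1,\phi_2]$ of $D(V_1)^{G_1}$. The contribution $t^{p(p+1)}/((1-t^{p+1})(1-t^{p(p-1)}))$ simply cannot be realized by multiplication inside the dual algebra; the free module structure lives on the coinvariant side, where $\bF_p[f_1,f_2]$ genuinely is a polynomial ring. A second structural problem: the tensor factorization $D(V_1)=\bigotimes_i\bF_p[\xi_1^{(p^i)},\xi_2^{(p^i)}]/((\xi_j^{(p^i)})^p)$ holds as algebras but is not $G_1$-equivariant, since $g\cdot\xi_1^{(p^i)}=\sum_k a^kb^{p^i-k}\xi_1^{(k)}\xi_2^{(p^i-k)}$ leaves the $i$-th factor; so the proposed "Frobenius-twist compatibility across factors" is not available. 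Finally, the decisive step — that your families exhaust $D(V_1)^{G_1}$ degree by degree — is announced but not carried out, and it is precisely the hard part.

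For comparison: the paper does not enumerate dual invariants at all. It deduces the Hilbert series from Proposition \ref{prop-divided-power-struture-modp}, a module-theoretic decomposition of $M^1_{G_1}$ over the Dickson ring $M^{1,G_1}=\bF_p[f_1,f_2]$ into a free rank-one piece generated by $X^{p^2-p}Y^{p-1}$ (giving $t^{p^2-1}/((1-t^{p+1})(1-t^{p(p-1)}))$ in the equivalent rewritten form) and $p-1$ cyclic pieces over $\bF_p[f_1,f_2]/(f_1)\cong\bF_p[f_2]$ generated by $1$ and $X^{(k-1)(p-1)}Y^{p-1}$. Generation is proved by Euclidean division together with the relation $X^{p^2-1}=X^{p-1}f_2-f_1^{p-1}$ (Lemma \ref{bbase}); the annihilator statements use explicit computations with $(\Id-S)$ and Corollary \ref{corstir} (Lemma \ref{nonvb}); and the divided power invariants enter only as linear functionals detecting non-vanishing, not as a ring to be computed. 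If you want to salvage your dual approach, you would need to replace "free module over $\bF_p[\phi_1,\phi_2]$" by the dual (cofree/contraction) module structure of $D(V_1)^{G_1}$ over $M^{1,G_1}$, which essentially forces you back to the coinvariant-side argument.
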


\remk 
One easily rewrites the expression in the theorem as follows
\[
\Hilb(M^1_{G_1}, t)=\frac{1+t^{2(p-1)}+t^{3(p-1)}+\cdots +t^{(p-1)^2}}{1-t^{p(p-1)}}+\frac{t^{p^2-1}}{(1-t^{p+1})(1-t^{p(p-1)})}
\]
\remk The analogue theorem holds with $\bF_{p}$ replaced by any $\bF_{p^r}$. Also, 
similar proof can be produced for $\SL_n(\bF_{p^r})(n\geq 3)$(under the condition that we have a good understanding of the boundary cohomology of certain locally symmetric space). But since we are 
only interested in the $n=2$ case for present, we leave the case $n\geq 3$ for future work.

Note that our theorem is a consequence of the proposition below.

\begin{prop}\label{prop-divided-power-struture-modp}
We have the following structure decomposition of $M_{G_1}^1$, 
\[
M_{G_1}^1\simeq M^{1, G_1}\epsilon_{p-1}^{p^2-1}\oplus\bigoplus_{k=2}^{p-1} M^{1, G_1}\epsilon_{p-1}^{k(p-1)}\oplus M^{1, G_1}1
\]
where 
\begin{itemize}
\item[(1)] the module $M^{1, G_1}\epsilon_{p-1}^{p^2-1}$ is free of rank one over $M^{1, G_1}$;
\item[(2)]the module $M^{1, G_1}\epsilon_{p-1}^{k(p-1)}(2\leq k\leq p-1)$ and $M^{1, G_1}1$ are free of rank one over $M^{1, G_1}/(f_1)$.
\end{itemize}
\end{prop}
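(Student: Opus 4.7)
My approach is to work through the duality isomorphism $\varphi_1: D(V_1)^{G_1} \xrightarrow{\sim} (M^1_{G_1})^*$ established in the preceding Corollary, combined with the explicit invariants produced by Proposition \ref{divel}. Specifically, the divided-power invariants $F_n := \sum_{k=1}^{n-1}\xi_1^{(k(p-1))}\xi_2^{((n-k)(p-1))} \in D(V_1)^{G_1}$ of degree $n(p-1)$ (for $n \geq 2$) are chosen precisely to pair non-trivially with the candidate generators $\epsilon_{p-1}^{k(p-1)}$ of $M^1_{G_1}$; together with the constant $1$ and a suitable ``top'' invariant $\Phi \in D(V_1)^{G_1}_{p^2-1}$ (a divided-power analogue of $X^{p^2-p}Y^{p-1}$ dual to $\epsilon_{p-1}^{p^2-1}$), these form the dual skeleton of the claimed decomposition.

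The first technical step is to verify the annihilator relations $f_1 \cdot [1] = 0$ and $f_1 \cdot [\epsilon_{p-1}^{k(p-1)}] = 0$ in $M^1_{G_1}$ for $2 \leq k \leq p-1$. This can be done directly: one checks, for instance, that $(T-\Id)(XY^p) = X^{p+1}$, $(S-\Id)(X^pY) = -(X^pY + XY^p)$, and $(R'-\Id)(X^2 Y^{p-1}) = 2XY^p + Y^{p+1}$ (with $R' = \begin{pmatrix}1 & 0\\ 1 & 1\end{pmatrix}$), from which $f_1 \in I_{G_1}M^1$ follows by elementary manipulation; the analogous relations for $f_1 \cdot \epsilon_{p-1}^{k(p-1)}$ admit similar direct proofs, or alternatively can be established dually by noting that the relevant degree $(k+1)p - (k-1)$ is not of the form $n(p-1)$ for any $2 \leq n \leq p-1$ when $p > 3$, so no invariant of $D(V_1)^{G_1}$ in this degree can pair non-trivially against it.

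Next, I would establish freeness of the top summand: multiplication by $f_1^a f_2^b$ on $[\epsilon_{p-1}^{p^2-1}]$ is non-zero for every $a, b \geq 0$. Dually, this amounts to exhibiting invariants $\Phi_{a,b} \in D(V_1)^{G_1}$ of degree $p^2 - 1 + a(p+1) + b p(p-1)$ pairing non-trivially with $f_1^a f_2^b \cdot \epsilon_{p-1}^{p^2-1}$; these are obtained by combining $\Phi$ with the divided-power analogues of $f_1$ and $f_2$. Finally, I assemble the direct sum decomposition and compute the Hilbert series of the right-hand side: the free summand contributes $t^{p^2-1}/((1-t^{p+1})(1-t^{p(p-1)}))$, each $\epsilon_{p-1}^{k(p-1)}$-summand contributes $t^{k(p-1)}/(1-t^{p(p-1)})$, and $[1]$ contributes $1/(1-t^{p(p-1)})$; after a rational-function identity, the total agrees with the Hilbert series stated in the theorem, showing the proposed decomposition is exhaustive.

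The hardest step will be constructing and analyzing the top invariant $\Phi$ of degree $p^2-1$, together with verifying that it spawns a free rank-one module under contraction by the Dickson generators. This forces a careful combinatorial computation with divided powers and Stirling numbers (in the spirit of Corollary \ref{corstir}), in order to separate $\Phi$ from lower-degree invariants of the form $F_n$ and to control the leading terms of its contractions; this is the technical heart of the argument and is where the restriction $p > 3$ will once again be essential.
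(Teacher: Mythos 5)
Your toolkit is the right one --- pairing against the divided-power invariants of Proposition \ref{divel} is exactly how the paper detects non-vanishing of $\epsilon_{p-1}^{k(p-1)}$ and of $f_2^j\epsilon_{p-1}^{k(p-1)}$ --- but two load-bearing steps of your plan do not go through as written. The exhaustiveness step is circular: you propose to prove that the listed elements generate $M^1_{G_1}$ by matching the Hilbert series of your candidate decomposition against ``the Hilbert series stated in the theorem,'' but that Hilbert series is not an independent input --- in the paper it is \emph{deduced} from this very proposition, and nothing upstream computes $\dim_{\bF_p}(M^1_{G_1})_d$. The same circularity infects your ``dual'' argument that $f_1\cdot\epsilon_{p-1}^{k(p-1)}=0$ because no invariant of $D(V_1)^{G_1}$ lives in degree $(k+1)(p-1)+2$: since $\varphi_1$ identifies $D(V_1)^{G_1}_d$ with the dual of $(M^1_{G_1})_d$, knowing which degrees carry invariants is equivalent to knowing the answer. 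The paper replaces both steps by direct work in the coinvariants: the identity $f_1h_k=(\Id-S)(X^{(k+1)(p-1)+1}Y+X^{k(p-1)+p}Y+X^pY^{k(p-1)+1})$ with $h_k=\sum_{i=1}^{k-1}X^{i(p-1)}Y^{(k-i)(p-1)}=(k-1)\epsilon_{p-1}^{k(p-1)}$ gives the annihilation by $f_1$, and generation is proved by Euclidean division against $f_1,f_2$, the relation $X^{p^2-1}=X^{p-1}f_2-f_1^{p-1}$, and graded Nakayama (Lemma \ref{bbase}). You need an argument of this kind; the Hilbert series cannot be assumed.

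Second, the freeness of $M^{1, G_1}\epsilon_{p-1}^{p^2-1}$ is deferred to an unconstructed family of invariants $\Phi_{a,b}$ in every degree $p^2-1+a(p+1)+bp(p-1)$. That construction is genuinely hard --- the paper only achieves the analogous pairing for very special exponents, via Lemma \ref{lemma-pairing-lifting-multiplication-nonvanishing}, and only because it is needed later for the $\bZ/p^{\delta}$ theory. For the mod-$p$ statement the paper sidesteps the issue entirely: if $f$ annihilated $\epsilon_{p-1}^{p^2-1}$ then $ff_1$ would annihilate every generator, hence all of $M^1_{G_1}$, contradicting $\rank_{S^G}(S_G)=1$ for a finite group acting on an integral domain (\cite{Lew17}, Proposition 5.7). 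Without either that abstract input or the explicit dual invariants, your freeness claim (and with it the directness of the sum, which the paper gets from $N_1\cap N_2\subset\Ann_{M^{1,G_1}}(f_1)\cap N_1=0$) remains unproven.
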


\remk We remark that in $M^{1, G_1}$, 
\[
\epsilon_{p-1}^{k(p-1)}=X^{(k-1)(p-1)}Y^{p-1} (k=2, \cdots p-1), \quad \epsilon_{p-1}^{p^2-1}=X^{p^2-p}Y^{p-1}.
\]

\begin{proof}
We start with the canonical subjective morphism 
\[
\pi: M^1/(1-T)M^1\rightarrow M^{1, G_1}.
\]
First of all, we know from proposition \ref{prop1} that 
\[
M^1_d/(1-T)M^1_d=\bF_p\epsilon_{d}^d\oplus \bigoplus_{1\leq \ell\leq d, p\mid \ell}\bF_p \epsilon_{\ell-1}^d.
\]
Consider the case $d=p-1$, then $\epsilon_{p-1}^{p-1}=Y^{p-1}$. The fact that 
$\pi(\epsilon_{p-1}^{p-1})=0$ follows from 
\[
\pi(Y^{p-1})=X^{p-1}+(S-Id)(X^{p-1})
\]
and $X^{p-1}=\epsilon_{0}^{p-1}=0$ in $M^1/(1-T)M^1$.
We use Proposition \ref{divel} to show that $\epsilon_{p-1}^{k(p-1)}(2\leq k\leq p-1)$ 
does not vanish in $M_{G_1}^1$. In fact, 
by Corollary \ref{corstir}, we have
\[
\epsilon_{p-1}^{k(p-1)}=X^{(k-1)(p-1)}Y^{p-1} 
\]
in $M^1_{k(p-1)}/(1-T)M^1_{k(p-1)}$. Since
\[
\langle \sum_{r=1}^{k-1}\xi_1^{(r(p-1))}\xi_2^{((k-r)(p-1))}, X^{(k-1)(p-1)}Y^{p-1}\rangle=1
\]
we know that $\pi(\epsilon_{p-1}^{k(p-1)})\neq 0$ in $M^1_{G_1}$.

\begin{lemma}\label{nonvb}
The elements $\{\epsilon_{p-1}^{k(p-1)}: 2\leq k\leq p-1\}\cup \{1\}$ are all annihilated by 
$f_1$ but not annihilated by any power of $f_2$.
\end{lemma}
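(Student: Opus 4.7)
The plan is to treat the two assertions separately. For the first assertion ($f_1$ annihilates everything in $M^1_{G_1}$), I would show the stronger statement $f_1 \in I_{G_1}M^1$; this suffices because $I_{G_1}M^1$ is closed under multiplication by $M^{1,G_1}$, since for any invariant $f$ and any $g \in G_1$, $f\cdot (g-1)m = (g-1)(fm)$.

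To show $f_1 = X^pY - XY^p \in I_{G_1}M^1$, first compute $(T-1)(X^pY) = X^p(X+Y) - X^pY = X^{p+1}$, so $X^{p+1} \in I_{G_1}M^1$. Next $(T-1)(X^{p-1}Y^2) = X^{p-1}(X+Y)^2 - X^{p-1}Y^2 = X^{p+1} + 2X^pY$, and subtracting the previous gives $2X^pY \in I_{G_1}M^1$; dividing by $2$ (using $p>2$) yields $X^pY \in I_{G_1}M^1$. Since $I_{G_1}M^1$ is a $G_1$-submodule, applying $S$ gives $XY^p = -S(X^pY) \in I_{G_1}M^1$, hence $f_1 \in I_{G_1}M^1$.

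For the second assertion, I use the non-degenerate pairing between $M^1_{G_1}$ and $D(V_1)^{G_1}$ afforded by the corollary: to show an element of $M^1_{G_1}$ is nonzero it suffices to exhibit an invariant in $D(V_1)^{G_1}$ pairing nontrivially with it. I take the invariants $\phi_n = \sum_{r=1}^{n-1} \xi_1^{(r(p-1))}\xi_2^{((n-r)(p-1))}$ from Proposition \ref{divel}. Writing out $f_2 = \sum_{i=0}^{p} X^{i(p-1)}Y^{(p-i)(p-1)}$, one gets $f_2^m = \sum_{s=0}^{mp} c_{m,s} X^{s(p-1)}Y^{(mp-s)(p-1)}$ with $\sum_{s=0}^{mp} c_{m,s} = (p+1)^m$ (evaluate at $X=Y=1$). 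For $e = X^{(k-1)(p-1)}Y^{p-1}$ with $2\leq k\leq p-1$, multiplication shifts exponents so that $f_2^m e$ has total degree $(k+mp)(p-1)$, and the pairing computation gives
\[
\langle \phi_{k+mp},\, f_2^m e\rangle = \sum_{s=0}^{mp} c_{m,s} = (p+1)^m \equiv 1 \pmod p,
\]
where the range condition $1 \leq s+k-1 \leq k+mp-1$ is automatic precisely because $k\geq 2$. For $e=1$, the two boundary indices $s=0$ and $s=mp$ fall outside the range $r\in\{1,\dots,mp-1\}$, yielding $\langle \phi_{mp}, f_2^m\rangle = (p+1)^m - 2 \equiv -1\pmod p$. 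In both cases the pairing is nonzero in $\mathbb{F}_p$, so $f_2^m$ does not annihilate any of the listed elements.

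The main obstacle is finding the right witness in Step 1 — specifically spotting that $(T-1)(X^{p-1}Y^2)$ gives exactly $X^{p+1}+2X^pY$ so that $X^pY$ can be isolated — and, on the pairing side, being careful about the boundary case $k\geq 2$ that makes the range condition automatic (which is why the element $1$ requires a separate computation and why $\epsilon_{p-1}^{p-1} = Y^{p-1}$, the $k=1$ case, is not on the list — it was shown earlier to vanish in $M^1_{G_1}$).
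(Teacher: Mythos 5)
Your second assertion (non-annihilation by powers of $f_2$) is handled correctly and is in fact exactly the paper's argument: pairing $f_2^m\epsilon$ against the invariants $\sum_r \xi_1^{(r(p-1))}\xi_2^{((n-r)(p-1))}$ of Proposition \ref{divel} and getting $(p+1)^m\equiv 1$ for $\epsilon=X^{(k-1)(p-1)}Y^{p-1}$ with $k\geq 2$, respectively $(p+1)^m-2\equiv -1$ for $\epsilon=1$. The first assertion, however, has a genuine gap. Your reduction is: show $f_1\in I_{G_1}M^1$ and conclude that $f_1$ kills everything in $M^1_{G_1}$ because $I_{G_1}M^1$ is stable under multiplication by invariants. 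But that stability (your identity $f\cdot(g-1)m=(g-1)(fm)$ for invariant $f$) only makes $I_{G_1}M^1$ an $M^{1,G_1}$-submodule of $M^1$; it is \emph{not} an ideal of $M^1$. Knowing $f_1\in I_{G_1}M^1$ therefore only tells you that $f_1\cdot[h]=[hf_1]=0$ when $h$ is itself invariant --- in particular it disposes of the element $1$, but not of the non-invariant monomials $\epsilon_{p-1}^{k(p-1)}=X^{(k-1)(p-1)}Y^{p-1}$. Concretely, for non-invariant $\epsilon$ one has $\epsilon\cdot(g-1)(m)=(g-1)(\epsilon m)-\bigl((g-1)(\epsilon)\bigr)\,g(m)$, and the error term need not lie in $I_{G_1}M^1$. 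Indeed, if your reduction were valid it would prove that $f_1$ annihilates \emph{all} of $M^1_{G_1}$ (which is how you paraphrase the claim), and this is false: part (1) of Proposition \ref{prop-divided-power-struture-modp} says the submodule generated by $\epsilon_{p-1}^{p^2-1}=X^{p^2-p}Y^{p-1}$ is free of rank one over $M^{1,G_1}$, so $f_1\cdot\epsilon_{p-1}^{p^2-1}\neq 0$.

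What the paper does instead is unavoidable extra work for each $k$: it introduces the explicit representative $h_k=\sum_{i=1}^{k-1}X^{i(p-1)}Y^{(k-i)(p-1)}$, verifies by a direct computation that $f_1h_k\in(\Id-S)M^1$ (so $f_1\cdot[h_k]=0$), and then uses Corollary \ref{corstir} to show $[h_k]=(k-1)\,[\epsilon_{p-1}^{k(p-1)}]$ in $M^1_{G_1}$; since $k-1$ is a unit for $2\leq k\leq p-1$, this yields $f_1\cdot[\epsilon_{p-1}^{k(p-1)}]=0$. Your computation showing $X^pY\in I_{G_1}M^1$ (hence $f_1\in I_{G_1}M^1$) is fine and covers the element $1$, but you still need a witness of the form ``$f_1\cdot(\text{representative of }\epsilon_{p-1}^{k(p-1)})\in I_{G_1}M^1$'' for each $k$, which is precisely the content of the $h_k$ computation you have omitted.
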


\begin{proof}[Proof of Lemma \ref{nonvb}]We have  
\[
f_1=X^pY-XY^p=(\Id-S)(X^pY).
\]
For $2\leq k\leq p-1$, consider
\[
h_k=\sum_{i=1}^{k-1}X^{i(p-1)}Y^{(k-i)(p-1)}.
\]
Note that 
\begin{align*}
f_1h_k&=(X^pY-XY^p)(\sum_{i=0}^{k}X^{i(p-1)}Y^{(k-i)(p-1)}-X^{k(p-1)}-Y^{k(p-1)})\\
 &=X^{(k+1)(p-1)+1}Y-XY^{(k+1)(p-1)+1}\\
 &+X^{k(p-1)+p}Y+X^pY^{k(p-1)+1}-X^{k(p-1)+1}Y^p-XY^{k(p-1)+p}\\
 &=(\Id-S)(X^{(k+1)(p-1)+1}Y+X^{k(p-1)+p}Y+X^pY^{k(p-1)+1})
\end{align*}
It remains to see that 
\[
h_k=(k-1)\epsilon_{p-1}^{k(p-1)}\neq 0
\]
in $M^1_{G_1}$. In fact, we have
\[
X^{i(p-1)}Y^{(k-i)(p-1)}=\sum_{r=0}^{(k-i)(p-1)}\stirlingii{(k-i)(p-1)}{r}\epsilon^{k(p-1)}_{r}
\]
only the terms with $r=-1 \mod p$ remains in $M^1/(1-T)M^1$. But applying Corollary
\ref{corstir}, we know that for $1\leq i\leq k-1$, 
\[
X^{i(p-1)}Y^{(k-i)(p-1)}=\epsilon^{k(p-1)}_{p-1}.
\]
Hence we have proved that $f_1$ annihilated all the elements in $\{\epsilon_{p-1}^{k(p-1)}: 2\leq k\leq p-1\}\cup \{1\}$. We still need to show that any power of $f_2$ does not annihilate any element in the 
same set.
We know that 
\[
f_2=X^{p(p-1)}+X^{(p-1)(p-1)}Y^{p-1}+\cdots+Y^{p(p-1)}.
\]
And 
\[
f_2^j=Y^{jp(p-1)}+\sum_{m, n\geq 1}c_{m, n}X^{m(p-1)}Y^{n(p-1)}+X^{jp(p-1)}.
\]
And we note that $\sum_{m, n\geq 1}c_{m, n}=(p+1)^j-2$. Therefore, 
\[
\langle\sum_{r=1}^{jp-1}\xi_1^{(r(p-1))}\xi_2^{((jp-r)(p-1))}, f_2^j\rangle=\sum_{m, n\geq 1}c_{m, n}=(p+1)^j-2\equiv-1 \mod p.
\]
And for $i\geq 1$, 
\[
\langle \sum_{r=1}^{jp+i}\xi_1^{(r(p-1))}\xi_2^{((jp+i+1-r)(p-1))}, f_2^jX^{i(p-1)}Y^{p-1}\rangle=(1+p)^{j}\equiv 1 \mod p.
\]
We finish the proof of the lemma.
\end{proof}

\begin{lemma}\label{vashmon}
Let $d>0$. Then the monomial $X^{d}Y^{p-1}$ vanishes $M^1_{G_1}$ if $(p-1)\nmid d$. 
\end{lemma}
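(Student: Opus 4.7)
The plan is to exploit the action of the split diagonal torus of $\SL_2(\bF_p)$. For each $\lambda\in\bF_p^{\times}$, the matrix $g_\lambda=\begin{pmatrix}\lambda&0\\0&\lambda^{-1}\end{pmatrix}$ lies in $G_1$, and the action formula $g.P(X,Y)=P(aX+cY,bX+dY)$ from the opening of the paper specializes to $g_\lambda.P(X,Y)=P(\lambda X,\lambda^{-1}Y)$. Applying this to our monomial I obtain
$$g_\lambda\cdot(X^dY^{p-1})=\lambda^{d-(p-1)}\,X^dY^{p-1}.$$

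Since $(g-\Id)m\in I_{G_1}M^1$ for every $g\in G_1$ and every $m\in M^1$ by the very definition of the coinvariants, the previous identity yields
$$\bigl(\lambda^{d-(p-1)}-1\bigr)\,X^dY^{p-1}\equiv 0\pmod{I_{G_1}M^1}$$
for every $\lambda\in\bF_p^{\times}$.

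If $(p-1)\nmid d$, then $(p-1)\nmid d-(p-1)$, and since $\bF_p^{\times}$ is cyclic of order $p-1$ I can choose $\lambda\in\bF_p^{\times}$ with $\lambda^{d-(p-1)}\neq 1$. The scalar $\lambda^{d-(p-1)}-1$ is then a unit in $\bF_p$, and multiplying by its inverse forces $X^dY^{p-1}=0$ in $M^1_{G_1}$, as desired.

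There is essentially no obstacle: the argument uses only the defining property of coinvariants together with the elementary observation that a nontrivial power character of the cyclic group $\bF_p^{\times}$ cannot take the value $1$ everywhere. Conceptually, any monomial of nonzero weight under the split torus dies already in the torus-coinvariants, hence a fortiori in the $G_1$-coinvariants.
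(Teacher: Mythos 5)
Your proof is correct and is essentially the same as the paper's: both act by the diagonal matrix $\begin{pmatrix}\lambda&0\\0&\lambda^{-1}\end{pmatrix}$, observe that the monomial scales by $\lambda^{d-(p-1)}=\lambda^{d}$, and pick $\lambda$ with $\lambda^{d}\neq 1$ when $(p-1)\nmid d$. The only cosmetic difference is that the paper simplifies the exponent using $\lambda^{p-1}=1$ before concluding.
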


\begin{proof}[Proof of Lemma \ref{vashmon}]
In fact, let $g=\begin{pmatrix}c&0\\ 0& c^{-1}\end{pmatrix}\in G_1$ with $c\in \bF^{\times}_p$.
Then 
\[
(\Id-g)(X^dY^{p-1})=(1-c^d)X^dY^{p-1}.
\]
If $(p-1)\nmid d$, then picking $c$ with $c^d\neq 1$ shows the result.  
\end{proof}

\begin{lemma}\label{bbase}
The set of elements
\[
\{1, \epsilon_{p-1}^{2(p-1)}, \epsilon_{p-1}^{3(p-1)}, \cdots, \epsilon_{p-1}^{(p-1)^2}\}\cup \{\epsilon_{p-1}^{(p^2-1)}\}
\]
generate $M^1_{G_1}/(f_1, f_2)M^1_{G_1}$ as a vector space over $\bF_p$ and hence generate $M^1_{G_1}$ as module
over $M^{1,G_1}$.
\end{lemma}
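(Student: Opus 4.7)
The plan is to establish the first assertion directly, since the second (generation of $M^1_{G_1}$ over $M^{1,G_1}$) then follows immediately from graded Nakayama's Lemma applied to the Noetherian graded ring $M^{1,G_1} = \bF_p[f_1, f_2]$. Thus one must show that every monomial $X^a Y^b \in M^1$, reduced modulo $I_{G_1} M^1 + (f_1, f_2) M^1$, is an $\bF_p$-linear combination of the listed generators.

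I would first reduce using Proposition \ref{prop1}: in degree $d$ the quotient $M^1_d/(1-T)M^1_d$ is spanned by $\epsilon_d^d$ together with $\{\epsilon_{\ell p - 1}^d : 1 \leq \ell \leq \lfloor d/p \rfloor\}$. Passing further to $M^1_{G_1}$, the diagonal torus in $G_1$ (the matrix $\text{diag}(c, c^{-1})$ multiplies $X^aY^b$ by $c^{a-b}$, as already used in Lemma \ref{vashmon}) kills every monomial with $(p-1) \nmid (a-b)$. Expanding each $\epsilon_{\ell p - 1}^d$ in the monomial basis via Stirling numbers of the second kind and applying Corollary \ref{corstir}, the surviving contributions in $M^1_{G_1}$ concentrate on monomials $X^{k(p-1)} Y^{p-1}$, together with the exceptional monomial $X^{p^2-p}Y^{p-1}$ arising from the fact that $\stirlingii{p^2-1}{kp-1} \equiv 1 \pmod p$ occurs at both $k=1$ and $k=p$.

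To finish, one must reduce the remaining monomials $X^{k(p-1)}Y^{p-1}$ for $k \geq p-1$ modulo $(f_1, f_2)$ to combinations of the listed generators. For this I would exploit the identities from the proof of Lemma \ref{nonvb}: the relations $f_1 \cdot h_k \in I_{G_1} M^1$ with $h_k = \sum_{i=1}^{k-1} X^{i(p-1)} Y^{(k-i)(p-1)}$, together with the fact that multiplication by $f_2$ shifts the $X$-exponent by $p(p-1)$, express high-degree monomials as $\bF_p[f_1, f_2]$-combinations of lower ones. The main obstacle is isolating the exceptional generator $\epsilon_{p-1}^{p^2-1}$ in degree $p^2-1$: the intermediate monomial $X^{(p-1)^2}Y^{p-1}$ of degree $p(p-1)$ is absorbed into $f_2 \cdot 1$ (since $f_2$ contains $X^{(p-1)^2}Y^{p-1}$ among its middle terms), whereas $X^{p^2-p}Y^{p-1}$ lives in a degree unreachable from earlier generators by multiplication by $f_1$ or $f_2$. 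Verifying that no further generators are needed, i.e. that all higher degrees fold into the $\bF_p[f_1, f_2]$-span, is the real combinatorial content of the lemma and will match the Hilbert series of the target decomposition in Proposition \ref{prop-divided-power-struture-modp}.
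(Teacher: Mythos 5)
Your overall architecture matches the paper's: graded Nakayama (the paper cites \cite{Lew17} Proposition B.14) handles the second assertion, the diagonal-torus argument of Lemma \ref{vashmon} kills monomials $X^aY^b$ with $(p-1)\nmid(a-b)$, and the vanishing of the degree-$p(p-1)$ class is obtained exactly as you suggest, by noting that the middle terms of $f_2$ all collapse to $\epsilon_{p-1}^{p(p-1)}$ so that $f_2\cdot 1\equiv (p-1)\epsilon_{p-1}^{p(p-1)}=-\epsilon_{p-1}^{p(p-1)}$.

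However, there is a genuine gap at the central step, and you essentially concede it when you write that verifying ``no further generators are needed \ldots is the real combinatorial content of the lemma.'' That content is precisely what a proof must supply, and the tools you name do not supply it. The relations $f_1h_k\in I_{G_1}M^1$ from Lemma \ref{nonvb} show that $f_1$ \emph{annihilates} the classes $\epsilon_{p-1}^{k(p-1)}$ — they are input for the module structure in Proposition \ref{prop-divided-power-struture-modp}, not a mechanism for expressing high-degree monomials as elements of $(f_1,f_2)M^1_{G_1}$. Likewise ``multiplication by $f_2$ shifts the $X$-exponent by $p(p-1)$'' is only true after further reduction, since $f_2\cdot X^{(k-1)(p-1)}Y^{p-1}$ is a sum of $p+1$ monomials, not a single shifted one. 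The paper's proof instead argues on an arbitrary $h\in M^1_d$ by Euclidean division with respect to $Y$: first by $f_2$ (monic of $Y$-degree $p^2-p$), then by $f_1$, leaving only $aY^d+bX^{d-p+1}Y^{p-1}$ plus terms that die in $M^1/(1-T)M^1$; and the degree bound $d\le p^2-1$ comes from the Dickson relation $X^{p^2-1}=X^{p-1}f_2-f_1^{p-1}$, which shows $X^jY^{p-1}\in(f_1,f_2)M^1$ for $j\ge p^2-1$. This identity is the linchpin of the whole reduction and appears nowhere in your sketch; without it (or an equivalent), your argument does not close. A secondary issue: passing first to $T$-coinvariants and then trying to convert the surviving $\epsilon_{\ell p-1}^d$ for $\ell\ge 2$ back into monomials of the form $X^{k(p-1)}Y^{p-1}$ requires inverting the Stirling expansion, which Corollary \ref{corstir} (stated in the monomial-to-$\epsilon$ direction) does not directly give you; the paper avoids this by doing the polynomial division before passing to coinvariants.
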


\begin{proof}[Proof of Lemma \ref{bbase}]Let $h\in M^1_d=\bF_p[X, Y]_d$.
Then by Euclidean division 
with respect to $Y$, we can write
\[
h=f_2h_1+h_2, \quad h_2=Y^jX^{d-j}+\sum_{\ell<j}c_{\ell} Y^{\ell}X^{d-\ell}, \quad j<p^2-p.
\]
Furthermore,  we have
\[
h_2=f_1h_3+aY^d+bX^{d-p+1}Y^{p-1}+h_4, \quad h_4=\sum_{\ell<p-1}m_{\ell}X^{d-\ell}Y^{\ell}
\]
Therefore in $M^{1}_{G_1}/(f_1, f_2)M^1_{G_1}$, we have
\[
h=h_2=aY^d+bX^{d-p+1}Y^{p-1}+h_4. 
\]
But the term $h_4$ vanishes in $M^1/(1-T)M^1$, we have
\[
h=aY^d+bX^{d-p+1}Y^{p-1}. 
\]
We also know that in $M^1_{G_1}$, 
\[
Y^d=X^d+(S-\Id)(X^d),
\]
while $X^d$ vanishes in $M^1/(1-T)M^1$. Hence
\[
h=bX^{d-p+1}Y^{p-1}
\]
in $M^{1}_{G_1}/(f_1, f_2)M^1_{G_1}$. According to the lemma \ref{vashmon}, this term can only be nonzero when 
\[
(p-1)\mid d.
\]
Assume that $d=(p-1)d_1$.
At this point we invoke the following 
relation between $f_1$ and $f_2$, 
\[
X^{p^2-1}=X^{p-1}f_2-f_1^{p-1}.
\]
Therefore, if $j\geq p^2-1$, then
\[
X^jY^{p-1}=(X^{p-1}f_2-f_1^{p-1})X^{j-p^2+1}Y^{p-1}, 
\]
which vanishes in $M^{1}_{G_1}/(f_1, f_2)M^1_{G_1}$.
Therefore, we can assume $d-p+1< p^2-1$, hence
\[
d_1<p+2, 
\]
then $d=d_1(p-1)\leq p^2-1$. Therefore, we know that the set 
\[
\{1, \epsilon_{p-1}^{2(p-1)}, \cdots, \epsilon_{p-1}^{p(p-1)}, \epsilon_{p-1}^{p^2-1}\}
\]
generates the space $M^{1}_{G_1}/(f_1, f_2)M^1_{G_1}$.
We claim that the element $\epsilon_{p-1}^{p(p-1)}$ also vanishes. 
To show this, consider
\[
f_2=X^{p(p-1)}+(X^{(p-1)(p-1)}Y^{p-1}+\cdots+X^{(p-1)}Y^{(p-1)(p-1)})+Y^{p(p-1)}, 
\]
by Corollary \ref{corstir}, we know that all the terms inside the parenthesis
$$X^{i(p-1)}Y^{(p-i)(p-1)}, \quad 1\leq i\leq p-1$$ are equal to $\epsilon_{p-1}^{p(p-1)}$, which implies 
\[
0=(p-1)\epsilon_{p-1}^{p(p-1)}=-\epsilon_{p-1}^{p(p-1)}.
\]
The second assertion in the lemma follows from the first via the following lemma
\begin{lemma}(cf. \cite{Lew17} Proposition B.14)
Let $R$ be an $\bN$-graded ring. Let $I\subset R_{+}:=\oplus_{d>0}R_d$ be 
a homogeneous ideal of positive degree elements. Let $M$ be a $\bZ$-graded 
$R$-module with nonzero degrees bounded below. 
Then a subset generates $M$ as $R$-module if and only if its images generate
$M/IM$ as $R/I$-module.
\end{lemma}
\end{proof}

Now we can finish the proof of Proposition \ref{prop-divided-power-struture-modp}. Let
$N_1$ and $N_2$ be the $M^{1, G_1}$ sub-modules of $M_{G_1}^1$ generated by
$\epsilon_{p-1}^{p^2-1}$ and $\{\epsilon_{p-1}^{k(p-1)}: 2\leq k\leq  p-1\}\cup\{1\}$. Then lemma \ref{bbase} implies
\[
M^{1, G_1}=N_1+N_2.
\]
And lemma \ref{nonvb} shows that 
\[
N_2=\oplus_{k=2}^{p-1}M^{1, G_1}/(f_1)\epsilon_{p-1}^{k(p-1)}\oplus M^{1, G_1}/(f_1) 1.
\]
We claim that $N_1\simeq M^{1, G_1}$. In fact, if $f\in M^{1, G_1}$ annihilates
$\epsilon_{p-1}^{p^2-1}$. Then $ff_1$ annihilates the whole module
$M^{1, G_1}$, which contradicts the following

\begin{prop}(cf. \cite{Lew17} Proposition 5.7)
Any finite group $G$ of automorphisms of an integral domain $S$ has
$\rank_{S^G}(S_G)=1$. 
\end{prop}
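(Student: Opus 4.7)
The plan is to reduce the problem to Galois theory by passing to fraction fields. Replacing $G$ by its image in $\mathrm{Aut}(S)$, I may assume the action is faithful without changing either $S^G$ or $S_G$.

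First, I would set $K=\mathrm{Frac}(S)$ and $L=\mathrm{Frac}(S^G)$, and verify that $L=K^G$ so that $K/L$ is a finite Galois extension with group $G$. The inclusion $L \subseteq K^G$ is clear. Conversely, for $k=s/t \in K^G$ with $s,t \in S$, multiplying through by the partial product $\prod_{g\ne e}g(t)$ gives $k=\bigl(s\prod_{g\ne e}g(t)\bigr)/N(t)$, with $N(t):=\prod_{g\in G}g(t)\in S^G$; invariance of $k$ forces the numerator to lie in $S \cap K^G = S^G$, exhibiting $k \in L$.

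Next, I would show that the natural map $S \otimes_{S^G} L \to K$ is an isomorphism. The norm trick above, namely $t^{-1}=N(t)^{-1}\prod_{g\ne e}g(t)$, valid for any $t\in S\setminus\{0\}$ (nonzero because $S$ is a domain), shows that every element of $K$ is obtained from $S$ by inverting only elements of $S^G\setminus\{0\}$. Hence $K \cong S \otimes_{S^G} L$ as $S^G$-algebras, and in particular as $L[G]$-modules.

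Then I would invoke the normal basis theorem on the Galois extension $K/L$ to conclude $K \cong L[G]$ as $L[G]$-modules. Since the coinvariants functor commutes with the flat base change $S^G \to L$,
\[
S_G \otimes_{S^G} L \;\cong\; K_G \;\cong\; (L[G])_G \;\cong\; L,
\]
where the last isomorphism is the augmentation $L[G] \to L$. This shows that the generic rank of $S_G$ as an $S^G$-module is $1$, which is precisely the assertion $\mathrm{rank}_{S^G}(S_G)=1$.

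The main obstacle I expect is the verification of $L=K^G$ together with the identification of $G$ with the full Galois group of $K/L$: one must combine the norm trick with faithfulness of the $G$-action on $S$ to rule out a proper containment $G \subsetneq \mathrm{Gal}(K/L)$. Once this identification is in place, the normal basis theorem is the decisive, clean tool that yields the result essentially for free.
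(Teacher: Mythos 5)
Your argument is correct. The paper does not actually prove this proposition -- it is quoted verbatim from \cite{Lew17} (Proposition 5.7) and used as a black box -- so there is no in-paper proof to compare against; your write-up supplies a complete, self-contained justification. The chain of reductions is sound: passing to the faithful image of $G$ changes neither $S^G$ nor $S_G$; the norm trick $t^{-1}=N(t)^{-1}\prod_{g\neq e}g(t)$ with $N(t)=\prod_{g}g(t)\in S^G\setminus\{0\}$ simultaneously shows $\mathrm{Frac}(S^G)=K^G$ and that $S\otimes_{S^G}L$ is the localization of $S$ at $S^G\setminus\{0\}$, hence equals $K$; Artin's theorem (not merely the norm trick) is what identifies $G$ with the full group $\mathrm{Gal}(K/K^G)$, since a finite group acting faithfully on a field $K$ has $[K:K^G]=|G|$ -- faithfulness on $K$ being inherited from faithfulness on $S\subseteq K$. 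The base-change step needs only right-exactness of $-\otimes_{S^G}L$ (one should check, as is implicit in your norm computation, that the image of $I_GS\otimes L$ in $K$ is all of $I_GK$), and then the normal basis theorem gives $K_G\cong(L[G])_G\cong L$, i.e.\ generic rank one. This is essentially the standard proof of the cited result and works in all characteristics, including the modular case relevant to the paper.
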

Finally, we conclude that the sum $N_1+N_2$ is direct since
\[
N_1\cap N_2\subset \Ann_{M^{1, G_1}}(f_1)\cap N_1=0.
\]
\end{proof}

We still need to consider the case of $M^{\delta}_{G_\delta}$ for $\delta>1$.
We have the following

\begin{teo}\label{thm-second-cohomology-torions-}
We have
\[
\Hilb(M_{G_{\delta}}^{\delta}, t)=1+\frac{t^{p^{\delta+1}+p^{\delta-1}-2}}{(1-t^{p^{\delta-1}(p+1)})(1-t^{p^{\delta}(p-1)})}.
\]
\end{teo}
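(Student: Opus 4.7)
The plan is to derive Theorem \ref{thm-second-cohomology-torions-} as a direct consequence of the structural Proposition \ref{prop-second-cohomology-module-structures}, which characterizes the primitive part of $M^\delta_{G_\delta}$. Recall that $\rank_{\bZ/p^\delta}$ counts the number of $\bZ/p^\delta$-summands in the elementary divisor decomposition of the graded pieces, so only elements of exact order $p^\delta$ contribute to the Hilbert series.

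I would first identify the two sources of free $\bZ/p^\delta$-summands. In degree zero, the constant $1$ is $G_\delta$-invariant and of order $p^\delta$, giving a free summand in $(M^\delta_{G_\delta})_0$ and contributing the leading ``$1$'' in the series. In positive degrees, Proposition \ref{prop-second-cohomology-module-structures} asserts that every element of exact order $p^\delta$ can be written as $c f + h$ with $c \in (\bZ/p^\delta)^\times$, $f$ in the cyclic submodule $\bZ/p^\delta[f_{1,\delta}, f_{2,\delta}] \cdot X^{p^{\delta+1}-p^\delta+p^{\delta-1}-1}Y^{p^\delta-1}$, and $p^{\delta-1} h = 0$. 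In particular, the $\bZ/p^\delta$-free part of $M^\delta_{G_\delta}$ in each positive degree is cyclic, generated by a monomial in $f_{1,\delta}, f_{2,\delta}$ applied to the distinguished generator.

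The Hilbert series computation is then immediate. The generator $X^{p^{\delta+1}-p^\delta+p^{\delta-1}-1}Y^{p^\delta-1}$ has total degree $(p^{\delta+1}-p^\delta+p^{\delta-1}-1)+(p^\delta-1) = p^{\delta+1}+p^{\delta-1}-2$, while $f_{1,\delta}$ and $f_{2,\delta}$ have respective degrees $p^{\delta-1}(p+1)$ and $p^\delta(p-1)$. Since the submodule is free of rank one over $\bZ/p^\delta[f_{1,\delta}, f_{2,\delta}]$, its contribution to the series is
\[
\frac{t^{p^{\delta+1}+p^{\delta-1}-2}}{(1-t^{p^{\delta-1}(p+1)})(1-t^{p^\delta(p-1)})},
\]
and adding the degree-zero contribution yields the claimed formula.

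The main obstacle is supplying Proposition \ref{prop-second-cohomology-module-structures} itself. The intended route is an induction on $\delta$ with base case given by Proposition \ref{prop-divided-power-struture-modp}; the inductive step lifts the mod-$p$ generators to $\bZ/p^\delta$ and uses the pairing $\langle\cdot,\cdot\rangle$ between $D(V_\delta)$ and $M^\delta$ to test which of the mod-$p$ summands remain in the free part at higher depth. Concretely, one expects that the summand $M^{1,G_1} \cdot \epsilon_{p-1}^{p^2-1}$ of Proposition \ref{prop-divided-power-struture-modp} lifts to the cyclic free $\bZ/p^\delta[f_{1,\delta}, f_{2,\delta}]$-module generated by $X^{p^{\delta+1}-p^\delta+p^{\delta-1}-1}Y^{p^\delta-1}$, whereas the $(f_1)$-torsion summands lift to modules that are annihilated by $p^{\delta-1}$ and hence do not contribute to $\rank_{\bZ/p^\delta}(M^\delta_{G_\delta})_d$.
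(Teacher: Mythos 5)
Your proposal is correct and follows essentially the same route as the paper: the theorem is deduced directly from Proposition \ref{prop-second-cohomology-module-structures}, with the degree-zero copy of $\bZ/p^{\delta}$ giving the summand $1$ and the free rank-one $\bZ/p^{\delta}[f_{1,\delta},f_{2,\delta}]$-module on the generator of degree $p^{\delta+1}+p^{\delta-1}-2$ giving the rational term, using $\deg f_{1,\delta}=p^{\delta-1}(p+1)$ and $\deg f_{2,\delta}=p^{\delta}(p-1)$. The heavy lifting is indeed in the structural proposition, which the paper establishes via Corollary \ref{cor-second-cohomology-nonvanishing-p-powers-freeness} and Lemma \ref{lemma-primitive-element-non-existence-higer-powers}, along the lines you sketch.
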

Again, this theorem is a consequence of the following

\begin{prop}\label{prop-second-cohomology-module-structures}
Assume $p$>3. Let $M^{2, G_2}_{\prim}=\bZ/p^{\delta}[f_{1, \delta}, f_{2, \delta}]$. Then the sub-module of $M^{\delta}_{G_{\delta}}$
generated by the element $X^{p^{\delta+1}-p^{\delta}+p^{\delta-1}-1}Y^{p^{\delta}-1}$ over
$M^{2, G_2}_{\prim}$ is free of rank one. Moreover, 
the direct sum of this module and a copy of  $\bZ/p^{\delta}$
generated by the degree zero element $1$, which is denoted by
$M^{\delta}_{G_{\delta}}$, forms a primitive
sub-module of $M^{\delta}_{G_{\delta}}$.
\end{prop}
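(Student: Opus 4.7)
The strategy is to reduce modulo $p$ and apply Proposition \ref{prop-divided-power-struture-modp}, then lift back to $\bZ/p^{\delta}$ via a Nakayama-type argument, complemented by a grading decomposition for the direct sum and a pairing argument with $D(V_{\delta})^{G_{\delta}}$ for primitivity. Write $g_\delta := X^{p^{\delta+1}-p^{\delta}+p^{\delta-1}-1}Y^{p^{\delta}-1}$ and let $\pi : M^{\delta}_{G_{\delta}} \twoheadrightarrow M^{\delta}_{G_{\delta}} \otimes \bF_p \cong M^1_{G_1}$ be the mod-$p$ reduction, the last isomorphism holding because $I_{G_{\delta}} M^1 = I_{G_1} M^1$.

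The first step is to compute $\pi(g_\delta)$: expand the monomial in the falling-factorial basis $\epsilon_k$ from Proposition \ref{prop1}, retain only the terms $\epsilon^N_{jp-1}$ which survive modulo $(1-T)$, identify the non-vanishing Stirling coefficients via Corollary \ref{corstir}, and detect the result by pairing it against an invariant element of $D(V_1)^{G_1}$ constructed in the spirit of Proposition \ref{divel}. The aim is to show $\pi(g_\delta)$ is a nonzero element of the free $M^{1,G_1}$-summand $M^{1,G_1}\epsilon_{p-1}^{p^2-1}$ in Proposition \ref{prop-divided-power-struture-modp}. Combined with $f_{i,\delta} \equiv f_{i,1}^{p^{\delta-1}} \pmod{p}$, this gives $\pi(f_{1,\delta}^a f_{2,\delta}^b g_\delta) = f_{1,1}^{a p^{\delta-1}} f_{2,1}^{b p^{\delta-1}} \pi(g_\delta)$, and the freeness clause of Proposition \ref{prop-divided-power-struture-modp}(1) forces these to be $\bF_p$-linearly independent as $(a,b)$ varies. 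A Nakayama/$p$-adic argument --- dividing any relation $\sum c_{a,b} f_{1,\delta}^a f_{2,\delta}^b g_\delta = 0$ by the largest power of $p$ common to all coefficients and then reducing mod $p$ --- then upgrades this mod-$p$ linear independence to freeness over $\bZ/p^{\delta}[f_{1,\delta}, f_{2,\delta}]$ and forces each generator to have order exactly $p^{\delta}$.

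The element $1 \in M^{\delta}_{G_{\delta}}$ clearly has order $p^{\delta}$ since the degree-zero piece of the coinvariants is $\bZ/p^{\delta}$, and the sum $M^{2,G_2}_{\prim} g_\delta + \bZ/p^{\delta}\cdot 1$ is direct by grading ($\deg g_\delta = p^{\delta+1}+p^{\delta-1}-2 > 0$, so the two summands are supported in disjoint degrees). For primitivity (maximality of this free $\bZ/p^{\delta}$-sub-module), I would invoke the pairing $\varphi_\delta: D(V_\delta)^{G_\delta} \to (M^{\delta}_{G_{\delta}})^*$ together with the remark that $\varphi_\delta$ induces an isomorphism on primitive parts: for each basis element of the proposed sum, construct a dual invariant functional in $D(V_\delta)^{G_\delta}$, by lifting the $D(V_1)^{G_1}$-invariants of Proposition \ref{divel} to the divided-power algebra over $\bZ/p^{\delta}$, that pairs to a unit in $\bZ/p^{\delta}$. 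This will show the sum exhausts the primitive part and so is a primitive sub-module.

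The main obstacle is verifying that $\pi(g_\delta)$ lands in the free $M^{1,G_1}$-summand rather than in one of the $f_1$-torsion summands of $M^1_{G_1}$; this requires a delicate Stirling-number computation matched against a pairing with a degree $p^{\delta+1}+p^{\delta-1}-2$ divided-power invariant. A secondary, subtler obstacle is the explicit construction of the lifted dual invariants in $D(V_\delta)^{G_\delta}$: because $D(V_\delta)$ is a divided-power, not a polynomial, algebra, the mod-$p$ invariants of Proposition \ref{divel} do not lift by simple exponentiation, and one needs an analogue of the Dickson-invariant lifting procedure of Section \ref{sec-tfc} (compare Lemma \ref{lem2}) adapted to the divided-power setting.
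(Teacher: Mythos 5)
Your overall architecture is close to the paper's (reduce mod $p$ to Proposition \ref{prop-divided-power-struture-modp}, pair against invariants in the divided power algebra, and recognize that lifting the $D(V_1)^{G_1}$-invariants to $D(V_\delta)^{G_\delta}$ is the hard point), but the route you propose for freeness has a genuine gap. Knowing that $\bar f\,\pi(g_\delta)\neq 0$ in $M^1_{G_1}$ for every $f\not\equiv 0 \bmod p$ only tells you that $fg_\delta\notin pM^{\delta}_{G_{\delta}}$, i.e.\ that $fg_\delta$ has order at least $p$; it does not tell you that $fg_\delta$ has order exactly $p^{\delta}$. Since $M^{\delta}_{G_{\delta}}$ is far from free over $\bZ/p^{\delta}$, an element can be nonzero mod $p$ and still be killed by $p^{r}$ for some $r<\delta$ (compare $(0,1)\in\bZ/p^{\delta}\oplus\bZ/p$). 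Your ``divide the relation by the largest common power of $p$'' step then breaks down: from $p^{r}\sum c'_{a,b}f_{1,\delta}^{a}f_{2,\delta}^{b}g_\delta=0$ you may only conclude that $\sum c'_{a,b}f_{1,\delta}^{a}f_{2,\delta}^{b}g_\delta$ is $p^{r}$-torsion, not that it vanishes or reduces to $0$ mod $p$. This is precisely why the paper does \emph{not} deduce freeness from the mod-$p$ picture alone: it proves (Lemma \ref{lemma-pairing-lifting-multiplication-nonvanishing} and Proposition \ref{prop-primitive-element-stable-multiplication}) that $fU_{\delta}$ is \emph{primitive}, i.e.\ of order exactly $p^{\delta}$, for every $f\not\equiv 0\bmod p$, by pairing against the lifted invariant $w_{\delta+r}\in D(V_{\delta})^{G_{\delta}}$ and checking the pairing is a unit in $\bZ/p^{\delta}$; only then does $\Ann_{M^{\delta,G_{\delta}}_{\prim}}(U_{\delta})=0$ follow (Corollary \ref{cor-second-cohomology-nonvanishing-p-powers-freeness}). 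So the divided-power lifting (Proposition \ref{prop-lift-fixedelement-divpow}), which you relegate to the ``primitivity'' clause, is already indispensable for the freeness clause. A smaller inaccuracy: by Lemma \ref{lemma-primitive-element-special-expression}, $\pi(g_\delta)$ does not lie \emph{in} the free summand $M^{1,G_1}\epsilon_{p-1}^{p^{2}-1}$; it merely has a nonzero projection there (with leading coefficient $f_1^{p^{\delta-1}-1}f_2^{p^{\delta-1}-1}$), together with components in the $f_1$-torsion summands.

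The maximality (``primitivity'') argument is also incomplete as stated. Producing, for each basis element of $M^{2,G_2}_{\prim}g_\delta\oplus\bZ/p^{\delta}\cdot 1$, a dual invariant functional pairing to a unit shows that this submodule is free and splits off as a direct summand, but it does not show it is a \emph{maximal} free $\bZ/p^{\delta}$-submodule: $M^{\delta}_{G_{\delta}}$ could a priori contain further order-$p^{\delta}$ elements independent of the ones you exhibit. The paper closes this by Lemma \ref{lemma-primitive-element-non-existence-higer-powers}, a degree-by-degree classification (via the Stirling-number congruences (\ref{equation-stirling-power-series-p-power}) and Lemma \ref{lemma-vanishing-diagonal-action}) showing that no primitive elements exist in degrees $0<d<p^{\delta+1}+p^{\delta-1}-2$ and that in all higher degrees every primitive element agrees with a unit multiple of $f_{1,\delta}^{a}f_{2,\delta}^{b}U_{\delta}$ up to an element killed by $p^{\delta-1}$. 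Some such exhaustion step (either this classification or an equivalent count of the rank of the free part in each degree) is needed and is absent from your outline.
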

\remk For $\delta=1$, it is covered by previous case.

As before, we need some results
on the divided power rings.

\begin{prop}\label{dbtor}
Let $V_{\infty}=\bZ^2$. Then 
we have 
\[
D(V_{\infty})_d/(\Id-T)=\bZ \nu_0\oplus \bigoplus_{i=1}^{d}\bZ/i\bZ \nu_{i}
\]
with
\[
\nu_{i}=\sum_{j=i}^d\stirlingii{j}{i}\xi_1^{(d-j)}\xi_2^{(j)}.
\]
\end{prop}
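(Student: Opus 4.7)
My plan is to exhibit $\{\nu_0, \ldots, \nu_d\}$ as a $\bZ$-basis of $D(V_\infty)_d$ on which $T-\Id$ acts by the clean shift
\[
(T-\Id)\nu_i = (i+1)\nu_{i+1}\quad (0\leq i<d),\qquad (T-\Id)\nu_d = 0.
\]
Once this is established, the image of $\Id-T$ equals the $\bZ$-span of $\{k\nu_k : 1\leq k\leq d\}$, and the claimed decomposition $D(V_\infty)_d/(\Id-T)D(V_\infty)_d = \bZ\nu_0\oplus \bigoplus_{i=1}^d (\bZ/i\bZ)\nu_i$ will follow by inspection.

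The first step will be to verify that $\{\nu_i\}$ is indeed a $\bZ$-basis. Since $\stirlingii{i}{i}=1$ and $\stirlingii{j}{i}=0$ for $j<i$, each $\nu_i$ has leading term $\xi_1^{(d-i)}\xi_2^{(i)}$ with coefficient $1$, so the change-of-basis matrix from the natural basis $\{\xi_1^{(d-j)}\xi_2^{(j)}\}$ to $\{\nu_i\}$ is upper-unitriangular, hence invertible over $\bZ$.

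The heart of the argument is the identity $(T-\Id)\nu_i=(i+1)\nu_{i+1}$. From $T(\xi_1)=\xi_1+\xi_2$, $T(\xi_2)=\xi_2$, and the divided-power multiplication one finds
\[
T\bigl(\xi_1^{(d-j)}\xi_2^{(j)}\bigr)=\sum_{m=j}^{d}\binom{m}{j}\xi_1^{(d-m)}\xi_2^{(m)};
\]
substituting into the definition of $\nu_i$ and interchanging the order of summation, the coefficient of $\xi_1^{(d-m)}\xi_2^{(m)}$ in $T\nu_i$ becomes $\sum_{j=i}^{m}\binom{m}{j}\stirlingii{j}{i}$. The essential combinatorial input, and in my view the only real technical obstacle, is the classical identity $\sum_{j=i}^{m}\binom{m}{j}\stirlingii{j}{i}=\stirlingii{m+1}{i+1}$ (cf.\ \cite{Sta97}); combined with the defining recursion $\stirlingii{m+1}{i+1}-\stirlingii{m}{i}=(i+1)\stirlingii{m}{i+1}$, this delivers the shift formula at once.

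As a cross-check (and an alternative route that bypasses the Stirling identity entirely), I would note that under the perfect pairing $\langle\xi_1^{(d-j)}\xi_2^{(j)}, X^{d-k}Y^k\rangle=\delta_{jk}$, the orthogonality relation $\sum_\ell s(k,\ell)\stirlingii{\ell}{i}=\delta_{ki}$ between signed Stirling numbers of the first and second kind shows that $\{\nu_i\}$ is the $\bZ$-dual basis of $\{\epsilon_k\}$ from Proposition \ref{prop1}. The adjointness $\langle T\nu_i,\epsilon_k\rangle=\langle\nu_i,T\epsilon_k\rangle$ together with the already known relation $T\epsilon_k=\epsilon_k+k\epsilon_{k-1}$ then transposes directly to $T\nu_i=\nu_i+(i+1)\nu_{i+1}$, yielding the same shift formula formally.
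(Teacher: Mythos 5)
Your proposal is correct and follows essentially the same route as the paper: both reduce to the shift formula $(T-\Id)\nu_i=(i+1)\nu_{i+1}$ and hence to the identity $\sum_{j=i}^{m-1}\binom{m}{j}\stirlingii{j}{i}=(i+1)\stirlingii{m}{i+1}$, which the paper derives by a generating-function computation while you obtain it from the classical identity $\sum_j\binom{m}{j}\stirlingii{j}{i}=\stirlingii{m+1}{i+1}$ together with the standard recurrence. Your duality cross-check (that $\{\nu_i\}$ is the $\bZ$-dual basis of $\{\epsilon_k\}$, so the shift formula is the transpose of $T\epsilon_k=\epsilon_k+k\epsilon_{k-1}$) is a valid and pleasant shortcut not taken in the paper.
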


\remk Note that here by convention, we have
\[
\stirlingii{0}{0}=1, \stirlingii{n}{0}=0, \text{ for } n>0.
\]

\begin{proof}
We want to show that 
\[
(T-\Id)\nu_i=(i+1)\nu_{i+1}
\]
In fact, 
\begin{align*}
(T-\Id)\nu_i&=\sum_{j=i}^d\stirlingii{j}{i}(\xi_1+\xi_2)^{(d-j)}\xi_2^{(j)}-\nu_i \\  
    &=\sum_{j=i}^d\sum_{k=1}^{d-j}\stirlingii{j}{i}\xi_1^{(d-j-k)}\xi_2^{(k)}\xi_2^{(j)}\\
    &=\sum_{j=i}^d\sum_{k=1}^{d-j}\stirlingii{j}{i}\binom{k+j}{j}\xi_1^{(d-j-k)}\xi_2^{(j+k)}\\
    &=\sum_{h=i+1}^d\sum_{j=i}^{h-1}\stirlingii{j}{i}\binom{h}{j}\xi_1^{(d-h)}\xi_2^{(h)}.
\end{align*}
Therefore, we need to show that
\[
\sum_{j=i}^{h-1}\stirlingii{j}{i}\binom{h}{j}=(i+1)\stirlingii{h}{i+1}.
\]
Note that we have
\[
\sum_{j=i}^{\infty}\stirlingii{j}{i}t^j=\frac{t^i}{(1-t)(1-2t)\cdots (1-it)}.
\]
Then 
\begin{align*}
&\sum_{h=i+1}^{\infty}\sum_{j=i}^{h-1}\stirlingii{j}{i}\binom{h}{j}t^h\\
&=\sum_{j=i}^{\infty}\stirlingii{j}{i}t^j\sum_{h=j+1}^{\infty}\binom{h}{j}t^{h-j}\\
&=\sum_{j=i}^{\infty}\stirlingii{j}{i}t^j(\frac{1}{(1-t)^{j+1}}-1)\\
&=\frac{(\frac{t}{1-t})^i}{(1-t)(1-\frac{t}{1-t})(1-\frac{2t}{1-t})\cdots (1-\frac{it}{1-t})}
-\frac{t^i}{(1-t)(1-2t)\cdots (1-it)}\\
&=\frac{(i+1)t^{i+1}}{(1-t)(1-2t)\cdots (1-(i+1)t)}.
\end{align*}
We are done.
\end{proof}

\begin{cor} \label{corpoder}
The element $\nu_i$ in 
$D(V_{\infty})_d/(\Id-T)$ is of order divisible by $p$ if and only $p\mid i$.
Moreover, the exact $p$-power of $\nu_i$ is $p^{\val_p(i)}$, where $\val_p$
is the standard $p$-adic valuation.
\end{cor}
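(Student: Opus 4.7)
The plan is to derive Corollary \ref{corpoder} as an immediate reading of the decomposition established in Proposition \ref{dbtor}, namely
\[
D(V_\infty)_d/(\Id-T) = \bZ\nu_0 \oplus \bigoplus_{i=1}^d (\bZ/i\bZ)\nu_i.
\]
Because this is an internal direct sum of cyclic groups with $\nu_i$ placed as the distinguished generator of its $i$-th summand, the additive order of $\nu_i$ in the full quotient equals the order of the generator in the summand containing it, which is precisely $i$ for $1 \leq i \leq d$. Hence the whole content of the corollary reduces to reading off the $p$-primary part of a cyclic group of order $i$, and there is nothing further to compute.

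For the second assertion I would invoke the elementary structure of finite cyclic groups: the $p$-primary component of $\bZ/i\bZ$ is $\bZ/p^{\val_p(i)}\bZ$, and under this identification the image of a generator is again a generator. Consequently the exact $p$-power annihilating $\nu_i$ is $p^{\val_p(i)}$, which is the second claim. The first assertion then falls out at once, since $\val_p(i) \geq 1$ is equivalent to $p \mid i$; equivalently, when $p \nmid i$ the integer $i$ is already a unit modulo every power of $p$, so multiplication by $i$ kills $\nu_i$ and the order contains no factor of $p$.

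No genuine obstacle is expected, as the essential work is done inside Proposition \ref{dbtor} (establishing that the sum is \emph{direct} and identifying $\nu_i$ with the image of $1 \in \bZ/i\bZ$). The only care I would take is to confirm that the inclusion of $\nu_i$ into the direct summand is as the canonical generator, so that its order in the ambient module is not smaller than $i$; but this is part of what Proposition \ref{dbtor} already asserts.
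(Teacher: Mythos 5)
Your proposal is correct and coincides with the paper's treatment: the paper gives no separate proof of Corollary \ref{corpoder}, treating it as an immediate consequence of the direct sum decomposition $D(V_{\infty})_d/(\Id-T)=\bZ \nu_0\oplus \bigoplus_{i=1}^{d}(\bZ/i\bZ) \nu_{i}$ from Proposition \ref{dbtor}, exactly as you do. Reading off that $\nu_i$ has order $i$ and extracting the $p$-part $p^{\val_p(i)}$ is all that is needed.
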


\begin{prop}
The set of elements
\[
\{\sum_{j=1}^{p}\xi_{1}^{(p^{\delta-1}-1+jp^{\delta-1}(p-1))}\xi_{2}^{(p^{\delta-1}-1 +(p-j+1)p^{\delta-1}(p-1))}: \delta\geq 1\}
\]
belongs to $D(V_1)^{G_1}$.
\end{prop}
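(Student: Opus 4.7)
The plan is to check invariance under the standard generators $S$ and $T$ of $\SL_2(\bF_p)$. Write $a_j = p^{\delta-1}-1+jp^{\delta-1}(p-1)$ and $b_j = p^{\delta-1}-1+(p-j+1)p^{\delta-1}(p-1)$, so the element is $\omega_\delta = \sum_{j=1}^p \xi_1^{(a_j)}\xi_2^{(b_j)}$, homogeneous of degree $d=p^{\delta+1}+p^{\delta-1}-2$, and the index change $j \mapsto p+1-j$ interchanges $a_j$ and $b_j$. Invariance under $S$ is then the easy case: using the formula $S\cdot(\xi_1^{(m)}\xi_2^{(n)})=(-1)^m\xi_1^{(n)}\xi_2^{(m)}$ established in the paper, and the fact that each $a_j$ is even (since $p$ is odd, both $p^{\delta-1}-1$ and $jp^{\delta-1}(p-1)$ are even), reindexing $j\mapsto p+1-j$ gives $S\omega_\delta=\omega_\delta$ immediately.

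The substantive step is invariance under $T$. Expanding $T\omega_\delta$ using the divided-power identity $(\xi_1+\xi_2)^{(m)}=\sum_{k=0}^m \xi_1^{(k)}\xi_2^{(m-k)}$, the coefficient of $\xi_1^{(d-s)}\xi_2^{(s)}$ in $(T-\Id)\omega_\delta$ comes out to $\sum_{j:b_j<s}\binom{s}{b_j}$. Since $\binom{s}{b_j}=0$ when $b_j>s$ and equals $1$ when $b_j=s$, the desired vanishing reduces to the congruence
\[
\sum_{j=1}^{p} \binom{s}{b_j} \equiv [\,s \in \{b_1,\ldots,b_p\}\,] \pmod{p}, \qquad 0\le s\le d,
\]
where $[\cdot]$ denotes the Iverson bracket. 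This binomial identity is the main obstacle of the proof.

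To establish it I would apply Lucas' theorem. For $2\le j\le p$ the base-$p$ digits of $b_j$ at positions $(0,\ldots,\delta-2,\delta-1,\delta)$ are $(p-1,\ldots,p-1,j-1,p-j)$, while for $j=1$ they are $(p-1,\ldots,p-1,0,p-1)$. So $\binom{s}{b_j}\not\equiv 0 \pmod p$ forces $s$ to have digit $p-1$ at every position $<\delta-1$; any $s\le d$ violating this kills both sides of the congruence (no $b_j$ has such a digit pattern either). For the remaining $s = (p^{\delta-1}-1)+\alpha p^{\delta-1}+\beta p^\delta$ with $0\le\alpha,\beta\le p-1$, Lucas gives $\binom{s}{b_j}\equiv\binom{\alpha}{j-1}\binom{\beta}{p-j}$ for $2\le j\le p$ and $\binom{s}{b_1}\equiv\binom{\beta}{p-1}$. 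Summing and applying Vandermonde,
\[
\sum_{j=1}^{p}\binom{s}{b_j} \equiv \sum_{k=0}^{p-1}\binom{\alpha}{k}\binom{\beta}{p-1-k} = \binom{\alpha+\beta}{p-1}\pmod{p}.
\]
A final application of Lucas to $\alpha+\beta\in\{0,\ldots,2p-2\}$ shows this last binomial is $\equiv 1\pmod p$ exactly when $\alpha+\beta=p-1$, which is precisely the condition that $s$ equals some $b_j$ (with $(\alpha,\beta)=(j-1,p-j)$ for $2\le j\le p$, or $(0,p-1)$ for $j=1$). This matches the right-hand side and completes the argument.
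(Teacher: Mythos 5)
Your proof is correct, and for the key step it takes a genuinely different route from the paper's. Both arguments make the same initial reduction: symmetry in $\xi_1,\xi_2$ disposes of $S$ (you additionally verify that the sign $(-1)^{a_j}$ is $+1$ because $p$ is odd, a point the paper leaves implicit), and expanding $(\xi_1+\xi_2)^{(a_j)}$ reduces $T$-invariance to the congruence $\sum_{j=1}^{p}\binom{s}{b_j}\equiv [\,s\in\{b_1,\dots,b_p\}\,] \pmod p$, equivalently the vanishing of the partial sums $\sum_{j\colon b_j<s}\binom{s}{b_j}$, which is exactly the paper's equation (\ref{coneq}). Where you diverge is in proving this binomial congruence. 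The paper argues by induction on $\delta$: Lucas' theorem strips off the lowest base-$p$ digit to pass from $\delta$ to $\delta-1$, and the base case $\delta=1$ is Proposition \ref{divel}, itself proved via the character-sum identity $\sum_{\beta\in\bF_p^{\times}}\beta^k$. You instead evaluate the sum in closed form in a single pass: the base-$p$ digit patterns of the $b_j$ force $s$ to have digit $p-1$ in positions $0,\dots,\delta-2$, after which Lucas collapses the sum to $\sum_{k=0}^{p-1}\binom{\alpha}{k}\binom{\beta}{p-1-k}=\binom{\alpha+\beta}{p-1}$ by Vandermonde, and a last application of Lucas shows this is $\equiv 1$ precisely when $\alpha+\beta=p-1$, i.e.\ when $s$ is one of the $b_j$. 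Your route is self-contained (no induction on $\delta$, no appeal to Proposition \ref{divel}) and yields the exact mod-$p$ value of the coefficient sum rather than just its vanishing; the paper's route has the mild advantage of recycling the $\delta=1$ computation it already needed elsewhere.
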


\begin{proof}
For $\delta=1$, we are covered by
proposition \ref{divel}. 
By design, the element 
\[
u_{\delta}:=\sum_{j=1}^{p}\xi_{1}^{(p^{\delta-1}-1+jp^{\delta-1}(p-1))}\xi_{2}^{(p^{\delta-1}-1 +(p-j+1)p^{\delta-1}(p-1))}
\]
is symmetric with respect to $\xi_1$ and $\xi_2$.
Therefore we only need to show that it is invariant
under $T$. In fact, 
\begin{align*}
T(u_{\delta})&=\sum_{j=1}^{p}(\xi_{1}+\xi_{2})^{(p^{\delta-1}-1+jp^{\delta-1}(p-1))}\xi_{2}^{(p^{\delta-1}-1 +(p-j+1)p^{\delta-1}(p-1))}\\
&=\sum_{j=1}^{p}\sum_{\ell=0}^{p^{\delta-1}-1+jp^{\delta-1}(p-1)}\xi_{1}^{(\ell)}\xi_2^{(p^{\delta-1}-1+jp^{\delta-1}(p-1)-\ell)}\xi_{2}^{(p^{\delta-1}-1 +(p-j+1)p^{\delta-1}(p-1))}\\
&=\sum_{j=1}^{p}\sum_{\ell=0}^{p^{\delta-1}-2+jp^{\delta-1}(p-1)}\binom{p^{\delta+1}+p^{\delta-1}-2-\ell}{p^{\delta-1}-1 +(p-j+1)p^{\delta-1}(p-1)}\xi_{1}^{(\ell)}\xi_{2}^{(p^{\delta+1}+p^{\delta-1}-2 -\ell)}\\
&+\sum_{j=1}^{p}\xi_{1}^{(p^{\delta-1}-1+jp^{\delta-1}(p-1))}\xi_{2}^{(p^{\delta-1}-1 +(p-j+1)p^{\delta-1}(p-1))}\\
&=\sum_{\ell=0}^{p^{\delta+1}-p^{\delta}+p^{\delta-1}-2}\sum_{\frac{\ell+2-p^{\delta-1}}{p^{\delta-1}(p-1)}\leq j\leq p }\binom{p^{\delta+1}+p^{\delta-1}-2-\ell}{p^{\delta-1}-1 +(p-j+1)p^{\delta-1}(p-1)}\xi_{1}^{(\ell)}\xi_{2}^{(p^{\delta+1}+p^{\delta-1}-2 -\ell)}\\
&+\sum_{j=1}^{p}\xi_{1}^{(p^{\delta-1}-1+jp^{\delta-1}(p-1))}\xi_{2}^{(p^{\delta-1}-1 +(p-j+1)p^{\delta-1}(p-1))}.\\
\end{align*}
As in Proposition \ref{divel}, we need to show that for fixed $j\geq 0$, 
\begin{equation}\label{coneq}
\sum_{k: 1< kp^{\delta-1}(p-1)<j}\binom{p^{\delta-1}-1+j}{p^{\delta-1}-1 +kp^{\delta-1}(p-1)}\equiv 0 \quad \mod p.
\end{equation}
We prove this equality by induction on $\delta$.
The case $\delta=1$ is proved in Proposition \ref{divel}. Assume that $\delta>1$. 
We recall the following congruence property of binomial coefficients.

\begin{lemma}(Lucas's theorem)\label{lemma-Lucas-theorem}
Assume that we have
\[
m=pm_1+m_2, n=pn_1+n_2, \quad 0\leq m_2<p, 0\leq m_2<p
\]
then 
\[
\binom{m}{n}\equiv \binom{m_1}{n_1}\binom{m_2}{n_2}\quad \mod p.
\]
\end{lemma}

Assume now $j=pj_1+j_2$ with $0\leq j_2<p$. If $j_2>0$, then by the 
obove lemma, we have
\[
\binom{p^{\delta-1}-1+j}{p^{\delta-1}-1 +kp^{\delta-1}(p-1)}\equiv \binom{j_2-1}{p-1}\binom{p^{\delta-2}+j_1}{p^{\delta-2}-1+kp^{\delta-2}(p-1)}\mod p
\]
but by assumption $j_2-1<p-1$, therefore we get $\binom{j_2-1}{p-1}=0$, hence
\[
\binom{p^{\delta-1}-1+j}{p^{\delta-1}-1 +kp^{\delta-1}(p-1)}\equiv 0 \quad \mod p.
\]
Now assume $ j=pj_1$, then we have
\[
\binom{p^{\delta-1}-1+j}{p^{\delta-1}-1 +kp^{\delta-1}(p-1)}\equiv \binom{p^{\delta-2}-1+j_1}{p^{\delta-2}-1+kp^{\delta-2}(p-1)}\mod p.
\]
The left hand side of (\ref{coneq}) becomes 
\[
\sum_{k: 1< kp^{\delta-2}(p-1)<j_1}\binom{p^{\delta-2}-1+j_1}{p^{\delta-2}-1 +kp^{\delta-2}(p-1)}, 
\]
applying induction, we know that it vanishes in $\bF_p$.
\end{proof}

Furhtermore,

\begin{prop}\label{prop-lift-fixedelement-divpow}
The element
\[
u_{\delta}:=\sum_{j=1}^{p}\xi_{1}^{(p^{\delta-1}-1+jp^{\delta-1}(p-1))}\xi_{2}^{(p^{\delta-1}-1 +(p-j+1)p^{\delta-1}(p-1))}
\]
lift to a primitive element $w_{\delta}$ in $D(V_{\delta})^{G_{\delta}}$.
\end{prop}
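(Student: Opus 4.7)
The plan is to prove the existence of $w_\delta$ via a cohomological obstruction argument. Consider the short exact sequence of $G_\delta$-modules
\[
0 \to D(V_{\delta-1}) \xrightarrow{\,p\,} D(V_\delta) \to D(V_1) \to 0,
\]
where the second map is reduction modulo $p$ (and the $G_\delta$-action on $D(V_1)$ factors through $G_1$). Taking $G_\delta$-invariants yields a connecting map
\[
\partial : D(V_1)^{G_1} \to H^1(G_\delta, D(V_{\delta-1})),
\]
and a $G_\delta$-invariant lift $w_\delta$ of $u_\delta$ exists exactly when $\partial u_\delta = 0$. Primitivity of any such lift is automatic, since each basis element $\xi_1^{(a_j)}\xi_2^{(b_j)}$ has order $p^\delta$ in $D(V_\delta)$.

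To analyse $\partial u_\delta$, let $\tilde{w}_\delta \in D(V_\delta)_n$ denote the tautological set-theoretic lift given by the same formula as $u_\delta$, where $n = p^{\delta+1}+p^{\delta-1}-2$. The cocycle $c(g) = (g\tilde{w}_\delta - \tilde{w}_\delta)/p \in D(V_{\delta-1})_n$ represents $\partial u_\delta$. Since $\SL_2(\bZ) \twoheadrightarrow G_\delta$ by strong approximation, $G_\delta$ is generated by the images of $T$ and $S$; and $S$-invariance of the tautological lift holds in $D(V_\delta)$ itself by the same symmetry and even-parity-of-$b_j$ argument used for $u_\delta$ (valid for any odd $p$). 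Hence $c(S) = 0$, and it suffices to solve $(T-\mathrm{Id})h = c(T)$ in $D(V_{\delta-1})_n$, in which case $\tilde w_\delta - p h$ is the required invariant lift.

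Expanding $T\tilde{w}_\delta$ by the divided-power Leibniz rule identifies the coefficient of $\xi_1^{(\ell)}\xi_2^{(n-\ell)}$ in $c(T)$ as
\[
\frac{1}{p}\sum_{j:\, a_j > \ell}\binom{n-\ell}{a_j - \ell}, \qquad a_j = p^{\delta-1}-1 + jp^{\delta-1}(p-1),
\]
which is an integer by the mod-$p$ congruence proved in the preceding proposition. By Proposition~\ref{dbtor}, the solvability of $(T-\mathrm{Id})h = c(T)$ in $D(V_{\delta-1})_n$ reduces to divisibility conditions on the coefficients of $c(T)$ expanded in the $\nu_i$-basis: the $\nu_0$-component must vanish, and for $i>0$ the $\nu_i$-component must be divisible by $\gcd(i,p^{\delta-1})$.

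The main obstacle will be verifying these divisibilities. The principal tool is Kummer's theorem, which gives $\val_p \binom{n-\ell}{a_j - \ell}$ as the number of carries in the base-$p$ addition $(a_j - \ell) + (n - a_j)$. The bottom $\delta-1$ base-$p$ digits of each $a_j$ are all $p-1$ while the top digits depend linearly on $j$, so that carries accumulate rapidly for most values of $\ell$. For the residual $\ell$ where the termwise $p$-divisibility is insufficient (those $\ell$ which are very close to some $a_{j_0}$, making $a_{j_0}-\ell$ sparse in base $p$), the $j \mapsto p-j+1$ symmetry of the sum must force the cancellation needed to reach the correct power of $p$. The argument is expected to proceed by induction on $\delta$, with the base case $\delta = 1$ provided by the preceding proposition and the inductive step exploiting the periodicity of the $a_j$ modulo $p^{\delta-1}(p-1)$.
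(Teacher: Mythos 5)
Your overall framework---the connecting map $\partial\colon D(V_1)^{G_1}\to H^1(G_\delta,D(V_{\delta-1}))$ attached to $0\to D(V_{\delta-1})\to D(V_\delta)\to D(V_1)\to 0$, and the observation that primitivity of any lift is automatic---agrees with the paper. But the core of your argument has a genuine gap. You claim that, since $c(S)=0$, it suffices to find $h$ with $(T-\Id)h=c(T)$, and that $\tilde w_\delta-ph$ is then the required invariant lift. This element is $T$-invariant, but it is $S$-invariant only if $(S-\Id)h=0$, which you never arrange. More structurally: a $1$-cocycle on $G_\delta=\langle S,T\rangle$ is a coboundary iff a \emph{single} $h$ works for both generators simultaneously; producing separate solutions for $S$ and $T$ proves nothing. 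Nor can you repair this by restricting to the cyclic subgroup $\langle T\rangle\le G_\delta$ and invoking a transfer argument, because for $\delta\ge 2$ the index $[G_\delta:\langle T\rangle]=p^{2\delta-2}(p^2-1)$ is divisible by $p$, so restriction to $\langle T\rangle$ is not injective on $H^1$. This is exactly why the paper's proof spends most of its length proving that the cocycle vanishes on the principal congruence subgroup $N_\delta=\ker(G_\delta\to G_1)$ (Lemma \ref{lemma-subgroup-vanishing}), so that the class descends to $H^1(G_1,\cdot)$, where $\langle T\rangle$ \emph{is} a $p$-Sylow subgroup and $Cor\circ Res=p^2-1$ makes restriction injective; only then does killing the class on $\langle T\rangle$ suffice.

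A second, related divergence: you take the tautological lift of $u_\delta$ (all coefficients $1$), whereas the paper takes $t_\delta=\nu_{p^\delta-1}-\nu_{p^{\delta+1}-1}$, whose coefficients are differences of Stirling numbers of the second kind. The paper explicitly remarks that the whole argument relies on this choice: with $t_\delta$, the identity $(T-\Id)\nu_i=(i+1)\nu_{i+1}$ gives $c(T)=p^{\delta-1}\nu_{p^\delta}-p^\delta\nu_{p^{\delta+1}}$, which is already $0$ in $D(V_{\delta-1})$ (no equation to solve), and the vanishing on $N_\delta$ is controlled by the valuation bound $\val_p\stirlingii{n}{p^\delta-1}\ge\delta-1-\val_p(n+1)$ of Theorem \ref{thm-stirl-valuation}. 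With the naive lift, both the $T$-computation and the (missing) congruence-subgroup computation would have to be redone from scratch, and the Kummer-theorem carry-counting you sketch for the divisibility conditions is left as a plan rather than carried out. As written, the proposal does not establish that $\partial u_\delta=0$.
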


\begin{lemma}\label{lemma-congruence-levelone}
We have
\[
u_{\delta}=\nu_{p^{\delta}-1}-\nu_{p^{\delta+1}-1}
\]
in $D(V_1)$.
\end{lemma}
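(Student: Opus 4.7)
The plan is to prove the identity in $D(V_1)$ (that is, modulo $p$) by computing both $\nu_{p^\delta-1}$ and $\nu_{p^{\delta+1}-1}$ explicitly in degree $d = p^{\delta+1}+p^{\delta-1}-2$, using the generating function identity of Lemma \ref{lemstir}. Concretely, $\nu_i$ involves the Stirling numbers $\stirlingii{j}{i}$, so the task reduces to determining, for $i=p^\delta-1$ and $i=p^{\delta+1}-1$, exactly which $\stirlingii{j}{i}$ are nonzero modulo $p$ for $i\le j\le d$.

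The key computational step is the following mod-$p$ reduction of the generating function. By Lemma \ref{lemstir},
\[
\sum_{r\ge i}\stirlingii{r}{i}X^{r-i}=\frac{1}{\prod_{\ell=1}^{i}(1-\ell X)}.
\]
Grouping the factors $(1-\ell X)$ for $\ell=1,\dots,i$ by residue class modulo $p$, the factors with $\ell\equiv0$ contribute $1\pmod p$, and each nonzero residue $r\in\{1,\dots,p-1\}$ appears exactly $\lfloor i/p\rfloor$ (or one more) times. For $i=p^\delta-1$ each nonzero residue appears exactly $p^{\delta-1}$ times, so after using Wilson's theorem in the form $\prod_{r=1}^{p-1}(1-rX)=1-X^{p-1}$ in $\bF_p[X]$, and then the Frobenius identity $(1-Y)^{p^{\delta-1}}=1-Y^{p^{\delta-1}}$ in characteristic $p$, one obtains
\[
\prod_{\ell=1}^{p^\delta-1}(1-\ell X)\equiv 1-X^{p^{\delta-1}(p-1)}\pmod p,
\]
and similarly $\prod_{\ell=1}^{p^{\delta+1}-1}(1-\ell X)\equiv 1-X^{p^\delta(p-1)}\pmod p$. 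Inverting these as geometric series pinpoints the nonzero Stirling coefficients: $\stirlingii{j}{p^\delta-1}\equiv 1\pmod p$ exactly when $j=p^\delta-1+m\,p^{\delta-1}(p-1)$, and $\stirlingii{j}{p^{\delta+1}-1}\equiv 1\pmod p$ exactly when $j=p^{\delta+1}-1+m\,p^\delta(p-1)$.

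Translating back into the definition $\nu_i=\sum_{j=i}^d\stirlingii{j}{i}\xi_1^{(d-j)}\xi_2^{(j)}$, for $i=p^{\delta+1}-1$ the degree bound $j\le d=p^{\delta+1}+p^{\delta-1}-2$ forces $m=0$, yielding the single term
\[
\nu_{p^{\delta+1}-1}\equiv \xi_1^{(p^{\delta-1}-1)}\xi_2^{(p^{\delta+1}-1)}\pmod p.
\]
For $i=p^\delta-1$, using $p^\delta-1=p^{\delta-1}-1+p^{\delta-1}(p-1)$, the admissible $m$ run from $0$ to $p$, giving $p+1$ monomials. Setting $j'=p+1-m$ matches the range $j'=1,\dots,p$ in the definition of $u_\delta$, and the remaining term $m=p+1$ is precisely $\xi_1^{(p^{\delta-1}-1)}\xi_2^{(p^{\delta+1}-1)}=\nu_{p^{\delta+1}-1}$. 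Hence $\nu_{p^\delta-1}\equiv u_\delta+\nu_{p^{\delta+1}-1}\pmod p$, as required.

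The only genuinely subtle step is the algebraic manipulation showing that the grouping of factors $(1-\ell X)$ mod $p$ produces a pure $p$-th power that collapses under Frobenius; everything else is a straightforward bookkeeping check that the exponent parametrization $(p+1-m)p^{\delta-1}(p-1)+p^{\delta-1}-1$ for $\xi_1$ (and symmetrically for $\xi_2$) matches the definition of $u_\delta$ after the substitution $j=p+1-m$. The main obstacle I expect is keeping track of the boundary value $m=p+1$ that generates the extra term $\nu_{p^{\delta+1}-1}$, which is the source of the sign in the statement.
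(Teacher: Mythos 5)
Your proof follows essentially the same route as the paper: both reduce the generating function $\prod_{\ell=1}^{p^{\delta}-1}(1-\ell t)$ to $1-t^{p^{\delta-1}(p-1)}$ modulo $p$ via Lemma \ref{lemstir} (the paper leaves the Wilson/Frobenius step implicit), read off which Stirling numbers survive mod $p$, and then match the $p+1$ surviving monomials of $\nu_{p^{\delta}-1}$ in degree $d=p^{\delta+1}+p^{\delta-1}-2$ against $u_{\delta}$ plus the single surviving monomial of $\nu_{p^{\delta+1}-1}$. The only blemish is an off-by-one in your final re-indexing: with admissible $m=0,\dots,p$ the substitution should be $j'=p-m$ (so $m=0,\dots,p-1$ give the terms of $u_{\delta}$) and the leftover term is $m=p$, not $m=p+1$; your identification of that leftover monomial with $\xi_1^{(p^{\delta-1}-1)}\xi_2^{(p^{\delta+1}-1)}=\nu_{p^{\delta+1}-1}$ is nonetheless correct.
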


\begin{proof}[Proof of lemma \ref{lemma-congruence-levelone}]
By lemma \ref{lemstir}, we have
\[
\sum_{n=p^{\delta}-1}^{\infty}\stirlingii{n}{p^{\delta}-1}t^n=\frac{t^{p^{\delta}-1}}{(1-t)(1-2t)\cdots (1-(p^{\delta}-1)t)}.
\]
The right hand side equals to 
\[
\frac{t^{p^{\delta}-1}}{(1-t^{p-1})^{p^{\delta-1}}}=\frac{t^{p^{\delta}-1}}{1-t^{(p-1)p^{\delta-1}}}=\sum_{j=0}^{\infty}t^{p^{\delta}-1+j(p-1)p^{\delta-1}}
\]
in $\bF_p[t]$. 
Similarly, 
\[
\sum_{n=p^{\delta+1}-1}^{\infty}\stirlingii{n}{p^{\delta+1}-1}t^n=\sum_{j=0}^{\infty}t^{p^{\delta+1}-1+j(p-1)p^{\delta}}.
\]
Hence we have
\[
\nu_{p^{\delta}-1}=\sum_{j=0}^{p}\xi_{1}^{(p^{\delta}-1+(p-j-1)p^{\delta-1}(p-1))}\xi_{2}^{(p^{\delta}-1 +jp^{\delta-1}(p-1))}=u_{\delta}+\nu_{p^{\delta+1}-1}.
\]
\end{proof}

\begin{proof}[Proof of Proposition \ref{prop-lift-fixedelement-divpow}]
First of all, we have the following 
short exact sequence of $G_{\delta}$-modules
\[
0\rightarrow D(V_{\delta-1})\rightarrow D(V_{\delta})
\rightarrow D(V_1)\rightarrow 0, 
\]
which induces the following long exact 
sequence 
\begin{displaymath}
\xymatrix{
0\ar[r]&H^0(G_{\delta}, D(V_{\delta-1}))\ar[r]&H^0(G_{\delta}, D(V_{\delta}))\ar[r]&H^0(G_{\delta}, D(V_{1}))\ar[r]^{\hspace{1.5cm}\kappa_{\delta}}&\\
\ar[r]&H^1(G_{\delta}, D(V_{\delta-1}))\ar[r]&H^1(G_{\delta}, D(V_{\delta})).
}
\end{displaymath}
Fix
$t_{\delta}=\nu_{p^{\delta}-1}-\nu_{p^{\delta+1}-1}\in \bZ[\xi_1, \xi_2]$, then by 
Lemma \ref{lemma-congruence-levelone}
we have $t_{\delta}\equiv u_{\delta}\mod p$.
The element $\kappa_{\delta}(u_{\delta})\in H^1(G_{\delta}, D(V_{\delta-1}))$
on the level of cochain is
defined as follows: we have for $g\in G_{\delta}$
\[
\kappa_{\delta}(u_{\delta})(g)=\frac{g(t_{\delta})-t_{\delta}}{p}\in D(V_{\delta-1}), 
\]
this make sence since $g(t_\delta)\equiv t_\delta\mod p$.
We claim that

\begin{lemma}\label{lemma-subgroup-vanishing}
The map 
\[
\kappa_{\delta}(u_{\delta}):G_{\delta}\rightarrow D(V_{\delta-1}), \quad g\mapsto \frac{g(t_{\delta})-t_{\delta}}{p}
\]
vanishes on 
\[
N_{\delta}=\{g\in G_{\delta}:g\equiv \begin{pmatrix}
1&0\\
0&1
\end{pmatrix}\quad \mod p\}.
\]
\end{lemma}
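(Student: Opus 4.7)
The strategy is to exploit the cocycle identity
\[
\kappa_\delta(u_\delta)(g_1 g_2) = \kappa_\delta(u_\delta)(g_1) + g_1 \cdot \kappa_\delta(u_\delta)(g_2)
\]
so that vanishing on $N_\delta$ reduces to vanishing on a generating set. For $p>3$, the principal congruence subgroup $N_\delta \subset \SL_2(\bZ/p^\delta)$ is generated by the three families of elementary matrices
\[
A(a) = \begin{pmatrix}1 & pa \\ 0 & 1\end{pmatrix},\qquad B(b) = \begin{pmatrix}1 & 0 \\ pb & 1\end{pmatrix},\qquad D(c) = \begin{pmatrix}1+pc & 0 \\ 0 & (1+pc)^{-1}\end{pmatrix},
\]
with $a,b,c \in \bZ/p^{\delta-1}$. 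It therefore suffices to verify $g \cdot t_\delta \equiv t_\delta \pmod{p^\delta}$ in $D(V_\delta)$ for each such generator.

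The case $A(a) = T^{pa}$ is essentially immediate from Proposition~\ref{dbtor}. Iterating $T\nu_i = \nu_i + (i+1)\nu_{i+1}$ yields
\[
T^{pa}\nu_i = \sum_{j \geq 0}\binom{pa}{j}(i+1)(i+2)\cdots(i+j)\,\nu_{i+j},
\]
and for $i = p^\delta-1$ (resp.\ $i = p^{\delta+1}-1$) every $j \geq 1$ summand contains $(i+1) = p^\delta$ (resp.\ $p^{\delta+1}$) as a factor, forcing $T^{pa}\nu_i \equiv \nu_i \pmod{p^\delta}$. For the remaining two families, one expands the divided-power action directly:
\begin{align*}
(B(b) - \Id)\xi_1^{(m)}\xi_2^{(n)} &= \sum_{k=1}^{n}(pb)^k\binom{m+k}{k}\xi_1^{(m+k)}\xi_2^{(n-k)},\\
(D(c) - \Id)\xi_1^{(m)}\xi_2^{(n)} &= \bigl((1+pc)^{m-n}-1\bigr)\xi_1^{(m)}\xi_2^{(n)},
\end{align*}
and applies these to $\nu_{p^\delta-1} = \sum_j \stirlingii{j}{p^\delta-1}\,\xi_1^{(d-j)}\xi_2^{(j)}$ with $d = p^{\delta+1}+p^{\delta-1}-2$.

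The key combinatorial input is the pair of compensating inequalities
\[
\val_p\!\bigl(\stirlingii{j}{p^\delta-1}\bigr) + \val_p(d-j+1) \geq \delta - 1, \qquad \val_p\!\bigl(\stirlingii{j}{p^\delta-1}\bigr) + \val_p(d-2j) \geq \delta - 1.
\]
When $\stirlingii{j}{p^\delta-1}$ is a $p$-adic unit, the analysis of Corollary~\ref{corstir} forces $j = p^\delta - 1 + m(p-1)p^{\delta-1}$, and one computes
\[
d-j+1 = p^{\delta-1}\bigl(p^2-p+1-m(p-1)\bigr),\qquad d-2j = p^{\delta-1}(p-1)(p-1-2m),
\]
each visibly divisible by $p^{\delta-1}$; combined with routine estimates on $\binom{d-j+k}{k}$ (using that $d-j+1,\ldots,d-j+k$ are consecutive integers starting with a multiple of $p^{\delta-1}$) and on the binomial expansion of $(1+pc)^{d-2j}-1$, this supplies the remaining powers of $p$. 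The contribution of $\nu_{p^{\delta+1}-1}$ is handled with room to spare, since only a narrow band of indices $j$ survives in degree $d$ and each carries an extra factor of $p$. The main obstacle is the intermediate range where $\stirlingii{j}{p^\delta-1}$ has $p$-adic valuation $0 < s < \delta-1$: one must show that the Stirling-number contribution ``compensates'' for a possibly small $\val_p(d-j+1)$. I expect this to follow by refining Lemma~\ref{lemstir} modulo higher powers of $p$, inductively on $\delta$, using the factorization
\[
\prod_{i=1}^{p^\delta-1}(1-it) = \Bigl(\prod_{r=1}^{p-1}\prod_{\substack{1 \leq i \leq p^\delta - 1\\ i \equiv r\,(p)}}(1-it)\Bigr)\cdot \prod_{m=1}^{p^{\delta-1}-1}(1-pm\,t)
\]
to separate the ``nice'' and ``defect'' layers of the Stirling numbers.
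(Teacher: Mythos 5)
Your overall strategy is the same as the paper's: reduce to a generating set of $N_{\delta}$ via the cocycle identity, expand the divided-power action of the three types of generators on $t_{\delta}=\nu_{p^{\delta}-1}-\nu_{p^{\delta+1}-1}$, and control every coefficient by $p$-adic valuations of Stirling numbers and binomial coefficients. The only structural difference is that you use a flat generating set $A(a),B(b),D(c)$ of $N_{\delta}$ at once, whereas the paper filters $N_{\delta}$ by the layers $N_{\gamma,\gamma-1}$ and descends inductively, checking three generators per layer; your version is slightly cleaner (and your treatment of $A(a)=T^{pa}$ via $(T-\Id)^{j}\nu_{i}=(i+1)\cdots(i+j)\nu_{i+j}$ is a nice shortcut), provided one justifies that these elements generate $N_{\delta}$, which is standard for $p>3$ via the Frattini quotient $K_1/K_2\simeq\mathfrak{sl}_2(\bF_p)$.

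However, there is a genuine gap exactly where you flag ``the main obstacle.'' Both of your compensating inequalities reduce, via $\val_p(d-j+1)\geq\min\{\delta-1,\val_p(j+1)\}$ and $\val_p(d-2j)\geq\min\{\delta-1,\val_p(j+1)\}$ (since $\val_p(d+2)=\delta-1$), to the single estimate
\[
\val_p\Bigl(\stirlingii{j}{p^{\delta}-1}\Bigr)\geq \delta-1-\val_p(j+1),
\]
and this is precisely what you do not prove: your analysis only covers the indices where the Stirling number is a $p$-adic unit, and you merely ``expect'' the intermediate range $0<\val_p(\stirlingii{j}{p^{\delta}-1})<\delta-1$ to be handled by a refinement of Lemma \ref{lemstir}. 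That refinement is the actual content of the argument. The paper isolates it as Theorem \ref{thm-stirl-valuation} and proves it not by factoring $\prod_{i}(1-it)$ into congruence classes, but by combining the shift $\stirlingii{n}{p^{\delta}-1}\equiv\stirlingii{n+1}{p^{\delta}}\ (\mathrm{mod}\ p^{\delta})$ with the closed-form congruence $\stirlingii{n+1}{p^{\delta}}\equiv\binom{(n+1-p^{\delta-1})/(p-1)-1}{(n+1-p^{\delta})/(p-1)}\ (\mathrm{mod}\ p^{\delta})$ of Chan--Manna (Proposition \ref{prop-stirl-ppower-cong}) and the elementary bound $\val_p\binom{a+p^{m}-1}{a}\geq m-\val_p(a)$ of Lemma \ref{lem-binom-valuation}. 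Without this input (or your proposed but uncarried-out alternative), the $B(b)$ and $D(c)$ cases are not established, so the proof is incomplete. Once that inequality is granted, your remaining estimates (e.g.\ $k+\val_p\binom{d-j+k}{k}\geq 1+\val_p(d-j+1)$ via $\binom{d-j+k}{k}=\binom{d-j+k}{k-1}\frac{d-j+1}{k}$ and $k-\val_p(k)\geq 1$) do close the argument, matching the paper's computation for its generators $g_5,g_6$.
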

\begin{proof}[Proof of Lemma \ref{lemma-subgroup-vanishing}]
Note that we have an exact sequence of groups
\[
1\rightarrow N_{\delta, \delta-1}\rightarrow N_{\delta}\rightarrow N_{\delta-1}\rightarrow 1
\]
where $N_{\delta, \delta-1}\simeq \bF_p^3$
is generated by 
\[
g_1=\begin{pmatrix}
1+p^{\delta-1}&0\\
0&1-p^{\delta-1}
\end{pmatrix}, 
g_2=\begin{pmatrix}
1& p^{\delta-1}\\
0&1
\end{pmatrix}, 
g_3=\begin{pmatrix}
1&0\\
p^{\delta-1}&1
\end{pmatrix}.
\]
We first show that $\kappa_{\delta}(u_{\delta})$ vanishes on $N_{\delta, \delta-1}$.
By symmetry, we only check that 
\[
\frac{g_i(t_{\delta})-t_{\delta}}{p}=0\in D(V_{\delta-1}), \text{ for }i=1, 2, 3.
\]
Let $d_0=p^{\delta+1}+p^{\delta-1}-2$, then we have 
\begin{align*}
&\frac{g_1(t_{\delta})-t_{\delta}}{p}\\
&=\frac{\sum_{j=p^{\delta}-1}^{d_0}(\stirlingii{j}{p^{\delta}-1}-\stirlingii{j}{p^{\delta+1}-1})((1+p^{\delta-1})\xi_1)^{(d_0-j)}((1-p^{\delta-1})\xi_{2})^{(j)}-t_{\delta}}{p}\\
&=\sum_{j=p^{\delta}-1}^{d_0}\frac{(\stirlingii{j}{p^{\delta}-1}-\stirlingii{j}{p^{\delta+1}-1})((1+p^{\delta-1})^{d_0-j}(1-p^{\delta-1})^{j}-1)}{p}\xi_1^{(d_0-j)}\xi_{2}^{(j)}
\end{align*}
But for $j\geq p^{\delta}-1$ and $\delta\geq 2$,
\begin{align*}
&\frac{(\stirlingii{j}{p^{\delta}-1}-\stirlingii{j}{p^{\delta+1}-1})((1+p^{\delta-1})^{d_0-j}(1-p^{\delta-1})^{j}-1)}{p}\\
&=\frac{(\stirlingii{j}{p^{\delta}-1}-\stirlingii{j}{p^{\delta+1}-1})((1+p^{\delta-1})(1-p^{\delta-1})-1)}{p}\\
&\equiv 0\quad \mod p^{\delta-1}.
\end{align*}
We conclude that 
\[
\frac{g_1(t_{\delta})-t_{\delta}}{p}=0\in D(V_{\delta-1}).
\]
And
\begin{align*}
&\frac{g_2(t_{\delta})-t_{\delta}}{p}\\
&=\frac{\sum_{j=p^{\delta}-1}^{d_0}(\stirlingii{j}{p^{\delta}-1}-\stirlingii{j}{p^{\delta+1}-1)})(\xi_1+p^{\delta-1}\xi_2)^{(d_0-j)}\xi_{2}^{(j)}-t_{\delta}}{p}\\
&\equiv\frac{\sum_{j=p^{\delta}-1}^{d_0}(\stirlingii{j}{p^{\delta}-1}-\stirlingii{j}{p^{\delta+1}-1})p^{\delta-1}\xi_1^{(d_0-j-1)}\xi_2\xi_2^{(j)}}{p}(\mod p^{\delta-1})\\
&= \sum_{j=p^{\delta}-1}^{d_0}(\stirlingii{j}{p^{\delta}-1}-\stirlingii{j}{p^{\delta+1}-1})p^{\delta-2}(j+1)\xi_1^{(d_0-j-1)}\xi_2^{(j+1)}.
\end{align*}
Therefore we need to show 
\[
(\stirlingii{j}{p^{\delta}-1}-\stirlingii{j}{p^{\delta+1}-1})(j+1)\equiv 0, \quad \mod p.
\]
But by Theorem \ref{thm-stirl-valuation}, we know that
\[
\stirlingii{j}{p^{\delta}-1}(j+1)\equiv 0 \quad \mod p^{\delta-1}.
\]
This shows that
\[
\frac{g_2(t_{\delta})-t_{\delta}}{p}=0\in D(V_{\delta-1}).
\]
For $g_3$, 
\begin{align*}
&\frac{g_3(t_{\delta})-t_{\delta}}{p}\\
&=\frac{\sum_{j=p^{\delta}-1}^{d_0}(\stirlingii{j}{p^{\delta}-1}-\stirlingii{j}{p^{\delta+1}-1)})\xi_1^{(d_0-j)}(p^{\delta-1}\xi_1+\xi_{2})^{(j)}-t_{\delta}}{p}\\
&=\frac{\sum_{j=p^{\delta}-1}^{d_0}(\stirlingii{j}{p^{\delta}-1}-\stirlingii{j}{p^{\delta+1}-1})p^{\delta-1}\xi_1^{(d_0-j)}\xi_1\xi_2^{(j-1)}}{p}(\mod p^{\delta-1})\\
&=\sum_{j=p^{\delta}-1}^{d_0}(\stirlingii{j}{p^{\delta}-1}-\stirlingii{j}{p^{\delta+1}-1})p^{\delta-2}(d-j+1)\xi_1^{(d_0-j+1)}\xi_2^{(j-1)}.
\end{align*}
Therefore we need to show 
\[
(\stirlingii{j}{p^{\delta}-1}-\stirlingii{j}{p^{\delta+1}-1})(d-j+1)\equiv 0, \quad \mod p.
\]
But by Theorem \ref{thm-stirl-valuation}, we know that
\[
\val_p(\stirlingii{j}{p^{\delta}-1})\geq \delta-1-\val_p(j+1)
\]
and since 
\[
\val_p(d+2-(j+1))\geq \min\{ \val_p(j+1), \val_p(d+2)=\delta\}, 
\]
we must have
\[
\stirlingii{j}{p^{\delta}-1}(d-j+1)\equiv 0 \quad \mod p^{\delta-1}.
\]
This shows that
\[
\frac{g_3(t_{\delta})-t_{\delta}}{p}=0\in D(V_{\delta-1}).
\]

Therefore $\kappa_{\delta}(u_{\delta})$ vanishes on $N_{\delta, \delta-1}$.
For $g\in N_{\delta}, h\in N_{\delta, \delta-1}$, we have
\[
\kappa_{\delta}(u_{\delta})(gh)=\kappa_{\delta}(u_{\delta})(g)+g\kappa_{\delta}(u_{\delta})(h)=\kappa_{\delta}(u_{\delta})(g)
\]
Moreover, 
\[
\kappa_{\delta}(u_{\delta})(hg)=\kappa_{\delta}(u_{\delta})(h)+h\kappa_{\delta}(u_{\delta})(g)=h\kappa_{\delta}(u_{\delta})(g), 
\]
which shows that $\kappa_{\delta}(u_{\delta})(g)\in D(V_{\delta-1})^{N_{\delta, \delta-1}}$.
Hence $\kappa_{\delta}(u_{\delta})$  defines a co-chain
\[
\kappa_{\delta}(u_{\delta}): N_{\delta-1}\rightarrow D(V_{\delta-1})^{N_{\delta, \delta-1}}.
\]

For $0\leq \gamma\leq \delta-1$, we prove by induction that $\kappa_{\delta}(u_{\delta})$ defines a co-chain map 
\[
\kappa_{\delta}(u_{\delta}):N_{\gamma}\rightarrow D(V_{\delta-1})^{N_{\delta, \gamma}},
\]
where 
\[
1\rightarrow N_{\delta, \gamma}\rightarrow N_{\delta}\rightarrow N_{\gamma}\rightarrow 1.
\]
The case of $\gamma=\delta-1$ is already proved. Assume the co-chain map
\[
\kappa_{\delta}(u_{\delta}):N_{\gamma}\rightarrow D(V_{\delta-1})^{N_{\delta, \gamma}},
\]
we show that it vanishes on $N_{\gamma, \gamma-1}$, where
\[
1\rightarrow N_{\gamma, \gamma-1}\rightarrow N_{\gamma}\rightarrow N_{\gamma-1}\rightarrow 1.
\]
The group $N_{\gamma, \gamma-1}\simeq \bF_p^3$ generated by 
\[
g_4=\begin{pmatrix}
1-p^{\gamma}&0\\
0& (1-p^{\gamma})^{-1}
\end{pmatrix}, 
g_5=\begin{pmatrix}
1& p^{\gamma}\\
0&1
\end{pmatrix}, 
g_6=\begin{pmatrix}
1&0\\
p^{\gamma}&1
\end{pmatrix}, 
\]
here we identify $g_i$ with their lift to $G_{\delta}$ and therefore
\[
(1-p^{\gamma})^{-1}=\sum_{0\leq i\gamma<\delta}p^{i\gamma}.
\]
Again, we check that
\[
\frac{g_i(t_{\delta})-t_{\delta}}{p}=0\in D(V_{\delta-1}), \text{ for }i=4, 5, 6.
\]
We have
\begin{align*}
&\frac{g_4(t_{\delta})-t_{\delta}}{p}\\
&=\frac{\sum_{j=p^{\delta}-1}^{d_0}(\stirlingii{j}{p^{\delta}-1}-\stirlingii{j}{p^{\delta+1}-1})((1-p^{\gamma})\xi_1)^{(d_0-j)}((1-p^{\gamma})^{-1}\xi_{2})^{(j)}-t_{\delta}}{p}\\
&=\sum_{j=p^{\delta}-1}^{d_0}\frac{(\stirlingii{j}{p^{\delta}-1}-\stirlingii{j}{p^{\delta+1}-1})((1-p^{\gamma})^{d_0-2j}-1)}{p}\xi_1^{(d_0-j)}\xi_{2}^{(j)}.
\end{align*}
But 
\begin{align*}
&\val_p((\stirlingii{j}{p^{\delta}-1}-\stirlingii{j}{p^{\delta+1}-1})((1-p^{\gamma})^{d_0-2j}-1))\\
&=\val_p((\stirlingii{j}{p^{\delta}-1}-\stirlingii{j}{p^{\delta+1}-1}))+\val_p((1-p^{\gamma})^{d_0+2}-(1-p^{\gamma})^{2j+2}).\\
&\geq\val_p((\stirlingii{j}{p^{\delta}-1}-\stirlingii{j}{p^{\delta+1}-1}))\\
&+\min\{\val_p((1-p^{\gamma})^{d_0+2}-1),\val_p(1-(1-p^{\gamma})^{2j+2}) \} 
\end{align*}
By Proposition \ref{prop-binomial-expansion-valuation}, we know
\[
\val_p((1-p^{\gamma})^{d_0+2}-1)\geq \val_p(d_0+2)+\gamma=\delta-1+\gamma,
\]
and 
\[
\val_p(1-(1-p^{\gamma})^{2j+2})\geq \val_p(j+1)+\gamma
\]
By Theorem \ref{thm-stirl-valuation},
\[
\val_p(\stirlingii{j}{p^{\delta}-1})\geq \delta-1-\val_p(j+1).
\]
Note that $p^{\delta}-1\leq j\leq d_0$ implies $\val_p(j+1)\leq \delta+1$. 
If $\val_p(j+1)\leq \delta-1$, then 
\[
\min\{\val_p((1-p^{\gamma})^{d_0+2}-1),\val_p(1-(1-p^{\gamma})^{2j+2}) \}\geq \val_p(j+1)+1, 
\]
hence we have
\[
\val_p((\stirlingii{j}{p^{\delta}-1}-\stirlingii{j}{p^{\delta+1}-1})((1-p^{\gamma})^{d_0-2j}-1))\geq \delta.
\]
If $\val_p(j+1)\geq \delta$, we  have
\[
\min\{\val_p((1-p^{\gamma})^{d_0+2}-1),\val_p(1-(1-p^{\gamma})^{2j+2}) \}\geq \delta,
\]
but 
\[
\val_p(\stirlingii{j}{p^{\delta}-1}-\stirlingii{j}{p^{\delta+1}-1})\geq 0, 
\]
hence
\[
\val_p((\stirlingii{j}{p^{\delta}-1}-\stirlingii{j}{p^{\delta+1}-1})((1-p^{\gamma})^{d_0-2j}-1))\geq \delta.
\]
This finishes the proof of 
\[
\frac{g_4(t_{\delta})-t_{\delta}}{p}=0\in D(V_{\delta-1}).
\]
As for $g_5$, 
\begin{align*}
&\frac{g_5(t_{\delta})-t_{\delta}}{p}\\
&=\frac{\sum_{j=p^{\delta}-1}^{d_0}(\stirlingii{j}{p^{\delta}-1}-\stirlingii{j}{p^{\delta+1}-1})(\xi_1+p^{\gamma}\xi_2)^{(d_0-j)}\xi_{2}^{(j)}-t_{\delta}}{p}\\
&=\frac{\sum_{j=p^{\delta}-1}^{d_0}(\stirlingii{j}{p^{\delta}-1}-\stirlingii{j}{p^{\delta+1}-1})\sum_{\ell=1}^{d_0-j}p^{\ell \gamma}\xi_1^{(d_0-j-\ell)}\xi_2^{(\ell)}\xi_2^{(j)}}{p}\\
&=\sum_{j=p^{\delta}-1}^{d_0}\sum_{\ell=1}^{d_0-j}(\stirlingii{j}{p^{\delta}-1}-\stirlingii{j}{p^{\delta+1}-1})p^{\ell\gamma-1}\binom{\ell+j}{j}\xi_1^{(d_0-j-\ell)}\xi_2^{(j+\ell)}\\
&=\sum_{h=p^{\delta}}^{d_0}\sum_{j=p^{\delta}-1}^{h-1}(\stirlingii{j}{p^{\delta}-1}-\stirlingii{j}{p^{\delta+1}-1})p^{(h-j)\gamma-1}\binom{h}{j}\xi_1^{(d_0-h)}\xi_2^{(h)}.
\end{align*}
And consider the following formal series
\begin{align*}
&\sum_{h=p^{\delta}}^{\infty}\sum_{j=p^{\delta}-1}^{h-1}    
\stirlingii{j}{p^{\delta}-1}p^{(h-j)\gamma}\binom{h}{j}t^h\\
&=\sum_{j=p^{\delta}-1}^{\infty}\stirlingii{j}{p^{\delta}-1}p^{-j\gamma}\sum_{h=j+1}^{\infty}\binom{h}{j}(p^\gamma t)^h\\
&=\sum_{j=p^{\delta}-1}^{\infty}\stirlingii{j}{p^{\delta}-1}p^{-j\gamma}(\frac{1}{(1-p^{\gamma}t)^{j+1}}-1)(p^{\gamma}t)^j\\
&=\sum_{j=p^{\delta}-1}^{\infty}\stirlingii{j}{p^{\delta}-1}\frac{t^j}{(1-p^\gamma t)^{j+1}}-\sum_{j=p^{\delta}-1}^{\infty}\stirlingii{j}{p^{\delta}-1}t^j\\
&=\frac{t^{p^{\delta}-1}}{(1-p^\gamma t)(1-(p^\gamma +1)t)\cdots (1-(p^\delta+p^\gamma-1)t)}-\frac{t^{p^{\delta-1}}}{(1-t)(1-2t)\cdots(1-(p^{\delta}-1)t)}\\
&\equiv 0\quad \mod p^{\delta}.
\end{align*}
The same proof applies to also to 
\[
\sum_{h=p^{\delta}}^{\infty}\sum_{j=p^{\delta}-1}^{h-1}    
\stirlingii{j}{p^{\delta+1}-1}p^{(h-j)\gamma}\binom{h}{j}t^h\equiv 0\mod p^{\delta}
\]
This shows that
\[
\frac{g_5(t_{\delta})-t_{\delta}}{p}=0\in D(V_{\delta-1}).
\]
As for $g_6$, 
\begin{align*}
&\frac{g_6(t_{\delta})-t_{\delta}}{p}\\
&=\frac{\sum_{j=p^{\delta}-1}^{d_0}(\stirlingii{j}{p^{\delta}-1}-\stirlingii{j}{p^{\delta+1}-1})\xi_{1}^{(d-j)}(p^{\gamma}\xi_1+\xi_2)^{(j)}-t_{\delta}}{p}\\
&=\frac{\sum_{j=p^{\delta}-1}^{d_0}(\stirlingii{j}{p^{\delta}-1}-\stirlingii{j}{p^{\delta+1}-1})\sum_{\ell=0}^{j-1}p^{(j-\ell) \gamma}\xi_1^{(d_0-j)}\xi_1^{(j-\ell)}\xi_2^{(\ell)}}{p}\\
&=\sum_{j=p^{\delta}-1}^{d_0}\sum_{\ell=0}^{j-1}(\stirlingii{j}{p^{\delta}-1}-\stirlingii{j}{p^{\delta+1}-1})p^{(j-\ell)\gamma-1}\binom{d_0-\ell}{j-\ell}\xi_1^{(d_0-\ell)}\xi_2^{(\ell)}\\
&=\sum_{\ell=0}^{d_0}\sum_{j=\ell+1}^{d_0}(\stirlingii{j}{p^{\delta}-1}-\stirlingii{j}{p^{\delta+1}-1})p^{(j-\ell)\gamma-1}\binom{d_0-\ell}{j-\ell}\xi_1^{(d_0-\ell)}\xi_2^{(\ell)}.
\end{align*}
We put 
\[
H(\ell)=\sum_{j=\ell+1}^{d_0}(\stirlingii{j}{p^{\delta}-1}-\stirlingii{j}{p^{\delta+1}-1})p^{(j-\ell)\gamma-1}\binom{d_0-\ell}{j-\ell}.
\]
We want to show that 
\[
\val_p(H(\ell))\geq \delta-1.
\]
Note that it is enough to show that 
\[
\val_p((\stirlingii{j}{p^{\delta}-1}-\stirlingii{j}{p^{\delta+1}-1})p^{j-\ell-1}\binom{d_0-\ell}{j-\ell})\geq \delta-1.
\]
Indeed, we will show that 
\[
\val_p(\stirlingii{j}{p^{\delta}-1}p^{j-\ell-1}\binom{d_0-\ell}{j-\ell})\geq \delta-1, \text{ for } \ell+1\leq j\leq d_0, 
\]
since the same proof applies to show 
\[
\val_p(\stirlingii{j}{p^{\delta+1}-1}p^{j-\ell-1}\binom{d_0-\ell}{j-\ell})\geq \delta-1.
\]
First of all, note that we have
\begin{align*}
    &\binom{d_0-\ell}{j-\ell}\\
    &=\frac{(d_0-\ell)(d_0-\ell-1)\cdots (d_0-j+1)}{(j-\ell)!}\\
    &=\frac{(d_0-\ell)(d_0-\ell-1)\cdots (d_0-j+2)}{(j-\ell-1)!}\frac{d_0-j+1}{j-\ell}.
\end{align*}
Hence
\[
\val_p(\binom{d_0-\ell}{j-\ell})\geq \val_p(\frac{d_0-j+1}{j-\ell})\geq \val_p(d_0-j+1)-\val_p(j-\ell).
\]
So we get
\begin{align*}
&\val_p(\stirlingii{j}{p^{\delta}-1}p^{j-\ell-1}\binom{d_0-\ell}{j-\ell})\\
&\geq \val_p(\stirlingii{j}{p^{\delta}-1})+j-\ell-1+\val_p(d_0-j+1)-\val_p(j-\ell).
\end{align*}
We know that for $j-\ell\geq 1$, 
\[
j-\ell-1\geq \val_p(j-\ell).
\]
So
\[
\val_p(\stirlingii{j}{p^{\delta}-1}p^{j-\ell-1}\binom{d_0-\ell}{j-\ell})\geq \stirlingii{j}{p^{\delta}-1}+\val_p(d_0-j+1).
\]
If $\val_p(j+1)\geq \delta-1$, then 
\begin{align*}
\val_p(\stirlingii{j}{p^{\delta}-1}p^{j-\ell-1}\binom{d_0-\ell}{j-\ell})&\geq \val_p(d_0-j+1)\\
&\geq \min\{\val_p(d_0+2), \val_p(j+1)\}\\
&\geq \delta-1.
\end{align*}
And if $\val_p(j+1)<\delta-1$, then $\val_p(d_0-j+1)=\val_p(j+1)$,  applying Theorem \ref{thm-stirl-valuation}, we get
\begin{align*}
\val_p(\stirlingii{j}{p^{\delta}-1}p^{j-\ell-1}\binom{d_0-\ell}{j-\ell})&\geq
\delta-1-\val_p(j+1)+\val_p(d_0-j+1)\\
&=\delta-1.
\end{align*}

We conclude that $\kappa_{\delta}(u_{\delta})$ vanishes
on $N_{\delta}$.
\end{proof}

We continue to finish the proof of Proposition \ref{prop-lift-fixedelement-divpow}.

Now we get a co-chain 
\[
\kappa_{\delta}: G_1\rightarrow D(V_{\delta-1})^{N_{\delta}}, 
\]
which defines an element in $H^1(G_1,D(V_{\delta-1})^{N_{\delta}})$.

Now we use the special fact about $G_1$: 
the element $T$ generates a cyclic subgroup of 
order $p$ in $G_1$, which is a p-Sylow sub-group.
Therefore if we consider the restriction 
and corestriction morphism of cohomology groups
\[
Res: H^1(G_1, D(V_{\delta-1})^{N_{\delta}})\rightarrow H^1(<T>, D(V_{\delta-1})^{N_{\delta}})
\]
\[
Cor: H^1(<T>, D(V_{\delta-1})^{N_{\delta}})\rightarrow H^1(G_1, D(V_{\delta-1})^{N_{\delta}})
\]
satisfying
\[
Cor\circ Res=[G_1:<T>]=p^2-1.
\]
This shows that $Res$ is actually injective since 
$p^2-1$ is co-prime to $p$.
But we know that
\[
\kappa_{\delta}(u_{\delta})(T)=\frac{T(\nu_{p^\delta -1}-\nu_{p^{\delta+1}-1})-(\nu_{p^\delta -1}-\nu_{p^{\delta+1}-1})}{p}\equiv 0\mod p^{\delta-1}.
\]
Therefore the co-chain $\kappa_{\delta}(u_{\delta})$
must be a co-boundary in $H^1(G_\delta, D(V_{\delta-1}))$.
This proves the existence of lifting.
\end{proof}

\remk 
The whole argument relies on 
the choice of $t_{\delta}$.

\remk
 In fact, in general for $p$ odd and $\delta=2$, one can write down an explicit lifting
\[
w_2=\sum_{j=p^{\delta}-1}^{d_0-p^\delta+1}\stirlingii{j}{p^{\delta}-1}\xi_1^{(d-j)}\xi_2^{(j)}.
\]
However, this is the only case where such formulas
are found. Instead for the general case, our proof
only yields the existence of $w_{\delta}$, no explicit
formula could be extracted from the proof.

As a consequence, we have

\begin{cor}
The module $M_{G_{\delta}}^{\delta}$ contains
a primitive element $$X^{p^{\delta+1}-p^{\delta}+p^{\delta-1}-1}Y^{p^\delta-1}.$$ Furthermore, the image of 
$X^{p^{\delta+1}-p^{\delta}+p^{\delta-1}-1}Y^{p^\delta-1}$ under the canonical projection 
\[
M_{G_{\delta}}^{\delta}\rightarrow M_{G_{r}}^{r}
\]
is primitive for any $1\leq r\leq \delta$.
\end{cor}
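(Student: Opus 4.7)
The plan is to turn the invariant constructed in Proposition \ref{prop-lift-fixedelement-divpow} into a detector of the order of $m_{\delta}:=X^{p^{\delta+1}-p^{\delta}+p^{\delta-1}-1}Y^{p^{\delta}-1}$ in $M^{\delta}_{G_{\delta}}$. Since $\langle gf, h\rangle = \langle f, gh\rangle$, any $f \in D(V_{\delta})^{G_{\delta}}$ pairs trivially with $I_{G_{\delta}}M^{\delta}$, so $\langle f, \cdot\rangle$ descends to a $\bZ/p^{\delta}$-linear functional on $M^{\delta}_{G_{\delta}}$. If $\langle f, m\rangle \in \bZ/p^{\delta}$ happens to be a unit, then $p^{\delta-1}\langle f,m\rangle = \langle f, p^{\delta-1}m\rangle \neq 0$, hence $p^{\delta-1}m \neq 0$ in $M^{\delta}_{G_{\delta}}$, and $m$ has order exactly $p^{\delta}$, i.e.\ is primitive.

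To produce such an $f$, take the primitive lift $w_{\delta} \in D(V_{\delta})^{G_{\delta}}$ of $u_{\delta}$ provided by Proposition \ref{prop-lift-fixedelement-divpow}. Since $w_{\delta} \equiv u_{\delta} \pmod{p}$, it suffices to verify $\langle u_{\delta}, m_{\delta}\rangle \equiv 1 \pmod{p}$. The degrees already match: both sides are homogeneous of degree $p^{\delta+1}+p^{\delta-1}-2$. The sum defining $u_{\delta}$ is indexed by $j = 1, \ldots, p$, and matching the $\xi_{1}$-exponent $p^{\delta-1}-1+jp^{\delta-1}(p-1)$ to $p^{\delta+1}-p^{\delta}+p^{\delta-1}-1$ forces $j = p$; at this value the $\xi_{2}$-exponent is automatically $p^{\delta-1}-1+p^{\delta-1}(p-1)=p^{\delta}-1$. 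Thus exactly one term of $u_{\delta}$ pairs nontrivially with $m_{\delta}$, giving $\langle u_{\delta}, m_{\delta}\rangle = 1$ in $\bF_{p}$, and $m_{\delta}$ is primitive in $M^{\delta}_{G_{\delta}}$.

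For the projection claim, observe that the $G_{\delta}$-action on $D(V_{r})$ factors through the natural surjection $G_{\delta} \twoheadrightarrow G_{r}$, because the action depends on matrix entries only modulo $p^{r}$; consequently $\overline{w}_{\delta}:= w_{\delta} \bmod p^{r}$ lies in $D(V_{r})^{G_{r}}$. The pairing is natural with respect to reduction mod $p^{r}$, so $\langle \overline{w}_{\delta}, m_{\delta}\rangle \in \bZ/p^{r}$ is obtained by reducing $\langle w_{\delta}, m_{\delta}\rangle$, hence is again $\equiv 1 \pmod{p}$ and a unit in $\bZ/p^{r}$. Applying the same argument in level $r$ shows that the image of $m_{\delta}$ in $M^{r}_{G_{r}}$ has order exactly $p^{r}$, completing the proof. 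No step here presents a serious obstacle, since all the real work has been absorbed into the construction of $w_{\delta}$; the only point requiring care is the exponent bookkeeping that isolates the $j = p$ summand of $u_{\delta}$.
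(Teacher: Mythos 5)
Your proposal is correct and follows essentially the same route as the paper: pair the monomial against the primitive invariant lift $w_{\delta}$ of $u_{\delta}$ from Proposition \ref{prop-lift-fixedelement-divpow}, observe that only the $j=p$ summand of $u_{\delta}$ contributes so the pairing is a unit, and use the $G_{\delta}$-equivariance of the pairing to see that this unit value is detected already on the coinvariants $M^{\delta}_{G_{\delta}}$ (and, after reduction mod $p^{r}$, on $M^{r}_{G_{r}}$). The paper's own proof is a one-line appeal to the same pairing computation; you have merely filled in the bookkeeping it leaves implicit.
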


\begin{proof}
Indeed, consider the lifting $w_{\delta}$ of $u_{\delta}$, then 
\[
\langle w_{\delta}, X^{p^{\delta+1}-p^{\delta}+p^{\delta-1}-1}Y^{p^\delta-1}\langle =c
\]
is a unit in $\bZ/p^\delta$.
\end{proof}

\begin{lemma}\label{lemma-primitive-element-special-expression}
In $M_{G_1}^1$, we have
\begin{align}\label{equation-monomial-reduction}
&X^{p^{\delta+1}-p^{\delta}+p^{\delta-1}-1}Y^{p^\delta-1}\nonumber\\
&=X^{p^2-p}Y^{p-1}h(f_1, f_2)+\sum_{0\leq\ell<p}X^{ \ell(p-1)}Y^{p-1}a_{\ell}(f_1, f_2)\nonumber \\
&+\sum_{0\leq \ell<p+1} X^{\ell(p-1)}b_{\ell}(f_1, f_2) 
\end{align}
satisfying the conditions 
\begin{description}
\item[(A)]$h(f_1, f_2)=f_1^{p^{\delta-1}-1}f_2^{p^{\delta-1}-1}+\sum_{j<p^{\delta-1}-1}h_{i, j}f_1^if_2^j$;
\item[(B)]$\deg_{f_2}(a_{\ell}(f_1, f_2))\leq p^{\delta-1}-p+l-1 $, for $ 1< \ell<p.$
\item[(C)]$\deg_{f_2}(b_{\ell}(f_1, f_2))\leq p^{\delta-1}-p+l-1$, for $ 1< \ell<p+1.$
\end{description}
\end{lemma}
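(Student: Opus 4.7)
The plan is to combine the structural decomposition of Proposition~\ref{prop-divided-power-struture-modp} with a duality argument using the $G_1$-invariant element $u_\delta\in D(V_1)^{G_1}$ from Proposition~\ref{prop-lift-fixedelement-divpow}, and then to pin down the $f_2$-degree bounds by a degree count in the graded ring $\bF_p[f_1,f_2]$. Set $A=p^{\delta+1}-p^\delta+p^{\delta-1}-1$ and $B=p^\delta-1$, so $d_0:=A+B=p^{\delta+1}+p^{\delta-1}-2$; a direct check gives $\deg(X^{p^2-p}Y^{p-1}f_1^{p^{\delta-1}-1}f_2^{p^{\delta-1}-1})=(p^2-1)+(p^{\delta-1}-1)(p^2+1)=d_0$, so the proposed leading monomial has the correct total degree.

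First, applying Proposition~\ref{prop-divided-power-struture-modp} to $X^AY^B$ yields a decomposition
\[
X^AY^B \;=\; X^{p^2-p}Y^{p-1}\,h(f_1,f_2)+\sum_{k=2}^{p-1}X^{(k-1)(p-1)}Y^{p-1}\,\alpha_k(f_1,f_2)+\beta(f_1,f_2)
\]
in $M^1_{G_1}$, with $h$ homogeneous of degree $d_0-(p^2-1)$ and $\alpha_k,\beta$ chosen as polynomial representatives (of the appropriate homogeneous degrees) modulo $(f_1)$.

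Second, to identify the top-$f_2$-degree term of $h$, I would use the pairing $\varphi_1\colon D(V_1)^{G_1}\to (M^1_{G_1})^{*}$. Direct evaluation shows $\langle u_\delta,X^AY^B\rangle=1$: only the summand indexed by $j=p$ in the defining formula of $u_\delta$ matches the exponents $(A,B)$. Paired against the displayed decomposition, this forces the coefficient of $f_1^{p^{\delta-1}-1}f_2^{p^{\delta-1}-1}$ in $h$ to equal $1$, provided one checks that $u_\delta$ pairs trivially with every other admissible monomial of total degree $d_0$. This auxiliary vanishing is a combinatorial statement about binomial coefficients which reduces via Lucas's theorem (Lemma~\ref{lemma-Lucas-theorem}) to an analysis of the base-$p$ expansions of the exponents appearing in $u_\delta$.

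Third, for conditions (B) and (C), a degree count in $\bF_p[f_1,f_2]$ using $\deg f_1=p+1$, $\deg f_2=p(p-1)$, together with the congruence $i(p+1)+j\,p(p-1)\equiv 0\pmod{p+1}$ forced by homogeneity, yields the bound $j\le p^{\delta-1}-p+\ell-1$ for every monomial $f_1^if_2^j$ that can appear in $a_\ell$ or $b_\ell$. The extra indices $\ell=0,\,p-1,\,p$ allowed in the lemma provide the flexibility to absorb boundary contributions via the relation $X^{p^2-1}=X^{p-1}f_2-f_1^{p-1}$ already exploited in Lemma~\ref{bbase}. The main obstacle will be the duality step: verifying $\langle u_\delta,X^{p^2-p}Y^{p-1}f_1^if_2^j\rangle\equiv 0\pmod p$ for every $(i,j)\neq(p^{\delta-1}-1,p^{\delta-1}-1)$ of the correct total degree, which requires expanding $f_1^if_2^j$ as an explicit combination of monomials and managing a nontrivial sum of products of binomial coefficients.
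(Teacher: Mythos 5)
Your proposal has a genuine gap, concentrated in the third step. Conditions (B) and (C) are \emph{not} consequences of homogeneity: a pure degree count in $\bF_p[f_1,f_2]$ with $\deg f_1=p+1$, $\deg f_2=p(p-1)$ admits monomials $f_1^if_2^j$ whose $f_2$-degree exceeds the bound $p^{\delta-1}-p+\ell-1$. Concretely, take $p=5$, $\delta=2$, $\ell=2$: then $a_2$ is homogeneous of degree $d_0-3(p-1)=116$, and $6i+20j=116$ has the solution $(i,j)=(6,4)$, while the claimed bound is $\deg_{f_2}a_2\le 1$. So the bound records which degree-admissible monomials actually occur in the reduction, and the congruence $i(p+1)+jp(p-1)\equiv 0\pmod{p+1}$ you invoke only constrains $j$ modulo $(p+1)$, not from above. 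Since (B) and (C) are exactly what makes the inductive bookkeeping close up, this step cannot be waved through. Separately, the duality step you flag as ``the main obstacle'' is not a routine Lucas-type check: the required vanishing $\langle u_\delta, X^{p^2-p}Y^{p-1}f_1^if_2^j\rangle\equiv 0$ for off-diagonal $(i,j)$ is essentially the content of Lemma \ref{lemma-pairing-lifting-multiplication-nonvanishing} (in the case $\delta=1$), which the paper proves by a separate generating-function and roots-of-unity argument; and you would additionally have to control the pairing of $u_\delta$ against the torsion components $X^{\ell(p-1)}Y^{p-1}a_\ell$ and $X^{\ell(p-1)}b_\ell$, which your sketch does not address.

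For comparison, the paper's proof is an induction on $\delta$ driven by the Frobenius identity
$X^{p^{\delta+2}-p^{\delta+1}+p^{\delta}-1}Y^{p^{\delta+1}-1}=\bigl(X^{p^{\delta+1}-p^{\delta}+p^{\delta-1}-1}Y^{p^{\delta}-1}\bigr)^pX^{p-1}Y^{p-1}$,
followed by explicit rewriting with the relations $X^{p^2-1}=X^{p-1}f_2-f_1^{p-1}$ and $Y^{p^2-1}=Y^{p-1}f_2-f_1^{p-1}$; the leading term in (A) and the degree bounds (B), (C) come out of tracking $\deg_{f_2}$ through each substitution, with no appeal to the divided-power duality at all. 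If you want to pursue your route, you would need to replace the degree count by an argument that actually follows the monomials through the reduction, at which point you have reproduced the paper's computation.
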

\begin{proof}[Proof of Lemma \ref{lemma-primitive-element-special-expression}]

We show this by induction on $\delta$.
For $\delta=1$, the left hand side is $X^{p^2-p}Y^{p-1}$. Assume that we have 
the desired expression for $\delta$. 
Then for $\delta+1$, 
\begin{align}\label{equation-monomial-deltaplusone}
X^{p^{\delta+2}-p^{\delta+1}+p^{\delta}-1}Y^{p^\delta-1}&=X^{p(p^{\delta+1}-p^\delta+p^{\delta-1}-1)+p-1}Y^{p(p^\delta-1)+p-1}\nonumber\\
&=(X^{p^{\delta+1}-p^{\delta}+p^{\delta-1}-1}Y^{p^\delta-1})^pX^{p-1}Y^{p-1}.
\end{align}
Applying the induction on $\delta$, the
right hand side of (\ref{equation-monomial-deltaplusone}) equals to
\begin{align*}
&(X^{p(p^2-p)}Y^{p(p-1)}h(f_1^p, f_2^p)+\sum_{0\leq \ell<p}X^{\ell p(p-1)}Y^{p(p-1)}a_{\ell}(f_1^p, f_2^p) \\
&+\sum_{0\leq \ell<p+1} X^{\ell p(p-1)}b_{\ell}(f_1^p, f_2^p))X^{p-1}Y^{p-1},     
\end{align*}
which simplies to be 
\begin{align}\label{equation-intermidiate-mononial-reduction}
&(X^{p(p^2-p)+p-1}Y^{p^2-1}h(f_1^p, f_2^p)+\sum_{0\leq \ell<p}X^{\ell p(p-1)+p-1}Y^{p^2-1}a_{\ell}(f_1^p, f_2^p)\nonumber \\
&+\sum_{0\leq \ell<p+1} X^{\ell p(p-1)+p-1}Y^{p-1}b_{\ell}(f_1^p, f_2^p).
\end{align}
We need the following two relations
between $f_1$ and $f_2$
\begin{align*}
X^{p^2-1}&=X^{p-1}f_2-f_1^{p-1}\\ 
Y^{p^2-1}&=Y^{p-1}f_2-f_1^{p-1}.
\end{align*}
Then 
\begin{align*}
&X^{p(p^2-p)+p-1}Y^{p^2-1}\\
&=X^{p^2(p-1)}Y^{p^2-1}X^{p-1}\\
&=(X^pf_2-Xf_1^{p-1})^{p-1}(Y^{p-1}f_2-f_1^{p-1})X^{p-1}\\
&=(\sum_{i=0}^{p-1}X^{(p-1)(i+1)}f_2^if_1^{(p-1)(p-i-1)})(Y^{p-1}f_2-f_1^{p-1})X^{p-1}\\
&=(X^{p-1}f_2-f_1^{p-1})Y^{p-1}f_2^p+X^{p(p-1)}Y^{p-1}f_2^{p-1}f_1^{p-1}\\
&+\sum_{2\leq\ell\leq p-1}X^{ \ell(p-1)}Y^{p-1}f_2^{\ell-1}f_1^{(p-1)(p-\ell+1)}-\sum_{2\leq \ell\leq p}X^{\ell(p-1)}f_2^{\ell-2}f_1^{(p-1)(p-\ell+2)}\\
&+(X^{p-1}f_2-f_1^{p-1})f_2^{p-1}f_1^{p-1}.
\end{align*}
This shows the term 
\[
(X^{p(p^2-p)+p-1}Y^{p^2-1}f_{1}^{p(p^{\delta-1}-1)}f_{2}^{p(p^{\delta-1}-1)}
\]
in (\ref{equation-intermidiate-mononial-reduction}) is of the form described on the right hand side of (\ref{equation-monomial-reduction}). Moreover, 
for $1<\ell<p$, 
\[
\deg_{f_2}(f_2^{\ell-1}f_1^{(p-1)(p-\ell+1)}h(f_1^p, f_2^p))\leq \ell-1+p(p^{\delta-1}-1)=p^{\delta}-p+\ell-1
\]
and for $1<\ell<p+1$
\[
\deg_{f_2}(f_2^{\ell-2}f_1^{(p-1)(p-\ell+2)}h(f_1^p, f_2^p))\leq \ell-2+p(p^{\delta-1}-1)\leq p^{\delta}-p+\ell-1, 
\]
which shows the conditions (B) and (C).
And for the second term in (\ref{equation-intermidiate-mononial-reduction}), 
\begin{align*}
&X^{\ell p(p-1)+p-1}Y^{p^2-1}a_{\ell}(f_1^p, f_2^p)\\
&=X^{\ell p(p-1)+p-1}Y^{p-1}f_2a_{\ell}(f_1^p, f_2^p)-X^{\ell p(p-1)+p-1}f_1^{p-1}a_{\ell}(f_1^p, f_2^p), 
\end{align*}
we prove by induction on $\ell$ that the term
\[
X^{\ell p(p-1)+p-1}Y^{p-1}f_2a_{\ell}(f_1^p, f_2^p)
\]
is of the form described on right hand side of (\ref{equation-monomial-reduction})
and satisfying (B) and (C).
For $\ell=0$, the term $X^{p-1}Y^{p-1}f_2a_{\ell}(f_1^p, f_2^p)$
is already of the desired form. 
And for $\ell>0$, 
\begin{align*}
&X^{\ell p(p-1)+p-1}Y^{p-1}f_2a_{\ell}(f_1^p, f_2^p)\\
&=X^{(\ell-1) p(p-1)}(X^{p-1}f_2-f_1^{p-1})Y^{p-1}a_{\ell}(f_1^p, f_2^p)\\
&=X^{(\ell-1) p(p-1)+p-1}Y^{p-1}f_2a_{\ell}(f_1^p, f_2^p)-X^{(\ell-1) p(p-1)}Y^{p-1}f_1^{p-1}a_{\ell}(f_1^p, f_2^p).
\end{align*}

If furthermore $\ell\geq 2$, 
\begin{align*}
&X^{(\ell-1) p(p-1)}Y^{p-1}\\
&=X^{(\ell-2) (p^2-1)+(p-1)(p-\ell+2)}Y^{p-1}\\
&=(X^{p-1}f_2-f_1^{p-1})^{\ell-2}X^{(p-1)(p-\ell+2)}Y^{p-1}\\
&=X^{p(p-1)}Y^{p-1}f_{2}^{\ell-2}+\sum_{i<\ell-2}d_iX^{(p-1)(p-\ell+i+2)}Y^{p-1}f_2^i f_1^{(p-1)(\ell-2-i)}
\end{align*}
Therefore for $2\leq \ell\leq p-1$, we get a contribution
\[
X^{p(p-1)}Y^{p-1}f_1^{p-1}f_{2}^{\ell-2}a_{\ell}(f_1^p, f_2^p)
\]
by assumption, we know that 
\[
\deg_{f_2}(f_1^{p-1}f_{2}^{\ell-2}a_{\ell}(f_1^p, f_2^p))\leq p(p^{\delta-1}-p+\ell-1)+\ell-2<p^{\delta}-1.
\]
Also for $i<\ell-2$, we get 
\[
X^{(p-1)(p-\ell+i+2)}Y^{p-1}f_2^if_1^{(p-1)(\ell-i-1)}a_{\ell}(f_1^p, f_2^p)
\]
satisfying 
\begin{align*}
\deg_{f_2}(f_2^if_1^{(p-1)(\ell-i-1)}a_{\ell}(f_1^p, f_2^p))&\leq i+p(p^{\delta-1}-p+\ell-1)\\
&\leq p^{\delta}-p+(p-\ell+i+2)-1, 
\end{align*}
which verifies the condition (B) above. Applying
induction on $\ell$ shows that 
\[
X^{\ell p(p-1)+p-1}Y^{p-1}f_2a_{\ell}(f_1^p, f_2^p)
\]
is also of the form described on the right hand 
side of (\ref{equation-monomial-reduction}).
We still need to show that the term
\[
\sum_{0\leq \ell<p+1} X^{\ell p(p-1)+p-1}Y^{p-1}b_{\ell}(f_1^p, f_2^p)
\]
is of the form on the right hand side of 
(\ref{equation-monomial-reduction}) and satisfying (B) and (C).
The case of $0\leq \ell\leq p-1$ is already proved. 
For $l=p$, we have
\begin{align*}
&X^{p^2(p-1)+p-1}Y^{p-1}\\
&=(X^pf_2-Xf_1^{p-1})^{p-1}X^{p-1}Y^{p-1}\\
&=(\sum_{i=0}^{p-1}X^{(p-1)(i+1)}f_2^if_1^{(p-1)(p-i-1)})X^{p-1}Y^{p-1}\\
&=(X^{p-1}f_2-f_1^{p-1})Y^{p-1}f_2^{p-1}+X^{p(p-1)}Y^{p-1}f_2^{p-2}f_1^{p-1}\\
&+\sum_{2\leq r<p}X^{ r(p-1)}Y^{p-1}f_2^{r-2}f_1^{(p-1)(p-r+1)}.
\end{align*}
And we have
\begin{align*}
&\deg_{f_2}(f_2^{p-2}f_1^{p-1}b_{p}(f_1^p, f_2^p))\leq p(p^{\delta-1}-p+p-1)+p-2<p^{\delta}-1, 
\end{align*}
and
\begin{align*}
\deg_{f_2}(f_2^{r-2}f_1^{(p-1)(p-r+1)}b_{p}(f_1^p, f_2^p))&\leq p(p^{\delta-1}-p+p-1)+r-2\\
&  \leq p^{\delta}-p+r-1.
\end{align*}
This proves the lemma.
\end{proof}
\begin{lemma}\label{lemma-pairing-lifting-multiplication-nonvanishing}
Assume $p>3$ be a prime. Let 
\[
U_{\delta}:=X^{p^{\delta+1}-p^{\delta}+p^{\delta-1}-1}Y^{p^\delta-1}.
\]
For $r\geq 1$, we have
\[
\langle w_{\delta+r}, f_{1, \delta}^{p^{r}-1}f_{2, \delta}^{p^{r}-1}U_{\delta}\rangle 
\]
is a unit in $\bZ/p^{\delta}$, where $w_{\delta+r}$ is considered to be the primitive element in 
$D(V_{\delta})^{G_{\delta}}$. Moreover, for $a>0$, 
\[
\langle w_{\delta+r}, f_{1, \delta}^{p^{r}-1+ap(p-1)}f_{2, \delta}^{p^{r}-1-a(p+1)}U_{\delta}\rangle\equiv 0, \quad \mod p, 
\]
where 
$a\in 1/2\bZ$.
\end{lemma}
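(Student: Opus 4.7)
The plan is to reduce the entire computation modulo $p$, using the fact from Proposition \ref{prop-lift-fixedelement-divpow} that $w_{\delta+r} \equiv u_{\delta+r} \pmod p$. Since
\[
u_{\delta+r} = \sum_{j=1}^{p} \xi_1^{(\alpha_j)}\xi_2^{(\beta_j)}, \qquad \alpha_j = p^{\delta+r-1}-1+jp^{\delta+r-1}(p-1), \; \beta_j = p^{\delta+r-1}-1+(p-j+1)p^{\delta+r-1}(p-1),
\]
the pairing $\langle u_{\delta+r}, P(X,Y) \rangle \bmod p$ simply extracts the sum of the coefficients of $P$ at the $p$ monomials $X^{\alpha_j}Y^{\beta_j}$. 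Thus the lemma reduces to identifying which terms of $f_{1,\delta}^{p^r-1}f_{2,\delta}^{p^r-1}U_{\delta}$ (and its perturbation) land on the degree set $\{(\alpha_j,\beta_j)\}_{j=1}^p$ modulo $p$.

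For the first assertion, I would use $f_{i,\delta}\equiv f_i^{p^{\delta-1}}\pmod p$ together with the easily verified identity $f_1 f_2 = X^{p^2}Y - XY^{p^2}$ to get the clean collapse
\[
f_{1,\delta}^{p^r-1}f_{2,\delta}^{p^r-1} \equiv (X^{p^2}Y - XY^{p^2})^N \pmod p,\qquad N = p^{\delta-1}(p^r-1).
\]
Multiplying by $U_\delta$ and expanding binomially, the generic term is
\[
(-1)^{N-k}\binom{N}{k} X^{(p^2-1)k+N+p^{\delta+1}-p^\delta+p^{\delta-1}-1}Y^{p^2 N - (p^2-1)k+p^\delta - 1}.
\]
Matching the $X$-degree with $\alpha_j$ boils down to the linear Diophantine condition $(p+1)k + p^\delta = jp^{\delta+r-1}$. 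The base-$p$ expansion of $N$ has digits $(p-1)$ precisely in positions $\delta-1,\ldots,\delta+r-2$, so by Lucas' theorem (Lemma \ref{lemma-Lucas-theorem}) the surviving $k$'s are exactly those supported in that window; combined with the Diophantine constraint (and the corresponding constraint from the $Y$-degree), this picks out a single $(j,k)$ with a $\pm 1$ coefficient mod $p$, giving the required unit in $\bZ/p^\delta$.

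For the second assertion, the exponent shift $+ap(p-1)$ on $f_{1,\delta}$ and $-a(p+1)$ on $f_{2,\delta}$ breaks the clean collapse $f_1 f_2$ and forces an asymmetric expansion. I would write $f_{1,\delta}^{p^r-1+ap(p-1)}f_{2,\delta}^{p^r-1-a(p+1)} = (f_1 f_2)^{p^{\delta-1}(p^r-1-a(p+1))}\cdot f_1^{p^{\delta-1}(ap(p-1)+a(p+1))}$ mod $p$, then expand $f_1 = XY(X^{p-1}-Y^{p-1})$ and track how the modified exponents alter the base-$p$ digit pattern. The goal is to show that for every candidate $(j,k)$ producing a monomial of degree $(\alpha_j,\beta_j)$, the base-$p$ digits of the relevant index exceed those of the shifted top exponent in some position, so every contributing binomial coefficient vanishes by Lucas; the half-integer values of $a$ are handled using the $X\leftrightarrow Y$ symmetry of $u_{\delta+r}$.

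The main obstacle will be the second assertion: it is not enough for each individual term of the expansion to vanish modulo $p$, one must ensure that no cancellation-surviving combination exists across the $p$ different $j$'s. The hypothesis $p>3$ enters here in controlling the binomial coefficients $\binom{p-1}{i}$ that appear when expanding $(X^{p-1}-Y^{p-1})^M$ and in ruling out degenerate digit-matchings in the Lucas analysis. Once the digit pattern of the shifted $f_1$-exponent is pinned down, a case analysis on $j \in \{1,\ldots,p\}$ should yield the desired vanishing.
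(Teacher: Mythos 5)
Your treatment of the first assertion follows the paper's route: reduce modulo $p$ to a pairing against $u_{\delta+r}$, collapse $f_{1,\delta}^{p^r-1}f_{2,\delta}^{p^r-1}$ to $(X^{p^2}Y-XY^{p^2})^{p^{\delta-1}(p^r-1)}$, apply Frobenius, and count solutions of a linear Diophantine condition. You elide the case analysis that actually produces \emph{exactly one} solution (the paper shows that for $r$ odd only $\ell=1$ gives an integral $j=\tfrac{p(p^{r-1}-1)}{p+1}$, while for $r$ even only $\ell=p$ works), but the strategy is sound and essentially identical.

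The second assertion is where your plan has a genuine gap. You propose to show that \emph{every contributing binomial coefficient vanishes by Lucas}, i.e.\ a term-by-term vanishing. That is false: after writing $f_{1,\delta}^{p^r-1+ap(p-1)}f_{2,\delta}^{p^r-1-a(p+1)}U_\delta$ as a double sum indexed by $(i,j)$ with coefficients $\binom{a(p^2+1)}{i}\binom{p^r-1-a(p+1)}{j}$, the pairing condition $p^r\ell=j(p+1)+ap+i+p$ admits many solutions whose individual coefficients are nonzero mod $p$ (e.g.\ $i\in\{0,1,p,p+1,\dots\}$ already give units from the first factor). The vanishing is an \emph{aggregate} cancellation, and the mechanism the paper uses is a generating-function filter: set $Q(t)=t^{(a+1)p}(1-t)^{a(p^2+1)}(1-t^{p+1})^{p^r-1-a(p+1)}$, observe that the quantity to be computed is $S_r(Q)=\sum_{p^r\mid i}b_i$, that $S_r$ annihilates any multiple of $1-t^{p^r}$, and that over $\bF_p$ one has $1-t^{p^r}=(1-t)^{p^r}$ while $Q$ is divisible by $(1-t)^{p^r-1+a(p^2-p)}$, whose exponent is $\ge p^r$ precisely because $a>0$. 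You correctly flag the cancellation issue as ``the main obstacle,'' but your proposed resolution (digit comparisons forcing each Lucas factor to die) cannot succeed, and no substitute for the $(1-t)^{p^r}$-divisibility argument appears in your plan. Likewise, the half-integer case is handled in the paper simply by noting that the pairing equation forces $a\in\bZ$, so for genuine half-integers there are no contributing terms at all; the $X\leftrightarrow Y$ symmetry you invoke is not needed and does not obviously substitute for this observation.
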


\remk For an element
$f_{1, \delta}^{p^{r}-1-b_2}f_{2, \delta}^{p^{r}-1+b_1}$ to be of
the same degree as $f_{1, \delta}^{p^{r}-1}f_{2, \delta}^{p^{r}-1}$, we must have
\[
b_1=a(p+1), \quad b_2=ap(p-1), 
\]
with $a$ being half integer for $p\geq 3$.

\begin{proof}[Proof of Lemma \ref{lemma-pairing-lifting-multiplication-nonvanishing}]
To show 
\[
\langle w_{\delta+r}, f_{1, \delta}^{p^{r}-1}f_{2, \delta}^{p^{r}-1}U_{\delta}\rangle 
\]
is a unit in $\bZ/p^{\delta}$, we can take 
the projection onto $\bF_p$. Then it 
suffice to show that
\[
\langle u_{\delta+r}, f_{1, \delta}^{p^{r}-1}f_{2, \delta}^{p^{r}-1}U_{\delta}\rangle 
\]
is a unit in $\bF_p$. But over $\bF_p$, we have
\begin{align*}
&f_{1, \delta}^{p^{r}-1}f_{2, \delta}^{p^{r}-1}U_{\delta}\\
&=(X^{p^2}Y-XY^{p^2})^{p^{\delta-1}(p^{r}-1)}U_{\delta}\\
&=(X^{p^{\delta+1}}Y^{p^{\delta-1}}-X^{p^{\delta-1}}Y^{p^{\delta+1}})^{p^{r}-1}X^{p^{\delta+1}-p^{\delta}+p^{\delta-1}-1}Y^{p^{\delta}-1}\\
&=\sum_{j=0}^{p^{r}-1}X^{jp^{\delta+1}+(p^r-1-j)p^{\delta-1}}Y^{jp^{\delta-1}+(p^{r}-j-1)p^{\delta+1}}
X^{p^{\delta+1}-p^{\delta}+p^{\delta-1}-1}Y^{p^{\delta}-1}\\
&=\sum_{j=0}^{p^{r}-1}X^{p^{\delta+r-1}-1+p^{\delta-1}(p-1)(p+j(p+1))}Y^{p^{\delta+r+1}-1-p^{\delta-1}(p-1)(p+j(p+1))}.
\end{align*}
And we have
\[
u_{\delta+r}=\sum_{\ell=1}^{p}\xi_1^{(p^{\delta+r-1}-1+\ell p^{\delta+r-1}(p-1))}
\xi_2^{(p^{\delta+r-1}-1+(p-\ell+1)p^{\delta+r-1}(p-1))}.
\]
Therefore we have
$\langle u_{\delta+r}, f_{1, \delta}^{p^{r}-1}f_{2, \delta}^{p^{r}-1}U_{\delta}\rangle $
equal to 
\[
\sharp\{j|0\leq j\leq p^r-1: p+j(p+1)=\ell p^r\text{ for some } 1\leq \ell \leq p\}.
\]
For $r$ odd and $\ell=1$, we know 
\[
j=\frac{p(p^{r-1}-1)}{p+1}
\]
is an integer. And for $2\leq \ell\leq p$, 
\[
p+\ell=\ell(p^r+1)-j(p+1)\equiv 0, \quad \mod (p+1) 
\]
admits no solution. For $r$ even and $\ell=p$, 
\[
j=\frac{p(p^{r}-1)}{p+1}
\]
is an integer. And for $1\leq \ell\leq p-1$, 
\[
p+\ell p=\ell p(p^{r-1}+1)-j(p+1)\equiv 0, \quad \mod (p+1)
\]
admits no solution. Therefore
\[
\langle u_{\delta+r}, f_{1, \delta}^{p^{r}-1}f_{2, \delta}^{p^{r}-1}U_{\delta}\rangle=1.
\]
Let $a>0$, 
\begin{align*}
&f_{1, \delta}^{p^{r}-1+ap(p-1)}f_{2, \delta}^{p^{r}-1-a(p+1)}U_{\delta}\\
&=(X^pY-XY^p)^{p^{\delta-1}a(p^2+1)}(X^{p^2}Y-XY^{p^2})^{p^{\delta-1}((p^{r}-1)-a(p+1))}U_{\delta}\\
&=(X^{p^{\delta}}Y^{p^{\delta-1}}-X^{p^{\delta}-1}Y^{p^{\delta}})^{a(p^2+1)}(X^{p^{\delta+1}}Y^{p^{\delta-1}}-X^{p^{\delta-1}}Y^{p^{\delta+1}})^{p^{r}-1-a(p+1)}U_{\delta}\\
&=(\sum_{i=0}^{a(p^2+1)}(-1)^{a(p^2+1)-i}\binom{a(p^2+1)}{i}X^{ip^{\delta}+(a(p^2+1)-i)p^{\delta-1}}Y^{p^{\delta-1}+(a(p^2+1)-i)p^{\delta}})\\
&\hspace{0.5cm}(\sum_{j=0}^{p^r-1-a(p+1)}(-1)^{p^r-1-a(p+1)-j}\binom{p^r-1-a(p+1)}{j}X^{jp^{\delta+1}+(p^r-1-j-a(p+1))p^{\delta-1}}\\
&\hspace{0.5cm}Y^{jp^{\delta-1}+(p^r-1-j-a(p+1))p^{\delta+1}})U_{\delta}\\
&=\sum_{i=0}^{a(p^2+1)}\sum_{j=0}^{p^r-1-a(p+1)}(-1)^{p^r-1+a(p^2-p)-i-j}\binom{a(p^2+1)}{i}\binom{p^r-1-a(p+1)}{j}\\
&\hspace{0.5cm}X^{p^{\delta+r-1}-p^{\delta-1}+p^{\delta-1}(ip-i+p^2j-j+ap(p-1))}Y^{p^{\delta+r+1}-p^{\delta+1}+p^{\delta-1}(-ip+i-p^2j+j-ap(p-1))}U_{\delta}\\
&=\sum_{i=0}^{a(p^2+1)}\sum_{j=0}^{p^r-1-a(p+1)}(-1)^{p^r-1+a(p^2-p)-i-j}\binom{a(p^2+1)}{i}\binom{p^r-1-a(p+1)}{j}\\
&\hspace{0.5cm}X^{p^{\delta+r-1}-1+p^{\delta-1}(p-1)((j+a)p+i+j+p)}Y^{p^{\delta+r+1}-1-p^{\delta-1}(p-1)((j+a)p+i+j+p)}.
\end{align*}
Again, recall that
\[
u_{\delta+r}=\sum_{\ell=1}^{p}\xi_1^{(p^{\delta+r-1}-1+\ell p^{\delta+r-1}(p-1))}
\xi_2^{(p^{\delta+r-1}-1+(p-\ell +1)p^{\delta+r-1}(p-1))}.
\]
Hence we need to consider the equation
\[
p^{\delta-1}(p-1)((j+a)p+i+j+p)=\ell p^{\delta+r-1}(p-1)
\]
or equivalently, 
\begin{equation}\label{equation-binomial-pairing-relation}
p^r\ell=j(p+1)+ap+i+p.
\end{equation}

In fact, when $p$ is odd, the equation (\ref{equation-binomial-pairing-relation}) requires $a$ to be integer. From now on we consider 
$a$ to be integer and prove the general case.
The equation (\ref{equation-binomial-pairing-relation}) allows
us to reduce to prove that the sum of coefficients
$\sum_{\ell=1}^pb_{\ell p^r}$ in 
\[
Q(t):=t^{(a+1)p}(1-t)^{a(p^2+1)}(1-t^{p+1})^{p^r-1-a(p+1)}=\sum_{i=0}^{\deg(Q(t))}b_it^i
\]
vanishes $\bF_p$, where $b_i$ are all integers. We have
\[
\deg(Q(t))=(a+1)p+a(p^2+1)+(p+1)(p^r-1-a(p+1))=p^{r+1}+p^r-ap-1
\]
As a consequence, we see that 
\[
\sum_{p^r\mid i}b_i=\sum_{\ell=1}^pb_{\ell p^r}.
\]
Motivated by this, we define for any 
$h(t)=\sum_{i=0}^{d}c_it^i\in \bF_p[t]$, 
\[
S_r(h)=\sum_{p^r\mid i}c_i\in \bF_p.
\]
We observe that the operator $S_r$
is linear and invariant under multiplication by $t^{p^r}$, i.e, 
\[
S_r(ht^{p^r})=S_r(h).
\]
One gets as an immediate consequence that
\[
S_r(h(t^{p^r}-1))=0.
\]
Now we only need to show that
\[
Q(t)\equiv =(1-t^{p^r})Q'(t), \quad \mod p.
\]
Over $\bF_p$, 
\[
(1-t^{p^r})=(1-t)^{p^r}
\]
and $Q(t)$ is divisible by
\[
(1-t)^{a(p^2+1)}(1-t)^{p^r-1-a(p+1)}=(1-t)^{p^r-1+a(p^2-p)}
\]
the condition that $a>0$ shows the desired result.
\end{proof}

\remk 
Note that we have for $\zeta_{p^r}$ the $p^r$-th roots
of unity, 
\[
\sum_{k=0}^{p^r-1}\zeta_{p^r}^{ck}=\left\{\begin{array}{lcr}
p^r ,&  \text{ if }  p^r\mid c, \\
0 ,&  \text{ otherwise }.
\end{array}\right. 
\]
This shows that consider $Q(t)$ as an element in $\bZ[t]$, we have
\[
\sum_{\ell=1}^{p}b_{p^r\ell}=\frac{\sum_{i=0}^{p^r-1}Q(\zeta_{p^r})}{p^r}.
\]
As a consequence, we get
\[
\val_p(\sum_{i=1}^pQ(\zeta_{p^r}))\geq r+1.
\]

\remk Indeed the statement also holds for 
 $a<0$, but we do not need it.
 
\remk We thank 
Danylo Radchenko for discussion on the last part of the proof.

\begin{prop}\label{prop-primitive-element-stable-multiplication}
Let $f\in M^{\delta, G_{\delta}}_{\prim}=\bZ/p^{\delta}[f_{1, \delta}, f_{2, \delta}]$ such that
$f\neq 0 \mod p$. Then the element $fX^{p^{\delta+1}-p^{\delta}+p^{\delta-1}-1}Y^{p^\delta-1}$ is also primitive.
\end{prop}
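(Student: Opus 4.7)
The plan is to exhibit an invariant $w\in D(V_\delta)^{G_\delta}$ such that the pairing $\langle w, fU_\delta\rangle\in\bZ/p^\delta$ is a unit, where $U_\delta:=X^{p^{\delta+1}-p^\delta+p^{\delta-1}-1}Y^{p^\delta-1}$. Since the pairing $D(V_\delta)\times M^\delta\to\bZ/p^\delta$ descends to a bilinear pairing $D(V_\delta)^{G_\delta}\times M^\delta_{G_\delta}\to\bZ/p^\delta$ and since $p^k\langle w,fU_\delta\rangle=\langle w,p^kfU_\delta\rangle$, a unit pairing forces $fU_\delta$ to have order $p^\delta$, i.e.\ to be primitive.

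By the graded structure of $M^\delta_{G_\delta}$ it suffices to treat homogeneous $f$. Write $f=\sum c_{a,b}f_{1,\delta}^af_{2,\delta}^b$ with the homogeneity constraint that $a(p+1)+bp(p-1)$ is a fixed integer across all $(a,b)$ with $c_{a,b}\neq 0$. Among those indices with $c_{a,b}\in(\bZ/p^\delta)^\times$ (which exist by the hypothesis $f\not\equiv 0\bmod p$), select $(a_0,b_0)$ minimizing the first coordinate; the degree constraint then forces any other unit-coefficient index to satisfy $a>a_0$ and $b<b_0$.

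Next, I would invoke the natural contraction action of $(M^\delta)^{G_\delta}$ on $D(V_\delta)^{G_\delta}$: given invariants $m$ and $\phi$, the element $m\cdot\phi$ determined by $\langle m\cdot\phi,n\rangle=\langle\phi,mn\rangle$ is again $G_\delta$-invariant (a routine check using $gm=m$, $g\phi=\phi$, and the dual action), of degree $\deg\phi-\deg m$. Pick $r\ge 1$ with $p^r-1\ge\max(a_0,b_0)$ and set
\[
w:=f_{1,\delta}^{p^r-1-a_0}f_{2,\delta}^{p^r-1-b_0}\cdot w_{\delta+r}\in D(V_\delta)^{G_\delta},
\]
where $w_{\delta+r}$ is the invariant lift of $u_{\delta+r}$ furnished by Proposition~\ref{prop-lift-fixedelement-divpow}. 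A direct computation from $\deg f_{1,\delta}=(p+1)p^{\delta-1}$, $\deg f_{2,\delta}=p(p-1)p^{\delta-1}$, $\deg U_\delta=p^{\delta+1}+p^{\delta-1}-2$, and $\deg w_{\delta+r}=p^{\delta+r+1}+p^{\delta+r-1}-2$ confirms $\deg w=\deg(fU_\delta)$.

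Expanding the pairing,
\[
\langle w,fU_\delta\rangle=\sum_{(a,b)}c_{a,b}\langle w_{\delta+r},f_{1,\delta}^{a+p^r-1-a_0}f_{2,\delta}^{b+p^r-1-b_0}U_\delta\rangle.
\]
The distinguished term $(a,b)=(a_0,b_0)$ contributes $c_{a_0,b_0}\cdot\langle w_{\delta+r},f_{1,\delta}^{p^r-1}f_{2,\delta}^{p^r-1}U_\delta\rangle$, a unit times a unit in $\bZ/p^\delta$ by the first part of Lemma~\ref{lemma-pairing-lifting-multiplication-nonvanishing}. Any other term with $c_{a,b}$ not a unit contributes something divisible by $p$. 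Any other term with $c_{a,b}$ a unit has $a>a_0$, whence by homogeneity $(a-a_0,b-b_0)=(sp(p-1),-s(p+1))$ for some $s\in\frac{1}{2}\bZ_{>0}$, and the second part of Lemma~\ref{lemma-pairing-lifting-multiplication-nonvanishing} shows such pairings vanish mod $p$. Hence $\langle w,fU_\delta\rangle\equiv c_{a_0,b_0}\bmod p$ is a unit, completing the proof.

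The main technical point is setting up the contraction action and verifying that its interaction with the pairing produces an element of $D(V_\delta)^{G_\delta}$ of the correct degree. The careful choice of $(a_0,b_0)$ as the unit-coefficient index with smallest first coordinate is essential because it keeps us entirely in the regime $s>0$ covered by Lemma~\ref{lemma-pairing-lifting-multiplication-nonvanishing}, sidestepping the $s<0$ variant that is alluded to only in the remark.
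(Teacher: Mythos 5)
Your proof is correct and follows essentially the same route as the paper: both arguments multiply by the complementary monomial $f_{1,\delta}^{p^r-1-a_0}f_{2,\delta}^{p^r-1-b_0}$ (you phrase this dually as a contraction acting on $w_{\delta+r}$, which is the same computation via $\langle m\cdot\phi,n\rangle=\langle\phi,mn\rangle$), select the extremal unit-coefficient index in the same way, and conclude from the two parts of Lemma \ref{lemma-pairing-lifting-multiplication-nonvanishing} that the distinguished term contributes a unit while all others vanish mod $p$. The only cosmetic differences are that the paper treats monomials first before the general linear combination and assumes WLOG that all coefficients are units, whereas you handle non-unit coefficients directly.
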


\begin{proof}
We first show that this is the case when 
$f$ is a monomial. For simplicity, 
we denote 
\[
U_{\delta}:=X^{p^{\delta+1}-p^{\delta}+p^{\delta-1}-1}Y^{p^\delta-1}.
\]

The lemma above implies that 
the element $f_{1, \delta}^{p^{r}-1}f_{2, \delta}^{p^{r}-1}U_{\delta}$ is primitive
in $M^{\delta}_{G_{\delta}}$. 
Therefore for any monomial
$f=f_{1, \delta}^af_{2, \delta}^{b}$, we can choose
$f'=f_{1, \delta}^{p^r-a-1}f_{2, \delta}^{p^r-b-1}$, where
$r$ is an integer such that $p^r-1\geq \max\{a, b\}$.
Then $f'fU_{\delta}$ is primitive, which implies that
$fU_{\delta}$ itself is primitive.
For general case let 
$f=\sum_{i}h_i$ be a linear combination of monomials $h_i=c_if_{1, \delta}^{a_i}f_{2, \delta}^{b_i}$. 
Without loss of generality, assume that $c_i\neq 0 \mod p$ for all $i$, and
\[
b_1>b_2>\cdots, \quad a_1+b_1=a_2+b_2=\cdots
\]
Let $r$ be the minimal integer such that
$p^r-1\geq \max\{a_1, b_1\}$.
Hence 
\[
f_{1, \delta}^{p^r-1-a_1}f_{2, \delta}^{p^r-1-b_1}fU_{\delta}=f_{1, \delta}^{p^r-1-a_1}f_{2, \delta}^{p^r-1-b_1}h_1U_{\delta}+f_{1, \delta}^{p^r-1-a_1}f_{2, \delta}^{p^r-1-b_1}h_2U_{\delta}+\cdots.
\]
We claim that
\[
\langle u_{\delta+r},f_{1, \delta}^{p^r-1-a_1}f_{2, \delta}^{p^r-1-b_1}fU_{\delta}\rangle 
\]
is nonzero in $\bF_p$. Note that
in the lemma above, we have shown
\[
\langle u_{\delta+r},f_{1, \delta}^{p^r-1-a_1}f_{2, \delta}^{p^r-1-b_1}h_1U_{\delta}\rangle =1.
\]
and for $i>1$, 
\[
\langle u_{\delta+r},f_{1, \delta}^{p^r-1-a_1}f_{2, \delta}^{p^r-1-b_1}h_iU_{\delta}\rangle =0.
\]
This shows that $fX^{p^{\delta+1}-p^{\delta}+p^{\delta-1}-1}Y^{p^\delta-1}$ is primitive.
\end{proof}

\begin{cor}\label{cor-second-cohomology-nonvanishing-p-powers-freeness}
 Let $M^{\delta, G_{\delta}}_{\prim}=\bZ/p^{\delta}[f_{1, \delta}, f_{2, \delta}]$. Then 
 \[
\Ann_{M^{\delta, G_{\delta}}_{\prim}}(X^{p^{\delta+1}-p^{\delta}+p^{\delta-1}-1}Y^{p^\delta-1})=0
 \]
 Therefore $X^{p^{\delta+1}-p^{\delta}+p^{\delta-1}-1}Y^{p^\delta-1}$ generates a free module of rank one over $M^{\delta, G_{\delta}}_{\prim}$.
\end{cor}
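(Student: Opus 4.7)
The plan is to deduce this corollary directly from Proposition \ref{prop-primitive-element-stable-multiplication}. Set $U_{\delta} := X^{p^{\delta+1}-p^{\delta}+p^{\delta-1}-1}Y^{p^{\delta}-1}$ and suppose $f \in M^{\delta, G_{\delta}}_{\prim} = \bZ/p^{\delta}[f_{1,\delta}, f_{2,\delta}]$ satisfies $f U_{\delta} = 0$ in $M^{\delta}_{G_{\delta}}$. I want to show $f = 0$.

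The key observation is that $M^{\delta, G_{\delta}}_{\prim}$ is a free $\bZ/p^{\delta}$-module, with basis given by monomials in the algebraically independent generators $f_{1,\delta}, f_{2,\delta}$. Consequently, any nonzero $f$ admits a unique presentation $f = p^{j} g$ with $0 \leq j < \delta$ and $g \not\equiv 0 \pmod{p}$, where $j$ is the minimum $p$-adic valuation among the $\bZ/p^{\delta}$-coefficients of $f$. Assuming for contradiction that $f \neq 0$, Proposition \ref{prop-primitive-element-stable-multiplication} applies to $g$ and shows that $g U_{\delta}$ is primitive in $M^{\delta}_{G_{\delta}}$, i.e.\ has exact order $p^{\delta}$. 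But then $0 = f U_{\delta} = p^{j}(g U_{\delta})$ forces the order of $g U_{\delta}$ to divide $p^{j}$, contradicting $j < \delta$. Hence $f = 0$ and $\Ann_{M^{\delta, G_{\delta}}_{\prim}}(U_{\delta}) = 0$.

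The freeness of rank one then follows immediately: the $M^{\delta, G_{\delta}}_{\prim}$-linear map $f \mapsto f U_{\delta}$ from $M^{\delta, G_{\delta}}_{\prim}$ onto the cyclic submodule generated by $U_{\delta}$ has trivial kernel by the step above, so it is an isomorphism.

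In terms of difficulty, there is no substantial obstacle in this corollary itself; the genuine content has already been absorbed into Proposition \ref{prop-primitive-element-stable-multiplication}, whose proof reduces the primitivity of $g U_{\delta}$ (for arbitrary mod-$p$ nonzero $g$) to the monomial case via pairing against the lift $w_{\delta+r}$ and the non-vanishing computation of Lemma \ref{lemma-pairing-lifting-multiplication-nonvanishing}. With that input in hand, the present statement reduces to a one-line divisibility remark in $\bZ/p^{\delta}$.
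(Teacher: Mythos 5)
Your proof is correct and follows essentially the same route as the paper: both arguments factor a putative annihilating element as $p^r g$ with $g\not\equiv 0 \bmod p$, invoke Proposition \ref{prop-primitive-element-stable-multiplication} to see that $gU_\delta$ has exact order $p^\delta$, and conclude $r\geq\delta$, hence $f=0$. The only (harmless) divergence is at the end: you deduce freeness directly from the vanishing of the annihilator of the cyclic generator, whereas the paper appends an additional, essentially redundant verification via the mod-$p$ structure of $M^1_{G_1}$ (Proposition \ref{prop-divided-power-struture-modp} and Lemma \ref{lemma-primitive-element-special-expression}); your shortcut is valid.
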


\begin{proof}
We recall that
\begin{align*}
f_{1, \delta}&=(X^pY-XY^p)^{p^{\delta-1}}\\
f_{2, \delta}&=(X^{p(p-1)}+X^{(p-1)(p-1)}Y^{p-1}+\cdots+Y^{p(p-1)})^{p^{\delta-1}}.
\end{align*}
Suppose that $f\in M^{\delta, G_{\delta}}_{\prim}=\bZ/p^{\delta}[f_{1, \delta}, f_{2, \delta}]$ annihilates the element 
$X^{p^{\delta+1}-p^{\delta}+p^{\delta-1}-1}Y^{p^{\delta}-1}$.
Assume $f=p^rf_2$ for some $r\geq 0$ and $f_2\neq 0\mod p$.
But then Proposition \ref{prop-primitive-element-stable-multiplication} shows that
$f_2X^{p^{\delta+1}-p^{\delta}+p^{\delta-1}-1}Y^{p^{\delta}-1}$
is of order $p^\delta$. This implies $r\geq \delta$, therefore
$f=0$ in $M^{\delta, G_{\delta}}_{\prim}$.
This shows 
\[
\Ann_{M^{\delta, G_{\delta}}_{\prim}}(X^{p^{\delta+1}-p^{\delta}+p^{\delta-1}-1}Y^{p^{\delta}-1})=0.
\]

By Proposition \ref{prop-divided-power-struture-modp}, we 
known that the sub-module $N$ of $M^1_{G_1}$
generated by $X^{p^2-p}Y^{p-1}$ is free of rank
one over $M^{1, G_1}$. Lemma \ref{lemma-primitive-element-special-expression} above shows that
the element 
$X^{p^{\delta+1}-p^{\delta}+p^{\delta-1}-1}Y^{p^{\delta}-1}$
is non-trivial under the projection of $M^1_{G_1}$
to $N$, this shows the freeness of 
$$M^{\delta, G_{\delta}}_{\prim}X^{p^{\delta+1}-p^{\delta}+p^{\delta-1}-1}Y^{p^{\delta}-1}.$$
\end{proof}

We return to the proof of Proposition \ref{prop-second-cohomology-module-structures}.

\begin{lemma}\label{lemma-vanishing-diagonal-action}
Let $d>0$. Then the monomial $X^{a}Y^{b}$ vanishes in $M^\delta_{G_\delta}$ if $p-1\nmid (a-b)$. Moreover, 
if $p-1\mid (a-b)$, let $r=\val_p(\frac{a-b}{p-1})$, then
$X^{a}Y^{b}\in M^\delta_{G_\delta}[p^{r+1}]$, where
$M^\delta_{G_\delta}[p^{r+1}]$ is the sub-module
generated by elements killed by $p^{r+1}$.
\end{lemma}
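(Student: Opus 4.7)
The plan is to exploit the diagonal subgroup of $G_\delta$. For any $c \in (\bZ/p^\delta)^\times$, the matrix $g_c = \begin{pmatrix} c & 0 \\ 0 & c^{-1} \end{pmatrix}$ lies in $G_\delta$ and scales $X^a Y^b$ by $c^{a-b}$, so
\[
(1 - c^{a-b}) X^a Y^b = (\Id - g_c)(X^a Y^b) \in I_{G_\delta} M^\delta.
\]
Consequently $1 - c^{a-b}$ annihilates $X^a Y^b$ in $M^\delta_{G_\delta}$ for every such $c$, and the whole proof reduces to choosing $c$ cleverly so as to make $\val_p(1 - c^{a-b})$ as small as possible.

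For the first assertion, assume $p-1 \nmid (a-b)$. I take $c$ to be a Teichm\"uller lift of a primitive root of $\bF_p^\times$, i.e.\ a solution of $c^{p-1}=1$ in $\bZ/p^\delta$ whose reduction modulo $p$ generates $\bF_p^\times$. Then $c^{a-b} \not\equiv 1 \pmod p$, so $1 - c^{a-b}$ is a unit in $\bZ/p^\delta$, and the relation above forces $X^a Y^b = 0$ in $M^\delta_{G_\delta}$.

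For the second assertion, write $a-b = (p-1)m$ with $\val_p(m) = r$, so $\val_p(a-b) = r$. I choose instead $c = 1+p$. Since $p$ is odd and $p \mid c-1$, the lifting-the-exponent identity gives
\[
\val_p\bigl((1+p)^{a-b} - 1\bigr) = \val_p(p) + \val_p(a-b) = 1 + r.
\]
Thus $1 - c^{a-b}$ equals $p^{r+1}$ times a unit of $\bZ/p^\delta$ (the case $r+1 > \delta$ being automatic since $p^\delta = 0$), and the displayed relation yields $p^{r+1} X^a Y^b = 0$ in $M^\delta_{G_\delta}$, as required. No genuine obstacle is expected: the argument is a direct verification using a single diagonal element, with LTE doing the $p$-adic bookkeeping.
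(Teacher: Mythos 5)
Your proof is correct and follows essentially the same route as the paper: both use the diagonal element $\begin{pmatrix} c & 0 \\ 0 & c^{-1}\end{pmatrix}$, which scales $X^aY^b$ by $c^{a-b}$, and then choose $c$ to control $\val_p(1-c^{a-b})$. The only difference is that you make the choices explicit (a Teichm\"uller lift of a primitive root, and $c=1+p$ with lifting-the-exponent), where the paper merely asserts that suitable $c$ exist.
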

\begin{proof}
In fact, let $g=\begin{pmatrix}c&0\\ 0& c^{-1}\end{pmatrix}\in G_\delta$ with $c\in (\bZ/p^\delta)^{\times}_p$.
Then 
\[
(\Id-g)(X^aY^b)=(1-c^{a-b})X^aY^{b}.
\]
If $p-1\nmid a-b$, then picking $c$ with $c^{a-b}\neq 1 \mod p$ yields the result. And if $p-1 \mid a-b$, and
\[
r=\val_p(\frac{a-b}{p-1}). 
\]
Then $p^r(p-1)\mid (a-b)$. Pick $c\in \bZ/p^{\delta}$, 
such that
\[
1-c^{a-b}=c_0 p^{r+1}, c_0\in (\bZ/p^{\delta})^{\times}.
\]
This shows the result.

\end{proof}

We first take care of the case $\delta=2$.
\begin{lemma}\label{lemma-primitive-element-non-existence-two}
Let $p>3$. We have
\begin{description}
\item[(1)]For $0<d<p^3+p-2$, no elements in $M^2_{G_2, d}$
are primitive.
\item[(2)]For $d=p^3+p-2$, all primitive elements $f$ in $M^2_{G_2, d}$ are of the form $$cX^{p^3-p^2+p-1}Y^{p^2-1}+h, $$ where
$c$ is a unit and $h\in M^2_{G_2}[p]$, where $M^2_{G_2}[p]$
 is the sub-module of elements of killed by $p$ in $M^2_{G_2}$. 

\item[(3)]For $d>p^3+p-2$, all primitive elements are  
 of the form $$cf_{1, 2}^af_{2, 2}^bX^{p^3-p^2+p-1}Y^{p^2-1}+h, $$ where
$c$ is a unit,  $h\in M^2_{G_2}[p]$ and
\[
d=ap(p+1)+bp^2(p-1)+p^3+p-2.
\]
\end{description}
\end{lemma}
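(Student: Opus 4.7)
The plan is to reduce mod~$p$ and exploit the structure of $M^1_{G_1}$ from Proposition~\ref{prop-divided-power-struture-modp}. Let $\pi\colon M^2_{G_2}\to M^1_{G_1}$ be the reduction map; it is surjective with kernel $pM^2_{G_2}$, so any primitive $f\in M^2_{G_2,d}$ satisfies $\pi(f)\neq 0$ (otherwise $f\in pM^2_{G_2}$ would force $pf=0$). I would decompose
\[
\pi(f) = A(f_1,f_2)\, X^{p^2-p}Y^{p-1} + \sum_{k=2}^{p-1} B_k(f_1,f_2)\, X^{(k-1)(p-1)}Y^{p-1} + C(f_1,f_2),
\]
with $A\in \bF_p[f_1,f_2]$ and $B_k,C\in \bF_p[f_1,f_2]/(f_1)$, and track each piece separately.

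The core structural claim, from which all three parts follow, is that modulo $M^2_{G_2}[p]$ every primitive element of $M^2_{G_2}$ lies in $\bZ/p^2[f_{1,2},f_{2,2}]\cdot U_2 \oplus \bZ/p^2\cdot 1$, where $U_2=X^{p^3-p^2+p-1}Y^{p^2-1}$. To establish this I would first use Lemma~\ref{lemma-primitive-element-special-expression} to write $\pi(U_2)=h(f_1,f_2)X^{p^2-p}Y^{p-1}+(\text{summands annihilated by }f_1)$ with $h=f_1^{p-1}f_2^{p-1}+\text{lower terms}$, so that the $\pi$-image of $M^{\delta,G_\delta}_{\prim}\cdot U_2$ in the free summand is the principal ideal $(h)\cdot X^{p^2-p}Y^{p-1}$. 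The nonvanishing and vanishing pairings of Lemma~\ref{lemma-pairing-lifting-multiplication-nonvanishing}, applied to the lifts $w_{2+r}\in D(V_{2+r})^{G_{2+r}}$ from Proposition~\ref{prop-lift-fixedelement-divpow}, then force the free-summand component $A$ of any primitive $f$ into $(h)$; the $B_k$ and $C$ contributions, being $f_1$-annihilated, can only come from elements of order dividing~$p$. Granting the claim, Part~(1) is immediate since the smallest positive degree appearing in $\bZ/p^2[f_{1,2},f_{2,2}]\cdot U_2$ is $\deg(U_2)=p^3+p-2$; Part~(2) follows because in that degree the only monomial contribution is $U_2$ itself; and Part~(3) follows from the freeness in Corollary~\ref{cor-second-cohomology-nonvanishing-p-powers-freeness}, which produces the basis $\{f_{1,2}^af_{2,2}^bU_2\}$ indexed by $ap(p+1)+bp^2(p-1)=d-(p^3+p-2)$.

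The main obstacle is precisely the claim that the free-summand component of any primitive $f$ must lie in the ideal $(h)$: a priori $A$ is an arbitrary element of $\bF_p[f_1,f_2]$, and $\pi(f)\neq 0$ only guarantees $f\notin pM^2_{G_2}$, not primitivity. The cleanest route is via the pairing $\varphi_2\colon D(V_2)^{G_2}\to (M^2_{G_2})^{*}$: if $A\notin(h)$ there is no element of $D(V_2)^{G_2}_d$ pairing with $f$ to a unit in $\bZ/p^2$, which one obtains by dualising Lemma~\ref{lemma-pairing-lifting-multiplication-nonvanishing} and using that the shifts of $w_{2+r}$ under multiplication by invariants exhaust the primitive dual of $\bZ/p^2[f_{1,2},f_{2,2}]\cdot U_2$. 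A subsidiary technical point that will need care is showing that the $B_k$ and $C$ pieces actually lift to $p$-torsion elements of $M^2_{G_2}$, which should follow from Lemma~\ref{lemma-vanishing-diagonal-action} combined with the annihilation by $f_1$ inherited from $M^1_{G_1}$.
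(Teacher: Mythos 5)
Your overall setup (reduction mod $p$, the observation that a primitive $f$ cannot lie in $pM^2_{G_2}$, and the decomposition of $\pi(f)$ along Proposition \ref{prop-divided-power-struture-modp}) is sound, but the engine you propose for the ``core structural claim'' does not work with what is available. To conclude that $f$ is \emph{not} primitive from the non-existence of an invariant dual element pairing with $f$ to a unit, you need the map $\varphi_2\colon D(V_2)^{G_2}\to (M^2_{G_2})^{*}$ to identify the primitive invariants of $D(V_2)$ with the dual of the primitive part of $M^2_{G_2}$ --- and the paper explicitly remarks, right after introducing $\varphi_\delta$, that this is \emph{not} proved there; only $\varphi_1$ is shown to be an isomorphism, because $\bZ/p$ is a field. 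You would additionally need to know that the multiplicative shifts of the $w_{2+r}$ exhaust $D(V_2)^{G_2}_{\prim}$ in the relevant degrees, which amounts to a determination of the primitive invariants of the divided power algebra that is carried out nowhere; the paper only constructs particular elements $u_\delta$, $w_\delta$. The pairing is used in the paper only in the easy direction: pairing to a unit proves an element \emph{is} primitive (Lemma \ref{lemma-pairing-lifting-multiplication-nonvanishing}, Proposition \ref{prop-primitive-element-stable-multiplication}); it is never used to bound orders from above.

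The paper obtains the upper bounds by a different mechanism: it uses the surjection $\pi_2\colon M^2/(\Id-T)M^2\to M^2_{G_2}$, whose primitive part in degree $d$ is explicitly $\bZ/p^2\,Y^d\oplus\bigoplus_{k,\,p^2\mid k+1}\bZ/p^2\,\epsilon_k$ by Proposition \ref{prop1}, and then kills $pY^d$ and $p\epsilon_{\ell p^2-1}$ one by one using explicit relations: the $S$-action, the diagonal torus action (Lemma \ref{lemma-vanishing-diagonal-action}, which forces $p(p-1)\mid d-2\ell p^2+2$), congruences for Stirling numbers of the second kind modulo $p^2$, and an induction on $\ell$ via elements of the form $(T-\Id)(X^{d-\ell p^2-p+1}Y^{\ell p^2+p-1})$. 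This is where the actual content of parts (1) and (2) lies, and your proposal has no substitute for it. Your subsidiary claim that the $B_k$ and $C$ components ``can only come from elements of order dividing $p$'' is exactly the statement that needs proof, and being annihilated by $f_1$ in $M^1_{G_1}$ does not by itself control the order of a lift in $M^2_{G_2}$. (Part (3), by contrast, you handle essentially as the paper does, via the vanishing of $M^1_{G_1}/(f_{1,2},f_{2,2})M^1_{G_1}$ in degrees above $p^3+p-2$.)
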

\remk The case of $d=0$ is omitted for 
triviality.

\begin{proof}

We first assume that $d<p^3+p-2=p(p-1)^2+p^2-1$.
Recall that we have a canonical projection
\[
\pi_2: M^{2}/(\Id-T)M^2\rightarrow M^2_{G_2}
\]
and 
\begin{align}\label{equation-decomposition-primitive-boundary-module
}
(M^{2}/(\Id-T)M^2)_{d, \prim}=\bZ/p^2 Y^d \oplus\bigoplus_{k, p^2\mid k+1} \bZ/p^2 \epsilon_k.
\end{align}
Note that
\[
Y^d=(S-\Id)X^d+X^d
\]
and $X^d$ vanishes in $M^{2}/(\Id-T)M^2$, therefore
$Y^d$ vanishes in $M^2_{G_2}$. 
We are reduced to show that all elements $\epsilon_{\ell p^2-1}$
lie in $M^2_{G_2}[p]$ for $\ell\leq p$. 
Note that for
$p>2$, 
by Lemma 5.1 of \cite{CH10}, we have
\begin{align}\label{equation-stirling-power-series-p-square}
\sum_{n=\ell p^2-1}^{\infty}\stirlingii{n}{\ell p^2-1}t^n&=\frac{t^{\ell p^2-1}}{(1-t)(1-2t)\cdots(1-(\ell p^2-1)t)}\nonumber\\
&\equiv \frac{t^{\ell p^2-1}}{(1-t^{p-1})^{p\ell}}
\quad \mod p^2.
\end{align}
From this equality we get for $1\leq \ell'<\ell<p$, 
\begin{align}\label{equation-stirling-number-psquare}
\stirlingii{\ell p^2-1}{\ell' p^2-1}\equiv 0, \quad \mod p^2.
\end{align}
and 
\begin{align}\label{equation-stirling-number-psquare-two}
\stirlingii{(\ell+r) p^2-rp-1}{\ell' p^2-1}\equiv 0, \quad \mod p^2.
\end{align}
\begin{align}\label{equation-stirling-number-psquare-nonvanishing-two}
\stirlingii{(\ell+r) p^2-rp-1}{\ell p^2-1}\equiv \binom{(\ell+r)p-1}{\ell p-1}, \quad \mod p^2.
\end{align}
Note that we have
\[
X^{d-\ell p^2+1}Y^{\ell p^2-1}=\epsilon_{\ell p^2-1}+\sum_{r<\ell p^2-1}\stirlingii{\ell p^2-1}{r}\epsilon_r.
\]

And applying equation (\ref{equation-stirling-number-psquare}) yields in $M^2_{G_2}$, 
\[
pX^{d-\ell p^2+1}Y^{\ell p^2-1}=p\epsilon_{\ell p^2-1}, 
\]
 And lemma \ref{lemma-vanishing-diagonal-action} shows that
$pX^{d-\ell p^2+1}Y^{\ell p^2-1}\equiv 0$ unless 
$p(p-1)\mid d-2\ell p^2+2$. Now let
\[
d=2\ell p^2-2+kp(p-1).
\]
Now 
\[
(T-\Id)(X^{d-\ell p^2-p+1}Y^{\ell p^2+p-1})=\sum_{i=0}^{\ell p^2+p-2}\binom{\ell p^2+p-1}{i} X^{d-i} Y^i.
\]
Note that for $\ell p^2-1<i\leq \ell p^2+p-2$, we have
$p(p-1)\nmid d-2i$, therefore $X^{d-i}Y^i$ vanishes in 
$M^2_{G_2}$. Moreover, 
\begin{align*}
&(T-\Id)(X^{d-\ell p^2-p+1}Y^{\ell p^2+p-1})\\
&\equiv X^{d-\ell p^2-p+1}(X^{p^2}+Y^{p^2})^{\ell}(X+Y)^{p-1}-X^{d-\ell p^2-p+1}Y^{\ell p^2+p-1}\mod p
\end{align*}
which implies that if $\binom{\ell p^2+p-1}{i}\not\equiv 0\mod p$  then 
$i=i_1p^2+i_2$ with 
$0\leq i_1\leq \ell $, $0\leq i_2\leq p-1$. In this case
\[
d-2i=2((\ell-i_1)p^2-i_2-1)+kp(p-1), 
\]
and $p(p-1)\mid d-2i$ implies $i_2=p-1$ and for $p>3$
\[
i_1=
\ell-\frac{p+1}{2}\text{ or }\ell-1 
\]
We prove by induction on $\ell$ that 
\[
p X^{d-(\ell-1)p^2-(p-1)}Y^{(\ell-1)p^2+p-1}=0
\]
in $M^2_{G_2}$. Note that for $\ell<\frac{p+1}{2}$, we have
\[
p(T-\Id)(X^{d-\ell p^2-p+1}Y^{\ell p^2+p-1})
=p a_\ell X^{d-(\ell-1)p^2-(p-1)}Y^{(\ell-1)p^2+p-1}, 
\]
for some $a_\ell\in (\bZ/p^2)^{\times}.$
This proves 
\[
p X^{d-(\ell-1)p^2-(p-1)}Y^{(\ell-1)p^2+p-1}=0.
\]
And for $\ell\geq \frac{p+1}{2}$, 
\begin{align*}
&p(T-\Id)(X^{d-\ell p^2-p+1}Y^{\ell p^2+p-1})\\
&=p a_\ell X^{d-(\ell-1)p^2-(p-1)}Y^{(\ell-1)p^2+p-1}\\
&+pb_{\ell}X^{d-(\ell-\frac{p+1}{2})p^2-(p-1)}Y^{(\ell-\frac{p+1}{2})p^2+p-1}, 
\end{align*}
for some units $a_\ell, b_\ell \in (\bZ/p^2)^\times$. 
Apply induction shows the result.
Now this implies
\begin{align*}
pX^{(\ell-1)p^2+p-1}Y^{d-(\ell-1)p^2-(p-1)}
&=(S-\Id)(pX^{d-(\ell-1)p^2-(p-1)}Y^{(\ell-1)p^2+p-1})\\
&+pX^{d-(\ell-1)p^2-(p-1)}Y^{(\ell-1)p^2+p-1}\\
&=0
\end{align*}
We deduce from the condition
\[
d=2\ell p^2-2+kp(p-1)<p^3+p-2
\]
that
\[
k<p
\]
and
\[
p> 2\ell+k-\frac{k+1}{p}\geq \ell+k+\ell-\frac{k+1}{p}\geq \ell+k
\]
And by Lemma \ref{lemma-Lucas-theorem}, we have for $\ell+k<p$, 
\begin{align}\label{equation-binomial-unit-modulo-p-square}
\binom{(k+\ell+1)p-1}{\ell p-1}\equiv \binom{k+\ell}{\ell-1},  \quad \mod p
\end{align}
which shows that $\binom{(k+\ell+1)p-1}{\ell p-1}$ is a unit in $\bZ/p^2$.
Now 
\[
X^{(\ell-1)p^2+p-1}Y^{d-(\ell-1)p^2-(p-1)}=\sum_{r\leq d-(\ell-1)p^2-(p-1)}\stirlingii{d-(\ell-1)p^2-(p-1)}{r}\epsilon_r.
\]
and 
\[
d-(\ell-1)p^2-(p-1)=\ell p^2-1+(k+1)p(p-1).
\]
Applying (\ref{equation-stirling-number-psquare-two}) and (\ref{equation-stirling-number-psquare-nonvanishing-two}) gives in $M^2_{G_2}$, 
\begin{align}\label{equation-monomial-muliplication-p-vanishing}
pX^{(\ell-1)p^2+p-1}Y^{d-(\ell-1)p^2-(p-1)}=p\epsilon_{\ell p^2-1+(k+1)p(p-1)}+p\binom{k+\ell}{\ell-1}\epsilon_{\ell p^2-1}.
\end{align}
The fact that $\ell+k<p$ shows that
$k<p-1$, and
\[
p^2\nmid \ell p^2+(k+1)p(p-1)
\]
hence $\epsilon_{\ell p^2-1+(k+1)p(p-1)}$ is killed by $p$, i.e., 
\[
p\epsilon_{\ell p^2-1+(k+1)p(p-1)}=0.
\]
This shows $p\epsilon_{\ell p^2-1}=0$ in $M^2_{G_2}$. To finish the proof of (1), 
we are left to consider the case
$\ell=p$ (the condition $\ell p^2<p^3+p-2$ implies $\ell\leq p$). The equation (\ref{equation-stirling-power-series-p-square}) gives 
\[
\stirlingii{p^3-1}{\ell' p^2-1}=\left\{\begin{array}{lcr}
\binom{2p^2-1}{p^2-1}, &\text{ if } \ell'=1,  \\
0, &\text{ for }1<\ell'<p.
\end{array}\right.
\]
And Lemma \ref{lemma-Lucas-theorem} (Lucas's theorem) shows that
\[
\binom{2p^2-1}{p^2-1}\equiv \binom{2p-1}{p-1}\equiv 0, 
\quad \mod p.
\]
So we still have
\[
pX^{d- p^3+1}Y^{ p^3-1}=p\epsilon_{ p^3-1}.
\]
And the same argument as the case $\ell<p$ holds as long as
we have $p+k<p$, i.e., k<0. This finishes the proof of (1).

As for $d=p^3+p-2$,  again we need to consider the element 
$\epsilon_{\ell p^2-1}$ with $1\leq \ell\leq p$. First
consider the case $1\leq \ell<p$, then as before, we still have
\[
pX^{d-\ell p^2+1}Y^{\ell p^2-1}=p\epsilon_{\ell p^2-1}, 
\]
Furthermore, 
\[
p(p-1)\mid d-2\ell p^2+2
\]
implies for that $p>3$
\[
\ell=
1\text{ or } \frac{p+1}{2}.
\]
Therefore we are reduce to consider the cases
$\ell=1,\frac{p+1}{2}, p$. Note that 
\[
d=p^2-1+p(p-1)^2.
\]
Then $\ell=1$ and $k=p-1$, and equation (\ref{equation-binomial-unit-modulo-p-square}) becomes
\[
\binom{(k+\ell+1)p-1}{\ell p-1}\equiv 1,  \quad \mod p
\]
which is still a unit.
And (\ref{equation-monomial-muliplication-p-vanishing}) becomes
\[
0=pX^{p-1}Y^{p^3-1}=p\epsilon_{p^3-1}+p\epsilon_{ p^2-1}.
\]
But we know that 
\[
X^{p^3-p^2+p-1}Y^{p^2-1}=\epsilon_{p^2-1}+\sum_{r<p^2-1}\stirlingii{p^2-1}{r} \epsilon_{r}
\]
is primitive, hence so is $\epsilon_{p^2-1}$. 
We conclude that $\epsilon_{p^3-1}+\epsilon_{ p^2-1}$ is of order
$p$. Finally for $\ell=\frac{p+1}{2}$, the (\ref{equation-monomial-muliplication-p-vanishing}) becomes
\[
0=pX^{\frac{p^2(p-1)}{2}+p-1}Y^{\frac{p^2(p+1)}{2}-1}=p\epsilon_{\frac{p^2(p+1)}{2}-1}.
\]
This finishes the proof of (2).

As for (3), we consider the action of 
\[
M^{2, G_2}_{\prim}=\bZ/p^2[f_{1, 2}, f_{2, 2}].
\]

We first show the following lemma
\begin{lemma}\label{lemma-quotient-vanishing-higer-degree-p-square}
We have
\[
\oplus_{d>p^3+p-2}(M^1_{G_1}/(f_{1,2},f_{2,2})M^1_{G_1})_d=0
\]
\end{lemma}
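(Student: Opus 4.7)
The plan is to reduce the claim to a finite degree count via the direct sum decomposition of $M^1_{G_1}$ established in Proposition \ref{prop-divided-power-struture-modp}. Since we are working over $\bF_p$, the lifts $f_{1, 2}$ and $f_{2, 2}$ reduce to $f_1^p$ and $f_2^p$ respectively, so the quotient in question is $M^1_{G_1}/(f_1^p, f_2^p) M^1_{G_1}$; it then suffices to bound the top degree of each summand separately.

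First I would treat the free summand $M^{1, G_1} \epsilon_{p-1}^{p^2-1}$. Under the identification $M^{1, G_1} = \bF_p[f_1, f_2]$, its image modulo $(f_1^p, f_2^p)$ has basis $\{f_1^a f_2^b \epsilon_{p-1}^{p^2-1} : 0 \le a, b \le p-1\}$. Recalling that $\deg f_1 = p+1$, $\deg f_2 = p(p-1)$, and $\deg \epsilon_{p-1}^{p^2-1} = p^2-1$, the top degree, attained uniquely at $a = b = p-1$, equals
\[
(p-1)(p+1) + (p-1)p(p-1) + (p^2-1) = p^3+p-2.
\]

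Next I would handle the remaining summands $M^{1, G_1}/(f_1) \cdot \epsilon_{p-1}^{k(p-1)}$ for $2 \le k \le p-1$ together with $M^{1, G_1}/(f_1) \cdot 1$. Since $f_1$ already acts as zero on these summands, further modding out by $(f_1^p, f_2^p)$ amounts to modding by $f_2^p$ alone, producing rank-one modules over $\bF_p[f_2]/(f_2^p)$. Their top degree is bounded by
\[
(p-1)\cdot p(p-1) + (p-1)(p-1) = p^3 - p^2 - p + 1,
\]
which is strictly less than $p^3+p-2$.

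Assembling the summands yields the lemma. I do not anticipate any serious obstacle beyond the routine verifications just sketched; the one conceptual point to emphasize is that the bound $p^3+p-2$ is saturated precisely by the free summand and nowhere else, which is exactly what forces the sharp cutoff appearing in the statement (and which explains the numerology behind the primitive element $X^{p^{\delta+1}-p^{\delta}+p^{\delta-1}-1}Y^{p^{\delta}-1}$ of the surrounding proposition, in the case $\delta = 2$).
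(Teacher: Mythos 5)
Your proof is correct and follows essentially the same route as the paper: reduce $f_{1,2}, f_{2,2}$ to $f_1^p, f_2^p$ modulo $p$, invoke the decomposition of $M^1_{G_1}$ from Proposition \ref{prop-divided-power-struture-modp}, and check that every resulting generator of the quotient has degree at most $p^3+p-2$. Your degree computations (with the maximum $p^3+p-2$ attained only on the free summand at $f_1^{p-1}f_2^{p-1}\epsilon_{p-1}^{p^2-1}$) agree with the paper, which merely asserts the bound without writing them out.
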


\begin{proof}[Proof of lemma \ref{lemma-quotient-vanishing-higer-degree-p-square}]
By definition, we have
\[
f_{1,2}=f_1^{p}, f_{2, 2}=f_{2}^{p}.
\]
By Proposition \ref{prop-divided-power-struture-modp}, we 
know that 
 $M^1_{G_1}/(f_{1,2},f_{2,2})M^1_{G_1}$ 
 as $\bF_p$-vector space, is generated by 
\[
f_2^{a_0}, f_2^{a_1}X^{p-1}Y^{p-1}, f_2^{a_2}X^{2(p-1)}Y^{p-1}, \cdots,f_2^{a_{p-2}}X^{(p-2)(p-1)}Y^{p-1},
f_1^bf_2^{a_{p}}X^{p(p-1)}Y^{p-1}
\]
with $0\leq a_i\leq p-1$ and $0\leq b\leq p-1$.
All of these generators are of degree $\leq p^3+p-2$.
\end{proof}
Now (3) is a consequece of Lemma \ref{lemma-quotient-vanishing-higer-degree-p-square}. We are done.
\end{proof}

\remk The reader will find that the case of $\delta\geq 3$ is exactly the same as $\delta=2$ but the notations are more complicated. 
We still need to treat the case when $\delta>2$.

\begin{lemma}\label{lemma-primitive-element-non-existence-higer-powers}
Let $p>3$. We have
\begin{description}
\item[(1)]For $0<d<p^{\delta+1}+p^{\delta-1}-2$, no elements in $M^\delta_{G_\delta, d}$
are primitive.
\item[(2)]For $d=p^{\delta+1}+p^{\delta-1}-2$, all primitive elements $f$ in $M^\delta_{G_\delta, d}$ are of the form 
\[
cX^{p^{\delta+1}-p^{\delta}+p^{\delta-1}-1}Y^{p^{\delta}-1}+h,
\]
where
$c$ is a unit and $h\in M^\delta_{G_\delta}[p^{\delta-1}]$, where $M^\delta_{G_\delta}[p^{\delta-1}]$
 is the sub-module of elements of killed by $p^{\delta-1}$ in $M^\delta_{G_\delta}$. 
\item[(3)]For $d>p^{\delta+1}+p^{\delta-1}-2$, all primitive elements are  
 of the form 
 \[
 cf_{1, \delta}^af_{2, \delta}^bX^{p^{\delta+1}-p^{\delta}+p^{\delta-1}-1}Y^{p^{\delta}-1}+h,
 \]
 where
$c$ is a unit,  $h\in M^\delta_{G_\delta}[p^{\delta-1}]$ and
\[
d=ap^{\delta-1}(p+1)+bp^{\delta}(p-1)+p^{\delta+1}+p^{\delta-1}-2.
\]
\end{description}
\end{lemma}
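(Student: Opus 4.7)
The plan is to mimic the proof of Lemma \ref{lemma-primitive-element-non-existence-two} with $p^2$ replaced by $p^\delta$ throughout, lifting every congruence from modulus $p^2$ to modulus $p^\delta$. First I would take the canonical surjection $\pi_\delta \colon M^\delta/(\mathrm{Id}-T)M^\delta \twoheadrightarrow M^\delta_{G_\delta}$ and, by Proposition \ref{prop1} applied over $\bZ/p^\delta$, observe that the primitive (order $p^\delta$) degree-$d$ part of the source is $\bZ/p^\delta\cdot Y^d \oplus \bigoplus_{k:\, p^\delta\mid k+1}\bZ/p^\delta\cdot \epsilon_k$. The term $Y^d$ is killed by the same argument as in the $\delta=2$ case, namely $Y^d=X^d+(S-\mathrm{Id})X^d$ with $X^d=\epsilon_0\in(\mathrm{Id}-T)M^\delta$, so the task reduces to bounding the $p$-order of each $\epsilon_{\ell p^\delta-1}$ by $p^{\delta-1}$ whenever $\ell p^\delta<p^{\delta+1}+p^{\delta-1}$.

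Next I would upgrade the Stirling-number congruences of Corollary \ref{corstir} and equation (\ref{equation-stirling-power-series-p-square}) to modulus $p^\delta$. Using Lemma \ref{lemstir} together with Lemma 5.1 of \cite{CH10}, one reduces the generating function $\sum_{n\ge\ell p^\delta-1}\stirlingii{n}{\ell p^\delta-1}t^n$ modulo $p^\delta$ to $t^{\ell p^\delta-1}/(1-t^{p-1})^{\ell p^{\delta-1}}$, which yields $\stirlingii{\ell p^\delta-1}{\ell' p^\delta-1}\equiv 0\pmod{p^\delta}$ for $\ell'<\ell$ together with the direct analogues of (\ref{equation-stirling-number-psquare-two})--(\ref{equation-stirling-number-psquare-nonvanishing-two}). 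Combined with Lemma \ref{lemma-vanishing-diagonal-action}, these force $p^{\delta-1}X^{d-\ell p^\delta+1}Y^{\ell p^\delta-1}\equiv p^{\delta-1}\epsilon_{\ell p^\delta-1}$ modulo $M^\delta_{G_\delta}[p^{\delta-1}]$ and require $p^{\delta-1}(p-1)\mid d-2\ell p^\delta+2$ for non-vanishing. Writing $d=2\ell p^\delta-2+kp(p-1)$, a $T$-expansion of $X^{d-\ell p^\delta-p^{\delta-1}(p-1)+1}Y^{\ell p^\delta+p^{\delta-1}(p-1)-1}$ combined with Lucas's theorem (Lemma \ref{lemma-Lucas-theorem}) and induction on $\ell$ yields recursion relations of the shape $p^{\delta-1}\epsilon_{\ell p^\delta-1}=0$ for all $\ell+k<p$, with an adjustment for the boundary value $\ell=p^{\delta-1}$ where the relevant central binomial $\binom{2p-1}{p-1}$ vanishes modulo $p$ exactly as in the $\delta=2$ case. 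This proves (1).

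For (2), the same analysis at $d=p^{\delta+1}+p^{\delta-1}-2$ leaves exactly one unkilled class, generated by $X^{p^{\delta+1}-p^\delta+p^{\delta-1}-1}Y^{p^\delta-1}$, whose primitivity is already supplied by the corollary to Proposition \ref{prop-lift-fixedelement-divpow}; every other primitive candidate is killed by $p^{\delta-1}$. For (3), I would prove the straightforward analogue of Lemma \ref{lemma-quotient-vanishing-higer-degree-p-square}: since $f_{i,\delta}=f_i^{p^{\delta-1}}$, Proposition \ref{prop-divided-power-struture-modp} identifies an $\bF_p$-basis of $M^1_{G_1}/(f_{1,\delta},f_{2,\delta})M^1_{G_1}$ consisting of the monomials $f_1^af_2^bX^{r(p-1)}Y^{p-1}$ with $0\le a,b\le p^{\delta-1}-1$ and $0\le r\le p$, so the quotient is concentrated in degrees $\le p^{\delta+1}+p^{\delta-1}-2$. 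Combined with Nakayama's lemma and Corollary \ref{cor-second-cohomology-nonvanishing-p-powers-freeness}, this pins down every primitive class in higher degrees to lie in $M^{\delta,G_\delta}_{\prim}\cdot X^{p^{\delta+1}-p^\delta+p^{\delta-1}-1}Y^{p^\delta-1}$ modulo $M^\delta_{G_\delta}[p^{\delta-1}]$, completing (3).

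The main obstacle will be the $T$-action step in the middle: controlling the monomial translates modulo $p^{\delta-1}$ rather than only modulo $p$ forces one to use the full strength of Theorem \ref{thm-stirl-valuation} instead of the coarser Corollary \ref{corstir}, and the bookkeeping of which binomials $\binom{\ell p^\delta+p^{\delta-1}(p-1)-1}{i}$ contribute modulo $p^{\delta-1}$, together with the half-integer shift phenomenon already encountered in Lemma \ref{lemma-pairing-lifting-multiplication-nonvanishing}, becomes combinatorially heavy as $\delta$ grows. Once the $p^\delta$-adic Stirling congruences are in hand, however, every qualitative step of the $\delta=2$ argument transposes essentially verbatim.
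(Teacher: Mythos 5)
Your proposal follows the paper's own proof essentially verbatim: the paper likewise reduces via $\pi_\delta$ to bounding the order of the classes $\epsilon_{\ell p^\delta-1}$, upgrades the Stirling congruences to modulus $p^\delta$ using Lemma 5.1 of \cite{CH10} to get $\sum_n\stirlingii{n}{\ell p^\delta-1}t^n\equiv t^{\ell p^\delta-1}/(1-t^{p-1})^{\ell p^{\delta-1}}$, combines Lemma \ref{lemma-vanishing-diagonal-action} with a $(T-\Id)$-expansion, Lucas's theorem and induction on $\ell$ to kill $p^{\delta-1}\epsilon_{\ell p^\delta-1}$, and proves the exact analogue of Lemma \ref{lemma-quotient-vanishing-higer-degree-p-square} for part (3). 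The only discrepancies are bookkeeping slips in your sketch --- the correct substitutions are $d=2\ell p^\delta-2+kp^{\delta-1}(p-1)$, the auxiliary monomial $X^{d-\ell p^\delta-p^{\delta-1}+1}Y^{\ell p^\delta+p^{\delta-1}-1}$ (shift by $p^{\delta-1}$, not $p^{\delta-1}(p-1)$), and the boundary case $\ell=p$ rather than $\ell=p^{\delta-1}$ --- none of which changes the strategy.
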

\begin{proof}
The case of $\delta=2$ is already treated. Assume from now on $\delta\geq 3$.
We follow the strategy in the proof of case $\delta=2$.

We first assume that $d<p^{\delta+1}+p^{\delta-1}-2$.
Recall that we have a canonical projection
\[
\pi_\delta: M^{\delta}/(\Id-T)M^\delta\rightarrow M^\delta_{G_\delta}
\]
and 
\begin{align}\label{equation-decomposition-primitive-boundary-module-p-power}
(M^{\delta}/(\Id-T)M^\delta)_{d, \prim}=\bZ/p^\delta Y^d \oplus\bigoplus_{r, p^\delta\mid r+1} \bZ/p^\delta \epsilon_r.
\end{align}
Note that
\[
Y^d=(S-\Id)X^d+X^d
\]
and $X^d$ vanishes in $M^{\delta}/(\Id-T)M^\delta$, therefore
$Y^d$ vanishes in $M^\delta_{G_\delta}$. 
We are reduced to show that all elements $\epsilon_{\ell p^\delta-1}$
lie in $M^\delta_{G_\delta}[p^{\delta-1}]$ for $\ell\leq p$. 
Note that for
$p>2$, 
by Lemma 5.1 of \cite{CH10}, we have
\begin{align}\label{equation-stirling-power-series-p-power}
\sum_{n=\ell p^\delta-1}^{\infty}\stirlingii{n}{\ell p^\delta-1}t^n&=\frac{t^{\ell p^\delta-1}}{(1-t)(1-2t)\cdots(1-(\ell p^\delta-1)t)}\nonumber\\
&\equiv \frac{t^{\ell p^\delta-1}}{(1-t^{p-1})^{\ell p^{\delta-1}}}
\quad \mod p^\delta.
\end{align}
From this equality we get for $1\leq \ell'<\ell<p$, 
\begin{align}\label{equation-stirling-number-ppower-different-ell}
\stirlingii{\ell p^\delta-1}{\ell' p^\delta-1}\equiv 0, \quad \mod p^\delta.
\end{align}
and 
\begin{align}\label{equation-stirling-number-p-power-vanishing-differ-r}
\stirlingii{(\ell+r) p^\delta-rp^{\delta-1}-1}{\ell' p^\delta-1}\equiv 0, \quad \mod p^\delta.
\end{align}
\begin{align}\label{equation-stirling-number-ppower-nonvanishing-higher}
\stirlingii{(\ell+r) p^\delta-rp^{\delta-1}-1}{\ell p^\delta-1}\equiv \binom{(\ell+r)p^{\delta-1}-1}{\ell p^{\delta-1}-1}, \quad \mod p^\delta.
\end{align}
Note that we have
\[
X^{d-\ell p^\delta+1}Y^{\ell p^\delta-1}=\epsilon_{\ell p^\delta-1}+\sum_{r<\ell p^\delta-1}\stirlingii{\ell p^\delta-1}{r}\epsilon_r.
\]

And applying equation (\ref{equation-stirling-number-ppower-different-ell}) yields in $M^\delta_{G_\delta}$, 
\[
p^{\delta-1}X^{d-\ell p^\delta+1}Y^{\ell p^\delta-1}=p^{\delta-1}\epsilon_{\ell p^\delta-1}, 
\]
 And lemma \ref{lemma-vanishing-diagonal-action} shows that
$p^{\delta-1}X^{d-\ell p^2+1}Y^{\ell p^2-1}=0$ unless 
$p^{\delta-1}(p-1)\mid d-2\ell p^\delta+2$. Now let
\[
d=2\ell p^\delta-2+kp^{\delta-1}(p-1).
\]
Now 
\[
(T-\Id)(X^{d-\ell p^\delta-p^{\delta-1}+1}Y^{\ell p^\delta+p^{\delta-1}-1})=\sum_{i=0}^{\ell p^\delta+p^{\delta-1}-2}\binom{\ell p^\delta+p^{\delta}-1}{i} X^{d-i} Y^i.
\]
Note that for $\ell p^\delta-1<i\leq \ell p^\delta+p^{\delta-1}-2$, we have
$p^{\delta-1}(p-1)\nmid d-2i$, therefore $p^{\delta-1}X^{d-i}Y^i$ vanishes in 
$M^\delta_{G_\delta}$. Moreover, 
\begin{align*}
&(T-\Id)(X^{d-\ell p^\delta-p^{\delta-1}+1}Y^{\ell p^\delta+p^{\delta-1}-1})\\
&\equiv X^{d-\ell p^\delta-p^{\delta-1}+1}(X^{p^\delta}+Y^{p^\delta})^{\ell}(X+Y)^{p^{\delta-1}-1}\\
&-X^{d-\ell p^\delta-p^{\delta-1}+1}Y^{\ell p^\delta+p^{\delta}-1}\mod p
\end{align*}
which implies $\binom{\ell p^\delta+p^{\delta-1}-1}{i}\neq 0\mod p$ implies 
$i=i_1p^\delta+i_2$ with 
\[
0\leq i_1\leq \ell, \quad 0\leq i_2\leq p^{\delta-1}-1.
\]
In this case
\[
d-2i=2((l-i_1)p^\delta-i_2-1)+kp^{\delta-1}(p-1), 
\]
and $p^{\delta-1}(p-1)\mid d-2i$ implies $i_2=p^{\delta-1}-1$ and 
\[
i_1=
\ell-\frac{p+1}{2}\text{ or }\ell-1 
\]
As in the case of $\delta=2$, we show by induction that
\[
p^{\delta-1} X^{d-(\ell-1)p^\delta-(p^{\delta-1}-1)}Y^{(\ell-1)p^\delta+p^{\delta-1}-1}=0
\]
in $M^\delta_{G_\delta}$. The case of $\ell<\frac{p+1}{2}$ follows from
\begin{align*}
&p^{\delta-1}(T-\Id)(X^{d-\ell p^\delta-p^{\delta-1}+1}Y^{\ell p^\delta+p^{\delta-1}-1})\\
&=p^{\delta-1} a_\ell X^{d-(\ell-1)p^\delta-(p^{\delta-1}-1)}Y^{(\ell-1)p^\delta+p^{\delta-1}-1}
\end{align*}
for some unit $a_\ell\in (\bZ/p^\delta)^\times$. 
And for $\ell\geq \frac{p+1}{2}$, 
\begin{align*}
&p^{\delta-1}(T-\Id)(X^{d-\ell p^\delta-p^{\delta-1}+1}Y^{\ell p^\delta+p^{\delta-1}-1})\\
&=p^{\delta-1} a_\ell X^{d-(\ell-1)p^\delta-(p^{\delta-1}-1)}Y^{(\ell-1)p^\delta+p^{\delta-1}-1}\\
&+p^{\delta-1}b_{\ell}X^{d-(\ell-\frac{p+1}{2})p^\delta-(p^{\delta-1}-1)}Y^{(\ell-\frac{p+1}{2})p^\delta+p^{\delta-1}-1}
\end{align*}
for some unit $a_\ell, b_\ell\in (\bZ/p^\delta)^\times$. And our induction gives 
\[
p^{\delta-1}  X^{d-(\ell-1)p^\delta-(p^{\delta-1}-1)}Y^{(\ell-1)p^\delta+p^{\delta-1}-1}=0
\]
This shows
\begin{align*}
&p^{\delta-1}X^{(\ell-1)p^\delta+p^{\delta-1}-1}Y^{d-(\ell-1)p^\delta-(p^{\delta-1}-1)}\\
&=(S-\Id)(p^{\delta-1}X^{d-(\ell-1)p^\delta-(p^{\delta-1}-1)}Y^{(\ell-1)p^\delta+p^{\delta-1}-1})\\
&+p^{\delta-1}X^{d-(\ell-1)p^\delta-(p^{\delta}-1)}Y^{(\ell-1)p^\delta+p^{\delta-1}-1}\\
&=0
\end{align*}
We deduce from the condition
\[
d=2\ell p^\delta-2+kp^{\delta-1}(p-1)<p^\delta+p^{\delta-1}-2
\]
that
\[
k<p
\]
and
\[
p> 2\ell+k-\frac{k+1}{p}\geq \ell+k+\ell-\frac{k+1}{p}\geq \ell+k
\]
And by Lemma \ref{lemma-Lucas-theorem}, we have for $\ell+k<p$, 
\begin{align}\label{equation-binomial-unit-modulo-p-power}
&\binom{(k+\ell+1)p^{\delta-1}-1}{\ell p^{\delta-1}-1}\equiv \binom{k+\ell}{\ell-1},  \quad \mod p \quad (\text{for } p >3),
\end{align}

which shows that $\stirlingii{(\ell+r) p^\delta-rp^{\delta-1}-1}{\ell p^\delta-1}$ is a unit in $\bZ/p^\delta$.
Now 
\begin{align*}
&X^{(\ell-1)p^\delta+p^{\delta-1}-1}Y^{d-(\ell-1)p^\delta-(p^{\delta-1}-1)}\\
&=\sum_{r\leq d-(\ell-1)p^\delta-(p^{\delta-1}-1)}\stirlingii{d-(\ell-1)p^\delta-(p^{\delta-1}-1)}{r}\epsilon_r.
\end{align*}
and 
\[
d-(\ell-1)p^\delta-(p^{\delta-1}-1)=\ell p^\delta-1+(k+1)p^{\delta-1}(p-1).
\]
Applying (\ref{equation-stirling-number-p-power-vanishing-differ-r}) and (\ref{equation-stirling-number-ppower-nonvanishing-higher}) gives in $M^\delta_{G_\delta}$, 
\begin{align}\label{equation-monomial-muliplication-p-power-vanishing}
&p^{\delta-1}X^{(\ell-1)p^\delta+p^{\delta-1}-1}Y^{d-(\ell-1)p^\delta-(p^{\delta-1}-1)}\nonumber\\
&=p^{\delta-1}\epsilon_{\ell p^\delta-1+(k+1)p^{\delta-1}(p-1)}+p^{\delta-1}\binom{k+\ell}{\ell-1}\epsilon_{\ell p^\delta-1}.
\end{align}
The fact that $\ell+k<p$ shows that
$k<p-1$, and
\[
p^\delta\nmid \ell p^\delta+(k+1)p^{\delta-1}(p-1)
\]
hence 
\[
p^{\delta-1}\epsilon_{\ell p^\delta-1+(k+1)p^{\delta-1}(p-1)}=0.
\]
This shows $p^{\delta-1}\epsilon_{\ell p^\delta-1}=0$ in $M^\delta_{G_\delta}$. To finish the proof of (1), 
we are left to consider the case
$\ell=p$ (the condition $\ell p^\delta<p^{\delta+1}+p^{\delta-1}-2$ implies $\ell\leq p$). The equation (\ref{equation-stirling-power-series-p-power}) gives for $p$ odd, 
\[
\stirlingii{p^{\delta+1}-1}{\ell' p^\delta-1}=\left\{\begin{array}{lcr}
\binom{2p^\delta-1}{p^\delta-1}, &\text{ if } \ell'=1,  \\
0, &\text{ for }1<\ell'<p.
\end{array}\right.
\]
And Lemma \ref{lemma-Lucas-theorem} (Lucas's theorem) shows that
\[
\binom{2p^\delta-1}{p^\delta-1}=\binom{2p-1}{p-1}\equiv 0, 
\quad \mod p.
\]
So we still have
\[
p^{\delta-1}X^{d- p^{\delta+1}+1}Y^{ p^{\delta+1}-1}=p^{\delta-1}\epsilon_{ p^{\delta+1}-1}.
\]
And the same argument as the case $\ell<p$ holds as long as
we have $p+k<p$, i.e, k<0(which is the case since we suppose $d<p^{\delta+1}+p^{\delta-1}-2$). 

This finishes the proof of (1).

As for $d=p^{\delta+1}+p^{\delta-1}-2$,  again, we need to consider the element 
$\epsilon_{\ell p^\delta-1}$ with $1\leq \ell\leq p$. First
consider the case $1\leq \ell<p$, then as before, we still have
\[
p^{\delta-1}X^{d-\ell p^\delta+1}Y^{\ell p^\delta-1}=p^{\delta-1}\epsilon_{\ell p^\delta-1}, 
\]
Furthermore, 
\[
p^{\delta-1}(p-1)\mid d-2\ell p^\delta+2
\]
implies for $p>3$
\[
\ell=
1\text{ or } \frac{p+1}{2}.
\]
Therefore we are reduce to consider the case
$\ell=1, \frac{p+1}{2}, p$. Note that 
\[
d=p^\delta-1+p^{\delta-1}(p-1)^2.
\]
Then for $\ell=1$ and $k=p-1$, equation (\ref{equation-binomial-unit-modulo-p-power}) becomes
\[
\binom{(k+\ell+1)p^{\delta-1}-1}{\ell p^{\delta-1}-1}\equiv 1,  \quad \mod p
\]
which is still a unit.
And (\ref{equation-monomial-muliplication-p-power-vanishing}) becomes
\[
0=p^{\delta-1}X^{p^{\delta-1}-1}Y^{p^{\delta+1}-1}=p^{\delta-1}\epsilon_{\ell p^{\delta+1}-1}+p^{\delta-1}\epsilon_{ p^\delta-1}.
\]
But we know that 
\[
X^{p^{\delta+1}-p^\delta+p^{\delta-1}-1}Y^{p^\delta-1}=\epsilon_{p^\delta-1}+\sum_{r<p^\delta-1}\stirlingii{p^\delta-1}{r} \epsilon_{r}
\]
is primitive, hence so is $\epsilon_{p^{\delta+1}-1}$. 
And we conclude that $\epsilon_{\ell p^{\delta+1}-1}+\epsilon_{ p^\delta-1}$ is of order at most
$p^{\delta-1}$.  Finally for $\ell=\frac{p+1}{2}$, the (\ref{equation-monomial-muliplication-p-power-vanishing}) becomes
\[
0=pX^{\frac{p^\delta(p-1)}{2}+p^{\delta-1}-1}Y^{\frac{p^\delta(p+1)}{2}-1}=p\epsilon_{\frac{p^\delta(p+1)}{2}-1}.
\]

This finishes the proof of (2).

As for (3), we consider the action of 
\[
M^{\delta, G_\delta}_{\prim}=\bZ/p^\delta[f_{1, \delta}, f_{2, \delta}].
\]

We first show the following lemma
\begin{lemma}\label{lemma-quotient-vanishing-higer-degree-p-power}
We have
\[
\oplus_{d>p^{\delta+1}+p^{\delta-1}-2}(M^1_{G_1}/(f_{1,\delta},f_{2,\delta})M^1_{G_1})_d=0
\]
\end{lemma}

\begin{proof}[Proof of lemma \ref{lemma-quotient-vanishing-higer-degree-p-power}]
By definition, we have
\[
f_{1,\delta}=f_1^{p^{\delta-1}}, f_{2, \delta}=f_{2}^{p^{\delta-1}}.
\]
By Proposition \ref{prop-divided-power-struture-modp}, we 
know that 
 $M^1_{G_1}/(f_{1,\delta},f_{2,\delta})M^1_{G_1}$ 
 as $\bF_p$-vector space, is generated by 
\[
f_2^{a_0}, f_2^{a_1}X^{p-1}Y^{p-1}, f_2^{a_2}X^{2(p-1)}Y^{p-1}, \cdots,f_2^{a_{p-2}}X^{(p-2)(p-1)}Y^{p-1},
f_1^bf_2^{a_{p}}X^{p(p-1)}Y^{p-1}
\]
with $0\leq a_i\leq p^{\delta-1}-1$ and $0\leq b\leq p^{\delta-1}-1$.
All of these generators are of degree $\leq p^{\delta+1}+p^{\delta-1}-2$.
\end{proof}
Now (3) is a consequence of Lemma \ref{lemma-quotient-vanishing-higer-degree-p-power}. We are done.
\end{proof}

\begin{proof}[Proof of Proposition \ref{prop-second-cohomology-module-structures}]
Now Proposition \ref{prop-second-cohomology-module-structures} is 
proved combining Corollary \ref{cor-second-cohomology-nonvanishing-p-powers-freeness} and Lemma \ref{lemma-primitive-element-non-existence-higer-powers}.
\end{proof}

\section{Application to Congruence of Modular Forms}\label{sec-cmf}

Using results from previous sections, we
are able to determine the torsions 
of $H^1(X, \tilde{\cM}_n)$ and $H^2_c(X, \tilde{\cM}_n)$. Note that though the 
2 and 3 torsions are not determined by the article, 
we still list them in the examples.

\begin{definition}
We say a prime $\ell$ is good with respect to 
$n$ if $3<\ell<n$ and both $H^1(X, \Tilde{\cM}_n)_{\tor}$ and $ H^2_c(X, \Tilde{\cM}_n)$ contain no $\ell$-power
torsions. Let $\bT(n)$ denotes the set of good primes
with respect to $n$.
\end{definition}

\begin{example}
We have
\[
H^1(X, \tilde{\cM}_{10})_{\tor}=\bZ/4;
\]
which is already known to Harder. And
\[
H^2_c(X, \tilde{\cM}_{10})=(\bZ/2)^{\oplus 2}\oplus \bZ/3.
\]
Moreover, 
\[
H^1(X, \tilde{\cM}_{22})_{\tor}=(\bZ/4)^{\oplus 2}\oplus \bZ/2\oplus \bZ/3
\]
\[
H^2_c(X, \tilde{\cM}_{22})=(\bZ/2)^{\oplus 3}\oplus \bZ/4\oplus (\bZ/3)^{\oplus 2}.
\]
\end{example}

Now recall that from (\ref{eq1}) we have the following exact
sequence
\begin{align*}
0&\rightarrow H^1(X, \Tilde{\cM}_n)_{\int, !}/H^1_{!}(X, \Tilde{\cM}_n)_{\int}\rightarrow
H^1(\partial X, \Tilde{\cM}_n)_{\tor}/H^1(X, \Tilde{\cM}_n)_{\tor}\rightarrow \nonumber\\
&\rightarrow H^2_c(X, \Tilde{\cM}_n)\rightarrow 0.
\end{align*}
Knowing the group $H^1(\partial X, \Tilde{\cM}_n)_{\tor}/H^1(X, \Tilde{\cM})_{\tor}$ would allow us to draw information 
about the map
\[
H^1(X, \Tilde{\cM}_n)_{\int, !}/H^1_{!}(X, \Tilde{\cM}_n)_{\int}\rightarrow
H^1(\partial X, \Tilde{\cM}_n)_{\tor}/H^1(X, \Tilde{\cM}_n)_{\tor}
\]
which gives us congruence of cuspidal forms to Eisenstein 
series modulo $p$-powers.

\begin{example}
For case $n=10$, we know the $5$ and $7$ torsions appear in 
$H^1(\partial X, \Tilde{\cM}_n)_{\tor}$ but not in 
$H^1(X, \Tilde{\cM}_n)_{\tor}$ or $ H^2_c(X, \Tilde{\cM}_n)$. 
Therefore applying Proposition \ref{prop-Hecke-operator-boudary-eigenvalue}
allows us to recover the famous conguences
\[
\tau(p)\equiv p^5+p^6(\equiv p+p^2),\quad \mod 5, 
\]
and
\[
\tau(p)\equiv p^7+p^4(\equiv p+p^4), \quad \mod 7, 
\]
where $\tau$ is the Ramanujan-$\tau$ function.
As for the case of $n=22$, the Hecke eigenform is defined
over the number field
\[
K:=\bQ[\alpha]/(\alpha^2 - \alpha - 36042).
\]
And we get an eigenform
\begin{align*}
f(q)=q+(-24\alpha + 552) q^2+(1152\alpha + 169164)q^2+(-25920\alpha + 12676288)q^3+ 
\end{align*}
Note that the primes $\ell=5, 7, 11, 13, 17, 19$ are all good primes, 
i.e, we get a congruence of Hecke eigenform to Eisenstein series. For $\ell=5, 7, 11$, we have several $\ell$-torsion classes 
in $H^1(\partial X, \Tilde{\cM}_n)_{\tor}$, hence we 
get several congruences. Consider the case
$\ell=5$, let $\ell=\mathfrak{l}_1\mathfrak{l}_2$ in $K$
such that $\alpha=2\mod \mathfrak{l}_1$ and
$\alpha=4\mod \mathfrak{l}_2$. Then 
for $f(q)=\sum a_i q^i$,
\[
a_p\equiv p^{15}+p^8\equiv p^{20}+p^3, \quad \mod \mathfrak{l}_1
\]
\[
a_p\equiv p^5+p^{18}\equiv p^{10}+p^{13}, \quad \mod \mathfrak{l}_2.
\]
And for $\ell=13, 17, 19$, consider the
case $\ell=13$, it splits into two primes
in $K$. In fact, let
$\ell=\mathfrak{l}_3\mathfrak{l}_4$ such
that $\alpha=3\mod \mathfrak{l}_1$. Then we get
for $f(q)=\sum a_i q^i$, 
\[
a_p=p^{13}+p^{10}, \quad \mod \mathfrak{l}_3.
\]
And we get no congruence modulo $\mathfrak{l}_4$.
\end{example}

\begin{example}
The reader familiar with classical results on 
congruence of Ramanujan-$\tau$ may observe that
we only get congruence modulo $5$ in the above example, 
but indeed we have
\[
\tau(p)\equiv p+p^{10}, \quad \mod 25.
\]
This could be explained as follows.
The Hecke-module
$$H^1(X, \Tilde{\cM}_{50})_{\int, !}\otimes_{\bZ} \bZ_5$$
contains an eigenform
\[
f=\sum_{i=1}^{\infty}a_i q^i,
\]
with
\[
a_p\equiv p+p^{10}, \quad \mod 25.
\]
This is explained by the fact that the image of 
$f$ under the natural map 
\[
H^1(X, \Tilde{\cM}_n)_{\int, !}/H^1_{!}(X, \Tilde{\cM}_n)_{\int}\rightarrow
H^1(\partial X, \Tilde{\cM}_n)_{\tor}/H^1(X, \Tilde{\cM}_n)_{\tor}
\]
is a $25$-torsion( as we said before, we leave the determination of the image of the above map for another paper). 
Finally, a classical result of Serre(cf. \cite{Ser73} \S 1.3) tells us that we have a congruence
\[
\Delta\equiv f, \quad \mod 25, 
\]
where $\Delta=\sum_{n=1}^{\infty} \tau(n)q^n$
is the modular form of weight 12.
\end{example}

\begin{example}
Our final example concerns the case of $n=34$, we have
\begin{align*}
&H^1(X, \tilde{\cM}_{34})_{\tor}=(\bZ/4)^{\oplus 3}\oplus (\bZ/3)^{\oplus 2}\oplus (\bZ/2)^{\oplus 2}, \\    
 &H^2_c(X, \tilde{\cM}_{34})=\bZ/8\oplus \bZ/4\oplus (\bZ/3)^{\oplus 2}\oplus (\bZ/2)^{\oplus 4}.  
\end{align*}
In particular, we know that $5\in\bT(34)$.  
And we find a Hecke eigenform $f=\sum_{i=1}^{\infty}a_i q^i\in \bZ_5[[q]]$ with 
\[
a_p\equiv p^{25}+p^{10}, \quad \mod 25, 
\]
for all prime $p$.
\end{example}

We have the following general results 
\begin{teo}
Let $n>0$ be even. Then for $\ell\in \bT(n)$, 
any $\ell$-torsion class gives rise to a congruence between some cuspidal form of level one
and Eisenstein series modulo $\ell$.
\end{teo}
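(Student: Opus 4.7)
The plan is to exploit the exact sequence (\ref{eq1}) together with the assumption $\ell\in \bT(n)$ to isolate the $\ell$-torsion contribution and convert it into a congruence of Hecke eigenvalues. Under the hypothesis that $\ell\in \bT(n)$, both $H^1(X,\tilde{\cM}_n)_{\tor}$ and $H^2_c(X,\tilde{\cM}_n)$ have trivial $\ell$-primary component, so tensoring (\ref{eq1}) with $\bZ_\ell$ collapses the outer terms and yields a Hecke-equivariant isomorphism
\[
\bigl[H^1(X,\tilde{\cM}_n)_{\int,!}/H^1_!(X,\tilde{\cM}_n)_{\int}\bigr]\otimes \bZ_\ell \;\simeq\; H^1(\partial X,\tilde{\cM}_n)_{\tor}\otimes \bZ_\ell.
\]
By Proposition \ref{prop-Hecke-operator-boudary-eigenvalue} (supplemented by the remark that follows, which covers the small prime range), the right-hand side is semisimple for the Hecke action, with the elements $\epsilon_k$ forming Hecke eigenvectors on which $T_p$ acts by $p^{n-k}+p^{k+1}$. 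These are exactly the Hecke eigenvalues of the weight-$(n+2)$ Eisenstein system indexed by $k$.

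Next I would invoke Eichler--Shimura to identify $H^1_!(X,\tilde{\cM}_n)\otimes \bQ$, as a Hecke module, with two copies of the cuspidal space $S_{n+2}(\SL_2(\bZ))$. Since $n>0$ is even, this space is nonzero for all relevant $n$ and admits a basis of normalized Hecke eigenforms $f_i=q+\sum_{p}a_p(f_i)q^p+\cdots$ defined over a number field $K$. Fix a nonzero $\ell$-torsion class $c$ on the left-hand side of the isomorphism above. Lifting $c$ to a class in $H^1(X,\tilde{\cM}_n)_{\int,!}\otimes \O$, where $\O$ is the ring of integers in a sufficiently large finite extension of $\bQ_\ell$, the semisimplicity of the Hecke action on $H^1_!(X,\tilde{\cM}_n)\otimes K_\ell$ lets me decompose $c=\sum_i c_i$ along the generalized Hecke eigenspaces of the $f_i$.

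Since the boundary image of $c$ is a nonzero $\ell$-torsion combination of the $\epsilon_k$, Hecke-equivariance of the isomorphism forces that for at least one $i$ and one $k$ the component $c_i$ is detected by the eigenline $\O\epsilon_k$. Matching eigenvalues of $T_p$ on the two sides then yields
\[
a_p(f_i)\equiv p^{n-k}+p^{k+1} \pmod{\mathfrak{l}}
\]
for every prime $p\neq \ell$, where $\mathfrak{l}$ is the prime of $\O$ induced by the eigensystem; this is precisely a mod $\ell$ congruence between the cuspidal eigenform $f_i$ of level one and the Eisenstein series, as claimed.

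The main obstacle is the rigorous passage from an abstract $\ell$-torsion cohomology class to a specific Hecke eigenform producing the congruence. This is a Deligne--Serre style lifting argument: one must show that the localization of $H^1(X,\tilde{\cM}_n)_{\int,!}\otimes \bZ_\ell$ at the maximal ideal $\mathfrak{m}$ of the Hecke algebra cut out by the Eisenstein system $\{T_p\mapsto p^{n-k}+p^{k+1}\bmod\ell\}$ is nonzero whenever the corresponding torsion class is, and then extract an eigenform from this localization. The assumption $\ell\in\bT(n)$ is exactly what ensures the torsion visible in $H^1(\partial X,\tilde{\cM}_n)_{\tor}\otimes\bZ_\ell$ is neither absorbed into $H^1(X,\tilde{\cM}_n)_{\tor}$ nor pushed out into $H^2_c(X,\tilde{\cM}_n)$, so that it necessarily originates from a genuine congruence within the cuspidal part.
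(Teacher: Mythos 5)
Your proposal is correct and follows essentially the same route the paper intends: the paper itself offers no written-out proof of this theorem, but its surrounding discussion of the exact sequence (\ref{eq1}), the definition of $\bT(n)$, and Proposition \ref{prop-Hecke-operator-boudary-eigenvalue} (with the remark covering $p\leq n$) amount to exactly your argument that the $\ell$-primary part of the boundary torsion is identified Hecke-equivariantly with the congruence module $H^1(X,\tilde{\cM}_n)_{\int,!}/H^1_{!}(X,\tilde{\cM}_n)_{\int}$, whence an Eisenstein congruence for some cuspidal eigenform. Your write-up supplies the Eichler--Shimura and Deligne--Serre style details that the paper leaves implicit, and correctly isolates the role of the hypothesis $\ell\in\bT(n)$.
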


\remk Sometimes even the primes 
that are not good with respect to $n$ contribute 
to congruence. But this requires the determination 
of the image of $H^1(\partial X, \Tilde{\cM}_n)_{\tor}$ in 
$ H^2_c(X, \Tilde{\cM}_n)$. We leave this
to the next paper.

\remk It remains to determine the Hecke module stucture on $H^1(X, \Tilde{\cM}_n)_{\tor}$ and $ H^2_c(X, \Tilde{\cM}_n)$. One could also attach Galois representations to the torsion classes we constructed. We plan to return to these questions in the next paper.

\section{Stirling Numbers of the Second Kind}

\begin{teo}\label{thm-stirl-valuation}
Assume $p>3$ be prime. Let $p$ be prime and $\delta>0$. We have 
\[
\val_p(\stirlingii{n}{p^{\delta}-1})\geq \delta-1-\val_p(n+1).
\]
\end{teo}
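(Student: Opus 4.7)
My plan is to reduce the bound to the $p$-adic valuation of a single binomial coefficient, and then conclude via Kummer's theorem. As a first step, I would use Lemma~\ref{lemstir} to write
\[
\sum_{n \geq p^{\delta}-1} \stirlingii{n}{p^{\delta}-1}\, t^{n-(p^{\delta}-1)} \;=\; \prod_{j=1}^{p^{\delta}-1}\frac{1}{1-jt},
\]
and then invoke the Chen--Hong factorization used to justify equation~(\ref{equation-stirling-power-series-p-power}), namely $\prod_{j=1}^{p^{\delta}-1}(1-jt) \equiv (1-t^{p-1})^{p^{\delta-1}} \pmod{p^{\delta}}$. Expanding $(1-t^{p-1})^{-p^{\delta-1}}$ as a binomial series and reading off the coefficient of $t^{n-p^{\delta}+1}$ should then give, modulo $p^{\delta}$,
\[
\stirlingii{n}{p^{\delta}-1} \equiv \binom{k+p^{\delta-1}-1}{p^{\delta-1}-1} \quad \text{with } k = \tfrac{n+1-p^{\delta}}{p-1},
\]
provided $(p-1)\mid n$; in the complementary case the Stirling number vanishes modulo $p^{\delta}$, which already beats the desired bound.

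Next I would apply Kummer's theorem to the binomial coefficient: its $p$-adic valuation equals the number of carries when $k$ and $p^{\delta-1}-1$ are added in base $p$. Since $p^{\delta-1}-1$ has digit $p-1$ at positions $0,1,\dots,\delta-2$ and $0$ thereafter, a short carry-propagation argument will show that once a carry is produced at the lowest nonzero digit of $k$, it triggers carries at every remaining position up through $\delta-2$; hence the carry count is at least $\max(0,\, \delta - 1 - \val_p(k))$.

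The last step would be to translate $\val_p(k)$ back to $\val_p(n+1)$: from $(p-1)k = (n+1) - p^{\delta}$ and $\gcd(p-1,p)=1$, one gets $\val_p(k) = \min\{\val_p(n+1),\,\delta\}$, so the carry count is $\geq \max(0,\, \delta - 1 - \val_p(n+1))$, which is the announced inequality. I expect the routine valuation bookkeeping to go through without drama; the load-bearing step is really the Chen--Hong factorization modulo $p^{\delta}$, and the main obstacle is making sure it is available in the precise form needed here. The boundary cases $n = p^{\delta}-1$ (where $k = 0$) and $(p-1)\nmid n$ are both handled by trivial bounds.
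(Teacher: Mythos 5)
Your proposal is correct and follows essentially the same route as the paper: both reduce $\stirlingii{n}{p^{\delta}-1}$ modulo $p^{\delta}$ to the single binomial coefficient $\binom{k+p^{\delta-1}-1}{k}$ with $k=\frac{n+1-p^{\delta}}{p-1}$ via the Chen--Hong congruence (the paper packages this as Lemma \ref{lem-stirl-congru-equal} plus Proposition \ref{prop-stirl-ppower-cong}, you extract it from the generating-function factorization), and then bound its valuation by $\delta-1-\val_p(k)$ before translating $\val_p(k)$ into $\val_p(n+1)$. The only difference is cosmetic: you invoke Kummer's theorem and a carry-propagation argument where the paper proves the same binomial bound by direct computation of factorial valuations in Lemma \ref{lem-binom-valuation}.
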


We recall the following results concerning Stirling number of 
the second kind

\begin{prop}(cf. \cite{CH10} Theorem 5.2)\label{prop-stirl-ppower-cong}
Let $p$ be odd and $m\geq 1, n\geq p^m$. Then we have
\[
\stirlingii{n}{p^m}\equiv \left\{\begin{array}{lcr}
\binom{\frac{n-p^{m-1}}{p-1}-1}{\frac{n-p^{m}}{p-1}}\mod p^m,&  \text{ if }
n=1 \mod p-1, \\
0 \mod p^m ,&  \text{ otherwise }.
\end{array}\right. 
\]
\end{prop}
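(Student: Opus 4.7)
The plan is to deduce the proposition from the polynomial congruence
\[
\prod_{k=1}^{p^m}(1-kt) \equiv (1-t^{p-1})^{p^{m-1}} \pmod{p^m}. \quad (\star)
\]
Granting $(\star)$, Lemma \ref{lemstir} yields
\[
\sum_{n\geq p^m}\stirlingii{n}{p^m}t^n = \frac{t^{p^m}}{\prod_{k=1}^{p^m}(1-kt)} \equiv \frac{t^{p^m}}{(1-t^{p-1})^{p^{m-1}}} = \sum_{s\geq 0}\binom{s+p^{m-1}-1}{s}t^{p^m+s(p-1)} \pmod{p^m},
\]
where the last equality uses the negative binomial series. Comparing coefficients shows $\stirlingii{n}{p^m}\equiv 0\pmod{p^m}$ unless $n\equiv 1\pmod{p-1}$; for $n=p^m+s(p-1)$, the identities $s+p^{m-1}-1 = \frac{n-p^{m-1}}{p-1}-1$ and $s=\frac{n-p^m}{p-1}$ show that the resulting binomial coincides with the one in the proposition.

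The identity $(\star)$ I would prove by induction on $m$. The base case $m=1$ is immediate: Fermat's little theorem gives $\prod_{k=1}^{p-1}(x-k)\equiv x^{p-1}-1 \pmod p$; substituting $x=1/t$ and multiplying by $t^{p-1}$ gives $\prod_{k=1}^{p-1}(1-kt)\equiv 1-t^{p-1}\pmod p$, while the remaining factor $(1-pt)$ is $\equiv 1\pmod p$.

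For $m\geq 2$, partition the product into $p$ blocks according to residue modulo $p^{m-1}$:
\[
\prod_{k=1}^{p^m}(1-kt)=\prod_{i=0}^{p-1}f_i(t), \qquad f_i(t):=\prod_{k=1}^{p^{m-1}}\bigl(1-(k+ip^{m-1})t\bigr),
\]
and pair $f_i$ with $f_{p-i}$ for $i=1,\ldots,(p-1)/2$, using that $p$ is odd so these are genuine pairs. A direct expansion gives
\[
\bigl(1-(k+ip^{m-1})t\bigr)\bigl(1-(k+(p-i)p^{m-1})t\bigr)=(1-kt)^2 - p^m t(1-kt) + i(p-i)p^{2m-2}t^2,
\]
and both correction terms vanish modulo $p^m$ since $2m-2\geq m$ for $m\geq 2$. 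Hence $f_i(t)f_{p-i}(t)\equiv f_0(t)^2\pmod{p^m}$, and consequently $\prod_{k=1}^{p^m}(1-kt)\equiv f_0(t)^p\pmod{p^m}$. Applying the inductive hypothesis to $f_0(t)=\prod_{k=1}^{p^{m-1}}(1-kt)$ together with the elementary fact that $A\equiv B\pmod{p^{m-1}}$ implies $A^p\equiv B^p\pmod{p^m}$ (verified via the binomial expansion, noting that for $1\leq j\leq p$ the term $\binom{p}{j}p^{j(m-1)}$ has $p$-adic valuation $\geq m$) then yields $f_0(t)^p\equiv (1-t^{p-1})^{p^{m-1}}\pmod{p^m}$, closing the induction.

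The main obstacle is the simultaneous control of $p$-adic valuations in the pairing step; this is precisely why the base case $m=1$ must be treated separately (the bound $2m-2\geq m$ fails at $m=1$) and why the assumption that $p$ is odd is essential, so that the involution $i\mapsto p-i$ has no fixed point on $\{1,\ldots,p-1\}$.
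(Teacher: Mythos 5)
Your argument is correct, but there is nothing in the paper to compare it against: the proposition is quoted from \cite{CH10} (Theorem 5.2) and the paper supplies no proof of its own. What you have written is therefore a complete, self-contained derivation of the cited result. Its engine, the congruence $\prod_{k=1}^{p^m}(1-kt)\equiv(1-t^{p-1})^{p^{m-1}} \pmod{p^m}$, is exactly the content of Lemma 5.1 of \cite{CH10}, which the paper does invoke elsewhere (for instance in the congruence (\ref{equation-stirling-power-series-p-square}) and in the proof of Lemma \ref{lem-stirl-congru-equal}), so your route is the natural one given the paper's toolkit: reduce to the generating function of Lemma \ref{lemstir}, establish $(\star)$, invert the (unit) power series modulo $p^m$, and read off coefficients from the negative binomial expansion. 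I checked the details: the pairing of the residue blocks $f_i$ and $f_{p-i}$ is a genuine fixed-point-free pairing because $p$ is odd, the identity
\[
\bigl(1-(k+ip^{m-1})t\bigr)\bigl(1-(k+(p-i)p^{m-1})t\bigr)=(1-kt)^2-p^mt(1-kt)+i(p-i)p^{2m-2}t^2
\]
is correct, the bound $2m-2\ge m$ does require $m\ge 2$ (whence the separate base case), the lifting $A\equiv B \pmod{p^{m-1}}\Rightarrow A^p\equiv B^p\pmod{p^m}$ is valid for $m\ge2$ since $v_p\bigl(\binom{p}{j}p^{j(m-1)}\bigr)\ge m$ for $1\le j\le p$, and the final bookkeeping $s=\frac{n-p^m}{p-1}$, $s+p^{m-1}-1=\frac{n-p^{m-1}}{p-1}-1$ matches the binomial coefficient in the statement. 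No gaps.
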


\begin{lemma}\label{lem-stirl-congru-equal}
We have
\[
\stirlingii{n-1}{p^m-1}\equiv \stirlingii{n}{p^m}, \quad \mod p^m
\]
\end{lemma}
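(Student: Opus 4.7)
The plan is to invoke the classical recurrence for Stirling numbers of the second kind, namely
\[
\stirlingii{n}{k}=k\stirlingii{n-1}{k}+\stirlingii{n-1}{k-1},
\]
which follows directly from the combinatorial interpretation (a partition of $\{1,\dots,n\}$ into $k$ blocks either puts $n$ alone into a new block, giving $\stirlingii{n-1}{k-1}$ choices, or inserts $n$ into one of the $k$ blocks of a partition of $\{1,\dots,n-1\}$, giving $k\stirlingii{n-1}{k}$ choices).

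First I would specialize this identity to $k=p^m$, obtaining
\[
\stirlingii{n}{p^m}-\stirlingii{n-1}{p^m-1}=p^m\stirlingii{n-1}{p^m}.
\]
Since the right-hand side is manifestly divisible by $p^m$, the congruence
\[
\stirlingii{n-1}{p^m-1}\equiv \stirlingii{n}{p^m}\pmod{p^m}
\]
follows immediately. This is really a one-line argument; there is no genuine obstacle, and in particular no appeal to the generating-function formula of Lemma \ref{lemstir} or to Proposition \ref{prop-stirl-ppower-cong} is needed. The only caveat worth flagging in the writeup is that the identity must be stated for all $n$ (with the usual convention $\stirlingii{0}{0}=1$ and $\stirlingii{n}{0}=0$ for $n>0$) so that it can be applied in the subsequent proof of Theorem \ref{thm-stirl-valuation}, where it is combined with Proposition \ref{prop-stirl-ppower-cong} to transfer information from $\stirlingii{n}{p^m}$ to $\stirlingii{n-1}{p^m-1}$.
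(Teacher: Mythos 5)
Your proof is correct. The recurrence $\stirlingii{n}{k}=k\stirlingii{n-1}{k}+\stirlingii{n-1}{k-1}$ specialized at $k=p^m$ gives $\stirlingii{n}{p^m}-\stirlingii{n-1}{p^m-1}=p^m\stirlingii{n-1}{p^m}$, and the congruence follows at once; the conventions you flag take care of the degenerate cases.

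The paper argues differently: it invokes the generating-function identity of Lemma \ref{lemstir},
\[
\sum_{n=p^{m}}^{\infty}\stirlingii{n}{p^m}X^n=\frac{X^{p^m}}{(1-X)(1-2X)\cdots (1-p^mX)},
\]
and observes that the last factor $(1-p^mX)$ in the denominator is congruent to $1$ modulo $p^m$, so the right-hand side is congruent to $X$ times the generating function for $\stirlingii{\,\cdot\,}{p^m-1}$; comparing coefficients gives the lemma. The two arguments isolate the same divisibility (the explicit factor $p^m$) but through different mechanisms. Your recurrence argument is more elementary and self-contained, requiring no generating functions at all. The paper's choice is a matter of economy within its own toolkit: Lemma \ref{lemstir} is already set up and is used repeatedly throughout (Corollary \ref{corstir}, Proposition \ref{dbtor}, Lemma \ref{lemma-congruence-levelone}), so reading the congruence off the denominator costs nothing there. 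Either proof serves equally well for the application in Theorem \ref{thm-stirl-valuation}.
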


\begin{proof}
This follows from the fact that 
\[
\sum_{n=p^{m}}^{\infty}\stirlingii{n}{p^m}X^n=\frac{X^{p^m}}{(1-X)(1-2X)\cdots (1-p^mX)}
\]
and the right hand side equals to 
\[
X\frac{X^{p^m-1}}{(1-X)(1-2X)\cdots (1-(p^m-1)X)}, \quad \mod p^m.
\]
\end{proof}

We also need the following property of binomial coefficients.
\begin{lemma}\label{lem-binom-valuation}
We have
\[
\val_p(\binom{n-1+p^{m}}{n})\geq m-\val_p(n),
\]
and for $1\leq n\leq p^m$, 
\[
\val_p(\binom{p^{m}}{n})\geq m-\val_p(n).
\]
\end{lemma}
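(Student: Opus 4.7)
The plan is to reduce both inequalities to the absorption identity
\[
k\binom{N}{k} = N\binom{N-1}{k-1},
\]
applied in the form $\binom{N}{k} = \frac{N}{k}\binom{N-1}{k-1}$, together with the trivial observation that the $p$-adic valuation of any integer is non-negative.

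For the second inequality, taking $N = p^m$ and $k = n$ in the identity yields
\[
\binom{p^m}{n} = \frac{p^m}{n}\binom{p^m-1}{n-1}.
\]
Since both sides are integers (the right-hand side being an integer exactly because $n\mid p^m\binom{p^m-1}{n-1}$), we obtain
\[
\val_p\!\Bigl(\binom{p^m}{n}\Bigr) = m - \val_p(n) + \val_p\!\Bigl(\binom{p^m-1}{n-1}\Bigr) \geq m - \val_p(n),
\]
as desired. For the first inequality, I would expand
\[
\binom{p^m+n-1}{n} = \frac{(p^m+n-1)(p^m+n-2)\cdots (p^m+1)\cdot p^m}{n!},
\]
and pull out the factor $p^m/n$ to rewrite this as
\[
\binom{p^m+n-1}{n} = \frac{p^m}{n}\binom{p^m+n-1}{n-1}.
\]
Exactly the same argument then gives $\val_p\!\bigl(\binom{p^m+n-1}{n}\bigr) \geq m - \val_p(n)$.

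There is essentially no obstacle here — the lemma is a direct manipulation, not a deep statement. The only thing to be careful about is verifying the factorizations above cleanly, i.e., that the product $(p^m+1)(p^m+2)\cdots(p^m+n-1)$ divided by $(n-1)!$ really is $\binom{p^m+n-1}{n-1}$, and that the remaining factor is $p^m/n$. Alternatively, one could give a Kummer-theorem proof by counting carries in base $p$ when adding $n$ to $p^m-1$ (resp. to $p^m-n$), but the absorption identity is faster and avoids case-analysis on the base-$p$ digits of $n$.
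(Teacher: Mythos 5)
Your proof is correct, and it takes a genuinely slicker route than the paper's. Both arguments begin the same way, by isolating the factor $p^m/n$ (which contributes exactly $m-\val_p(n)$ to the valuation), but they diverge in how they show the leftover factor has nonnegative valuation. The paper expands the leftover as a product of ratios $\frac{p^m+i}{i}$ (resp.\ $\frac{p^m-i+1}{i}$), observes that $\val_p(p^m+i)=\val_p(i)$ unless $p^m\mid i$ (resp.\ $\val_p(p^m-i+1)=\val_p(i-1)$), and telescopes the surviving terms. You instead recognize the leftover factor as the binomial coefficient $\binom{p^m+n-1}{n-1}$ (resp.\ $\binom{p^m-1}{n-1}$), whose integrality immediately gives nonnegative valuation; the identities $\binom{p^m+n-1}{n}=\frac{p^m}{n}\binom{p^m+n-1}{n-1}$ and $\binom{p^m}{n}=\frac{p^m}{n}\binom{p^m-1}{n-1}$ both check out. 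Your version is shorter and dodges the digit/valuation case analysis entirely; the paper's version has the minor side benefit of computing the excess valuation explicitly (as $\val_p(j_{\max})$ in the first case), but the lemma only needs the inequality, so nothing is lost.
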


\begin{proof}
Let $m\geq 1$ and $n\geq 1$.
We have
\begin{align*}
\val_p(\binom{n-1+p^{m}}{n})&=\val_p(\frac{(n-1+p^m)(n-2+p^m)\cdots (p^m+1)p^m}{n!})\\
&=m-\val_p(n)+\sum_{1<i<n}(\val_p(p^m+i)-\val_p(i)).
\end{align*}
Note that 
we have
\[
\val_p(p^m+i)-\val_p(i)=0, \text{ if } \val_p(i)<m.
\]
Therefore, 
\begin{align*}
&\sum_{1<i<n}(\val_p(p^m+i)-\val_p(i))\\
&=\sum_{0<j<n/p^m}(\val_p(p^m(1+j))-\val_p(p^mj))\\
&=\sum_{0<j<n/p^m}(\val_p(1+j)-\val_p(j))\\
&=\val_p(j_{\max})\geq 0, 
\end{align*}
where $j_{\max}$ is the maximal integer satisfying $0<j<n/p^m$.
Similarly, 
\begin{align*}
\val_p(\binom{p^{m}}{n})&=\val_p(\frac{p^m(p^m-1)(p^m-2)\cdots (p^m-n+1)}{n!})\\
&=m+\sum_{2\leq i\leq n}(\val_p(p^m-i+1)-\val_p(i)).
\end{align*}
And
\[
\val_p(p^m-i+1)=\val_p(i-1), \text{ for } 2\leq i\leq p^m.
\]
This shows that
\[
\val_p(\binom{p^{m}}{n})\geq m-\val_p(n).
\]
\end{proof}
\remk We thank Robin Bartlett for helping us 
with the proof of the lemma.

\begin{proof}[Proof of Theorem \ref{thm-stirl-valuation}]
According to Lemma \ref{lem-stirl-congru-equal}, 
we know 
\[
\stirlingii{n}{p^{\delta}-1}\equiv \stirlingii{n+1}{p^{\delta}}\quad \mod p^{\delta}.
\]
First of all, we assume that $p$ is odd. 
Then applying Proposition \ref{prop-stirl-ppower-cong}, we known that $\stirlingii{n+1}{p^{\delta}}\equiv 0 \mod p^{\delta}$
if $n\not\equiv 0\mod p-1$, from which we deduce
\[
\val_p(\stirlingii{n+1}{p^{\delta}})\geq \delta.
\]
If $n\equiv 0\mod (p-1)$, let $n=a(p-1)+p^{\delta}-1$. Then by the same proposition, we get
\[
\stirlingii{n+1}{p^{\delta}}\equiv \binom{\frac{n+1-p^{\delta-1}}{p-1}-1}{\frac{n+1-p^{\delta}}{p-1}}\equiv \binom{a+p^{\delta-1}-1}{a}\mod p^{\delta}.
\]
Applying Lemma \ref{lem-binom-valuation}, we know
\[
\val_p(\binom{a+p^{\delta-1}-1}{a})\geq \delta-1-\val_p(a).
\]
If $\val_p(a)< \delta$, we know further that 
\[
\val_p(n+1)=\val_p(a(p-1)+p^{\delta})=\val_p(a).
\]
If $\val_p(a)\geq \delta$
\[
\val_p(n+1)\geq \delta.
\]
This shows that 
\[
\stirlingii{n}{p^{\delta}-1}\geq \delta-1-\val_p(n+1).
\]

\end{proof}

We finish this section with the following proposition
\begin{prop}\label{prop-binomial-expansion-valuation}
Let $\gamma\geq 1$ and $p$ is prime. We have
\[
\val_p((1-p^{\gamma})^j-1)\geq \val_p(j)+\gamma.
\]
The equality holds whenever $p>2$ or $p=2$
and $\gamma\geq 2$.
\end{prop}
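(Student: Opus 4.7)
The approach is a direct binomial expansion. Writing
\[
(1-p^{\gamma})^{j} - 1 = \sum_{k=1}^{j} \binom{j}{k} (-p^{\gamma})^{k},
\]
I would estimate the $p$-adic valuation of each summand. The key input is the absorption identity $k\binom{j}{k} = j\binom{j-1}{k-1}$, which yields $\val_p\binom{j}{k} \geq \val_p(j) - \val_p(k)$. Hence the $k$-th summand has valuation at least $\val_p(j) + \gamma k - \val_p(k)$, so the lower bound in the proposition reduces to the elementary claim
\[
\gamma(k-1) \geq \val_p(k) \quad \text{for all } k \geq 1.
\]
For $k=1$ this is trivial, and for $k \geq 2$ it follows from $\gamma \geq 1$ combined with $\val_p(k) \leq \log_p k \leq k - 1$, the second inequality being a one-line induction equivalent to $p^{k-1} \geq k$. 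Summing, every term has valuation at least $\val_p(j) + \gamma$, which gives the first assertion.

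For the equality statement, the $k = 1$ term equals $-jp^{\gamma}$ and contributes exactly $\val_p(j) + \gamma$. So equality amounts to showing the strict inequality $\gamma(k-1) > \val_p(k)$ for every $k \geq 2$, under the hypothesis $p > 2$ or $p = 2$ with $\gamma \geq 2$. For odd $p$ and $k \geq 2$, one has $p^{k-1} > k$, hence $\val_p(k) \leq \log_p k < k - 1 \leq \gamma(k-1)$. For $p = 2$ and $\gamma \geq 2$, the extra factor $\gamma$ provides the required slack: at $k = 2$ one has $\val_2(2) = 1 < 2 \leq \gamma$, while for $k \geq 3$ the inequality $\log_2 k < k-1$ is strict, yielding $\val_2(k) \leq \log_2 k < k - 1 \leq \gamma(k-1)$. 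In all cases the $k = 1$ term strictly dominates, and the other summands collectively have strictly larger valuation, so equality is attained.

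The main obstacle is essentially cosmetic: it is keeping track of the excluded boundary case $(p,\gamma) = (2,1)$. Indeed, here $(1-2)^{j} - 1 = (-1)^{j} - 1$ vanishes for even $j$, giving infinite valuation while $\val_2(j) + \gamma$ is finite, so no equality can be asserted. The necessity of this restriction also shows that the hypothesis on $(p,\gamma)$ in the proposition is sharp. Apart from this case, the proof is pure bookkeeping in the binomial expansion.
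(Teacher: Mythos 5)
Your proof is correct, and it takes a genuinely different route from the paper's. You expand $(1-p^{\gamma})^{j}-1=\sum_{k\geq 1}\binom{j}{k}(-p^{\gamma})^{k}$ and control each term via the absorption identity $k\binom{j}{k}=j\binom{j-1}{k-1}$, reducing everything to the elementary inequality $\gamma(k-1)\geq \val_p(k)$ (strict for $k\geq 2$ under the stated hypothesis on $(p,\gamma)$), after which the ultrametric inequality lets the $k=1$ term $-jp^{\gamma}$ dominate. The paper instead argues structurally with the filtration $U_i=1+p^{i_0+i}\bZ_p$ of principal units and the power maps $\phi_s(a)=a^s$: it checks that $\phi_s$ is an automorphism of $U_1$ when $p\nmid s$ and that $\phi_p(U_i)=U_{i+1}$ exactly (the computation $(1+jp^{i})^{p}=1+jp^{i+1}+\sum_{\ell\geq 2}\binom{p}{\ell}j^{\ell}p^{\ell i}$), then writes $j=p^{\val_p(j)}u$. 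Your version is more elementary and self-contained — it needs no facts about the unit group of $\bZ_p$ and makes the exceptional case $(p,\gamma)=(2,1)$ completely transparent ($(1-2)^{j}-1$ vanishes for even $j$), whereas the paper's version packages the same estimate into the statement that the one-unit filtration shifts by exactly one under $p$-th powers, which is the reusable structural fact and explains why the filtration must start at level $i_0=2$ when $p=2$. Both arguments are sound; yours trades generality for directness.
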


\remk We learn the proof from 
Carlo Pagano.

\begin{proof}
We assume that either $p$ is odd and $i_0=1$ or $p=2$ and $i_0=2$.
Consider the following filtration of subgroups on $\bZ_p$
\[
1+p^{i_0}\bZ_p:=U_1\supseteq U_2\supseteq \cdots 
\]
with $U_i=1+p^{i_0+i}\bZ_p$. Then 
$\bZ_p$ acts on $U_i$ via taking powers, i.e, for
$s\in \bZ_p$ and $a\in U_1$, 
\[
\phi_s(a)=a^s.
\]
Note that $\phi_s$ satisfies the following properties
\begin{description}
\item[(1)]$\phi_{s_1}(a)\phi_{s_2}(a)=\phi_{s_1+s_2}(a)$, 
\item[(2)]$\phi_{s_1}(\phi_{s_2}(a))=\phi_{s_1s_2}(a)$.
\end{description}
We claim that $\phi_s(U_i)=U_{i+\val_p(s)}$, 
in particular, $\phi_s$ is an automorphism of $U_1$ if $\val_p(s)=0$. 
By property $(1)$ and $(2)$, we only need 
to check that 
$\phi_s$ is an automorphism of $U_1$ for
$s=1, \cdots, p-1$ and $\phi_p(U_i)=U_{i+1}$.
Indeed, for $1\leq i, j, k\leq p-1$,  
\[
(1+jp^{i})^k=1+kjp^{i}+\cdots\in U_i
\]
and 
\[
(1+jp^{i})^{p}=1+jp^{i+1}+\sum_{2\leq \ell\leq p}\binom{p}{\ell}j^{\ell}p^{\ell i}.
\]
By Lemma \ref{lemma-Lucas-theorem}, we know that
\[
\val_p(\binom{p}{\ell})\geq 1, \ell=1, 2, \cdots, p-1.
\]
Therefore, we have for $1\leq j\leq p-1$, 
\[
\val_p((1+jp^{i})^p-1)=i+1.
\]

\end{proof}

\bibliographystyle{plain}
\bibliography{biblio}

\end{document}